\documentclass[a4paper,11pt,reqno]{amsart}

\usepackage[utf8]{inputenc}
\usepackage[T1]{fontenc}
\usepackage{amsfonts,amssymb}
\usepackage{comment}
\usepackage{graphicx}
\usepackage{array}
\usepackage{orcidlink}
\usepackage{enumerate}
\usepackage{url}
\usepackage[all]{xy}
\usepackage{tikz}
\usepackage{esint}
\usetikzlibrary{arrows,shapes,positioning,calc,3d,decorations.pathreplacing,decorations.markings,cd,patterns}
\tikzset{
    side by side/.style 2 args={
        line width=2pt,
        #1,
        postaction={
            clip,postaction={draw,#2}
        }
    }
} 
\tikzstyle{dot}=[shape=circle,fill=black,inner sep=1.5pt]
\tikzstyle{root}=[ultra thick,->]
\tikzstyle{weight}=[orange,ultra thick,->]
\tikzstyle{weightdot}=[dot,orange]
\tikzstyle{wall}=[blue, thin, dashed]
\tikzstyle{auxwall}=[blue!20, thin, dashed]
\tikzstyle{->-}=[postaction={decorate,decoration={markings,mark=at position .5 with {\arrow{to}}}}]
\tikzstyle{-<-}=[postaction={decorate,decoration={markings,mark=at position .5 with {\arrowreversed{to};}}}]
\tikzstyle{->>-}=[postaction={decorate,decoration={markings,mark=at position .5 with {\arrow{to [sep] to}}}}]
\tikzstyle{-<<-}=[postaction={decorate,decoration={markings,mark=at position .5 with {\arrowreversed{to [sep] to};}}}]
\tikzstyle{->>>-}=[postaction={decorate,decoration={markings,mark=at position .5 with {\arrow{to [sep] to [sep] to}}}}]
\tikzstyle{-<<<-}=[postaction={decorate,decoration={markings,mark=at position .5 with {\arrowreversed{to [sep] to [sep] to};}}}]
\tikzstyle{--}=[double,double distance=1pt]
\usepackage{xargs}
\usepackage{stmaryrd}
\usepackage{hyperref}
\usepackage{multirow}
\usepackage{mathrsfs}
\usepackage{etoolbox}

\usepackage{empheq}
\numberwithin{equation}{section}

\usepackage{amsthm}
\theoremstyle{plain}
\newtheorem{introtheorem}{Theorem}
\newtheorem{introconjecture}[introtheorem]{Conjecture}
\newtheorem{introcorollary}[introtheorem]{Corollary}

\newtheorem{theorem}{Theorem}[section]
\newtheorem{lemma}[theorem]{Lemma}
\newtheorem{corollary}[theorem]{Corollary}

\newtheorem{proposition}[theorem]{Proposition}

\theoremstyle{definition}
\newtheorem{definition}[theorem]{Definition}

\newtheorem{example}[theorem]{Example}
\theoremstyle{remark}
\newtheorem*{remark}{Remark}

\newcommand{\R}{\mathbb{R}}
\newcommand{\C}{\mathbb{C}}

\newcommand{\Z}{\mathbb{Z}}
\newcommand{\N}{\mathbb{N}}

\newcommand{\F}{\mathbb{F}}
\newcommand{\E}{\mathbb{E}}

\newcommand{\calB}{\mathscr{B}}
\newcommand{\calC}{\mathcal{C}}

\newcommand{\calS}{\mathcal{S}}

\newcommand{\calF}{\mathcal{F}}

\newcommand{\calH}{\mathcal{H}}

\newcommand{\calL}{\mathcal{L}}
\newcommand{\calP}{\mathcal{P}}
\newcommand{\calZ}{\mathscr{Z}}
\newcommand{\bfG}{\mathbf{G}}
\newcommand{\bfH}{\mathbf{H}}
\newcommand{\catname}[1]{{\normalfont\textbf{#1}}} 

\newcommand{\ProjGrpd}{\catname{P}}
\newcommand{\ProjGrp}{P}
\newcommand{\EProjGrpd}{\catname{Q}}
\newcommand{\EProjGrp}{Q}
\newcommand{\Flat}{\catname{F}}
\newcommand{\Wall}{\catname{Y}}
\newcommand{\WallHat}{\hat{\catname{Y}}}
\newcommand{\FlatEmb}{\catname{F}^X}
\newcommand{\WallEmb}{\catname{Y}^X}
\newcommand{\WallHatEmb}{\smash{\hat{\catname{Y}}}^X}

\newcommand{\Prob}{\operatorname{Prob}}
\newcommand{\Res}{\operatorname{Res}}
\newcommand{\type}{\operatorname{type}}

\newcommand{\Scoxeter}{F}
\newcommand{\Sregular}{F}
\newcommand{\Ssingular}{Y}
\newcommand{\Sdetecting}{H}
\newcommand{\Fregular}{\mathscr{F}}
\newcommand{\Fsingular}{\mathscr{Y}}
\newcommand{\Fdetecting}{\mathscr{H}}
\newcommand{\Meas}{\mathbf{Meas}}
\newcommand{\Mble}{\mathbf{Mble}}
\newcommand{\Set}{\mathbf{Set}}
\newcommand{\CTO}{\mathbf{Set}_{cto}}
\newcommand{\calD}{\mathcal{D}}
\newcommand{\calA}{\mathcal{A}}
\newcommand{\calT}{\mathcal{T}}
\newcommand{\cato}{\textsc{cat}(\oldstylenums{0})}

\newcommand{\defeq}{\mathrel{\mathop{:}}=}

\newcommand{\abs}[1]{\lvert #1 \rvert}

\newcommand{\norm}[1]{\lVert #1 \rVert}
\newcommand{\gen}[1]{\langle #1 \rangle}

\newcommand{\Isom}{\operatorname{Isom}}
\newcommand{\Aut}{\operatorname{Aut}}
\newcommand{\op}{\text{op}}
\newcommandx{\Deltav}[1][1={*}]{\Delta_{#1}}
\newcommandx{\EDeltav}[1][1={*}]{\overline{\Delta}_{#1}}
\newcommand{\Deltaop}{\Delta^\op}
\newcommandx{\Deltaopv}[1][1={*}]{\Delta^\op_{#1}}

\newcommand{\Opp}{\operatorname{Opp}}
\newcommandx{\Oppv}[1][1={*}]{\operatorname{Opp}_{#1}}
\newcommand{\conv}{\operatorname{conv}}
\newcommand{\Hom}{\operatorname{Hom}}
\newcommand{\Homeo}{\operatorname{Homeo}}
\newcommand{\Stab}{\operatorname{Stab}}
\newcommand{\Sym}{\operatorname{Sym}}
\newcommand{\PGL}{\operatorname{PGL}}
\newcommand{\GL}{\operatorname{GL}}

\newcommand{\lk}{\operatorname{lk}}
\newcommand{\id}{\operatorname{id}}
\renewcommand{\setminus}{\smallsetminus}
\newcommand{\pr}{\proj}
\newcommand{\proj}{\operatorname{proj}}
\newcommand{\colim}{\operatorname{colim}}

\newcommand{\newcomment}[4]{%
\newcounter{#2counter}
\expandafter\newcommand\csname #1\endcsname[1]{%
\refstepcounter{#2counter}%
{\color{#4}(#3\arabic{#2counter})}\marginpar{\scriptsize\raggedright\textbf{\color{#4}(#2 \arabic{#2counter}):} ##1}%
}}
\reversemarginpar

\newcommandx{\weight}[1][1={}]{\ifstrempty{#1}{\check{\varpi}}{\varpi_{#1}}}
\newcommandx{\coweight}[1][1={}]{\ifstrempty{#1}{\check{\varpi}}{\check{\varpi}_{#1}}}
\renewcommandx{\root}[1][1={}]{\ifstrempty{#1}{\alpha}{\alpha_{#1}}}
\newcommandx{\coroot}[1][1={}]{\ifstrempty{#1}{\check{\alpha}}{\check{\alpha}_{#1}}}
\newcommandx{\sector}[4][1=S]{{#1}_{#2,#3,#4}}

\definecolor{darkgreen}{rgb}{0,0.6,0}

\begin{document}
\title[Normal subgroup theorem for two-dimensional buildings] {The normal subgroup theorem for lattices on two-dimensional Euclidean buildings}

\date{May 2026}
\subjclass[2020]{Primary 22E40;  
                Secondary 11F06, 
                20E07, 
                20F65,  
                43A85, 
                51E24} 

\keywords{}

\excludecomment{internal}

\author[J.~Lécureux]{Jean Lécureux \orcidlink{0000-0002-3820-5517}}
\address{Univ. Bordeaux, IMB, CNRS, UMR 5251, F-33400 Talence, France}
\thanks{J.L.\ was supported by ANR-24-CE40-3137 project PLAGE}
\email{jean.lecureux@math.u-bordeaux.fr}

\author[S.~Witzel]{Stefan Witzel \orcidlink{0000-0002-3536-9938}}
\address{Mathematisches Institut, JLU Gießen, Arndtstr.\ 2, D-35392 Gießen, Germany}
\thanks{S.W.\ was supported through the DFG Heisenberg project WI 4079/6.}
\email{stefan.witzel@math.uni-giessen.de}

\begin{abstract}
  We prove the normal subgroup property for every group that acts properly and cocompactly on a two-dimensional Euclidean building: every normal subgroup has finite index or is contained in the finite kernel of the action. As a consequence, the non-residually finite lattices constructed by Titz Mite and the second author are virtually simple. They are the first known simple lattices on irreducible Euclidean buildings.
\end{abstract}

\maketitle

Margulis's celebrated Normal Subgroup Theorem \cite[Theorem~4]{MargulisBook} asserts that an irreducible lattice $\Gamma$ in a higher rank, semisimple algebraic group $G$ over a local field has all normal subgroups either finite central or of finite index. Each factor of $G$ is either a Lie group acting on a symmetric space of non-compact type (by Lie theory) or is an algebraic group over a local field acting on a Euclidean building (by Bruhat--Tits theory). Thus $\Gamma$ acts as a lattice on a product of symmetric spaces and Euclidean buildings. Conversely, it follows from the classification of Euclidean buildings by Tits and Weiss \cite{Tits86, Weiss} together with Margulis arithmeticity \cite[Theorem~1]{MargulisBook} that most lattices on such products are arithmetic, see \cite{BCL} for a discussion. As an exception there are infinitely many two-dimensional buildings that are not \emph{Bruhat--Tits}, i.e.\ do not arise from algebraic groups, but are \emph{exotic} and also admit lattices. Such lattices cannot be arithmetic.

%
%
%
%
%
%

The Normal Subgroup Theorem has been extended in various directions (see for example \cite{BaderShalom}, \cite{StuckZimmer}, \cite{BoutonnetHoudayer} among many others). Our contribution is to extended it to all uniform lattices on two-dimensional buildings:

\begin{introtheorem}[Normal Subgroup Theorem]\label{thm:main_theorem}
  Let $X$ be an irreducible two-dimensional Euclidean building and let $\Gamma$ be a group acting properly and cocompactly on $X$. If $N \lhd \Gamma$ is a normal subgroup then either $N$ is finite and lies in the kernel of the action on $X$ or it has finite index in $\Gamma$.
\end{introtheorem}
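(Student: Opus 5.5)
We follow the Margulis strategy in the form axiomatized by Bader--Shalom: a group with property (T) all of whose infinite normal subgroups are amenable-at-infinity-complemented satisfies the normal subgroup property. Concretely, let $N\lhd\Gamma$ not contained in the finite kernel. We show separately: \emph{(i)} $\Gamma/N$ has property (T), and \emph{(ii)} $\Gamma/N$ is amenable. Together these force $\Gamma/N$ to be finite. Step (i) is the easier half. Cocompact lattices on irreducible two-dimensional Euclidean buildings (types $\tilde A_2$, $\tilde C_2$, $\tilde G_2$) have property (T): the links are spherical buildings (generalized polygons) whose spectral gap exceeds the threshold in Żuk's/Garland's criterion, via the results of Ballmann--Świątkowski and Pansu. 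Property (T) passes to quotients. The exotic case is no different, since the criterion only uses finite links.

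The main work is (ii), the amenability half. Here we would construct a measure-theoretic boundary $B$ of $\Gamma$: a $\Gamma$-space with quasi-invariant measure on which the action is amenable. For this we take the space $\Omega$ of chambers of the spherical building at infinity, or a suitable space of sectors/flags, with a harmonic-type measure coming from random walks or from the visual measures constructed via sectors. We then prove a \emph{factor theorem}. Every measurable $\Gamma$-equivariant quotient of $\Omega$ should be, up to null sets, one of finitely many natural quotients, namely $\Omega$ itself, the spaces of panels at infinity (vertices of the two types), or a point. This requires a double-ergodicity and mixing analysis: we show the $\Gamma$-action on pairs of opposite chambers is ergodic, and that the stabilizer actions are contracting along walls. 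The projection groupoids $\ProjGrpd$, $\EProjGrpd$ and the wall/flat categories set up in the paper encode precisely the ``restricted'' structure that replaces the algebraic group $G$, which is absent in the exotic case.

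Given the factor theorem, the Bader--Shalom argument runs as follows. Suppose $\Gamma/N$ acts on a compact convex set $K$ in a dual Banach space. Amenability of $\Gamma\curvearrowright\Omega$ yields an equivariant map $\Omega\to\Prob(K)$. Factor it through $\Omega/N$, i.e.\ the $N$-ergodic components. Since $N$ is infinite and normal, an ergodicity argument shows that $N$ acts ergodically on each of the natural quotients $\Omega\to\Omega_i$ (panels of type $i$). Consequently the map factors through each $\Omega_i$. We then use that the two panel quotients jointly generate, meaning the $\sigma$-algebras intersect trivially, so the map is essentially constant. This gives a fixed point, hence $\Gamma/N$ is amenable.

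The hardest step is the factor theorem together with the ergodicity of $N$ on the panel quotients, in the absence of any ambient locally compact group. The first thing to try is a Howe--Moore-type substitute. Using the cocompactness of $\Gamma$, we would find elements of $N$ acting as strongly regular hyperbolic isometries, which exist since $N$ is an infinite normal subgroup of a group acting minimally. Their attracting dynamics on $\Omega$ (contraction to chambers opposite a fixed one) should be used to show that $N$-invariant functions are invariant under the ``horospherical'' groupoid of a wall. The difficulty is controlling measurable rather than continuous dynamics for these contractions.
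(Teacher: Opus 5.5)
Your overall architecture is the paper's: property (T) of $\Gamma$ passes to $\Gamma/N$, and amenability of $\Gamma/N$ comes from topological amenability of $\Gamma\curvearrowright\Delta$ plus a factor theorem for $L^\infty(\Delta)$. However, the step that derives amenability from the factor theorem does not work as written. Having granted yourself the factor theorem, you never apply it. Instead you assert that $N$ acts ergodically on each $\Delta_i$ and conclude that the $N$-invariant map $\phi\colon\Delta\to K$ ``consequently factors through each $\Delta_i$''. That is a non sequitur. Ergodicity of $N$ on $\Delta_i$ concerns $N$-invariant functions \emph{on} $\Delta_i$. It says nothing about whether an $N$-invariant function on $\Delta$ is measurable for the $\sigma$-algebra pulled back from $\Delta_i$ (think of $N$ acting on $\Omega_1\times\Omega_2$, ergodically on the first factor and trivially on the second).

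The correct deduction, which is the paper's, goes as follows. Pass to a finite-index type-preserving subgroup and to the faithful image. Then $\phi$ defines a $\Gamma$-equivariant factor of $(\Delta,\mu^o_\Delta)$ on which $N$ acts trivially, so by the Factor Theorem it is $\Delta$, $\Delta_1$, $\Delta_2$ or a point. In the first three cases $N$ acts trivially on $L^\infty(\Delta_i)$ for some $i$. This forces $N=\{1\}$, because a nontrivial type-preserving automorphism cannot fix almost every point of $\Delta_i$ (Lemma~\ref{lem:aefaithful}). Indeed, a hyperbolic element moves every point opposite its repelling endpoint other than the attracting one, and an elliptic element fixing a vertex $o$ and almost every ray from $o$ fixes all of $X$. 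In the last case $\phi$ is essentially constant, with value a $\Gamma$-fixed point.

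So the only $N$-specific input needed is this elementary faithfulness lemma, which your proposal lacks. Ergodicity of $N$ on $\Delta_i$ is not an input at all but a consequence: apply the Factor Theorem to $L^\infty(\Delta_i)^N$ and use the same lemma. Your final step about the two panel algebras then becomes superfluous; there you also conflate ``jointly generate'' with ``intersect trivially'', which are different properties.

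The second gap is that the Factor Theorem, which carries essentially all the weight, is left without a mechanism, and the one you suggest points the wrong way. It is a statement about arbitrary $\Gamma$-invariant algebras, so it must come from dynamics of $\Gamma$, not of $N$. Powers of one strongly regular hyperbolic element converge to a constant map and are centred at a single, non-generic chamber, so they give no handle on a merely measurable function. What is needed are elements of $\Gamma$ approximating translations along \emph{singular} directions at almost every pair of opposite vertices. Their limits are the maps $\proj_\xi\circ\proj_\zeta$, which factor through the perspectivity $\Res(\zeta)\to\Res(\xi)$, a bijection. The paper obtains them in the following steps.
\begin{enumerate}
\item It constructs pro-uniform measures on marked flats and marked wall spaces, which needs the convex-geometry analysis of Section~\ref{sec:convex_geometry_on_trees}.
\item It proves ergodicity of the singular Cartan flow $\Gamma\curvearrowright\Fsingular_i/A_i$ by a Hopf argument combined with density of projectivities built from almost all perspectivities.
\item It uses recurrence to find $\gamma_n\in\Gamma$ with $\gamma_n\to\proj_\xi\circ\proj_\zeta$ on $\Opp(\zeta)$ for almost every $(\xi,\zeta)$.
\item A martingale (Lebesgue differentiation) argument upgrades this to $\gamma_n f\to f\circ\proj_\xi\circ\proj_\zeta$ weak-$*$, so every closed $\Gamma$-invariant algebra is invariant under a dense set of projectivities.
\item $2$-transitivity of the projectivity groups on residues, together with a topological factor theorem for $C(\Delta_i)$, finishes the proof.
\end{enumerate}

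Finally, for (T) the spectral criterion you quote does not apply as stated. A generalized quadrangle link of order $(q,q)$ has $\lambda_1=1-\sqrt{2q}/(q+1)<1/2$ for $q\le 5$, so small-thickness $\tilde C_2$ (and $\tilde G_2$) buildings need a refined criterion; the paper cites Oppenheim for (T).
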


The case where the building $X$ is of type $\tilde A_2$ was recently proven by Bader, Furman, and the first author \cite{BFL} and we will discuss the relationship below. Extending the Normal Subgroup Theorem to exotic lattices can be seen as a result unifying arithmetic and exotic lattices. Besides that an important motivation is an expected contrast between arithmetic and exotic lattices. While arithmetic lattices are linear and consequently residually finite, this is not expected for non-arithmetic lattices. In fact, the following was conjectured in \cite{BCL} in the $\tilde A_2$ case:

\begin{introconjecture}\label{conj:exotic_simple}
  Let $X$ be an irreducible Euclidean building of dimension $\geq 2$.
  Let $\Gamma < \Aut(X)$ act properly and cocompactly on $X$. If $\Gamma$ is not arithmetic then it is virtually simple.
\end{introconjecture}

Our main theorem reduces the conjecture for lattices on exotic buildings to proving that they are not residually finite, see Section~\ref{sec:residual_finiteness}. Although the conjecture predicts simplicity of infinitely many groups, many of which are understood rather explicitly, not a single one was known to satisfy the conjecture until now. However, recently examples of lattices on buildings of type $\tilde C_2$ were constructed by Titz Mite and the second author that are \emph{not} residually finite \cite{TitzMiteWitzel}. In combination with our main theorem this yields:

\begin{introcorollary}\label{cor:tmw_simple}
    There are simple groups acting properly and cocompactly on buildings of type $\tilde C_2$.
\end{introcorollary}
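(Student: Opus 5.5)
The corollary follows by combining Theorem~\ref{thm:main_theorem} with the non-residually finite $\tilde C_2$-lattices of \cite{TitzMiteWitzel}. Let $\Gamma$ be such a lattice: it acts properly and cocompactly on a building $X$ of type $\tilde C_2$, which is irreducible and two-dimensional, and $\Gamma$ is not residually finite. Replacing $\Gamma$ by its image in $\Aut(X)$, we may assume the action is faithful. This quotient is by a finite normal subgroup $K$. Non-residual finiteness passes to $\Gamma/K$: if $\Gamma/K$ were residually finite, then for $g\notin K$ we would separate $g$ in a finite quotient of $\Gamma/K$, while the finitely many elements of $K\setminus\{1\}$ are handled as follows. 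Either we use that \cite{TitzMiteWitzel} already produce faithful actions, or we note that the nontrivial elements which are not detected form a nontrivial finite normal subgroup and work with that. The simplest route is to check that the construction in \cite{TitzMiteWitzel} gives subgroups of $\Aut(X)$ directly.

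Next define the finite residual $\Gamma^{(\infty)}=\bigcap\{N\lhd\Gamma : [\Gamma:N]<\infty\}$. Since $\Gamma$ is not residually finite, $\Gamma^{(\infty)}\neq 1$. Since $\Gamma$ is finitely generated (it acts cocompactly and properly on a connected locally finite complex), it has only finitely many subgroups of each finite index. The key point is to show that $\Gamma^{(\infty)}$ itself has finite index. By Theorem~\ref{thm:main_theorem}, every normal subgroup is either trivial (by faithfulness) or of finite index. Consider the family of finite-index normal subgroups. If the intersection $\Gamma^{(\infty)}$ had infinite index, then, being normal and nontrivial, it would contradict the theorem. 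Hence $\Gamma^{(\infty)}$ has finite index, and so $\Gamma_0\defeq\Gamma^{(\infty)}$ is a finite-index normal subgroup with no proper finite-index subgroups.

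Now show that $\Gamma_0$ is simple. Let $1\neq M\lhd\Gamma_0$. A finite-index subgroup of $\Gamma$ still acts properly, cocompactly and faithfully on $X$. The main obstacle is that $M$ need not be normal in $\Gamma$. We apply Theorem~\ref{thm:main_theorem} to the group $\Gamma_0$ itself, not to $\Gamma$. Then $M$ is either contained in the finite kernel, which is trivial, or of finite index in $\Gamma_0$. In the second case $M$ contains a finite-index normal subgroup of $\Gamma_0$, namely its normal core. Since $\Gamma_0$ has no proper finite-index subgroups (any such would give a proper finite-index normal core in $\Gamma_0$, which has finite index in $\Gamma$; its $\Gamma$-core is a finite-index normal subgroup of $\Gamma$ properly below $\Gamma^{(\infty)}$, contradicting the definition), we get $M=\Gamma_0$. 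So $\Gamma_0$ is an infinite simple group acting properly and cocompactly on the $\tilde C_2$-building $X$.
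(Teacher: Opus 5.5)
Your argument is correct and is the paper's: Theorem~\ref{thm:main_theorem}, applied to the lattice and to its finite-index subgroups, makes a faithful lattice hereditarily just-infinite, so for a non-residually finite lattice of \cite{TitzMiteWitzel} the finite residual $\Gamma^{(\infty)}$ is nontrivial, hence of finite index, hence simple (the proposition and Corollary~\ref{cor:virtualsimple} in Section~\ref{sec:residual_finiteness}). Drop your aside that non-residual finiteness passes to $\Gamma/K$ for a finite normal $K$: this is false in general, since by Deligne certain finite central extensions of $\mathrm{Sp}_{2n}(\Z)$, $n\ge 2$, are not residually finite although $\mathrm{Sp}_{2n}(\Z)$ is. Instead, as the paper does, take the lattices of \cite{TitzMiteWitzel} directly as non-residually finite subgroups of $\Aut(X)$.
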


Specifically, the groups $\Gamma^2_1 = \check{\Gamma}^2_1$, $\check{\Gamma}^3_1$, $\check{\Gamma}^3_2$, $\check{\Gamma}^3_3$, $\check{\Gamma}^3_4$ of \cite{TitzMiteWitzel} are simple. An important feature of these simple groups is that they are $\cato$-groups since the building that they act on properly and cocompactly is a $\cato$-space. In particular, they are very explicitly understood geometrically. This is in contrast to other examples of finitely presented simple groups, such as Thompson groups and Kac--Moody groups: they also act properly on $\cato$-spaces but the action is not cocompact and therefore provides much less geometric information on the group. The simple lattices on products of trees found by Burger an Mozes \cite{BurgerMozes00} are $\cato$ but they form a single quasi-isometry class and are less rigid. Indeed, an important feature of a simple lattice $\Gamma$ on a two-dimensional irreducible building $X$ is that it has strong rigidity properties. For instance $\Gamma$ inherits quasi-isometric rigidity from $X$ in a very strong sense, see \cite[Proposition~4.15]{TitzMiteWitzel}. The lattice $\Gamma$ is also known or expected (depending on the type of $X$) to have the same rigidity properties as arithmetic lattices of higher rank with respect to representations and actions on other spaces \cite{StrongT, Antoine}.

\medskip

The general strategy to prove Theorem~\ref{thm:main_theorem} is still the one devised by Margulis in the case of arithmetic groups. Taking the action of $\Gamma$ to be faithful, the plan is to prove that any of its proper quotients has property (T) and is amenable. Since $\Gamma$ has property (T) by \cite{Oppenheim} which passes to quotients we are concerned exclusively with the amenability part. It is a consequence of the following Factor Theorem. Its statement involves the building at infinity of $X$ which is a bipartite graph with sets of vertices $\Delta_1$ and $\Delta_2$ and set of edges $\Delta$. The asymptotic structure of $X$ induces a topology and, in fact, a measure class on all three sets. The obvious maps $\pi_i \colon \Delta \to \Delta_i$ are $\Aut(X)$-equivariant.

\begin{introtheorem}[Factor Theorem]\label{thm:factorintro}
  Let $X$ be an irreducible two-dimensional building with spaces of chambers $\Delta$ and vertices at infinity $\Delta_1$ and $\Delta_2$, and let $\Gamma$ act on $X$ properly and cocompactly. The only $\Gamma$-invariant, weak-* closed subgalgebras of $L^\infty(\Delta)$ are $L^\infty(\Delta_1)$, $L^\infty(\Delta_2)$ and $\C$.
\end{introtheorem}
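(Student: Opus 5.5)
Following the strategy of Bader--Furman--Lécureux for $\tilde A_2$ (which in turn adapts Margulis's factor theorem via the approach of Bader--Shalom), I will reformulate the statement in terms of measurable $\Gamma$-factors: a $\Gamma$-invariant weak-* closed subalgebra $\calA \subseteq L^\infty(\Delta)$ corresponds to a $\Gamma$-equivariant measurable factor map $\phi\colon \Delta \to Z$ onto a Lebesgue $\Gamma$-space $Z$. The goal is then to show that $\phi$ factors through $\pi_1$, through $\pi_2$, or is trivial. The key tool is that the boundary $\Delta$ (chambers at infinity, with the natural measure class) is a $\Gamma$-boundary in the sense of Bader--Furman: the action is amenable and the diagonal action on $\Delta\times\Delta$, indeed on the space of opposite pairs of chambers, is isometrically ergodic. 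I would first establish these properties for $\Gamma$ acting cocompactly on an arbitrary irreducible two-dimensional building, using the geodesic flow/apartment structure and a Mautner-type argument along Weyl chamber directions. Cocompactness, rather than algebraicity, should be enough to run Howe--Moore-like mixing for the ``Weyl chamber flow'' on the space of marked apartments modulo $\Gamma$.

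Next I will exploit the combinatorial structure of $\Delta$. Given two chambers $c, c'$ at infinity, one considers the relation of being opposite, or sharing a vertex of type $i$. The space of pairs of chambers decomposes into finitely many $\Aut(X)$-orbits (measurably, indexed by Weyl group elements $w\in W$ of the spherical Weyl group, of type $A_2$, $C_2$ or $G_2$). Following Bader--Furman's ``algebraic representation'' technique, for the factor $Z$ I consider the map $\Delta\times\Delta \to Z\times Z$ and, for each Weyl-distance stratum $\Delta^{(w)} \subset \Delta\times\Delta$, ask whether $\phi(c)=\phi(c')$ almost surely for pairs in that stratum. The set $W_Z$ of those $w$ for which this holds should be a subgroup of $W$ (closure under products uses that stratum compositions $\Delta^{(w)}\circ\Delta^{(w')}$ contain $\Delta^{(ww')}$ generically, plus ergodicity to turn ``positive measure'' into ``almost everywhere''). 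Then one shows $W_Z$ is a standard parabolic subgroup: trivial, $\langle s_1\rangle$, $\langle s_2\rangle$, or $W$, corresponding exactly to $L^\infty(\Delta)$, $L^\infty(\Delta_2)$, $L^\infty(\Delta_1)$ (fibers over vertices of the other type are panels), and $\C$.

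The main obstacle will be showing that if $W_Z$ is trivial then $\phi$ is essentially injective, i.e.\ $\calA=L^\infty(\Delta)$, and more generally that $\phi$ is determined by $W_Z$; a priori $\phi$ could identify chambers on a non-orbit-structured null-measure-invisible set. I would attack this via a ``gate/projection'' argument: using isometric ergodicity of $\Gamma$ on pairs of opposite chambers, the conditional distribution of $c$ given $\phi(c)$ must be $\Gamma$-equivariantly determined, and comparing with a fixed opposite chamber $c'$ the fiber $\phi^{-1}(\phi(c))$ projects (via the retraction to the apartment spanned by $c,c'$) to a subset of chambers at Weyl distance in a $W_Z$-coset. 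Where the $\tilde A_2$ case used the projective-plane structure of residues, here I would instead use only that panels at infinity form Moufang-free generalized polygons with the relevant measure being non-atomic and the residues being thick, which is why irreducibility (and dimension two, so that the spherical building is a generalized polygon) is needed; ensuring the non-atomicity and quasi-invariance of the panel measures for exotic buildings, without any group structure, is the technical heart I expect to take most effort.
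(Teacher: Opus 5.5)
\textbf{There is a genuine gap, and it sits exactly at the step you flag as the ``main obstacle''.} Nothing earlier in your plan supplies a tool that could close it.

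The invariant $W_Z$ only records identifications that charge a whole Weyl-distance stratum, each equipped with its natural measure class. For $w\neq w_0$ that measure class is singular to $\mu\times\mu$. A priori, a $\Gamma$-factor could have non-atomic conditional measures on its fibres that are nevertheless singular to every stratum measure. This has two consequences:
\begin{itemize}
\item $W_Z=\{1\}$ does not force essential injectivity.
\item In the case $W_Z=\langle s\rangle$ you are left with an arbitrary $\Gamma$-factor of $\Delta_i$, which is the same problem again.
\end{itemize}
Your sentence that the fibre $\phi^{-1}(\phi(c))$ ``projects to a subset of chambers at Weyl distance in a $W_Z$-coset'' is asserted without any mechanism, and it is essentially the whole content of the theorem.

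The boundary-type properties you invoke (amenability, isometric ergodicity on $\Delta\times\Delta$) give ergodicity statements; they do not classify factors. A Mautner or Howe--Moore argument also has nowhere to live here: for exotic buildings there is in general no non-discrete ambient group, and $\Aut(X)$ may itself be discrete. What is missing is a way to convert invariance under the countable group $\Gamma$ into invariance under a large, highly transitive family of transformations.

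A side correction: the generalized polygon is the whole building at infinity $\partial X$. Residues are just the ends $\partial T_\xi$ of wall trees. Non-atomicity of their measures is automatic from the pro-uniform construction, so that is not where the difficulty lies.

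\textbf{How the paper gets the missing mechanism.}
\begin{enumerate}
\item Ergodicity of the singular Cartan flow, $A_i$ acting on $\Gamma\backslash\Fsingular_i$ (Theorem~\ref{thm:SingularCartanErgodic}), is proved by the Hopf argument. It relies on the $M_i$-invariant pro-uniform measure on marked wall spaces, whose construction is where the convex geometry of Section~\ref{sec:convex_geometry_on_trees} is needed.
\item Combined with recurrence, this gives, for almost every $(\xi,\zeta)\in\Deltaopv[i]$, elements $\gamma_n\in\Gamma$ converging to $\pr_\xi\circ\pr_\zeta$ on $\Opp(\zeta)$ (Theorem~\ref{thm:contracting}).
\item A martingale (Lebesgue-differentiation) argument upgrades this to $\gamma_nf\to f\circ\pr_\xi\circ\pr_\zeta$ weak-* (Theorem~\ref{thm:convergence}). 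Hence a weak-* closed $\Gamma$-invariant algebra $A$ is stable under these projections.
\item Composing them and using density of generic projectivities, $A\cap R_\xi$ is invariant under the projectivity group. That group is $2$-transitive on $\Res(\xi)$ (Lemma~\ref{lem:2-transitive}), so $A\cap R_\xi$ is $\C$ or $R_\xi$ (Lemma~\ref{lem:radial}).
\item A topological factor theorem for $C(\Delta_i)$ (Proposition~\ref{prop:topfactor}, proved with the same contracting sequences) then gives $L^\infty(\Delta_2)\subseteq A$ whenever $A\not\subseteq L^\infty(\Delta_1)$.
\item Finally, $L^\infty(\Delta_1)$ and $L^\infty(\Delta_2)$ generate a weak-* dense subalgebra and intersect in $\C$ (Lemma~\ref{lem:treillis}), which finishes the proof.
\end{enumerate}
This route never has to prove essential injectivity of a factor directly, which is precisely the step your proposal cannot carry out.
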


The Normal Subgroup Theorem \ref{thm:main_theorem} follows from the Factor Theorem \ref{thm:factorintro} as explained in Section \ref{sec:outline}. The article is therefore devoted to proving Theorem~\ref{thm:factorintro}.

For buildings of type $\tilde{A}_2$ both theorems were proven in \cite{BFL} making use of results from \cite{BCL}. We will now discuss ingredients to the proof of Theorem~\ref{thm:factorintro} with an eye on how they relate to results in \cite{BFL} and \cite{BCL}. Roughly speaking, given a subalgebra $A < L^\infty(\Delta)$ the plan is to show that since it is $\Gamma$-invariant and closed, it is invariant under (almost all) \emph{projectivities}. To do so, we first produce a sequence of elements of $\Gamma$ that converges to the needed projectivity in the compact-open topology (Theorem~\ref{thm:contracting}). This crucially uses ergodicity of the \emph{singular Cartan flow} (Theorem~\ref{thm:SingularCartanErgodic}). Combining convergence in the compact-open topology with a martingale convergence result resembling Lebesgue differentiation (Theorem~\ref{thm:martingale_convergence_fV}) we show that the elements of $\Gamma$ converge to the projectivity in the weak-*-topology (Theorem~\ref{thm:convergence}). Since projectivities act highly transitively, this is enough to deduce Theorem~\ref{thm:factorintro}.

The proof involves various measure spaces and showing that maps between them are measure preserving or measure-class preserving. The best-known among them are the \emph{harmonic} measures on $\Delta$ which are indexed by vertices of $X$ and lie in a common measure class; they appear in \cite{Parkinson2} or more recently \cite{RemyTrojan}. Another important example is the measure on the regular and the singular \emph{Cartan flow}, that was the main object of study in type $\tilde{A}_2$ in \cite{BCL} (the regular Cartan flow is sometimes called \emph{Weyl chamber flow} in more algebraic settings). Let us also mention the Haar measure on the group of projectivities and the measure on the space of pairs of opposite chambers. In order to compare measures and to show disintegration results \emph{prouniform measures} were introduced in \cite{BFL} as limit measures of finite spaces.

One of our main contributions is to develop the theory of prouniform measures further and to exploit it systematically. Specifically, in Appendix~\ref{sec:limit_measures} we study how an inverse system of constant-to-one maps and a choice of a counting measure on one object leads to a measure on the limit, and how these measures can be compared and disintegrated. This method is used in Section~\ref{sec:pro-uniform_measures} to construct measures on spaces of embeddings of complexes. The generality of the framework allows us to put constraints on the embeddings we consider (technically implemented by appropriate categories). This allows to subsume Haar measures on tdlc groups in the construction and make the comparison of the singular Cartan flow with the Haar measure of the projectivity group (Proposition~\ref{prop:Y/M}) seamless, which it was not in \cite{BCL}. Another illustration of how useful the approach is is when constructing the measure on the space of pairs of opposite chambers, whose existence is well-known (\cite{Parkinson2,RemyTrojan}): we can use that the effects of a choice of basepoint cancel out without having to explicitly compute these effects (Lemma~\ref{lem:muop_basepoint_change}).

Another main difference with the proof of \cite{BFL} lies in the proof of \mbox{weak-*}-convergence mentioned above. In \cite{BFL} it used an intricate argument applying the Martingale Convergence Theorem to what was called the detecting flow, constructed for that purpose in an \textit{ad hoc} fashion. We give a more streamlined argument, not using the detecting flow at all, only relying on some version of the Martingale Convergence Theorem which plays a role very analogous to the Lebesgue Differentiation Theorem.

All differences so far are improvements rendering arguments clearer and more natural, but are unrelated to the generalization from type $\tilde{A}_2$ to arbitrary two-dimensional buildings. However, there is a new profound technicality as well. In order to apply the prouniform measure machinery in Section~\ref{sec:pro-uniform_measures}, the building needs to satisfy a certain condition, called \emph{symmetry} with respect to a family of complexes in \cite{BFL}. The family is subcomplexes of an apartment for many measures, such as the regular Cartan flow, but consists of subcomplexes of \emph{wall-spaces}, spaces that decompose as a tree times a line, for measures related to the singular Cartan flow. The corresponding analysis in type $\tilde{A}_2$ in \cite{BFL} could be carried out in a rather elementary combinatorial fashion, thanks to the fact that $\tilde{A}_2$ has only three parallel classes of walls. The analogous analysis in our case in Section~\ref{sec:convex_geometry_on_trees} is more involved.

Extending one of the main results of \cite{BCL}, we are able to prove what would be a special case of a Theorem of Howe--Moore in the algebraic setting:

\begin{introtheorem}\label{thm:introergodic}
  Let $X$ be a two-dimensional Euclidean building and let $\Gamma$ be a group acting properly and cocompactly on $X$. The singular Cartan flows $\Fsingular_1, \Fsingular_2$ and the regular Cartan flow $\Fregular$ are ergodic.

  More precisely, every non-trivial translation of $\Sregular$ acts on $\Fregular/\Gamma$ ergodically.
\end{introtheorem}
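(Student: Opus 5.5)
We prove ergodicity of the regular Cartan flow first and deduce the singular flows from it. Here $\Fregular$ is the space of marked apartments (flats with a chosen Weyl chamber and base point, i.e.\ embeddings of the model apartment $\Sregular$ into $X$). It carries the commuting actions of $\Gamma$ on the left and of the translation group $\Sregular\cong\R^2$ (together with the finite Weyl group) on the right. The invariant measure is the prouniform measure on the space of such embeddings, which is locally the product of the measure on pairs of opposite chambers $\Delta^{\op}$ with Lebesgue measure on the flat. Since $\Gamma$ acts cocompactly and properly, $\Fregular/\Gamma$ has finite measure.

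The plan follows the Hopf argument, adapted as in \cite{BCL}. Fix a nontrivial translation $t$ of $\Sregular$ and an $L^2$ (say continuous, compactly supported, then approximate) function $f$ on $\Fregular/\Gamma$ that is $t$-invariant. By the von Neumann/Birkhoff ergodic theorem, $f$ agrees a.e.\ with its forward and backward Birkhoff averages along $t$. For continuous $f$ these averages are constant along strong stable, respectively strong unstable, "horospherical" classes. Two marked flats $\phi,\phi'$ are forward-asymptotic in the direction of $t$ if they coincide on a translate of a sector (or half-flat, when $t$ is singular) containing $t^n\cdot$basepoint for $n\to\infty$, and the distance between $t^n\phi$ and $t^n\phi'$ then becomes eventually zero, not just small. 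Thus $f$ is a.e.\ invariant under the stable and unstable equivalence relations.

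The key step is to show that these two relations, together with the translations commuting with $t$, generate an ergodic relation on $\Fregular$ itself before quotienting. Concretely, write a marked flat as a pair of opposite chambers $(c_+,c_-)\in\Delta^{\op}$ plus a point of $\R^2$. Stable equivalence for a regular $t$ changes $c_-$ arbitrarily among chambers opposite $c_+$ with the same germ, keeping $c_+$. Unstable equivalence changes $c_+$. Using the measure-class description of $\Delta^{\op}$ as equivalent to $\Delta\times\Delta$ restricted to opposition, together with a Fubini/disintegration argument from the prouniform machinery, a function invariant under both is a.e.\ a function of the $\R^2$-coordinate alone. It is therefore $\Sregular$-translation-only dependent, and since $\Gamma$ acts on pairs, such a function is constant on the quotient. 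This needs care when $t$ is singular, i.e.\ parallel to a wall. Then the stable sets only pin down a panel at infinity, so the relations move $c_\pm$ within residues. We handle this by first deducing invariance under the full $\R^2$ translation using that $f$ is also invariant under the stable relations of $t$, which are bigger in the singular case. An alternative is to run the argument for the singular flows $\Fsingular_i$ (the wall spaces, tree times line) and use the tree's boundary in the same way.

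For the singular Cartan flows $\Fsingular_i$, we use the natural measure-preserving factor map $\Fregular\to\Fsingular_i$ (forgetting all but a wall-tree direction, or, dually, restricting a marked flat to the singular line). An invariant function on $\Fsingular_i/\Gamma$ pulls back to one invariant under the singular translation on $\Fregular/\Gamma$. By the above, that pullback is constant.

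The main obstacle is the non-transitive step for singular $t$, and more generally the rigorous Hopf argument without a group structure. It requires that the stable/unstable relations are measure-class preserving with respect to the prouniform measures, that is, absolute continuity of the holonomies. I would establish this using the disintegration results of the appendix and the symmetry conditions of Section~\ref{sec:convex_geometry_on_trees}, which ensure that the conditional measures on the horospherical pieces are the natural prouniform ones.
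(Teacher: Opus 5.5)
The central gap is your treatment of the singular flows, which runs in the wrong direction. There is no natural map $\Fregular\to\Fsingular_i$. A marked flat determines the wall space $Y_{\xi,\zeta}$ containing it, but not an identification of all of $\Ssingular_i\cong\R\times T_i$ with $Y_{\xi,\zeta}$. Restricting a marked flat to $\ell_i(\R)$ only gives a further factor of $\Fregular$, not $\Fsingular_i$.

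The natural map is the restriction $\Fsingular_i\to\Fregular$, $\iota\mapsto\iota|_{\Sregular}$ (Proposition~\ref{prop:measure_singular_to_regular}). Its fibres are orbits of the pointwise stabiliser of $\Sregular$ in $M_i$, a compact group that is non-trivial in general (already in the Bruhat--Tits case). So $\Fregular$ is a factor of $\Fsingular_i$. Ergodicity passes from an extension to its factors, not conversely, and an $A_i$-invariant function on $\Gamma\backslash\Fsingular_i$ has no reason to come from $\Gamma\backslash\Fregular$.

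What is missing is control along the fibres of the principal $M_i$-bundle $\Fsingular_i\to\Deltaopv[i]$ (Proposition~\ref{prop:Y/M}), where $M_i/A_i\cong\EProjGrp_i$ is non-compact. The paper proceeds as follows:
\begin{enumerate}
\item The Hopf argument for $A_i$ makes an invariant $f$ a.e.\ a function of $\overline{\sigma}_+$ and of $\overline{\sigma}_-$.
\item On the fibre of $\overline{\Delta}_i$ over $u$ (a torsor for $\EProjGrp_i$) this gives $f_u$ with $f_v\circ[u,v]=f_u$ for $(u,v)$ in a conull set.
\item Hence $f_u$ is invariant under the isotropy at $u$ of the groupoid generated by $\Gamma$ and generic perspectivities. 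This isotropy is dense in $\EProjGrp_u$ (Lemma~\ref{lem:FullMeasureDenseProj}), so $f_u$ is $\EProjGrp_u$-invariant (Lemma~\ref{lem:InvDenseSubgroup}).
\item Then $f$ descends to $\Fsingular_i/M_i\cong\Deltaopv[i]$, and only there does ergodicity of the regular flow enter.
\end{enumerate}
The same step is needed for singular translations on $\Gamma\backslash\Fregular$, which the paper handles by lifting $t$ to $\Fsingular_i$ and passing back to the factor. Your claim that the stable relations of a singular $t$ are bigger is backwards. Stably asymptotic marked flats must then agree on two adjacent sectors at a vertex at infinity, which is more restrictive than agreeing on one sector.

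Your argument for regular $t$ also stops before the real difficulty. Note that $A$ is the discrete translation lattice, not $\R^2$. The absolute continuity you worry about is automatic from the product structure of the prouniform measures (Lemma~\ref{lem:F/A}), so it is not the main obstacle. Grant that an element of $\hat{\pi}_+^*L^\infty(\hat{\Delta})\cap\hat{\pi}_-^*L^\infty(\hat{\Delta})$ depends only on the translation coordinate. The remark that $\Gamma$ acts on pairs still does not make it constant. Invariance under the translations commuting with $t$ is exactly what is not yet known, so including them among the generators is circular.

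The paper uses accessibility (Lemma~\ref{lem:accessibility}). By thickness, the reflection of a marked flat in any wall is realised by a chain $\iota,\iota_1,\iota_2,\iota_3$ in which consecutive members agree on complementary half-apartments. Consecutive members are therefore alternately $\sim_+$- and $\sim_-$-equivalent, so the relation generated by $\sim_+$ and $\sim_-$ contains the $\tilde{W}$-orbit relation. Made measure-theoretic through open, hence positive-measure, sets of such triples, this gives $f(a)=f(r.a)$ for every reflection $r$. Hence $f\in L^\infty(A)^{\tilde{W}}$, which consists of constants; $\Gamma$ plays no role in this step.
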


While we do reprove the main technical core of \cite{BCL}, we cannot generalize its main result. This is because a result due to Schleiermacher \cite{Schleiermacher} about projectivity groups of projective planes is not known to hold for generalized quadrangles or hexagons. Instead, we get the following weaker conclusion:

\begin{introtheorem}\label{thm:intrononlinearity}
  Let $X$ be a two-dimensional Euclidean building and let $\Gamma$ be a group acting properly and cocompactly on $X$. Assume that for some $i \in \{1,2\}$ no cocompact subgroup of the closure of the projectivity group $\overline{P}_i$ admits a faithful representation in $\GL_n(K)$ with $K$ a local field. Then any linear representation of $\Gamma$ has finite image.
\end{introtheorem}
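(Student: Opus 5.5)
We follow the strategy of \cite{BCL}, with the ergodicity of Theorem~\ref{thm:introergodic} as the key input. Let $\rho\colon \Gamma \to \GL_n(k)$ be a linear representation. Since $\Gamma$ is finitely generated, we may take $k$ finitely generated over its prime field. The standard specialization argument then reduces the claim to the following statement: for every local field $K$ and every homomorphism $\rho\colon\Gamma\to\GL_n(K)$, the image is bounded, hence relatively compact. If all these images are bounded, then in particular every $\rho$ into $\GL_n$ of a number field or function field has image bounded at every place, and a finitely generated subgroup with this property has finite image. So we fix a local field $K$ and a homomorphism $\rho$ with unbounded image, and aim for a contradiction. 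Passing to the Zariski closure and a finite-index subgroup of $\Gamma$, we may assume $\rho(\Gamma)$ is Zariski dense in a connected $K$-group $\bfG$. Dividing by the solvable radical and using property (T) from \cite{Oppenheim}, which rules out unbounded actions on the abelian and solvable parts, we may further take $\bfG$ semisimple, with $\rho(\Gamma)$ unbounded in $G=\bfG(K)$.

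The core step is a boundary map. By amenability of the $\Gamma$-action on $\Delta$ (or on $\Delta_i$), there is a $\Gamma$-equivariant measurable map $\phi\colon \Delta_i \to \Prob(\calP)$, where $\calP = G/Q$ is a suitable flag variety, obtained as in Furstenberg's lemma. Proximality arguments upgrade $\phi$ to a point map $\phi\colon \Delta_i\to G/Q$ with $Q$ a proper parabolic subgroup. Next we use the singular Cartan flow $\Fsingular_i$, which is ergodic by Theorem~\ref{thm:introergodic}. The relevant property is that two vertices at infinity connected by a projectivity, meaning opposite to a common vertex, can be joined through the flow. Using this, we show that $\phi$ intertwines projectivities with group elements. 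Concretely, we produce a measurable homomorphism from a cocompact subgroup $H$ of $\overline{P}_i$ into $G$, with $\phi$ equivariant for it.

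To do this, we use the description of $\Fsingular_i/\Gamma$ in terms of $\overline{P}_i$ (Proposition~\ref{prop:Y/M}) and the contracting sequences of Theorem~\ref{thm:contracting}. Together they show that the cocycle $\Delta_i\times\Gamma\to G$ defined by $\phi$ factors through the projectivity group. The ergodicity of $\Fsingular_i$ is what forces the resulting map $\overline{P}_i\to G$, defined almost everywhere, to be a genuine continuous homomorphism on a cocompact subgroup. Since the image is nonrelatively compact, the homomorphism has noncompact image. Finally, we must obtain faithfulness. We use that $\overline{P}_i$ acts highly transitively on the corresponding residue, so the kernel is a closed normal subgroup. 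By restricting to a cocompact subgroup and quotienting appropriately, we obtain a faithful representation into $\GL_n(K)$, contradicting the hypothesis.

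The main obstacle is the middle step: getting from the equivariant boundary map $\phi$ to an honest continuous homomorphism defined on a cocompact subgroup of $\overline{P}_i$. In \cite{BCL} this step is where Schleiermacher's theorem is used, to identify the target with an algebraic projectivity group, and that is exactly what we cannot use here. We would therefore argue directly that $\phi$ is equivariant along projectivities. Our approach is to take a.e.\ limits of $\rho(\gamma_k)$ along the contracting sequences of Theorem~\ref{thm:contracting}. The unboundedness of $\rho$ together with the construction of $\phi$ should guarantee that these limits exist in $G$. We expect handling nonfaithfulness, that is, identifying the correct cocompact subgroup modulo its kernel, to be the most delicate point.
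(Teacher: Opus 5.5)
Your reduction to an unbounded, Zariski-dense $\rho\colon\Gamma\to\bfG(K)$ over a local field matches the paper, which uses \cite[Corollary~A]{BCL} and property (T). The central step, however, does not work.

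You propose to build a homomorphism from $\overline{P}_i$ into $G$ as limits of $\rho(\gamma_k)$ along the contracting sequences of Theorem~\ref{thm:contracting}. Those $\gamma_k$ translate farther and farther along a wall, so they escape every finite subset of $\Gamma$. For unbounded $\rho$ there is then no reason for $\rho(\gamma_k)$ to converge in $G$; when $\rho$ is the inclusion of an arithmetic lattice they visibly diverge. Unboundedness works against you: at most some induced action on a homogeneous space of $\bfG$ can converge, and identifying that space is the whole difficulty. Separately, amenability is established for $\Gamma\curvearrowright\Delta$ and does not in general pass to the factor $\Delta_i$, so your starting map on $\Delta_i$ is not justified; the paper never needs it.

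The missing idea is the Bader--Furman theory of algebraic representations. Since $\Gamma$ acts ergodically on $\Fsingular_i/A_i$ (Theorem~\ref{thm:SingularCartanErgodic}), this space has an initial algebraic representation $\Fsingular_i/A_i\to\bfG/\bfH(K)$, its gate. By the universal property, every group acting on $\Fsingular_i/A_i$ commuting with $\Gamma$ acts on the gate. Here that group is $M_i$, hence $Q_i\supseteq\overline{P}_i$. This gives a continuous homomorphism (\cite[Proposition~8.5]{BCL}) into the linear group $N_{\bfG}(\bfH)/\bfH(K)$, with no limits of $\rho(\gamma_k)$ involved. You also misplace Schleiermacher's theorem: in \cite{BCL} it is not used to construct this homomorphism. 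It enters only afterwards, to derive a contradiction from linearity of the projectivity group, and that is precisely the step the present hypothesis replaces.

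Your final step has a second gap. Unboundedness of $\rho(\Gamma)$ gives no information on whether the image of $\overline{P}_i$ is non-compact or even non-trivial. Moreover, dividing by the kernel yields a faithful representation of a \emph{quotient} of a cocompact subgroup, not of a cocompact subgroup, which is what the hypothesis forbids.

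The paper closes this gap as follows:
\begin{enumerate}
\item By $2$-transitivity on $\partial T_i$ and \cite[Theorem~3.7]{BCL} (Burger--Mozes), $\overline{P}_i$ contains a closed cocompact topologically simple subgroup $G_i^+$. Hence any non-trivial continuous homomorphism on $G_i^+$ is injective.
\item $\Delta$ has a non-constant algebraic representation. This follows from amenability of $\Gamma\curvearrowright\Delta$ and ergodicity on $\Delta\times\Delta\simeq\Deltaop$, via \cite[Theorem~6.1]{BDL}.
\item Consequently the flip $\tau(c,d)=(d,c)$ acts non-trivially on the gate of $\Deltaop$; otherwise the gate map of $\Delta$ would be essentially constant.
\item By $2$-transitivity there is $m\in G_i^+$ inducing $\tau$ through the natural map $\Fsingular_i/A_i\to\Deltaop$. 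So $m$ acts non-trivially on the gate of $\Fsingular_i/A_i$, and step 1 gives injectivity.
\end{enumerate}
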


\setcounter{tocdepth}{1}
\tableofcontents

\section{Just infiniteness and residual finiteness}\label{sec:residual_finiteness}

In this brief section we recall the relationship between the normal subgroup property and residual finiteness. In particular, we show how Theorem~\ref{thm:main_theorem} reduces Conjecture~\ref{conj:exotic_simple} to showing that the lattices are not residually finite, and how Corollary~\ref{cor:tmw_simple} follows.

We first recall the relevant definitions. The \emph{finite residual} $\Gamma^{(\infty)}$ of a group $\Gamma$ is the intersection of all its finite index subgroups (and one may equivalently restrict to normal ones). It is the subgroup of all elements of $\Gamma$ that map trivially under any homomorphism to a finite group. The group $\Gamma$ is \emph{residually finite} if $\Gamma^{(\infty)} = \{1\}$. A group is \emph{just-infinite} if every proper quotient is finite or, equivalently, every non-trivial normal subgroup is of finite index. It is \emph{hereditarily just-infinite} if every finite index subgroup is just-infinite.

Note that another way to phrase the normal subgroup property in Theorem~\ref{thm:main_theorem} is to say that the image $\bar{\Gamma}$ of $\Gamma \to \Aut(X)$ (which has finite kernel) is just-infinite. Note also that a finite index subgroup of a lattice on $X$ is itself a lattice on $X$ hence Theorem~\ref{thm:main_theorem} asserts that in fact $\bar{\Gamma}$ is hereditarily just-infinite.

There is the following dichotomy.

\begin{proposition}
  If $\Gamma$ is hereditarily just-infinite then $\Gamma^{(\infty)}$ is either trivial or simple of finite index.

  In particular, $\Gamma$ is either residually finite or virtually simple.
\end{proposition}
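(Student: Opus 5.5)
Since $\Gamma^{(\infty)}$ is normal in $\Gamma$, I start from the just-infinite property of $\Gamma$ itself, using that $\Gamma$ is infinite (implicit in the definition of just-infinite). Either $\Gamma^{(\infty)} = \{1\}$, in which case we are done, or $\Gamma^{(\infty)}$ is a non-trivial normal subgroup and hence of finite index. In the second case I would argue in three steps: identify that $\Gamma^{(\infty)}$ has no proper finite index subgroups; show that $\Gamma^{(\infty)}$ is just-infinite; then combine these with a conjugation argument to conclude simplicity.

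\emph{Step 1: $\Gamma^{(\infty)}$ has no proper finite index subgroups.} Let $L = \Gamma^{(\infty)}$ be of finite index, and let $K < L$ have finite index in $L$. Then $K$ has finite index in $\Gamma$, so $K \supseteq \Gamma^{(\infty)} = L$ by definition of the finite residual. Hence $K = L$.

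\emph{Step 2: $L$ is just-infinite.} Since $L$ has finite index in $\Gamma$, the hereditary hypothesis gives that every non-trivial normal subgroup of $L$ has finite index in $L$.

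\emph{Step 3: $L$ is simple.} Let $1 \neq M \lhd L$. By Step 2, $M$ has finite index in $L$, and by Step 1, $M = L$. Also $L \neq \{1\}$ in this case. So $L$ is simple, and it has finite index in $\Gamma$.

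The second sentence of the proposition is then immediate: either $\Gamma^{(\infty)}$ is trivial, so $\Gamma$ is residually finite, or $\Gamma$ contains a finite index simple subgroup, so $\Gamma$ is virtually simple.

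There is essentially no obstacle; the argument never needs to pass to the normal core, because finite index subgroups of $L$ are automatically finite index in $\Gamma$. The only point to check is that the infinite-ness built into ``just-infinite'' is used correctly. It is needed only to rule out the degenerate case $\Gamma$ finite, where $\Gamma^{(\infty)}$ would be trivial anyway, so the dichotomy still holds.
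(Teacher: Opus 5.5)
Your proof is correct and matches the paper's argument essentially step for step: $\Gamma^{(\infty)}$ has finite index because $\Gamma$ is just-infinite; a non-trivial $N \lhd \Gamma^{(\infty)}$ then has finite index by the hereditary hypothesis, hence finite index in $\Gamma$, hence contains and so equals $\Gamma^{(\infty)}$. The ``conjugation argument'' announced in your outline is never actually used, so you can drop that phrase.
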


In the theory of just-infinite groups \cite{Wilson71}, this is part of a trichotomy, namely a just-infinite group that is not hereditarily just-infinite is a \emph{branch group}, see \cite[Theorem~3(a)]{Grigorchuk00}.

\begin{proof}
  If $\Gamma^{(\infty)} \ne \{1\}$ then it has finite index since $\Gamma$ is just-infinite. Then if $\{1\} \ne N \lhd \Gamma^{(\infty)}$ is non-trivial normal, then $N$ has to have finite index in $\Gamma^{(\infty)}$ because $\Gamma^{(\infty)}$ is just-infinite being finite index in the hereditarily just-infinite group $\Gamma$. Then $N$ has finite index in $\Gamma$ and it follows from the definition of $\Gamma^{(\infty)}$ that $N = \Gamma^{(\infty)}$.
\end{proof}

Combining with Theorem~\ref{thm:main_theorem} we see:

\begin{corollary}\label{cor:virtualsimple}
  If $\Gamma < \Aut(X)$ is discrete and acts cocompactly on $X$ then it is either residually finite or virtually simple. More precisely if $\Gamma = \Gamma^{(\infty)}$ then it is simple.
\end{corollary}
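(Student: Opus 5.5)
The plan is to combine Theorem~\ref{thm:main_theorem} with the preceding proposition. Since $\Gamma < \Aut(X)$ is discrete and acts cocompactly, and $X$ is locally finite, the action of $\Gamma$ on $X$ is proper: vertex stabilizers in $\Aut(X)$ are compact open, so their intersections with the discrete group $\Gamma$ are finite. The action is also faithful, because $\Gamma$ is a subgroup of $\Aut(X)$. By Theorem~\ref{thm:main_theorem}, every normal subgroup $N \lhd \Gamma$ is therefore finite and contained in the kernel of the action, hence trivial, or has finite index. So $\Gamma$ is just-infinite. Here I assume that $X$ is irreducible, as in the standing hypothesis of Theorem~\ref{thm:main_theorem}.

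Next I show that $\Gamma$ is hereditarily just-infinite. Let $\Gamma' \le \Gamma$ have finite index. Then $\Gamma'$ is again a discrete subgroup of $\Aut(X)$ acting cocompactly, and it acts faithfully and properly on $X$. Theorem~\ref{thm:main_theorem} applied to $\Gamma'$ shows that every non-trivial normal subgroup of $\Gamma'$ has finite index in $\Gamma'$, so $\Gamma'$ is just-infinite. The proposition now gives that $\Gamma^{(\infty)}$ is either trivial or simple of finite index. In the first case $\Gamma$ is residually finite; in the second it is virtually simple.

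For the refined statement, suppose $\Gamma = \Gamma^{(\infty)}$. If $\Gamma$ were trivial there would be nothing to prove, so assume $\Gamma \neq \{1\}$. Then $\Gamma^{(\infty)}$ is non-trivial, and by the dichotomy it is simple, so $\Gamma$ itself is simple. (Since $\Gamma$ is infinite, being cocompact on an unbounded building, the trivial case does not actually occur.)

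The only step needing care is checking the hypotheses of Theorem~\ref{thm:main_theorem} for discrete cocompact subgroups and their finite-index subgroups, namely properness and faithfulness. Both are immediate from local finiteness of $X$ and the inclusion in $\Aut(X)$. I expect no real obstacle beyond this bookkeeping.
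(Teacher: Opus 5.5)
Your proposal is correct and follows the paper's argument. You apply Theorem~\ref{thm:main_theorem} to $\Gamma$ and to its finite-index subgroups, which are again discrete and cocompact, hence act properly and faithfully; this makes $\Gamma$ hereditarily just-infinite, and the preceding proposition then gives the dichotomy, including simplicity when $\Gamma = \Gamma^{(\infty)}$. Your extra bookkeeping supplies what the paper leaves implicit: properness via compact open stabilizers, and infiniteness of $\Gamma$ to exclude the trivial case.
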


In particular, Corollary~\ref{cor:tmw_simple} follows together with the Main Theorem of \cite{TitzMiteWitzel}.

\section{Euclidean buildings}\label{sec:buildings}

\subsection{Euclidean Coxeter complexes}\label{sec:coxeter_complexes}

Let $\Scoxeter$ be a Euclidean vector space and let $\Scoxeter^*$ be its dual space. Let $\Phi \subseteq \Scoxeter^*$ be a root system
with simple roots $\root[1],\ldots,\root[k] \in \Scoxeter^*$ and fundamental weights $\weight[1],\ldots,\weight[k] \in \Scoxeter$, i.e.\ $\root[i](\weight[j]) = \delta_{ij}$. The dual root system has basis $\coroot[1],\ldots,\coroot[k] \in \Scoxeter$ characterized by the condition that the reflection $s_\alpha \colon \Scoxeter \to \Scoxeter$ is given by $s_{\root}(x) = x - \root(x)\coroot$. We also denote the reflection $x \mapsto x - \coroot(x)\root$ on $\Scoxeter^*$ by $s_\alpha$.

Below we illustrate the irreducible cases of dimension $2$ that will concern us in the paper. We draw the extended (Euclidean) Coxeter diagrams. The roots and dual roots are drawn in black and the weights in orange. The dark blue dashed lines are the lines $\root[i](\cdot) = 1$ while the light blue lines are other lines of the form $\root[i](\cdot) \in \Z$. The solid blue lines are the lines $\root[i](\cdot) = 0, i \ge 1$ and $\root[0](\cdot) = 1$, where $\root[0]$ is the longest root. The grey sector is the Weyl chamber given by $\root[i](\cdot) \ge 0$ for all $i \ge 1$.

\begin{center}
  %
  %
  
\begin{tikzpicture}
  \pgfdeclarelayer{labels}
  \pgfdeclarelayer{weights}
  \pgfdeclarelayer{background}
  \pgfsetlayers{background,main,weights,labels}
  \node at (0,-3) {$A_2$};
  \node[dot] (0) at ($(0,-1.5) + (90:{sqrt(1/3)})$) {};
  \node[dot] (1) at ($(0,-1.5) + (-30:{sqrt(1/3)})$) {};
  \node[dot] (2) at ($(0,-1.5) + (210:{sqrt(1/3)})$) {};
  \node[above] at (0) {$0$};
  \node[below] at (1) {$1$};
  \node[below] at (2) {$2$};
  \draw (0) -- (1) (1) -- (2) (2) -- (0);
  \begin{scope}[shift={(-3.5,0)},scale=1.5]
  \node at (0,-2) {$\Scoxeter$};
  \coordinate (alpha1) at (0:{sqrt(2)});
  \coordinate (alpha2) at (120:{sqrt(2)});
  \foreach \ang in {60,120,...,360}
  {
    \draw[root] (0,0) -- (\ang:{sqrt(2)});
  }
  \node at ($1.3*(alpha1)$) {$\coroot[1]$};
  \node at ($1.3*(alpha2)$) {$\coroot[2]$};
  \coordinate (omega2) at (90:{sqrt(2/3)});
  \coordinate (omega1) at (30:{sqrt(2/3)});
  \begin{pgfonlayer}{weights}
  \foreach \ang in {30,90,...,330}
  {
    \node[weightdot] at (\ang:{sqrt(2/3)}) {};
  }
  \end{pgfonlayer}
  \begin{pgfonlayer}{labels}
  \node[orange] at ($1.3*(omega1)$) {$\weight[1]$};
  \node[orange] at ($1.3*(omega2)$) {$\weight[2]$};
  \end{pgfonlayer}
  \begin{pgfonlayer}{background}
  \foreach \ang in {30,90,150}
  {
    \draw[auxwall] (\ang:{sqrt(8/3)}) -- ({\ang+180}:{sqrt(8/3)});
  }
    \path[fill=black!10,draw=blue] ($2*(omega2)$) -- (0,0) -- ($2*(omega1)$);
    \path[draw=blue] (alpha2) -- (alpha1);
  \foreach \ang in {60,120,...,360}
  {
    \draw[wall] (\ang:{sqrt(2)}) -- ({\ang+120}:{sqrt(2)});
    \draw[auxwall] ($(\ang:{sqrt(2)})+(\ang+90:{sqrt(2/3)})$) -- ($({\ang+120}:{sqrt(2)})+(\ang+30:{sqrt(2/3)})$);
  }
  \end{pgfonlayer}
  \end{scope}
  \begin{scope}[shift={(3.5,0)},scale=1.5]
  \node at (0,-2) {$\Scoxeter^*$};
  \coordinate (alpha1) at (0:{sqrt(2)});
  \coordinate (alpha2) at (120:{sqrt(2)});
  \foreach \ang in {60,120,...,360}
  {
    \draw[root] (0,0) -- (\ang:{sqrt(2)});
  }
  \node at ($1.3*(alpha1)+1.3*(alpha2)$) {$\root[0]$};
  \node at ($1.3*(alpha1)$) {$\root[1]$};
  \node at ($1.3*(alpha2)$) {$\root[2]$};
  \coordinate (omega2) at (90:{sqrt(2/3)});
  \coordinate (omega1) at (30:{sqrt(2/3)});
  \begin{pgfonlayer}{weights}
  \foreach \ang in {30,90,...,330}
  {
    \node[weightdot] at (\ang:{sqrt(2/3)}) {};
  }
  \end{pgfonlayer}
  \begin{pgfonlayer}{labels}
  \node[orange] at ($1.3*(omega1)$) {$\coweight[1]$};
  \node[orange] at ($1.3*(omega2)$) {$\coweight[2]$};
  \end{pgfonlayer}
  \begin{pgfonlayer}{background}
  \foreach \ang in {30,90,150}
  {
    \draw[auxwall] (\ang:{sqrt(8/3)}) -- ({\ang+180}:{sqrt(8/3)});
  }
    \path[fill=black!10,draw=blue] ($2*(omega2)$) -- (0,0) -- ($2*(omega1)$);
    \path[draw=blue] (alpha2) -- (alpha1);
  \foreach \ang in {60,120,...,360}
  {
    \draw[wall] (\ang:{sqrt(2)}) -- ({\ang+120}:{sqrt(2)});
    \draw[auxwall] ($(\ang:{sqrt(2)})+(\ang+90:{sqrt(2/3)})$) -- ($({\ang+120}:{sqrt(2)})+(\ang+30:{sqrt(2/3)})$);
  }
  \end{pgfonlayer}
  \end{scope}
\end{tikzpicture}

  %
  %
\begin{tikzpicture}
  \pgfdeclarelayer{labels}
  \pgfdeclarelayer{weights}
  \pgfdeclarelayer{walls}
  \pgfdeclarelayer{background}
  \pgfsetlayers{background,walls,main,weights,labels}
  \node at (0,-3) {$B_2$};
  \node at (-4,-3) {$\Scoxeter^*$};
  \node at (4,-3) {$\Scoxeter$};
  \node[dot] (0) at (-1,-2) {};
  \node[dot] (2) at (0,-2) {};
  \node[dot] (1) at (1,-2) {};
  \node[below] at (0) {$0$};
  \node[below] at (1) {$1$};
  \node[below] at (2) {$2$};
  \draw (0) edge[--] (2) (2) edge[--] (1);
  \path (0) edge[->-,draw=none] (2) (2) edge[-<-,draw=none] (1);
  \begin{scope}[shift={(-4,0)},rotate=45]
  \coordinate (alpha2) at (-45:2);
  \coordinate (alpha1) at (90:{sqrt(2)});
  \foreach \ang in {90,180,...,360}
  {
    \draw[root] (0,0) -- (\ang:{sqrt(2)});
  }
  \foreach \ang in {45,135,...,315}
  {
    \draw[root] (0,0) -- (\ang:2);
  }
  \node at ($1.3*(alpha2)$) {$\coroot[2]$};
  \node at ($1.3*(alpha1)$) {$\coroot[1]$};
  \end{scope}
  \begin{scope}[shift={(-4,0)}]
  \coordinate (omega2) at (45:{sqrt(2)});
  \coordinate (omega1) at (90:1);
  \begin{pgfonlayer}{weights}
  \foreach \ang in {45,135,...,315}
  {
    \node[weightdot] at (\ang:{sqrt(2)}) {};
  }
  \foreach \ang in {90,180,...,360}
  {
    \node[weightdot] at (\ang:1) {};
  }
  \end{pgfonlayer}
  \begin{pgfonlayer}{walls}
  \foreach \ang in {90,180}
  {
    \draw[auxwall] (\ang:2) -- ({\ang+180}:2);
  }
  \foreach \ang in {45,135}
  {
    \draw[auxwall] (\ang:{sqrt(1/2)}) -- ({\ang+180}:{sqrt(1/2)});
  }
  \foreach \ang in {90,180,...,360}
  {
    \draw[wall] ($(\ang:1)+(\ang+90:1)$) -- ($(\ang:1)+(\ang-90:1)$);
  }
  \foreach \ang in {135,225,...,315}
  {
    \draw[wall] ($(\ang:{sqrt(1/2)})+(\ang+90:{sqrt(2)})$) -- ($(\ang:{sqrt(1/2)})-(\ang+90:{sqrt(2)})$);
  }
  \foreach \ang in {45}
  {
    \draw[blue] ($(\ang:{sqrt(1/2)})+(\ang+90:{sqrt(2)})$) -- ($(\ang:{sqrt(1/2)})+(\ang-90:{sqrt(2)})$);
  }
  \end{pgfonlayer}
  \begin{pgfonlayer}{background}
    \path[fill=black!10,draw=blue] ($1*(omega2)$) -- (0,0) -- ($2*(omega1)$);
  \end{pgfonlayer}
  \begin{pgfonlayer}{labels}
  \node[orange] at ($1.3*(omega2)$) {$\weight[2]$};
  \node[orange] at ($1.3*(omega1)$) {$\weight[1]$};
  \end{pgfonlayer}
\end{scope}
  \begin{scope}[shift={(4,0)},rotate=45,scale={sqrt(1/2)}]
  \coordinate (alpha2) at (-45:{sqrt(2)});
  \coordinate (alpha1) at (90:2);
  \foreach \ang in {90,180,...,360}
  {
    \draw[root] (0,0) -- (\ang:2);
  }
  \foreach \ang in {45,135,...,325}
  {
    \draw[root] (0,0) -- (\ang:{sqrt(2)});
  }
  \node at ($1.3*(alpha1)+2*1.3*(alpha2)$) {$\root[0]$};
  \node at ($1.3*(alpha2)$) {$\root[2]$};
  \node at ($1.3*(alpha1)$) {$\root[1]$};
  \coordinate (omega1) at (45:{sqrt(2)});
  \coordinate (omega2) at (0:1);
  \begin{pgfonlayer}{weights}
  \foreach \ang in {45,135,...,315}
  {
    \node[weightdot] at (\ang:{sqrt(2)}) {};
  }
  \foreach \ang in {90,180,...,360}
  {
    \node[weightdot] at (\ang:1) {};
  }
  \end{pgfonlayer}
  \begin{pgfonlayer}{walls}
  \foreach \ang in {90,180}
  {
    \draw[auxwall] (\ang:2) -- ({\ang+180}:2);
  }
  \foreach \ang in {45,135}
  {
    \draw[auxwall] (\ang:{sqrt(1/2)}) -- ({\ang+180}:{sqrt(1/2)});
  }
  \foreach \ang in {90,180,...,360}
  {
    \draw[wall] ($(\ang:1)+(\ang+90:1)$) -- ($(\ang:1)+(\ang-90:1)$);
  }
  \foreach \ang in {135,225,...,315}
  {
    \draw[wall] ($(\ang:{sqrt(1/2)})+(\ang+90:{sqrt(2)})$) -- ($(\ang:{sqrt(1/2)})-(\ang+90:{sqrt(2)})$);
  }
  \foreach \ang in {45}
  {
    \draw[blue] ($(\ang:{sqrt(1/2)})+(\ang+90:{sqrt(2)})$) -- ($(\ang:{sqrt(1/2)})+(\ang-90:{sqrt(2)})$);
  }
  \end{pgfonlayer}
  \begin{pgfonlayer}{background}
    \path[fill=black!10,draw=blue] ($2*(omega2)$) -- (0,0) -- ($1*(omega1)$);
  \end{pgfonlayer}
  \begin{pgfonlayer}{labels}
  \node[orange] at ($1.3*(omega2)$) {$\coweight[2]$};
  \node[orange] at ($1.3*(omega1)$) {$\coweight[1]$};
  \end{pgfonlayer}
  \end{scope}
\end{tikzpicture}

  %
  %
\begin{tikzpicture}
  \pgfdeclarelayer{labels}
  \pgfdeclarelayer{weights}
  \pgfdeclarelayer{walls}
  \pgfdeclarelayer{background}
  \pgfsetlayers{background,walls,main,weights,labels}
  \node at (0,-3) {$C_2$};
  \node at (-4,-3) {$\Scoxeter$};
  \node at (4,-3) {$\Scoxeter^*$};
  \node[dot] (0) at (-1,-2) {};
  \node[dot] (1) at (0,-2) {};
  \node[dot] (2) at (1,-2) {};
  \node[below] at (0) {$0$};
  \node[below] at (1) {$1$};
  \node[below] at (2) {$2$};
  \draw (0) edge[--] (1) (1) edge[--] (2);
  \path (0) edge[->-,draw=none] (1) (1) edge[-<-,draw=none] (2);
  \begin{scope}[shift={(-4,0)},xscale=-1,rotate=90]
  \coordinate (alpha1) at (-45:{sqrt(2)});
  \coordinate (alpha2) at (90:1);
  \foreach \ang in {90,180,...,360}
  {
    \draw[root] (0,0) -- (\ang:1);
  }
  \foreach \ang in {45,135,...,315}
  {
    \draw[root] (0,0) -- (\ang:{sqrt(2)});
  }
  \node at ($1.3*(alpha1)$) {$\coroot[1]$};
  \node at ($1.3*(alpha2)$) {$\coroot[2]$};
  \coordinate (omega2) at (45:{sqrt(1/2)});
  \coordinate (omega1) at (0:1);
  \begin{pgfonlayer}{weights}
  \foreach \ang in {45,135,...,315}
  {
    \node[weightdot] at (\ang:{sqrt(1/2)}) {};
  }
  \foreach \ang in {90,180,...,360}
  {
    \node[weightdot] at (\ang:1) {};
  }
  \end{pgfonlayer}
  \begin{pgfonlayer}{walls}
  \foreach \ang in {90,180}
  {
    \draw[auxwall] (\ang:2) -- ({\ang+180}:2);
  }
  \foreach \ang in {45,135}
  {
    \draw[auxwall] (\ang:{sqrt(1/2)}) -- ({\ang+180}:{sqrt(1/2)});
  }
  \foreach \ang in {90,180,...,360}
  {
    \draw[wall] ($(\ang:1/2)+(\ang+90:1.5)$) -- ($(\ang:1/2)+(\ang-90:1.5)$);
    \draw[auxwall] ($(\ang:1)+(\ang+90:1)$) -- ($(\ang:1)+(\ang-90:1)$);
  }
  \foreach \ang in {135,225,...,315}
  {
    \draw[wall] ($(\ang:{sqrt(1/2)})+(\ang+90:{sqrt(2)})$) -- ($(\ang:{sqrt(1/2)})-(\ang+90:{sqrt(2)})$);
  }
  \foreach \ang in {0}
  {
    \draw[blue] ($(\ang:1/2)+(\ang+90:1.5)$) -- ($(\ang:1/2)+(\ang-90:1.5)$);
  }
  \end{pgfonlayer}
  \begin{pgfonlayer}{background}
    \path[fill=black!10,draw=blue] ($2*(omega2)$) -- (0,0) -- ($2*(omega1)$);
  \end{pgfonlayer}
  \begin{pgfonlayer}{labels}
  \node[orange] at ($1.3*(omega2)$) {$\weight[2]$};
  \node[orange] at ($1.3*(omega1)$) {$\weight[1]$};
  \end{pgfonlayer}
  \end{scope}
  \begin{scope}[shift={(4,0)},xscale=-1,rotate=90]
  \coordinate (alpha1) at (-45:{sqrt(2)});
  \coordinate (alpha2) at (90:2);
  \foreach \ang in {90,180,...,360}
  {
    \draw[root] (0,0) -- (\ang:2);
  }
  \foreach \ang in {45,135,...,325}
  {
    \draw[root] (0,0) -- (\ang:{sqrt(2)});
  }
  \node at ($1.2*(alpha2)$) {$\root[2]$};
  \node at ($1.3*(alpha1)$) {$\root[1]$};
  \node at ($2*1.2*(alpha1)+1.2*(alpha2)$) {$\root[0]$};
  \coordinate (omega2) at (45:{sqrt(2)});
  \coordinate (omega1) at (0:1);
  \begin{pgfonlayer}{weights}
  \foreach \ang in {45,135,...,315}
  {
    \node[weightdot] at (\ang:{sqrt(2)}) {};
  }
  \foreach \ang in {90,180,...,360}
  {
    \node[weightdot] at (\ang:1) {};
  }
  \end{pgfonlayer}
  \begin{pgfonlayer}{walls}
  \foreach \ang in {90,180}
  {
    \draw[auxwall] (\ang:2) -- ({\ang+180}:2);
  }
  \foreach \ang in {45,135}
  {
    \draw[auxwall] (\ang:{sqrt(1/2)}) -- ({\ang+180}:{sqrt(1/2)});
  }
  \foreach \ang in {90,180,...,360}
  {
    \draw[wall] ($(\ang:1)+(\ang+90:1)$) -- ($(\ang:1)+(\ang-90:1)$);
  }
  \foreach \ang in {135,225,...,315}
  {
    \draw[wall] ($(\ang:{sqrt(1/2)})+(\ang+90:{sqrt(2)})$) -- ($(\ang:{sqrt(1/2)})-(\ang+90:{sqrt(2)})$);
  }
  \foreach \ang in {45}
  {
    \draw[blue] ($(\ang:{sqrt(1/2)})+(\ang+90:{sqrt(2)})$) -- ($(\ang:{sqrt(1/2)})+(\ang-90:{sqrt(2)})$);
  }
  \end{pgfonlayer}
  \begin{pgfonlayer}{background}
    \path[fill=black!10,draw=blue] ($1*(omega2)$) -- (0,0) -- ($2*(omega1)$);
  \end{pgfonlayer}
  \begin{pgfonlayer}{labels}
  \node[orange] at ($1.3*(omega2)$) {$\coweight[2]$};
  \node[orange] at ($1.3*(omega1)$) {$\coweight[1]$};
  \end{pgfonlayer}
  \end{scope}
\end{tikzpicture}

  %
  %
\begin{tikzpicture}
  \pgfdeclarelayer{labels}
  \pgfdeclarelayer{weights}
  \pgfdeclarelayer{background}
  \pgfsetlayers{background,main,weights,labels}
  \node at (0,-3) {$G_2$};
  \node[dot] (0) at (-1,-2) {};
  \node[dot] (2) at (0,-2) {};
  \node[dot] (1) at (1,-2) {};
  \node[below] at (0) {$0$};
  \node[below] at (1) {$1$};
  \node[below] at (2) {$2$};
  \draw (0) -- (2) (2) edge[--,double distance = 1.5pt] (1);
  \path (2) edge[->-] (1);
  \begin{scope}[shift={(-3.5,0)},scale=1]
  \node at (0,-3) {$\Scoxeter^*$};
  \coordinate (alpha1) at (0:{sqrt(2)});
  \coordinate (alpha2) at (150:{sqrt(2/3)});
  \foreach \ang in {60,120,...,360}
  {
    \draw[root] (0,0) -- (\ang:{sqrt(2)});
  }
  \foreach \ang in {30,90,...,330}
  {
    \draw[root] (0,0) -- (\ang:{sqrt(2/3)});
  }
  \node at ($1.3*(alpha1)$) {$\coroot[1]$};
  \node at ($1.3*(alpha2)$) {$\coroot[2]$};
  \coordinate (omega2) at (90:{sqrt(2/3)});
  \coordinate (omega1) at (60:{sqrt(2)});
  \begin{pgfonlayer}{weights}
  \foreach \ang in {60,120,...,360}
  {
    \node[weightdot] at (\ang:{sqrt(2)}) {};
    \node[weightdot] at (\ang+30:{sqrt(2/3)}) {};
  }
  \end{pgfonlayer}
  \begin{pgfonlayer}{labels}
  \node[above right,orange] at ($1*(omega1)$) {$\weight[1]$};
  \node[above,orange] at ($1*(omega2)$) {$\weight[2]$};
  \end{pgfonlayer}
  \begin{pgfonlayer}{background}
  \foreach \ang in {30,90,150}
  {
    \draw[auxwall] (\ang:{sqrt(6)}) -- ({\ang+180}:{sqrt(6)});
  }
  \foreach \ang in {60,120,180}
  {
    \draw[auxwall] (\ang:{sqrt(8)}) -- ({\ang+180}:{sqrt(8)});
  }
    \path[fill=black!10,draw=blue] ($3*(omega2)$) -- (0,0) -- ($2*(omega1)$);
  \foreach \ang in {60,120,...,360}
  {
    \draw[wall] ($(\ang:{sqrt(1/2)})+(\ang+90:{sqrt(6)})$) -- ($(\ang:{sqrt(1/2)})+(\ang-90:{sqrt(6)})$);
    \draw[auxwall] ($(\ang:{sqrt(2)})+(\ang+90:{sqrt(6)})$) -- ($(\ang:{sqrt(2)})+(\ang-90:{sqrt(6)})$);
    \draw[auxwall] ($(\ang:{3*sqrt(1/2)})+(\ang+90:{sqrt(3/2)})$) -- ($(\ang:{3*sqrt(1/2)})+(\ang-90:{sqrt(3/2)})$);
  }
  \foreach \ang in {30,90,...,330}
  {
    \draw[wall] ($(\ang:{sqrt(1/6)})+(\ang+90:{sqrt(6)})$) -- ($(\ang:{sqrt(1/6)})+(\ang-90:{sqrt(6)})$);
    \draw[auxwall] ($(\ang:{sqrt(2/3)})+(\ang+90:{sqrt(5)})$) -- ($(\ang:{sqrt(2/3)})+(\ang-90:{sqrt(5)})$);
    \draw[auxwall] ($(\ang:{sqrt(3/2)})+(\ang+90:{sqrt(4)})$) -- ($(\ang:{sqrt(3/2)})+(\ang-90:{sqrt(4)})$);
    \draw[auxwall] ($(\ang:{sqrt(8/3)})+(\ang+90:{sqrt(3)})$) -- ($(\ang:{sqrt(8/3)})+(\ang-90:{sqrt(3)})$);
    \draw[auxwall] ($(\ang:{sqrt(25/6)})+(\ang+90:{sqrt(2)})$) -- ($(\ang:{sqrt(25/6)})+(\ang-90:{sqrt(2)})$);
    \draw[auxwall] ($(\ang:{sqrt(6)})+(\ang+90:{sqrt(1)})$) -- ($(\ang:{sqrt(6)})+(\ang-90:{sqrt(1)})$);
    \draw[auxwall] ($(\ang:{sqrt(6)})+(\ang+90:{sqrt(2)})$) -- ($(\ang:{sqrt(6)})+(\ang-90:{sqrt(2)})$);
  }
  \foreach \ang in {90}
  {
    \draw[blue] ($(\ang:{sqrt(1/6)})+(\ang+90:{sqrt(6)})$) -- ($(\ang:{sqrt(1/6)})+(\ang-90:{sqrt(6)})$);
  }
  \end{pgfonlayer}
  \end{scope}
  \begin{scope}[shift={(3.5,0)},scale=1]
  \node at (0,-3) {$\Scoxeter$};
  \coordinate (alpha1) at (0:{sqrt(2)});
  \coordinate (alpha2) at (150:{sqrt(6)});
  \foreach \ang in {60,120,...,360}
  {
    \draw[root] (0,0) -- (\ang:{sqrt(2)});
  }
  \foreach \ang in {30,90,...,330}
  {
    \draw[root] (0,0) -- (\ang:{sqrt(6)});
  }
  \coordinate (omega2) at (90:{sqrt(6)});
  \coordinate (omega1) at (60:{sqrt(2)});
  \begin{pgfonlayer}{weights}
  \foreach \ang in {60,120,...,360}
  {
    \node[weightdot] at (\ang:{sqrt(2)}) {};
    \node[weightdot] at (\ang+30:{sqrt(6)}) {};
  }
  \end{pgfonlayer}
  \begin{pgfonlayer}{labels}
  \node at ($3*1.2*(alpha1)+2*1.2*(alpha2)$) {$\root[0]$};
  \node at ($1.3*(alpha1)$) {$\root[1]$};
  \node at ($1.3*(alpha2)$) {$\root[2]$};
  \node[above right,orange] at ($1*(omega1)$) {$\coweight[1]$};
  \node[above,orange] at ($1*(omega2)$) {$\coweight[2]$};
  \end{pgfonlayer}
  \begin{pgfonlayer}{background}
  \foreach \ang in {30,90,150}
  {
    \draw[auxwall] (\ang:{sqrt(6)}) -- ({\ang+180}:{sqrt(6)});
  }
  \foreach \ang in {60,120,180}
  {
    \draw[auxwall] (\ang:{sqrt(8)}) -- ({\ang+180}:{sqrt(8)});
  }
    \path[fill=black!10,draw=blue] ($1*(omega2)$) -- (0,0) -- ($2*(omega1)$);
  \foreach \ang in {60,120,...,360}
  {
    \draw[wall] ($(\ang:{sqrt(1/2)})+(\ang+90:{sqrt(6)})$) -- ($(\ang:{sqrt(1/2)})+(\ang-90:{sqrt(6)})$);
    \draw[auxwall] ($(\ang:{sqrt(2)})+(\ang+90:{sqrt(6)})$) -- ($(\ang:{sqrt(2)})+(\ang-90:{sqrt(6)})$);
    \draw[auxwall] ($(\ang:{3*sqrt(1/2)})+(\ang+90:{sqrt(3/2)})$) -- ($(\ang:{3*sqrt(1/2)})+(\ang-90:{sqrt(3/2)})$);
  }
  \foreach \ang in {30,90,...,330}
  {
    \draw[wall] ($(\ang:{sqrt(3/2)})+(\ang+90:{sqrt(4)})$) -- ($(\ang:{sqrt(3/2)})+(\ang-90:{sqrt(4)})$);
    \draw[auxwall] ($(\ang:{sqrt(6)})+(\ang+90:{sqrt(2)})$) -- ($(\ang:{sqrt(6)})+(\ang-90:{sqrt(2)})$);
  }
  \foreach \ang in {60}
  {
    \draw[blue] ($(\ang:{sqrt(1/2)})+(\ang+90:{sqrt(6)})$) -- ($(\ang:{sqrt(1/2)})+(\ang-90:{sqrt(6)})$);
  }
  \end{pgfonlayer}
  \end{scope} 
\end{tikzpicture}
\end{center}

The spherical Coxeter complex associated to $\Phi$ is the tiling of the boundary sphere of $\Sregular$ obtained by cutting along the codimension-$1$ spheres that are boundaries of the hyperplanes $\root(\cdot) = 0$ for $\root \in \Phi$. The Euclidean Coxeter complex is the tiling of $\Sregular$ obtained by cutting along hyperplanes $\root(\cdot) = k$ for $\root \in \Phi$ and $k \in \Z$. The spheres and hyperplanes are called \emph{walls}, the maximal cells are \emph{chambers}. The spherical Coxeter group $W$ and the Euclidean Coxeter group $\tilde{W}$ are generated by the reflections in these spheres respectively hyperplanes. They act regularly on chambers.

The Euclidean Coxeter group $\tilde{W}$ is a split extension
\[
  A \to \tilde{W} \to W
\]
of the spherical Coxeter group $W$ by the subgroup $A$ of translations. Note that $A$ is naturally identified with the weight lattice $\gen{\weight[1],\ldots,\weight[k]} < \Scoxeter$. A splitting $W \to \tilde{W}$ is given by a choice of \emph{special} vertex, namely a vertex that lies on a wall from every parallel class of walls.

The numbering of roots allows us to assign types to vertices of the spherical Coxeter complexes: we declare that $\omega_i$ points in direction of a vertex of type $i$. Note that for buildings of type $C_n$ the type depends on whether we regard them as type $B_n$ or $C_n$.

We restrict to the two-dimensional case from now on. To uniformly phrase statements involving vertex types we will use the following notation. We will write $\{i,j\} = \{1,2\}$ to indicate that $i$ and $j$ should be the two types. The type of a vertex opposite to a vertex of type $i$ will be denoted $\bar{i}$; it is $j$ in type $A_2$ but $i$ in the other types. This is summarized in the following table

\begin{center}
\begin{tabular}{c|cccc}
  Type & $i$ & $\bar{i}$ & $j$ & $\bar{j}$\\
  \hline
  \multirow{2}{*}{$A_2$} & $1$ & $2$ & $2$ & $1$\\
  & $2$ & $1$ & $1$ & $2$\\
  \hline
  \multirow{2}{*}{$B_2$, $C_2$, $G_2$} & $1$ & $1$ & $2$ & $2$\\
  & $2$ & $2$ & $1$ & $1$\\
\end{tabular}
\end{center}

Now consider the Euclidean Coxeter complex structure on $\Scoxeter$: it is a decomposition into triangles (chambers) such that $\tilde{W}$ acts regularly on triangles. Occasionally we will make use of types of simplices in $\Scoxeter$. In that case we define types so that $0 \in \Scoxeter$ is of type $0$, its neighbor in direction $\weight[2]$ is of type $1$ and its neighbor in direction $\weight[1]$ is of type $2$. Using the chamber regular-action of $W$ this extends to a well-defined type of all vertices. The special vertices are those of type in $\{0,1,2\}$ in type $\tilde{A}_2$, of type in $\{0,2\}$ in type $\tilde{B}_2$, of type in $\{0,1\}$ in type $\tilde{C}_2$, and of type $0$ in type $\tilde{G}_2$. The \emph{type}  $\type(\sigma)$ of a simplex $\sigma$ is the set of types of its vertices. Its \emph{cotype} is $\{0,1,2\} \setminus \type (\sigma)$, in particular panels (edges) have singleton cotype. A map $\varphi \colon X \to Y$ of simplicial complexes with type functions $\type \colon X,Y \to \{0,1,2\}$ is \emph{type-preserving} if $\type(\varphi(\sigma)) = \type(\sigma)$ for all simplices $\sigma$ of $X$.

Let $\{i,j\} = \{1,2\}$. We let $\ell_i \colon \R \to \Scoxeter, t \mapsto (t/\norm{\weight[i]}) \weight[i]$ be the geodesic line spanned by $\weight[i]$. We refer to its endpoints in the visual boundary as $\ell_i(\infty)$ and $\ell_i(-\infty)$. We let
\begin{align*}
  S &= \R_{\ge 0} \cdot \weight[1] + \R_{\ge 0} \cdot \weight[2]\\
    &= \{x \in \Scoxeter \mid \root[1](x) \ge 0, \root[2](x) \ge 0\}
\end{align*}
be the Weyl chamber and refer to its visual boundary as $S(\infty)$. We let
\[
  \Sdetecting_i = \{x \in \Scoxeter \mid \alpha_j(x) \ge 0\}
\]
be the closed halfspace of $\Scoxeter$ containing $S$ bounded by $\ell_i$.

%
%

If $M \subseteq \Scoxeter$ is a subset by the \emph{combinatorial convex hull} $\conv M$ we mean the least convex subcomplex of $\Scoxeter$ that contains $M$. It is the smallest complex containing the metric convex hull of $M$ and is also the intersection of half-apartments
\[
  \conv M = \bigcap_{\root,k} \{x \in \Scoxeter \mid \root(x) \le k\}
\]
where the intersection ranges over pairs $\root \in \Phi,k \in \Z$ with $\root(M) \le k$.


\subsection{Euclidean buildings}
Let $\Scoxeter$ be a Coxeter complex of type  $\tilde{T}_2$ ($T \in \{A,B,C,G\}$).
A \emph{building} of type $\tilde{T}_2$ is a triangle complex $X$ with a type map satisfying the following building axioms, where any (type-preserving) isomorphic copy $\Sigma$ of $\Scoxeter$ in $X$ is called an \emph{apartment}
\begin{enumerate}
  \item for any two simplices $e, f \subseteq X$ there is an apartment $\Sigma$ containing $e$ and $f$
  \item for any two apartments $\Sigma, \Sigma'$ there is a (type-preserving) isomorphism $\Sigma \to \Sigma'$ whose restriction to $\Sigma \cap \Sigma'$ is the identity.
\end{enumerate}

Let $X$ be a locally finite building of type $\tilde{T}_2$. We let $X_i$ denote the set of vertices of type $i$. The \emph{thickness} of $X$ in a \emph{panel} (edge) is the number of chambers (triangles) that contain it. We will assume throughout that $X$ is \emph{thick} meaning that the thickness of every panel is at least $3$. It follows from the fact that opposite residues in spherical buildings are isomorphic that panels of same (co)type have same thickness. Thus there are numbers $q_0,q_1,q_2 \ge 1$ such that every panel of cotype $i$ has thickness $q_i + 1$. The fact just mentioned also implies that $q_0 = q_2$ in type $G$ and $q_0 = q_1 = q_2$ in type $A$.

A \emph{gallery} is a sequence of chambers any consecutive two of which share a panel. Length of a minimal gallery defines a \emph{gallery distance} on chambers. For any simplex $e \subseteq X$ and a chamber $c \subseteq X$, among the chambers containing $e$ there is a unique one that is closest in gallery distance to $c$, it is called the \emph{projection} $\pr_e c$ of $c$ to $e$. More generally, if $f \subseteq X$ is a simplex, the chambers $\pr_e d$ for $d > f$ intersect in a simplex $\pr_e f \supseteq e$, the \emph{projection} of $f$ to $e$.

\subsection{Boundaries of Euclidean buildings}\label{sec:boundaries}

Let $X$ be a two-dimensional Euclidean building. The visual boundary $\partial X$ is a spherical building whose apartments are the visual boundaries $\partial \Sigma$ of apartments. In particular, our discussion assigns a type ($1$ or $2$) to every vertex at infinity. If $e \subseteq X$ is a simplex and $\eta \subseteq \partial X$ is a simplex at infinity, in the convex hull of $e$ and $\eta$ (i.e.\ the minimal convex subcomplex that contains $e$ and contains $\eta$ in its boundary) there is a unique maximal simplex containing $e$; it is the \emph{projection} $\pr_e \eta$ of $\eta$ to $e$.

We denote the space of vertices at ininity of type $i$ by $\Delta_i$. The topology is the topology induced from $\partial X$, that is, if $\rho \colon [0,\infty) \to X$ is a ray with $\xi \defeq \rho(\infty) \in \Delta_i$ then a neighborhood basis of $\xi$ is given by $\{\rho'(\infty) \mid \rho'|_{[0,r]} = \rho|_{[0,r]}\}, r \ge 0$.

We denote by $\Delta$ the space of chambers at infinity. It can be topologized via the embedding $\Delta \to \partial X$ that takes each edge to its midpoint. A more workable description of the topology is as follows. If $S$ is a model Weyl chamber, we also write $S(\infty) \defeq \partial S$ for the corresponding edge at infinity. Then if $\iota \colon S \to X$ is an embedding, a neighborhood basis of $\iota(S(\infty))$ is $\{\iota'(S(\infty)) \mid \iota'|_K = \iota|_K\}$ for $K \subseteq S$ compact.

Pairs of opposite chambers in $\Delta$ form a space
\[
  \Deltaop = \{ (c,d) \in \Delta \times \Delta \mid c \text{ opposite }d\}.
\]
Similarly there are two spaces of pairs of opposite vertices
\[
  \Deltaopv[i] = \{ (\xi,\zeta) \in \Deltav[i] \times \Deltav[\bar{i}] \mid \xi \text{ opposite } \zeta\}
\]
one for each type $i \in \{1,2\}$.

If $\iota \colon \Scoxeter \to X$ is an embedding of a model apartment then the two chambers $\iota(S(\infty))$ and $\iota(-S(\infty))$ are opposite and conversely for any two opposite chambers lie in the boundary of an apartment (unique as a subspace). Thus the natural topology on $\Deltaop$ can be described by saying that a neighborhood of $(\iota(S(\infty)), \iota(-S(\infty)))$ is $\{(\iota'(S(\infty)), \iota'(-S(\infty))) \mid \iota'|_K = \iota|_K\}$ for $K \subseteq \Scoxeter$ compact.

Let $\{i,j\} = \{1,2\}$ and let $\xi$ be a vertex of type $i$. Its \emph{residue} $\Res \xi$ is the space $\{c \in \Delta \mid \xi \in c\} \subseteq \Delta$. It can be (homeomorphically) identified with $\{\zeta \in \Delta_j \mid [i,j] \in \Delta\}$ and we may do so implicitly. If $\zeta \in \Delta_{\bar{i}}$ is a vertex opposite $\xi$ then the building theoretic projection induces a homeomorphism $\pr_\xi|_{\Res \zeta} \colon \Res \zeta \to \Res \xi$, which is called a \emph{perspectivity}.

Before moving on we make a general remark about structures at infinity. Let $\rho,\rho' \colon \R \to X$ be two geodesic rays. There are two extremal asymptotic equivalence relations on rays. The most restrictive one is that of eventually coinciding (or having a common parametrized subray): $\rho|_{[R,\infty)} = \rho'|_{[R,\infty)}$ for some $R$. The most inclusive on is that of having bounded distance $\sup_t d(\rho(t),\rho(t')) < \infty$, which means that they define the same point at infinity. Between these two there are two intermediate relations only quotienting in the parallel respectively transverse direction: the rays can eventually coincide up to translation (have a common setwise subray): $\rho(t) = \rho'(T+t)$ for some $R,T$ and all $t \ge R$; or they can eventually be parallel in the sense that $\rho_{[R,\infty)}$ and $\rho'_{[R,\infty)}$ bound a flat strip so that $\rho(t)$ and $\rho'(t)$ are each others mutual projection for $t \ge R$, this is equivalent to $\rho$ and $\rho'$ defining the same Busemann function. Considering rays that define the same point at infinity up to the relation of eventually coinciding up to translation leads to a refined structure that turns out to be a tree and will interest us next. See \cite{Caprace09} for the general $\cato$ case. We note that for embeddings of sectors $S \to X$ we could do a similar construction, but there is no transverse direction so there are only two main asymptotic equivalence relations: that of having a common parametrized subsector and that of having a common setwise subsector.

The previous discussion leads to a more refined structure than $\Res \xi$ associated to $\xi$. Namely, consider the set $T_\xi$ of rays $\rho \colon [0,\infty) \to X$ with $\rho(\infty) = \xi$ up to the equivalence relation of eventually coinciding up to translation. If $[\rho],[\gamma] \in T_\xi$ are two points then $d(\rho(r),\gamma([0,\infty))$ and $d(\rho([0,\infty),\gamma(s))$ are eventually constant and equal and define a metric $d([\rho],[\gamma])$ on $T_\xi$. With respect to this metric $T_\xi$ is a metric tree whose vertices are $[\rho]$ for $\rho$ a ray contained in a wall. It is $(q_i+1)$-regular unless $(T,i) \in \{(B,2),(C,1)\}$ in which case it is $(q_0+1,q_i+1)$-biregular.

Let $S \subseteq X$ be a Weyl chamber with $S(\infty) \ni \xi$. A \emph{regular} ray is a ray in an apartment which is not parallel to any wall of this apartment. If $\gamma$ is a regular ray in $S$, then the rays $\rho_t$ starting in $\gamma(t)$ and tending toward $\xi$ have the property that $t \mapsto [\rho_t]$ is a ray in $T_\xi$. In this way $\partial T_\xi$ is naturally (and homeomorphically) identified with $\Res \xi$, see for example \cite[Lemma 2.2]{RemyTrojan}.

If $\zeta \in \Deltav[\bar{i}]$ is a vertex of type $\bar i$ we denote by $\Oppv[i](\zeta) \subseteq \Delta_i$ the space of vertices opposite it. If $\rho \colon (-\infty,0] \to X$ is a geodesic ray with $\rho(-\infty) = \zeta$ then $\Oppv[i](\zeta)$ is the set of $\ell(\infty)$ where $\ell \colon \R \to X$ is a geodesic line that coincides with $\rho$ on an infinite interval. We denote by $\Opp(\zeta) \subseteq \Delta$ the space of chambers opposite $\zeta$. If $\rho$ is as above, this is the set of $\iota(S(\infty))$ where $\iota \colon \Sregular \to X$ is an embedding such that $\iota \circ \ell_i$ coincides with $\rho$ on an infinite interval.

\begin{lemma}\label{lem:opp_decomposition_topological}
  Let $\zeta \in \Deltav[\bar{i}]$. The maps
  \begin{align*}
    \Opp(\zeta) &\leftrightarrow \Oppv[i](\zeta) \times \Res(\zeta)\\
    c & \mapsto (c_i,\pr_\zeta c)\\
    \pr_\xi(\tau) & \mapsfrom (\xi,\tau)
  \end{align*}
  where $c_i$ denotes the vertex of type $i$ of $c$, are mutually inverse homeomorphisms.
\end{lemma}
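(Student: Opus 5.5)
The plan is to first establish the bijection by purely combinatorial facts about the spherical building $\partial X$, and then to check continuity in both directions using the descriptions of the topologies by embeddings that agree on compact sets.

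\emph{Bijectivity.} A chamber $c \in \Delta$ is opposite the vertex $\zeta$ exactly when its vertex $c_i$ of type $i$ is opposite $\zeta$. This is the sense in which $\Opp(\zeta)$ was described via embeddings $\iota$ with $\iota \circ \ell_i$ asymptotic to $\rho$. So $c \mapsto c_i$ lands in $\Oppv[i](\zeta)$, and $c \in \Res(c_i)$.

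For opposite vertices $\xi \in \Oppv[i](\zeta)$ and $\zeta$, I will use the standard fact about opposite residues in a spherical building: the perspectivities $\pr_\xi|_{\Res\zeta} \colon \Res\zeta \to \Res\xi$ and $\pr_\zeta|_{\Res\xi} \colon \Res\xi \to \Res\zeta$ are mutually inverse bijections. Applying this with $\xi = c_i$ gives $\pr_{c_i}(\pr_\zeta c) = c$. Conversely, for $(\xi,\tau)$ the chamber $c = \pr_\xi\tau$ contains $\xi$, so $c_i = \xi$, and $\pr_\zeta c = \tau$. Hence the two maps are mutually inverse.

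\emph{Continuity.} I would work with a concrete realization. Fix a ray $\rho$ with $\rho(-\infty) = \zeta$.

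A chamber $c \in \Opp(\zeta)$ is $\iota(S(\infty))$ for an apartment embedding $\iota$ such that $\iota\circ\ell_i$ coincides with $\rho$ on an infinite interval. In that apartment:
\begin{itemize}
\item[(a)] $c_i = \iota(\ell_i(\infty))$;
\item[(b)] $\pr_\zeta c$ is the chamber of $\iota(\partial\Scoxeter)$ containing $\zeta$ on the same side of the wall $\iota(\ell_i)$ as $c$, namely $\iota(s(S)(\infty))$ for the appropriate Weyl group element $s$;
\item[(c)] conversely, $\pr_\xi\tau$ is read off the same way from an apartment containing $\xi$, $\zeta$ and $\tau$.
\end{itemize}
If $\iota'$ agrees with $\iota$ on a large compact $K \subseteq \Scoxeter$ containing a segment of $\ell_i$ and a neighborhood of it in $\Sdetecting_i$, then $\iota'\circ\ell_i$ and $\iota\circ\ell_i$ share a long initial segment. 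Therefore $\iota'(\ell_i(\infty))$ is close to $\xi$, and $\iota'(s(S)(\infty))$ is close to $\pr_\zeta c$. This gives continuity of $c \mapsto (c_i, \pr_\zeta c)$.

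For the inverse, given $(\xi,\tau)$ realized by $\iota$, take a nearby pair $(\xi',\tau')$. Then $\xi'$ is realized by a line agreeing with $\iota\circ\ell_i$ on a long segment (coinciding with $\rho$ beyond it), and $\tau'$ by a sector at $\zeta$ agreeing with $\iota(s(S))$ on a large compact set. The aim is to glue these into an apartment $\iota'$ agreeing with $\iota$ on a large compact set $K$; its chamber $\iota'(S(\infty))$ is then $\pr_{\xi'}\tau'$ and lies close to $\pr_\xi\tau$.

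The main obstacle is this gluing step, i.e.\ continuity of the inverse jointly in $(\xi,\tau)$. I would first handle it using the tree $T_\zeta$ and $T_\xi$: the chamber $\tau' \in \Res\zeta = \partial T_\zeta$ and the line for $\xi'$ determine a half-apartment bounded by that line. The convex hull of this half-apartment and $\xi'$ contains a sector whose germ near the compact set $K$ is forced to coincide with $\iota$, because both the wall segment and the $\tau'$-sector agree there and convex hulls in $X$ are determined locally by such data. If that becomes messy, an alternative is to prove continuity of the forward map only and then deduce continuity of the inverse by a local compactness argument: restrict to the compact pieces of $\Opp(\zeta)$ given by lines through a fixed point $\rho(-R)$, on which the forward map is a continuous bijection from a compact space to a Hausdorff space.
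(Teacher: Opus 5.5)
Your bijectivity argument is the paper's: the perspectivities $\pr_\xi|_{\Res\zeta}$ and $\pr_\zeta|_{\Res\xi}$ are mutually inverse.

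For continuity, the paper handles both directions at once. It asserts that for $K$ the convex hull of a compact subcomplex and $\ell_i(-\infty)$, the images of $\{\iota' : \iota'|_K=\iota|_K\}$ are open and form bases both in $\Opp(\zeta)$ and in $\Oppv[i](\zeta)\times\Res(\zeta)$. Your forward estimate and your gluing step are exactly the two halves of that assertion, so that route is the paper's.

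Your compactness fallback is genuinely different. It needs only one direction of continuity, at the price of using compactness of $\Delta$ and the exhaustion of $\Opp(\zeta)$ by the sets $C_R$ of chambers whose line from $\zeta$ contains $\rho|_{(-\infty,-R]}$. To make it work, record that $C_R$ is open as well as compact. It is clopen in $\Delta$: the defining condition is that $[\rho(-R),c_i)$ leaves $\rho(-R)$ in the direction opposite to $\zeta$, which is a condition on an initial germ. Hence the images $F(C_R)$ are products of a clopen subset of $\Oppv[i](\zeta)$ with $\Res(\zeta)$. They form an increasing open cover of the target, so the local inverses patch.

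What you should fix is where you locate the difficulty: your forward direction silently assumes what you call the main obstacle. Neighbourhoods of $c$ in $\Delta$ are given by sector embeddings $S\to X$ that agree with $\iota|_S$ on compact subsets of $S$. To conclude that $\pr_\zeta c'$ is close to $\pr_\zeta c$, you must show that every such nearby $c'$ equals $\iota'(S(\infty))$ for an apartment embedding $\iota'$ with $\iota'(\ell_i(-\infty))=\zeta$. This $\iota'$ must agree with $\iota$ on a large piece of $\iota(s(S))$, which lies \emph{outside} $S$.

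This is the same gluing, and it goes as follows. With $o=\iota(0)$:
\begin{enumerate}
\item Concatenate $\iota\circ\ell_i|_{(-\infty,0]}$ with the ray $[o,c'_i)$. This is a local geodesic at $o$, hence a line $L'$ from $\zeta$ to $c'_i$.
\item Take the half-apartment bounded by $L'$ with $c'$ at infinity. It exists by the product structure of the wall space of $(c'_i,\zeta)$, and it contains the sector at $o$ toward $c'$.
\item Its intersection with $\iota(\Sdetecting_i)$ is convex. It contains a long ray of $\iota\circ\ell_i$ toward $\zeta$ and a large compact piece of $\iota(S)$, hence a large piece of $\iota(s(S))$.
\item Extend the half-apartment to an apartment. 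The building axiom then gives $\iota'$.
\end{enumerate}
With this written out for the forward map, the inverse follows either from the mirror-image argument or from your compactness trick, so the proof is complete in substance.

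One small slip: ``the convex hull of this half-apartment and $\xi'$'' is just the half-apartment itself, since $\xi'$ is already an endpoint of its boundary line.
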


\begin{proof}
  That the maps are mutually inverse is clear from the bijectivity of perspectivities. Let $\iota \colon \Sregular \to X$ be an embedding with $\iota(\ell_i(-\infty)) = \zeta$. Let $K \subseteq \Sregular$ be the convex hull of a compact subcomplex and $\ell_i(-\infty)$ and consider $U_K(\iota) = \{\iota' \colon \Sregular \to X \mid \iota'|_K = \iota|_K\}$. Then the set of $\iota'(S(\infty))$ where $\iota'$ ranges over $U_K(\iota)$ is open in $\Opp(\zeta)$, the set of $(\iota'(\ell_i(\infty)), \iota'(T_i(\infty)))$ where $\iota'$ ranges over $U_K(\iota)$ is open in $\Oppv[i](\zeta) \times \Res(\zeta)$ and these sets form a basis for the topology.
\end{proof}

\subsection{Wall-trees and projectivities}\label{sec:projectivies}

Let $\xi \in \Delta_i$ be a vertex at infinity of $X$ and let $\zeta \in \Delta_{\bar{i}}$ be an opposite vertex. The union $Y_{\xi,\zeta}$ of all lines connecting $\zeta$ and $\xi$ is a convex subcomplex of $X$. Metrically it is isomorphic to a product \[\Ssingular_{\xi,\zeta} \cong \R \times T_{\xi,\zeta}\] of $\R$ and a tree $T_{\xi,\zeta}$ though the cell structure does not decompose as a direct product. We call $Y_{\xi,\zeta}$ the \emph{wall space} associated to $\xi$ and $\zeta$ and $T_{\xi,\zeta}$ the \emph{wall tree}. These spaces can also be called \emph{façade} and \emph{inner façade} (see \cite{RousseauBook}). We denote the projections to factors by
\begin{align*}
  p_T \colon \Ssingular_{\xi,\zeta} &\to T_{\xi,\zeta} && \text{and} & p_\R \colon \Ssingular_{\xi,\zeta} &\to \R.
\end{align*}
We denote by $\Aut(\Ssingular_{\xi,\zeta})$ the group of type-preserving automorphisms. We take $\Aut^0(\Ssingular_{\xi,\zeta})$ to be the subgroup that fixes $\xi$ and $\zeta$ and preserves the bipartition of $T_{\xi,\zeta}$; it has index at most $4$ in $\Aut(\Ssingular_{\xi,\zeta})$. The group $\Aut^0(\Ssingular_{\xi,\zeta})$ decomposes as a direct product of $\Z$ and $\Aut^0(T_{\xi,\zeta})$, the group of automorphisms of $T_{\xi,\zeta}$ that preserve the bipartition.

Restricting a line to a ray towards $\xi$ respectively $\zeta$ induces maps
\[
  T_\zeta \leftarrow \Ssingular_{\xi,\zeta} \to T_\xi
\]
both of which are $p_T$ up to an isomorphism. In particular there are isometries $T_\zeta \leftarrow T_{\xi,\zeta} \to T_\xi$ that compose to an isometry $\pr_\xi \colon T_\zeta \to T_\xi$. It is compatible with perspectivities and refines them in the sense that the following diagram commutes:
\begin{center}
\begin{tikzcd}
\partial T_\zeta \arrow[d, "\cong"] \arrow[r, "\pr_\xi"] & \partial T_\xi \arrow[d, "\cong"] \\
\Res \zeta \arrow[r, "\pr_\xi"]                          & \Res \xi                         
\end{tikzcd}
\end{center}
We therefore call them \emph{perspectivities} as well.

As a result we obtain the \emph{groupoid of projectivities} $\ProjGrpd$ whose objects are the vertices at infinity $\Deltav[1] \cup \Deltav[2]$ and whose morphisms $\zeta \to \xi$ are the homeoomorphisms $\Res(\zeta) \to \Res(\xi)$ or, equivalently, the isomorphisms $T_\zeta \to T_\xi$ that arise as compositions of perspectivities.

Since ($X$ and thus) $\partial X$ is a thick building, any two vertices of the same type can be connected by a sequence of pairwise opposite vertices. As a consequence in type $A$ the groupoid of projectivities is connected in the sense that there is a projectivity from any object $\zeta$ to any object $\xi$, while it has two components in types $C$ and $G$: $\Deltav[1]$ and $\Deltav[2]$. The isotropy group of $\ProjGrpd$ in $\xi$ will be denoted $\ProjGrp_\xi$ and called the \emph{projectivity group}. Note that projectivity groups in the same component are conjugate and therefore isomorphic, hence we have one abstract projectivity group $P = P_1 = P_2$ in type $A$ and two projectivity groups $P_1$ and $P_2$, one for each vertex type, in types $C$ and $G$.

\section{Amenable groups, amenable actions, and factors}\label{sec:outline}

\subsection{Amenable actions}

The main part of the proof of Theorem \ref{thm:main_theorem} is to prove that a proper quotient of $\Gamma$ is amenable. 

There are many equivalent definitions of amenable groups. The one we will use here is the following: a locally compact group $G$ is amenable if and only if each time it acts by affine maps on a weak-* compact subspace of a dual Banach space, it has a fixed point.



Amenable actions are supposed to generalize the notion of amenable groups, in a way that a group is amenable if and only if its action on a point is. However, it turns out that the various generalizations we could define from these various conditions are not equivalent. 

One of these definitions is the following. An action of a locally compact group $G$ on a locally compact space $Z$ is \emph{topologically amenable} if there exists a sequence of continuous maps $\mu_n \colon Z \to \Prob(G)$ such that $\norm{\mu_n(\gamma x) - \gamma \mu_n(x)} \to 0$ uniformly for $(\gamma,x)$ in every compact subspace of $G\times Z$. Here $\Prob(G)$ can be regarded as lying in the dual of the space of compactly supported continuous functions on $G$ and the norm is the dual norm: $\norm{\mu} = \sup_{\norm{f}_\infty \le 1} \int f d\mu$, and continuity of $\mu_n$ is understood with respect to the weak-* topology.


The following follows from the main theorem of \cite{Lecureux10} (see also \cite{KaimanovichAmenable} or \cite{RSAmenable} for previous results).

\begin{theorem}\label{thm:action_amenable}
  The action of $\Gamma$ on $\Delta$ is topologically amenable.\qed
\end{theorem}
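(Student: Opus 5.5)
The plan is to deduce the statement from the main theorem of \cite{Lecureux10}, which, roughly, says that the action of $\Aut(X)$ (indeed of any closed subgroup, in particular of a discrete one) on the space of chambers at infinity of a locally finite Euclidean building is topologically amenable. What has to be checked is that our setting fits the hypotheses of that theorem. First, $\Delta$ is a compact metrizable space on which $\Gamma$ acts by homeomorphisms. Second, $X$ is locally finite and thick, and $\Gamma$ acts properly, so $\Gamma$ is countable and the kernel of $\Gamma \to \Aut(X)$ is finite.

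The first reduction handles the kernel. Let $K$ be this finite kernel and $\bar\Gamma$ the image. If $\bar\mu_n \colon \Delta \to \Prob(\bar\Gamma)$ witnesses amenability of the $\bar\Gamma$-action, define $\mu_n(x)$ by lifting $\bar\mu_n(x)$ uniformly along the $K$-cosets. This means putting mass $\bar\mu_n(x)(\bar g)/\abs{K}$ on each preimage of $\bar g$. The map $\mu_n$ is still weak-* continuous, and $\norm{\mu_n(\gamma x)-\gamma\mu_n(x)} = \norm{\bar\mu_n(\bar\gamma x)-\bar\gamma\bar\mu_n(x)}$. So we may assume $\Gamma < \Aut(X)$ is a discrete subgroup.

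Next, amenability of a closed subgroup's action follows from amenability of the ambient group's action. For a discrete subgroup $\Gamma < G = \Aut(X)$, choose a Borel (or continuous, by local profiniteness) cross-section $G \to \Gamma$. Alternatively, fix a compactly supported continuous cutoff $\phi \ge 0$ on $G$ with $\sum_{\gamma\in\Gamma}\phi(g\gamma)=1$, which exists since $\Gamma$ is discrete and the quotient is paracompact. Then push $\mu_n(x)\in\Prob(G)$ to $\Prob(\Gamma)$ via $\nu_n(x)(\gamma)=\int \phi(g^{-1}\gamma)\,d\mu_n(x)(g)$. Continuity and the uniform almost-invariance on compacta transfer directly, because the pushforward map is $\Gamma$-equivariant and norm-contracting. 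This is the standard fact that restricting a topologically amenable action to a closed subgroup stays amenable.

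Finally, for $G$ itself, \cite{Lecureux10} supplies the maps. We take the main theorem there as a black box; its proof constructs, for each chamber at infinity $c$ and basepoint vertex, combinatorial sectors or "convex hulls" pointing to $c$ and averages over vertices at distance about $n$ along them, exploiting that sectors to nearby $c$ share large subsectors. I expect the only real obstacle to be confirming that the hypotheses of \cite{Lecureux10} cover all two-dimensional types $\tilde A_2,\tilde C_2,\tilde G_2$ and the thickness assumptions here. They do, since that result holds for arbitrary locally finite Euclidean buildings. We also need the topology on $\Delta$ defined in Section~\ref{sec:boundaries} to agree with the one used there; both are given by the neighbourhood bases $\{\iota'(S(\infty)) \mid \iota'|_K=\iota|_K\}$.
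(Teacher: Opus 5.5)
Your proposal follows the paper: it, too, obtains the statement by citing the main theorem of \cite{Lecureux10}, and your two reductions (passing to the image $\bar\Gamma<\Aut(X)$ through the finite kernel, then restricting a topologically amenable $\Aut(X)$-action to a closed subgroup) are correct and spell out what that citation uses implicitly. One small fix: a compactly supported $\phi\ge 0$ with $\sum_{\gamma}\phi(g\gamma)=1$ exists only because $\bar\Gamma$ is cocompact in $\Aut(X)$ ($\bar\Gamma$ acts cocompactly on $X$ and vertex stabilizers in $\Aut(X)$ are compact open); paracompactness of the quotient does not give this, and for a general discrete subgroup you would use a Bruhat function instead.
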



The most commonly used notion of an amenable action is amenability in the sense of Zimmer \cite{ZimmerBook}. The relationship between the various notions of amenable actions is clarified in the book \cite{AnantharamanRenault}. One instance of such a relation is the following implication. A self-contained proof can be found in \cite[Proposition~2.5]{GuirardelHorbezLecureux22}.

\begin{proposition}\label{prop:amenable_action_map}
  Let $G$ be a group with a topologically amenable action on compact topological space $Z$. Let $G$ act affinely on a convex, weak-$*$-compact subset $K$ of a dual Banach space. Then there exists a $G$-equivariant measurable map $Z \to K$. \qed
\end{proposition}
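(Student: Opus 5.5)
The plan is the standard argument: build a fixed-point-free-in-the-limit averaging map, then push the measures forward along it. Since the action is topologically amenable, Theorem~\ref{thm:action_amenable}-style hypotheses give continuous maps $\mu_n \colon Z \to \Prob(G)$ that are asymptotically equivariant, uniformly on compacta of $G \times Z$. I would fix a point $k_0 \in K$ and define
\[
  \phi_n(z) \defeq \int_G g\cdot k_0 \, d\mu_n(z)(g) \in K,
\]
a barycenter taken in the weak-$*$ sense. This is well defined because $K$ is convex and weak-$*$ compact and the orbit map $g \mapsto g k_0$ is bounded. The maps $\phi_n$ are weak-$*$ continuous, hence measurable.

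Next I would control the equivariance defect. Writing $\phi_n(gz) - g\phi_n(z)$ as the integral of $h \mapsto hk_0$ against $\mu_n(gz) - g\mu_n(z)$, and using that $K$ is norm-bounded (it is weak-$*$ compact in a dual space) with an isometric or at least bounded affine action, the difference is bounded in norm by $C\,\norm{\mu_n(gz) - g\mu_n(z)}$. This tends to $0$ uniformly on compacta.

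Finally I would pass to a limit. Pick any quasi-invariant probability measure $\nu$ on $Z$, or work without a measure class and produce a measurable map defined everywhere. I would consider $\phi_n$ as elements of the space $L^\infty_{w*}(Z,K)$ of weak-$*$ measurable maps, viewed inside the dual of $L^1(Z,E)$, where $E$ is the predual. That space is convex and compact in the corresponding weak-$*$ topology, so a subnet $\phi_{n_\alpha}$ converges to some $\phi$. Equivariance passes to the limit because the defect tends to $0$ in norm, uniformly on compacta, for each $g$, and the action of $g$ on $L^\infty_{w*}(Z,K)$ by $(g\psi)(z) = g\psi(g^{-1}z)$ is weak-$*$ continuous.

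The main obstacle is this last step. One needs the identification of $L^\infty_{w*}(Z,E^*)$ with $L^1(Z,E)^*$, which requires $E$ separable. One also needs continuity of the $G$-action in that topology, and must handle an uncountable $G$ (the uniformity on compacta is what allows this). I would first try the separable case with a countable dense set of group elements, following the argument in \cite[Proposition~2.5]{GuirardelHorbezLecureux22}.
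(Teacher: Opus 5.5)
Your proposal is correct, and it is the standard argument that the paper defers to \cite[Proposition~2.5]{GuirardelHorbezLecureux22} rather than proving itself. You average an orbit point against the approximately invariant maps $\mu_n$, bound the equivariance defect in norm by $\sup_{k\in K}\norm{k}\cdot\norm{\mu_n(gz)-g\mu_n(z)}$, and take a weak-$*$ cluster point in $L^\infty(Z,\nu;E^*)\cong L^1(Z,\nu;E)^*$.

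Be aware of what this actually produces: a map that is equivariant $\nu$-almost everywhere for a chosen quasi-invariant $\nu$, using a separable predual. It does not give the everywhere-defined Borel equivariant map of your parenthetical alternative. The $\nu$-a.e.\ version is exactly what the proof of Theorem~\ref{thm:main_theorem} needs, since there the map is only used to push forward the measure class of $\Delta$. Separability is also harmless there, because amenability of a countable group can be tested on metrizable $K$.
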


 In particular, if $Z$ is equipped with a $G$-quasi-invariant probability measure, then pushing the measure by the map $Z\to K$ gives rise to a measurable quotient. In that case this is a slightly weaker version of Zimmer's amenability. 

\subsection{From the Factor Theorem to amenability}

In this paragraph we work in the category of measure spaces , which is the category whose objects are measure spaces, and morphisms are measurable, measure-class preserving maps, which are identified when the agree almost everywhere.

By a \emph{factor} of a measure space $Z$ we mean a measure class-preserving map $Z \to Q$. If $Z$ is a standard measure space, by \cite[Theorem~3.3.4]{AnantharamanPopa}, a factor is equivalent to a subalgebra of $L^\infty(X)$.  

There is a natural, $\Aut(X)$-invariant measure class on the space $\Delta$, classically called \emph{harmonic measure}, defined for example in \cite{Parkinson2} or \cite{RemyTrojan}. We will come back to this definition in Section \ref{sec:measures_on_delta}, and give a precise definition in our setting of prouniform measures. This measure will be denoted $\mu_\Delta^o$ (the measure itself depends on an origin $o$, although the measure class does not)

There are 4 obvious $\Gamma$-equivariant factors of $\Delta$, which are $\id:\Delta\to \Delta$, $\pi_1:\Delta\to \Delta_1$, $\pi_2:\Delta\to \Delta_2$, and $\pi_{\bullet}: \Delta\to \{\bullet\}$. 

Thus Theorem~\ref{thm:factorintro} can equivalently be rephrased as follows:

\begin{theorem}\label{thm:factor}
  Let $X$ be an irreducible two-dimensional building with spaces of chambers and vertices at infinity $\Delta$, $\Delta_1$, $\Delta_2$, and let $\Gamma < \Aut(X)$ be a uniform lattice. 

  Let $\pi:(\Delta,\mu_\Delta^o)\to (Z,\nu)$ be a $\Gamma$-equivariant factor. Then there is a $\Gamma$-equivariant  isomorphism of measure spaces $Z \to Y$, where $Y$ is either $\Delta,\Delta_1,\Delta_2$ or $\{\bullet\}$, which conjugates $\pi$ to $\id,\pi_1,\pi_2$ or $\pi_{\bullet}$.
\end{theorem}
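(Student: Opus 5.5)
Theorem~\ref{thm:factor} should follow the strategy announced in the introduction: a $\Gamma$-invariant weak-* closed subalgebra $A < L^\infty(\Delta)$ turns out to be invariant under a much larger group, the projectivities, and subalgebras invariant under that group can be classified directly. The steps below carry this out.

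\textbf{Step 1: reformulation.} By \cite[Theorem~3.3.4]{AnantharamanPopa}, the factor $\pi$ corresponds to the $\Gamma$-invariant weak-* closed subalgebra $A = \pi^*L^\infty(Z) < L^\infty(\Delta)$. It suffices to show that $A$ is one of $L^\infty(\Delta)$, $L^\infty(\Delta_1)$, $L^\infty(\Delta_2)$, $\C$, with $L^\infty(\Delta_i)$ embedded via $\pi_i^*$. The isomorphism $Z \cong Y$ conjugating $\pi$ then comes from the uniqueness of factors up to isomorphism.

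\textbf{Step 2: local descriptions of $\Delta$.} Fix $\zeta \in \Delta_{\bar i}$. By Lemma~\ref{lem:opp_decomposition_topological}, $\Opp(\zeta) \cong \Oppv[i](\zeta)\times \Res(\zeta)$, and this decomposition should be measure-class compatible. We will need that the harmonic class on $\Opp(\zeta)$ disintegrates as a product class; the prouniform measure machinery of Section~\ref{sec:pro-uniform_measures} is built for exactly this. In these coordinates the projectivity group $P_\zeta$, and more generally projectivities moving the $\Res$ coordinate, act on the second factor.

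\textbf{Step 3: projectivities preserve $A$.} This is the core of the argument and the main obstacle. Take a projectivity $\varphi$, a composition of perspectivities. Using ergodicity of the singular Cartan flow (Theorem~\ref{thm:SingularCartanErgodic}), we want a sequence $\gamma_n \in \Gamma$ converging to $\varphi$ in the compact-open topology on suitable sets of positive measure (Theorem~\ref{thm:contracting}). These $\gamma_n$ should contract $\Opp(\zeta)$ onto $\Res(\xi)$ in the transverse direction.

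Next we upgrade this to weak-* convergence $\gamma_n f \to \varphi$-transported $f$ for $f \in L^\infty(\Delta)$ (Theorem~\ref{thm:convergence}). The tool is the martingale/Lebesgue-differentiation statement Theorem~\ref{thm:martingale_convergence_fV}: averaging $f$ over shrinking neighbourhoods given by embeddings agreeing on larger compacta recovers $f$ almost everywhere. Compact-open convergence then controls these averages.

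Since $A$ is $\Gamma$-invariant and weak-* closed, the limits lie in $A$. The conclusion is that, restricted to each $\Res(\xi)$ (fibrewise in the product decomposition), the family of functions induced by $A$ is invariant under $P_\xi$. The delicate parts are the measure-class comparisons, for instance between the singular Cartan flow and Haar measure on $\overline P_i$ (Proposition~\ref{prop:Y/M}).

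\textbf{Step 4: classification.} The closure $\overline{P}_\xi$ acts on $\partial T_\xi \cong \Res(\xi)$ highly transitively, in particular $2$-transitively, so its ergodic-theoretic structure is rigid.
\begin{itemize}
\item Suppose $A$ contains a function that is not $\pi_j$-measurable along residues $\Res(\xi)$ for $\xi$ of type $i$. Then invariance under the $2$-transitive group $\overline{P}_\xi$ forces $A$ to separate points of almost every residue of type $i$.
\item Combining the two types via the product decomposition of Step 2, the options are exhausted as follows. If $A$ separates residues of both types, it generates $L^\infty(\Delta)$. If it is constant along the residues of one type, it is $L^\infty(\Delta_1)$ or $L^\infty(\Delta_2)$. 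If it is constant along both, then ergodicity of $\Gamma$ on $\Delta$ (a consequence of Theorem~\ref{thm:introergodic}) gives $A = \C$.
\end{itemize}
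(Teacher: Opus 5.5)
Your Steps 1--3 follow the paper. Theorem~\ref{thm:contracting} and Theorem~\ref{thm:convergence} give $f\circ\proj_{\xi}\circ\proj_{\zeta}\in A$ for almost every opposite pair. Note that $\Gamma$ only approximates these collapsing maps for almost every pair, not an arbitrary projectivity $\varphi$, so invariance under all of $P_\xi$ needs Lemma~\ref{lem:FullMeasureDenseProj} together with weak-$*$ closedness. Lemma~\ref{lem:radial} then yields $R_\xi=\proj_\xi^*L^\infty(\Res(\xi))\subseteq A$. This inclusion is the only rigorous content of your phrase ``$A$ separates points of almost every residue'': residues are null sets, so restricting $L^\infty$ functions to them is meaningless, and Lemma~\ref{lem:projection_measure-preserving} is what makes $R_\xi$ well defined.

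The genuine gap is what comes next. $R_\xi$ consists only of functions of $\proj_\xi$; for $\xi$ of type $1$ outside type $\tilde A_2$ it is a small subalgebra of $L^\infty(\Delta_2)$. Your sentence ``if it is constant along the residues of one type, it is $L^\infty(\Delta_1)$ or $L^\infty(\Delta_2)$'' turns the upper bound $A\subseteq L^\infty(\Delta_i)$ into an equality with no argument for the lower bound. Knowing how $A$ distinguishes points within (projections to) residues says nothing yet about whether $A$ separates points of $\Delta_2$ globally.

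The paper supplies this lower bound with a separate ingredient, the topological factor theorem Proposition~\ref{prop:topfactor}. For non-constant $h\in C(\Res(\xi))$, the function $h\circ\proj_\xi$ is a non-constant continuous function on $\Delta_2$ lying in $A$. So $A\cap C(\Delta_2)$ is a non-trivial $\Gamma$-invariant norm-closed subalgebra of $C(\Delta_2)$, hence equals $C(\Delta_2)$, and weak-$*$ density gives $L^\infty(\Delta_2)\subseteq A$. That proposition needs its own proof, using Gelfand duality, contracting sequences, density of projectivities, $2$-transitivity and gallery-connectedness of $\Delta$. One could presumably instead apply Theorem~\ref{thm:convergence} once more to get $R_\zeta\subseteq A$ for almost every $\zeta$, and then show that countably many of the maps $\proj_\zeta$ separate points and hence generate the Borel $\sigma$-algebra. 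Some argument of this kind is indispensable, and your proposal has none.

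Finally, your case ``constant along both types'' cannot be settled by ergodicity of $\Gamma$ on $\Delta$: elements of $A$ are not $\Gamma$-invariant functions, so ergodicity says nothing about them. What is needed is $L^\infty(\Delta_1)\cap L^\infty(\Delta_2)=\C$. In the case where $A$ contains both $L^\infty(\Delta_i)$, you also need weak-$*$ density of the algebra they generate. Both facts are Lemma~\ref{lem:treillis}, proved via Fubini on $\Fregular\to\Delta\times\Delta$.
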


\begin{proof}[Proof of Theorem~\ref{thm:main_theorem} using Theorem~\ref{thm:factor}]
  By passing to a finite index subgroup we may assume that the action of $\Gamma$ preserves types. We also replace $\Gamma$ by its image in $\Aut(X)$. Let $N \lhd \Gamma$ be a normal subgroup. We want to show that $N = \{1\}$ or that $\Gamma/N$ is amenable. Since $\Gamma$ has property (T) by \cite{Oppenheim} so does $\Gamma/N$. It then follows that $\Gamma/N$ is finite in the latter case.

  Let $\Gamma/N$ act on a convex, weak-$*$-compact subset $K$ of a dual Banach space. We regard this as an action of $\Gamma$ that restricts trivially to $N$. 
  The action of $\Gamma$ on $\Delta$ is amenable by Theorem~\ref{thm:action_amenable}. By Proposition~\ref{prop:amenable_action_map} there is thus a $\Gamma$-equivariant measurable map $\Delta \to K$. Pushing a measure in the canonical measure class on $\Delta$ to $K$ we regard this map as a factor. By Theorem~\ref{thm:factor} $K$ is one of $\Delta$, $\Delta_1$, $\Delta_2$ and a point, up to measure-class preserving isomorphism. Since the action of $\Gamma$ in $L^\infty(K)$ is faithful on the first three spaces (Lemma~\ref{lem:aefaithful}) it follows that $N = \{1\}$. In the last case the measure on $K$ is concentrated on a single point which is then a fixed point, showing that $\Gamma/N$ is amenable.
\end{proof}

Our goal for the rest of the paper is therefore to prove Theorem~\ref{thm:factorintro}.

\section{Flows}\label{sec:flows}

\subsection{Marked wall spaces}

We describe the spaces of marked flats and of restricted marked wall trees following \cite[Section~5.3]{BCL}. They are the spaces underlying the regular and singular Cartan flow respectively.

We start with a general basic remark that will be relevant throughout: we will often consider spaces $\Hom(A,B)$ where $A$ and $B$ are locally finite affine cell complexes (for instance $A = S$, $B = X$) and morphisms are cellular embeddings, possibly with additional constraints. The remark is that these spaces carry a unique reasonable topology which is at the same time (induced from) the topology of pointwise convergence and the compact open topology. The reason is that the image of a compact subspace is determined by the images of a finite subcomplex supporting it. For this reason we will not mention the topology explicitly. We will use the following notation to describe open neighborhoods. If $\iota \colon A \to B$ is a map and $C \subseteq A$ is a compact subset then
\[
  U_C(\iota) = \{\iota' \in \Hom(A,B) \mid \iota|_C = \iota'|_C\}
\]
is an open neighborhood of $\iota$. Since $B$ will always be locally finite, $U_C(\iota)$ is also compact when $C \ne \emptyset$.

The space of \emph{marked flats} is the set
\[
  \Fregular = \{\iota \colon \Sregular \to X \mid \iota \text{ injective and type-preserving}\}
\]
of type preserving embeddings of $\Sregular$ into $X$.

Coming to the space of (restricted) marked wall trees we fix a few choices. First let $\iota_0 \colon \Sregular \to X$ be an embedding of the model Coxeter complex $\Sregular$ into the building. Now let $\{i,j\} = \{1,2\}$. The embedding $\iota_0$ determines vertices at infinity $\xi_i = \iota_0(\ell_i(\infty))$ of type $i$ and $\zeta_i = \iota_0(\ell_i(-\infty))$ of type $\bar{i}$. We denote the associated wall space by $\Ssingular_i \defeq \Ssingular_{\xi_i,\zeta_i}$, the associated wall tree by $T_i$, and the associated projectivity group by $P_i$. We regard $\Ssingular_i$ as an abstract wall space with a chosen prefered embedding $\iota_0 \colon \Ssingular_i \to X$. In this distinction the vertices $\zeta_i$ and $\xi_i$ will be regarded as lying in the boundary of $\Ssingular_i$ rather than that of $X$. 

Let $A_i = \gen{\weight[i]} < A$ be the subgroup that leaves $\ell_i$ setwise invariant. Note that the action of $A_i$ on $\Sregular \subseteq \Ssingular_i$ canonically extends to a an action of $A_i$ on $\Ssingular_i$ by automorphisms that leave the projection $p_T \colon \Ssingular_i \to T_i$ invariant. Regarding $A_i$ as automorphisms of the abstract wall space $\Ssingular_i$ this gives an extension
\begin{equation}\label{eq:wall_space_tree_group_extension}
1 \to A_i \to \Aut^0(\Ssingular_i) \to  \Aut^0(T_i) \to 1
\end{equation}
where we denote the projection on the right by $p_T$ as well. We generally write the actions of $A$ on $\Fregular$ and of $A_i$ on $\Fsingular_i$ multiplicatively but sometimes write the actions of $A$ on $\Sregular$ and of $A_i$ on $\Ssingular_i$ additively: $(a.\iota)(x) = \iota(-a + x)$.

Let $\hat{\Fsingular}_i$ be the space of \emph{marked wall spaces}, namely type preserving embeddings $\Ssingular_i \to X$. It admits a (faithful) action by $\Gamma \times \Aut^0(\Ssingular_i)$ given by $((g,\alpha).\iota)(x) = g(\iota(\alpha^{-1}(x)))$. We define the relation $\sim_+$ (resp. $\sim_-$) by saying that $\iota\sim_+\iota'$ (resp. $\iota \sim_- \iota'$) if and only if for every $x\in Y_i$, if $\ell\colon\R_+\to X$ is the ray $[x,\iota(\xi_i))$ (resp. $[x,\iota(\zeta_i))$) then $\iota=\iota'$ on $\ell([t,+\infty))$ for some $t>0$ (which might depend on $x$).

We let $\sim$ be the equivalence relation on $\hat{\Fsingular}$ generated by the relation $\sim_+$, the relation $\sim_-$ and the orbit relation under $\Gamma \times A_i$. We let $M_i$ be the closure in $\Aut^0(\Ssingular_i)$ of the stabilizer of $[\iota_0]_\sim$ and denote the equivalence relation generated by $\sim$ and the orbit relation of $M_i$ by $\simeq$. We equip $M_i$ with the Haar measure that gives measure one to the stabilizer $(M_i)_0$ of the point $0\in \Sregular\subset \Ssingular_i$. Letting $\EProjGrp_i$ denote the image of $M_i$ under $p_T$ the extension \eqref{eq:wall_space_tree_group_extension} restricts to an extension

\begin{equation}\label{eq:extended_projectivity_group_extension}
1 \to A_i \to M_i \to \EProjGrp_i \to 1
\end{equation}
that splits non-canonically as a direct product.

\subsection{Extended projectivity groupoid}\label{sec:extended_projectivity_groupoid}

Here is another point of view on the construction, introducing notation that will be used throughout the article. First, define $\sigma_+: \hat\Fsingular_i\to \Delta_i$ (resp. $\sigma_-: \hat\Fsingular_i\to \Delta_{\overline{i}}$) as $\sigma_+(\iota)=\iota(\xi_i)$ (resp. $\sigma_-(\iota)=\iota(\zeta_i)$).

Consider the space
$\EDeltav[i]=\{(\xi, \iota)\mid \xi\in\Delta_i, \iota \in \Hom(T_i,T_u)\}$ of marked wall-trees and define a map $\overline \sigma_+ \colon \hat\Fsingular_i\to\EDeltav[i]$ (resp. $\overline \sigma_- :\hat\Fsingular_i\to\EDeltav[\bar{i}]$) as $\sigma_\pm(\iota)=(\sigma_\pm(\iota),\iota_{\pm\infty})$ where $\iota_{\pm\infty}(\lim_{r \to \pm\infty}(r,x)) = \lim_{r \to \pm\infty}\iota(r,x)$ when identifying $Y_i$ with $\R \times T_i$. We endow $\EDeltav[i]$ with the quotient topology.

This is well defined as the maps $\overline{\sigma}_+ \colon \hat\Fsingular_i \to \EDeltav[i]$ and $\overline{\sigma}_- \colon \hat\Fsingular_{\bar{i}} \to \EDeltav[i]$ induce the same topology on $\EDeltav[i]$. To see this note first that there is a homeomorphism $s \colon \Ssingular_i \to \Ssingular_{\bar{i}}$ that when restricted to $\Sregular_i$ takes $0$ to $0$, $\weight[i]$ to $-\weight[\bar{i}]$: if $i = \bar{i}$ it can be taken to act on every apartment as reflection along a line through $0$ and otherwise it identifies $\Ssingular_i$ and $\Ssingular_j$. It induces a homeomorphism $\Fsingular_i \to \Fsingular_{\bar{i}} \colon \iota \mapsto \iota \circ s$. Now $\overline\sigma_+(\iota) = \overline\sigma_-(\iota \circ s)$ and the claim follows.

\begin{lemma}
  The first projection $\overline \Delta_i\to \Delta_i$ is continuous.
\end{lemma}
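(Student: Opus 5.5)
Since $\EDeltav[i]$ carries the quotient topology induced by $\overline{\sigma}_+ \colon \hat\Fsingular_i \to \EDeltav[i]$, a map out of $\EDeltav[i]$ is continuous if and only if its composition with $\overline{\sigma}_+$ is continuous. So I will not work with open sets in $\EDeltav[i]$ directly. The composition of the first projection with $\overline{\sigma}_+$ is the map $\sigma_+ \colon \hat\Fsingular_i \to \Delta_i$, $\iota \mapsto \iota(\xi_i)$. The task therefore reduces to showing that $\sigma_+$ is continuous on the space of marked wall spaces.

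To do this, fix $\iota \in \hat\Fsingular_i$ and a basic neighborhood of $\sigma_+(\iota)$ in $\Delta_i$. By Section~\ref{sec:boundaries}, such a neighborhood is given by a ray $\rho \colon [0,\infty) \to X$ with $\rho(\infty) = \iota(\xi_i)$ and a radius $r \ge 0$. It consists of the endpoints $\rho'(\infty)$ of rays $\rho'$ with $\rho'|_{[0,r]} = \rho|_{[0,r]}$. Neighborhoods of this kind with basepoint $\rho(0)$ fixed form a basis. Changing the basepoint does not change the topology, so I may take $\rho = \iota \circ \ell_i|_{[0,\infty)}$, the image of the ray from $0$ towards $\xi_i$ inside the model wall space $\Ssingular_i \supseteq \Sregular$.

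Now let $C \subseteq \Ssingular_i$ be the compact set $\ell_i([0,r])$, or a finite subcomplex containing it. Take any $\iota' \in U_C(\iota)$. Since $\iota'$ is a type-preserving embedding of the convex subcomplex $\Ssingular_i$, it maps the geodesic ray $\ell_i|_{[0,\infty)}$ to a geodesic ray $\iota' \circ \ell_i$ in $X$, and this ray ends at $\iota'(\xi_i) = \sigma_+(\iota')$. This ray agrees with $\rho$ on $[0,r]$. Hence $\sigma_+(U_C(\iota))$ lies in the given neighborhood, which proves continuity of $\sigma_+$ and so of the first projection.

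The only point requiring care is that embeddings of $\Ssingular_i$ carry geodesic rays of $\Ssingular_i$ to geodesic rays of $X$ with the corresponding endpoint. This holds because $\Ssingular_i$ is abstractly a convex wall space $\R \times T_i$, and a type-preserving cellular embedding restricts on each apartment $\Sregular$ to an isometric embedding into an apartment of $X$. Everything else is unwinding the definitions of the topologies.
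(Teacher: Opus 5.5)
Your proposal is correct and takes the same route as the paper. The paper also reduces, via the quotient topology on $\EDeltav[i]$, to continuity of $\sigma_+ \colon \hat\Fsingular_i \to \Deltav[i]$ and asserts that continuity in one line; you additionally verify it from the basic neighborhoods $U_C(\iota)$ and the ray-neighborhood basis of $\Deltav[i]$.
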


\begin{proof}
  This follows from the fact that the projection $\hat\Fsingular_i\to \Delta_i$ is continuous.
\end{proof}

Note that $\iota \in \hat\Fsingular_i$ is uniquely determined up to $A_i$ by its values under $\overline \sigma_+$ and $\overline \sigma_-$, in other words:

\begin{lemma}
  The map $\overline \sigma_+ \times \overline \sigma_- \colon \hat\Fsingular_i \to \EDeltav[i] \cup \EDeltav[\bar{i}]$ descends to an injection $A_i \backslash \hat\Fsingular_i \to \EDeltav[i] \cup \EDeltav[\bar{i}]$.\qed
\end{lemma}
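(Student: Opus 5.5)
Since the map $\overline\sigma_+ \times \overline\sigma_-$ is invariant under $A_i$ by construction, the content is injectivity on $A_i$-orbits. I would therefore show that two marked wall spaces $\iota,\iota' \colon \Ssingular_i \to X$ with $\overline\sigma_\pm(\iota) = \overline\sigma_\pm(\iota')$ differ by an element of $A_i$.

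First I would identify images. From $\sigma_+(\iota) = \sigma_+(\iota') = \xi$ and $\sigma_-(\iota) = \sigma_-(\iota') = \zeta$, both $\iota(\Ssingular_i)$ and $\iota'(\Ssingular_i)$ are contained in the wall space $\Ssingular_{\xi,\zeta}$. An embedding of a full wall space contains every line from $\zeta$ to $\xi$, because $\Ssingular_i \cong \R\times T_i$ and $T_i$ surjects onto $\partial T_\xi$. Hence both images equal $\Ssingular_{\xi,\zeta}$. Thus $\phi \defeq \iota^{-1}\circ\iota'$ is a type-preserving automorphism of the abstract wall space $\Ssingular_i$ fixing $\xi_i$ and $\zeta_i$.

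Next I would use the splitting $\Ssingular_i \cong \R \times T_i$ to write $\phi = (\phi_\R,\phi_T)$. Because $\phi$ fixes both ends and is an isometry, $\phi_\R$ is a translation of $\R$. The induced tree isomorphism $\phi_T$ is exactly the map that the boundary markings record. Indeed, $\iota_{+\infty}$ is $\iota$ composed with the identification $T_i \to T_{\xi_i}$ given by rays toward $\xi_i$, and $\iota'_{+\infty} = \iota_{+\infty}$ then forces $\phi_T = \id$ on $T_i$. (Here the $\Hom(T_i,T_u)$ in the definition of $\EDeltav[i]$ is read as isomorphisms $T_i \to T_\xi$.) So $\phi$ is a pure translation along the $\R$-factor, by some amount $t$.

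Finally I must show that $t$ corresponds to an element of $A_i = \gen{\weight[i]}$. Here I use that $\phi$ is a type-preserving cellular automorphism. Restricted to the model flat $\Sregular \subseteq \Ssingular_i$ (a line of $T_i$ times $\R$), $\phi$ is then a type-preserving automorphism of $\Sregular$ acting as a translation in direction $\weight[i]$. Type-preserving translations of $\Sregular$ form the lattice $A$, and those parallel to $\ell_i$ form exactly $A_i$. Since the canonical extension of $A_i$ to $\Ssingular_i$ is precisely translation along $\R$ fixing $p_T$, the map $\phi$ equals the element $a \in A_i$, and so $\iota' = \iota\circ a$.

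The only delicate step is the middle one: checking that equality of the markings $\iota_{\pm\infty}$ pins down $\phi_T$ completely, rather than only its action on $\partial T_i$. I would handle it via the tree $T_\xi$ of rays up to eventual coincidence up to translation, which is isometric to $T_{\xi,\zeta}$. The marking is then a genuine tree isometry. Note that one of the two markings, $\overline\sigma_+$, already suffices; $\overline\sigma_-$ is needed only for $\zeta$.
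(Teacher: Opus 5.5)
Your argument is correct and is exactly the reasoning the paper leaves implicit: it states the lemma with \qed, relying on the extension $1 \to A_i \to \Aut^0(\Ssingular_i) \to \Aut^0(T_i) \to 1$. The map $\iota^{-1}\circ\iota'$ is an automorphism of $\Ssingular_i$ fixing both ends, equality of the markings forces its image under $p_T$ to be trivial, and the kernel is $A_i$.

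One small point concerns your claim $\iota(\Ssingular_i)=\Ssingular_{\xi,\zeta}$. As written the justification is loose: it really needs injectivity plus matching valences of the locally finite wall trees. You do not need this claim, though. Equal markings together with $\sigma_-(\iota)=\sigma_-(\iota')$ already force $\iota$ and $\iota'$ to send each line $\R\times\{x\}$ onto the same line from $\zeta$ to $\xi$, so the two images coincide and $\iota^{-1}\circ\iota'$ is defined.
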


By construction, we have $\iota\sim_+\iota'$ if and only if $\overline \sigma_+(\iota)=\overline\sigma_+(\iota')$, and similarly for $\sim_-$.

 \begin{remark}
 There is an action of $\ProjGrpd$  on $\EDeltav[1] \cup \EDeltav[2]$  where $p.(\zeta',\iota)$ for $p \colon T_\zeta \to T_\xi$ and $(\zeta',\iota) \in \EDeltav[1] \cup \EDeltav[2]$ is defined if $\zeta = \zeta'$ and is then given by $(\xi, p \circ \iota)$. 
   The space $\EDeltav[1] \cup \EDeltav[2]$ is a marked version of the set $\bigsqcup_{\xi \in \Deltav[1] \cup \Deltav[2]} T_\xi$ underlying our groupoid $\ProjGrpd$. Unlike $\bigsqcup_{\xi \in \Deltav[1] \cup \Deltav[2]} T_\xi$ the space $\EDeltav[1] \cup \EDeltav[2]$ induces a non-discrete topology on $\Delta_1 \cup \Delta_2$. We expect that $\ProjGrpd$ should naturally carry the structure of a topological groupoid with respect to this coarser topology: in it, a projectivity $T_\zeta \to T_\xi$ can be close to a projectivity $T_{\zeta'} \to T_{\xi'}$ when $(\xi',\zeta')$ is distinct from but close to $(\xi,\zeta)$. However, we will not need this here and consequently we will work with the set $\bigsqcup_{\xi \in \Deltav[1] \cup \Deltav[2]} T_\xi$ of unmarked trees in what follows.
 \end{remark}

 The group $\Gamma$ acts on $\bigsqcup_{\xi \in \Deltav[1] \cup \Deltav[2]} T_\xi$ giving rise to a morphism $\zeta \to \gamma.\zeta$ represented by a homeomorphism $T_\zeta \to T_{\gamma.\zeta}$ for every $\zeta \in \Deltav[1] \cup \Deltav[2]$ and every $\gamma \in \Gamma$. We equip $\Homeo(T_\zeta,T_\xi)$ with the natural topology and define the \emph{extended projectivity groupoid} $\EProjGrpd$ to be the groupoid obtained by taking the groupoid generated by $\ProjGrpd$ and $\Gamma$ and taking the closure of morphisms $\zeta \to \xi$ as a subset of $\Homeo(T_\zeta,T_\xi)$. As for $\ProjGrpd$ we denote the isotropy group in $\xi$ by $\EProjGrp_\xi$ and up to isomorphism we have a single isotropy group for each type $i \in \{1,2\}$ (if $X$ is of type $\tilde{A}_2$ both of these are isomorphic as well). The following proposition implies that this isotropy group is in fact isomorphic to the group $Q_i$ in that appears in the extension \eqref{eq:extended_projectivity_group_extension} and we call it the \emph{extended projectivity group}.

 \begin{proposition}\label{prop:groupoid}
   For $\iota,\iota' \in \hat \Fsingular_i$ we have $\iota \simeq \iota'$ if and only if $\overline \sigma_+(\iota)$ and $\overline \sigma_+(\iota')$ lie in the same $\EProjGrpd$-orbit if and only if $\overline \sigma_-(\iota)$ and $\overline \sigma_-(\iota')$ lie in the same $\EProjGrpd$-orbit.
 \end{proposition}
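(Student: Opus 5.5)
It suffices to prove the first equivalence. The second follows by applying the first to $\hat\Fsingular_{\bar i}$ via the homeomorphism $\iota \mapsto \iota\circ s$, which exchanges $\overline\sigma_+$ and $\overline\sigma_-$ and carries $\simeq$ to $\simeq$. For the first equivalence I would show that each of the generating relations of $\simeq$ moves $\overline\sigma_+$ within an $\EProjGrpd$-orbit, and conversely that every generator of $\EProjGrpd$ acting on $\overline\sigma_+(\iota)$ can be realised by a chain of generating relations.

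\emph{Forward direction.} We check the four generating relations of $\simeq$ one at a time. If $\iota\sim_+\iota'$, then $\overline\sigma_+(\iota)=\overline\sigma_+(\iota')$ by the remark preceding the proposition. If $\iota\sim_-\iota'$, then $\iota,\iota'$ have the same $\zeta=\sigma_-$ and the same marking $\iota_{-\infty}\colon T_i\to T_\zeta$. Since each wall space is $\R\times$ a tree spanned by lines from $\zeta$ to $\xi=\sigma_+(\iota)$, the identifications $T_\zeta\leftarrow T_{\xi,\zeta}\to T_\xi$ give $\iota_{+\infty}=\pr_\xi\circ\iota_{-\infty}$ up to the canonical identification of $T_i$ through $\Ssingular_i$. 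Hence $\iota'_{+\infty}=\pr_{\xi'}\circ\pr_\zeta\circ\iota_{+\infty}$, so $\overline\sigma_+(\iota')$ is obtained from $\overline\sigma_+(\iota)$ by the projectivity $\pr_{\xi'}\circ\pr_\zeta\colon T_\xi\to T_{\xi'}$. The $\Gamma$-action is part of $\EProjGrpd$ by definition. The $A_i$-action does not change $\overline\sigma_+$ at all, because $A_i$ acts trivially on $T_i$. By transitivity, $\sim$-equivalent embeddings have $\overline\sigma_+$ in one $\EProjGrpd$-orbit. The $M_i$-action is the delicate case. For $m$ in the stabilizer of $[\iota_0]_\sim$ we have $\iota_0\circ m^{-1}\sim\iota_0$, so $\overline\sigma_+(\iota_0\circ m^{-1})=(\xi_i,(\iota_0)_{+\infty}\circ p_T(m)^{-1})$ lies in the orbit of $\overline\sigma_+(\iota_0)$. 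It follows that $(\iota_0)_{+\infty}\, p_T(m)\,(\iota_0)_{+\infty}^{-1}$ belongs to $\EProjGrp_{\xi_i}$. Because $\EProjGrpd$ is closed, this persists for $m$ in the closure $M_i$. To pass from $\iota_0$ to an arbitrary $\iota$, I would use that precomposition by $m$ commutes with the generating relations, so the same conclusion transfers along $\sim$-chains. Embeddings not $\sim$-equivalent to $\iota_0$ require a separate argument: one must check that conjugating $p_T(m)$ by $\iota_{+\infty}$ still yields an element of $\EProjGrpd$. I would handle this by noting that $\Fsingular$-markings only enter through $\iota_{+\infty}$, and the composite $\iota_{+\infty}p_T(m)\iota_{+\infty}^{-1}$ is a morphism $\sigma_+(\iota)\to\sigma_+(\iota)$ of the closed groupoid whenever it is one for some representative in the same $\sim$-class.

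\emph{Converse.} Suppose $\overline\sigma_+(\iota')=g.\overline\sigma_+(\iota)$ for some morphism $g$ of $\EProjGrpd$. First, if $g$ is a single perspectivity composite $\pr_{\xi'}\circ\pr_\zeta$ for $\zeta$ opposite both $\xi$ and $\xi'$, then we build $\iota''$ with $\iota''\sim_+\iota$ and $\sigma_-(\iota'')=\zeta$. This is possible because lines from any $x$ toward $\xi$ extend to lines toward any $\zeta$ opposite $\xi$, so $\iota$ can be re-embedded to a wall space $Y_{\xi,\zeta}$ agreeing with $\iota$ on the $+$ ends. We then take $\iota'''\sim_-\iota''$ with $\sigma_+(\iota''')=\xi'$. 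Now $\overline\sigma_+(\iota''')=\overline\sigma_+(\iota')$, hence $\iota'''\sim_+\iota'$. Since every projectivity is a composite of such two-step perspectivities (in type $A$ with an odd number of steps, handled the same way), and $\Gamma$ is built into $\sim$, the whole algebraic groupoid generated by $\ProjGrpd$ and $\Gamma$ is covered by $\sim$. For a general $g$, which is a limit of such elements, I would write $g=h\circ g_0$ with $g_0$ algebraic and $h\in \EProjGrp_{\xi'}$ close to the identity. Then $h$ is a limit of algebraic elements in the isotropy, which correspond to elements of the stabilizer of $[\iota]_\sim$, and so $h$ is realised by an element of the closure $M_i$ after transporting by the marking.

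\emph{Main obstacle.} The hardest step is this identification of the closure. One must show that limits of isotropy elements in $\Homeo(T_\xi)$ correspond exactly to the closure in $\Aut^0(\Ssingular_i)$ of the stabilizer of $[\iota_0]_\sim$, modulo $A_i$. This includes checking that the $\R$-coordinate (an element of $A_i$) can be ignored, using the splitting in \eqref{eq:extended_projectivity_group_extension}, and that convergence in the compact-open topology on trees lifts to convergence in $\Aut^0(\Ssingular_i)$. I would prove it by lifting via the extension \eqref{eq:wall_space_tree_group_extension}, using local compactness of $\Aut^0(\Ssingular_i)$ and the fact that $A_i$ is discrete, so that a convergent sequence in $\Aut^0(T_i)$ has lifts converging after correcting by a bounded sequence in $A_i$.
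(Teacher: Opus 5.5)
Your overall plan is the paper's: check the generators of $\simeq$ in the forward direction, and in the converse realise perspectivities by $\sim_\pm$-chains and pass to limits. However, the step you flag as delicate in the forward direction cannot be completed as proposed.

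Your treatment of $\iota_0$ is correct. $p_T(M_i')$ lies in $(\iota_0)_{+\infty}^{-1}\EProjGrp_{\xi_i}(\iota_0)_{+\infty}$, hence so does $p_T(M_i)$ by closedness, and this transfers to every $\iota\in\Fsingular_i=[\iota_0]_\simeq$. For other $\iota$ your argument is circular. You correctly note that whether $\iota_{+\infty}p_T(m)\iota_{+\infty}^{-1}$ lies in $\EProjGrpd$ depends only on $[\iota]_\sim$, but you know it for no class outside $M_i.[\iota_0]_\sim$.

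In general it is false. For $g\in\Aut^0(\Ssingular_i)$ put $\iota=\iota_0\circ g$. Then $\iota_{+\infty}=(\iota_0)_{+\infty}p_T(g)$ and $(\iota\circ m)_{+\infty}=(\iota_0)_{+\infty}p_T(g)p_T(m)$. So $\iota\simeq\iota\circ m$ by definition, while their $\overline\sigma_+$-values lie in one $\EProjGrpd$-orbit if and only if $p_T(g)p_T(m)p_T(g)^{-1}\in Q$, where $Q=(\iota_0)_{+\infty}^{-1}\EProjGrp_{\xi_i}(\iota_0)_{+\infty}$. Demanding this for all $g$ and $m$ forces $Q$ to contain the normal closure of $p_T(M_i)$ in $\Aut^0(T_i)$. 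By Tits' simplicity theorem that closure is all of $\Aut^0(T_i)$, which fails e.g.\ for Bruhat--Tits buildings, where $Q_i$ is a rank-one linear group.

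So the proposition is only true, and only needed, for $\iota,\iota'\in\Fsingular_i$. The paper's proof tacitly works there: its forward direction only checks the generators of $\sim$, and its converse begins by arranging $\iota_\infty\in M_i$. The same restriction is hidden in your converse at ``transporting by the marking'', since the closure of the stabiliser of $[\iota_0\circ g]_\sim$ is $g^{-1}M_ig$, not $M_i$. Restricted to $\Fsingular_i$, your forward argument is complete. Your converse, including the lift of limits through the discrete kernel $A_i$, is then a more detailed version of the paper's, apart from the parity point below.

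Two smaller points. First, in types $\tilde C_2$ and $\tilde G_2$ (where $\bar i=i$), $\EProjGrpd$ contains odd-length projectivities between type-$i$ vertices, e.g.\ a single perspectivity $[\xi',\xi]$. Along a $\sim$-chain, however, $\overline\sigma_+$ only changes by pairs $[\xi',\zeta][\zeta,\xi]$ at $\sim_-$ steps and by $\Gamma$. So a $\sim$-chain only realises elements of the groupoid generated by even projectivities and $\Gamma$. Your parenthetical puts the odd case in type $A$, where it does not occur, and says it is ``handled the same way'', which it is not. You need an additional argument, e.g.\ that odd projectivities at $\xi_i$ lie in the closure of the even ones together with $\Gamma$; the paper's sketch does not address this either.

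Second, reducing the second equivalence via $\iota\mapsto\iota\circ s$ requires $s$ to intertwine $\simeq$ on $\hat\Fsingular_{\bar i}$ and on $\hat\Fsingular_i$, i.e.\ to conjugate $M_{\bar i}$ onto $M_i$, and this depends on how $s$ is chosen. The paper avoids this issue. The embedding $\iota$ itself realises the perspectivity $[\sigma_+(\iota),\sigma_-(\iota)]$ carrying $\overline\sigma_-(\iota)$ to $\overline\sigma_+(\iota)$, which you already use in your $\sim_-$ step. Hence the $\overline\sigma_+$- and $\overline\sigma_-$-orbit conditions are equivalent for every pair, independently of $\simeq$.
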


 \begin{proof}
   First note that $\iota$ represents a perspectivity $\sigma_-(\iota) \to \sigma_+(\iota)$ thus $\overline \sigma_+(\iota)$ and $\overline \sigma_-(\iota)$ lie in the same $\ProjGrpd$-orbit and the last two conditions are equivalent. By definition $\iota \sim_+ a.\iota'$ for some $a \in A_i$ if and only if $\overline \sigma_+ = \overline \sigma_-$ and analogously for $\sim_-$. Since the orbit relation under $\EProjGrpd$ also contains the orbit relation under $\Gamma$ it follows that if $\iota \sim \iota'$ then $\overline \sigma_+(\iota)$ and $\overline \sigma_+(\iota')$ lie in the same $\EProjGrpd$-orbit.

   Conversely if $\overline \sigma_+(\iota)$ and $\overline \sigma_+(\iota')$ lie in the same $\EProjGrpd$-orbit, upon applying an element of $\EProjGrpd$ we may assume that $\sigma_+(\iota) = \sigma_+(\iota') = \xi_i$, i.e.\ that $\overline \sigma_+(\iota) = (\xi_i,\iota_\infty)$ and $\overline \sigma_+(\iota') = (\xi_i,\iota_\infty')$ with $\iota_\infty,\iota_\infty' \colon T_i \to T_{\xi_i} = T_i$ elements of $\Aut^0(\Ssingular_i)$. Multiplying by an apropriate element of $\Aut^0(\Ssingular_i)$, we may assume $\iota_\infty \in M_i$. By the definition of $\EProjGrpd$ we know that $\iota_\infty'$ is an accumulation point of a sequence of translates of $\iota_\infty$ under projectivities and elements of $\Gamma$. Since all of these are in relation $\sim$ with $\iota_\infty$ their limit is in relation $\simeq$, i.e.\ $\iota_\infty' \in M_i$.
 \end{proof}

 \subsection{Restricted marked wall spaces}

With these technical preparations in place we can define the \emph{space of restricted marked wall spaces}
\[
  \Fsingular_i = [\iota_0]_{\simeq}
\]
to be the $\simeq$-class of $\iota_0$. By construction it is $\Gamma\times M_i$ invariant.

 \begin{lemma}\label{lem:2-transitive}
  The space $\Fsingular_i$ is closed in $\hat{\Fsingular_i}$. The projection $\EProjGrp_i = p_T(M_i)$ contains the projectivity group $\ProjGrp_i$ and is 2-transitive on $\partial T_i$.

\end{lemma}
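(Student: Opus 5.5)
The lemma has three claims, and I will prove them in the order closedness, containment of $\ProjGrp_i$, and 2-transitivity.

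\emph{Closedness.} I will use Proposition~\ref{prop:groupoid}: the class $\Fsingular_i=[\iota_0]_\simeq$ consists exactly of those $\iota\in\hat\Fsingular_i$ for which $\overline\sigma_+(\iota)$ lies in the $\EProjGrpd$-orbit of $\overline\sigma_+(\iota_0)=(\xi_i,\id)$.

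Take a sequence $\iota_n\in\Fsingular_i$ converging to some $\iota\in\hat\Fsingular_i$. Since $\Gamma$ acts cocompactly and $\Fsingular_i$ is $\Gamma$-invariant, I may translate by elements of $\Gamma$ and assume the $\iota_n$ all agree with $\iota$ on a growing compact region. I then pass to the limit inside a fixed wall space.

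Write $\overline\sigma_+(\iota_n)=(\xi^n,\phi_n)$. By Proposition~\ref{prop:groupoid} there are morphisms $g_n\colon\xi_i\to\xi^n$ in $\EProjGrpd$ realising $\phi_n$ from the identity marking. The aim is to produce a morphism $\xi_i\to\sigma_+(\iota)$ in $\EProjGrpd$ carrying $\id$ to $\iota_{+\infty}$.

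Because the $\iota_n$ agree with $\iota$ on large balls, the map $\iota_n^{-1}\circ\iota$ makes sense on large compact pieces of $\Ssingular_i$. Using the $\sim_\pm$ relations together with perspectivities, I will compose so that the relevant trees are compared at the common vertex. The resulting elements of $\Aut^0(\Ssingular_i)$ then lie in the stabiliser of $[\iota_0]_\sim$ up to an error that tends to the identity. Since $M_i$ is closed, the limit lies in $M_i$, and hence $\iota\simeq\iota_0$.

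Making this limiting step rigorous is the main obstacle. The trouble is that $\sigma_+(\iota_n)$ moves in $\Delta_i$, while the groupoid $\EProjGrpd$ is only closed fibrewise. To get around this, I will first try to use $\sim_-$ to shift the $\iota_n$ so that they share the same vertex $\zeta$ and then the same $\xi$. Concretely, I replace $\iota_n$ by an embedding that coincides with $\iota_n$ near $\zeta$ and with $\iota$ near $\xi$. This is possible once $\iota_n$ and $\iota$ agree on a large enough ball, since both ends can then be rerouted inside a common apartment-type region. After this replacement the comparison takes place in a single $\Homeo(T_\zeta,T_\xi)$, where closure is available by definition.

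\emph{Containment of $\ProjGrp_i$.} This is nearly formal. A perspectivity $T_\zeta\to T_\xi$ is represented by any marked wall space $\iota$ with $\sigma_-(\iota)=\zeta$ and $\sigma_+(\iota)=\xi$. Composing perspectivities amounts to chaining $\sim_+$ and $\sim_-$ moves: given $\iota$ and $\iota'$ with $\sigma_+(\iota)=\sigma_-(\iota')$, the markings at the shared vertex are matched by an element of $\Aut^0$. A closed chain of such moves starting and ending at $\xi_i$ therefore gives an element of $\Aut^0(\Ssingular_i)$ stabilising $[\iota_0]_\sim$, and its image under $p_T$ is the given projectivity. Hence $\ProjGrp_i\subseteq p_T(M_i)$.

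\emph{2-transitivity.} It suffices to show that $\ProjGrp_i$ is 2-transitive on $\partial T_i\cong\Res\xi_i$. This is a standard fact about projectivity groups of thick spherical buildings of rank 2, namely generalized polygons. First, the stabiliser of a chamber $c\ni\xi_i$ acts transitively on $\Res\xi_i\setminus\{c\}$. To see this, take two vertices $\zeta,\zeta'$ opposite $\xi_i$ whose projections to $\xi_i$ are both $c$, and compose perspectivities $\xi_i\to\zeta\to\xi_i$ through suitable intermediate vertices, using thickness to choose them so as to move any chamber other than $c$ to any other. Second, transitivity on $\Res\xi_i$ itself follows by choosing the intermediate vertices differently. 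In the polygon setting this is the classical result that the projectivity group is 2-transitive, which holds for any thick generalized $n$-gon.
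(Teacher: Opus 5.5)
There is a genuine gap in the closedness part, at exactly the step you flag as the main obstacle. You locate the difficulty correctly: $\sigma_+(\iota_n)$ moves, while $\EProjGrpd$ is only closed fibrewise. But nothing you write resolves it.

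\begin{itemize}
\item The paragraph about $\iota_n^{-1}\circ\iota$ cannot work as stated. The maps $\iota_n$ and $\iota$ lie over different pairs of ends, so no element of $\Aut^0(\Ssingular_i)$ relates them.
\item The relation $\sim_-$ keeps the $\zeta$-end, so it cannot move $\iota_n$ to the vertex $\zeta$.
\item An embedding $\kappa$ with $\iota_n\sim_-\kappa\sim_+\iota$ (``coincides with $\iota_n$ near $\zeta$ and with $\iota$ near $\xi$'') does not exist in general. Once $\overline\sigma_-(\kappa)=\overline\sigma_-(\iota_n)$ and $\sigma_+(\kappa)=\xi$, the marking $\kappa_{+\infty}=[\xi,\sigma_-(\iota_n)]\circ(\iota_n)_{-\infty}$ is forced. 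It need not agree with $\iota_{+\infty}$ on all of $T_i$; if it did, you would get $\iota\sim\iota_n$ outright and need no closure at all.
\end{itemize}

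What actually has to be proved is that the rerouted embeddings still converge to $\iota$, and this is never addressed. The initial translation by $\Gamma$ should also be dropped: convergence already means agreement on growing compacta, and translating changes the limit.

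The repair uses two moves, with the convergence checked. Let $(\xi^n,\zeta^n)$ and $(\xi,\zeta)$ be the ends of $\iota_n$ and $\iota$. Take $\kappa_n\sim_+\iota_n$ with $\sigma_-(\kappa_n)=\zeta$, then $\kappa_n'\sim_-\kappa_n$ with $\sigma_+(\kappa_n')=\xi$.

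\begin{itemize}
\item Suppose $\iota_n=\iota$ on $\ell_x([-R,R])$, where $\ell_x$ is the $\R$-line through $x\in T_i$. The concatenation of $\iota(\ell_x)|_{(-\infty,R]}$ and $\iota_n(\ell_x)|_{[-R,\infty)}$ is a local geodesic, hence a line from $\zeta$ to $\xi^n$ in the \cato{} space $X$. In particular $\zeta$ is opposite $\xi^n$.
\item Two lines from $\zeta$ to $\xi^n$ that share a point coincide, so this line is $\kappa_n(\ell_x)$, with matching parametrisation.
\item The same reasoning at the other end gives $\kappa_n'(\ell_x)=\iota(\ell_x)$.
\end{itemize}

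Hence $\kappa_n'\to\iota$, and each $\kappa_n'\in\Fsingular_i$ lies over $(\xi,\zeta)$. You can finish in either of two ways.
\begin{itemize}
\item As you intended: $(\kappa_n')_{+\infty}\to\iota_{+\infty}$ in $\Homeo(T_i,T_\xi)$, and $\EProjGrpd(\xi_i,\xi)$ is closed there by definition, so Proposition~\ref{prop:groupoid} gives $\iota\simeq\iota_0$.
\item Alternatively: $\Fsingular_i=M_i\cdot[\iota_0]_\sim$ meets each fibre, which is an $\Aut^0(\Ssingular_i)$-torsor, in a single $M_i$-orbit. That orbit is closed because $M_i$ is.
\end{itemize}

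This is a genuinely different route from the paper's. The paper exhibits $\Fsingular_i$ as the preimage of a point under $\hat{\Fsingular}_i\to\hat{\Fsingular}_i/{\sim}\cong\Aut^0(\Ssingular_i)/M_i'\to\Aut^0(\Ssingular_i)/M_i$ and then uses Hausdorffness. That is slicker, but it needs the inverse of the orbit bijection to be continuous. The repaired rerouting is precisely a proof of that continuity, so your route, once fixed, makes explicit the cross-fibre step the paper takes for granted.

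On the other two claims you agree with the paper: containment is ``clear by construction'', and 2-transitivity is Knarr's Lemma~1.2.
\begin{itemize}
\item Containment, one caveat: in types $\tilde C_2,\tilde G_2$ no move swaps the two ends of $\Ssingular_i$. So chains of $\sim_\pm$-moves only realise even-length projectivities $\xi_i\to\xi_i$. The paper is silent on this too. It is harmless, since length-four projectivities already give 2-transitivity.
\item 2-transitivity: rely on the citation, because your sketch does not parse. For $\zeta$ opposite $\xi_i$, $\pr_{\xi_i}$ is a bijection $\Res\zeta\to\Res\xi_i$, so no vertex opposite $\xi_i$ ``projects to $c$''.
\end{itemize}

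The standard argument runs as follows. Take $\eta\neq\xi_i$ through the other vertex $p$ of $c$, and $z,z'$ opposite both $\xi_i$ and $\eta$. Then $[\eta,z][z,\xi_i]$ sends $c=\{\xi_i,p\}$ to $\{\eta,p\}$. For suitable $z'$, the composite $[\xi_i,z'][z',\eta][\eta,z][z,\xi_i]$ fixes $c$ and sends a given chamber of $\Res\xi_i\setminus\{c\}$ to any other.
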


\begin{proof}
  That $\EProjGrp_i$ contains $\ProjGrp_i$ is clear by construction and that $\ProjGrp_i$ is $2$-transitive on $\partial T_i \cong \Res(\xi_i)$ was shown by Knarr \cite[Lemma~1.2]{Knarr88}. 

    The group $M_i$ is closed in $\Aut(\Ssingular_i)^0$ by definition. To prove that $\Fsingular_i$ is closed, let us first define $M'_i$ as the stabilizer in $\Aut(\Ssingular_i)^0$ of the class of $\iota_0$, so that $M_i$ is the closure of $M'_i$.

    We first claim that $\Aut(\Ssingular_i)^0$ acts transitively on $\hat\Fsingular_i/\sim$. Indeed, as the groupoid of perspectivities $\ProjGrpd$ acts transitively on $\Delta_i$, we see that each $\sim$-equivalence class intersects each fiber of the natural map $\hat\Fsingular_i \to \Deltaopv[i]$. Since $\Aut(\Ssingular_i)^0$ acts transitively on these fibers, the claim follows.
    
    Hence, the orbit map induces an equivariant continuous bijection 
    $$\hat\Fsingular_i/\sim\to \Aut(\Ssingular_i)^0/M'_i$$
    We therefore have canonical maps
    $$\hat\Fsingular_i\to \hat\Fsingular_i/\sim\to \Aut^0(\Ssingular_i)^0/M'_i\to \Aut(\Ssingular_i)^0/M_i$$
    The space $\Fsingular_i$ is the preimage of a point by the composition of all these maps. Since $M_i$ is closed in $\Aut(\Ssingular_i)^0$, the quotient is Hausdorff, so by continuity $\Fsingular_i$ is indeed closed in $\hat\Fsingular_i$.
\end{proof}


\begin{proposition}\label{prop:restricted_wall_trees}
%

  The action of $M_i$ on  each fiber $(\sigma_+\times\sigma_-)(u,v)$ (for $(u,v)\in\Deltaopv[i]$) is transitive and the map \[\Fsingular_i \to \Deltaopv[i], \iota \mapsto (\iota(\ell_i(\infty),\ell_i(-\infty))\] is surjective.


The action of $M_i$ on $\Fsingular_i$ is proper and free.
\end{proposition}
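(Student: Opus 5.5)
The plan is to combine three ingredients: the description of $\Fsingular_i$ as a single $\simeq$-class, the transitivity of $\Aut^0(\Ssingular_i)$ on the fibres of $\hat\Fsingular_i \to \Deltaopv[i]$ (used in the proof of Lemma~\ref{lem:2-transitive}), and the fact that a marked wall space is pinned down by finitely many of its values.

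\emph{Surjectivity.} Let $(u,v) \in \Deltaopv[i]$. Since $\partial X$ is thick and $\ProjGrpd$ acts transitively on each type class, we can chain perspectivities from $\xi_i$ to $u$. By Proposition~\ref{prop:groupoid}, this means the $\simeq$-class of $\iota_0$ contains some $\iota$ with $\sigma_+(\iota)=u$. To also arrange $\sigma_-(\iota)=v$, we use the relation $\sim_+$: any two wall spaces with the same vertex $u$ and the same marking at $u$ are $\sim_+$-related. Concretely, we pick $\iota' \in \hat\Fsingular_i$ with $\overline\sigma_+(\iota')=\overline\sigma_+(\iota)$ and $\sigma_-(\iota')=v$. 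Such an $\iota'$ exists because the tree $T_u$ is canonically isometric to $T_{u,v}$, so we may glue the marking at $u$ onto the wall space $Y_{u,v}$. Then $\iota' \sim_+ \iota$, so $\iota' \in \Fsingular_i$.

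\emph{Transitivity on fibres.} Let $\iota,\iota'\in\Fsingular_i$ have the same image $(u,v)$. Both embed $\Ssingular_i$ isomorphically onto $Y_{u,v}$, so $\alpha = \iota^{-1}\circ\iota'$ lies in $\Aut(\Ssingular_i)$. It fixes $\xi_i,\zeta_i$ and is type-preserving, so it lies in $\Aut^0(\Ssingular_i)$; the bipartition is preserved because it is determined by types. Since $\iota' = \iota\circ\alpha$ and both lie in the same $\simeq$-class, the argument of Lemma~\ref{lem:2-transitive} places $\alpha^{-1}$ in the stabiliser of that class in $\Aut^0(\Ssingular_i)$. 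That stabiliser is closed and equals $M_i$. Here I rely on $\simeq$ already including the $M_i$-orbit relation, and on the continuous bijection $\hat\Fsingular_i/\!\sim\;\to \Aut^0(\Ssingular_i)/M_i'$ passing to the closure $M_i$.

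\emph{Freeness and properness.} Freeness is immediate: the action is $(\alpha.\iota)=\iota\circ\alpha^{-1}$ and $\iota$ is injective, so $\iota\circ\alpha^{-1}=\iota$ forces $\alpha=\id$. For properness, take compact sets $C_1,C_2\subseteq\Fsingular_i$. It suffices to show that $\{\alpha \in M_i \mid \alpha C_1\cap C_2\neq\emptyset\}$ is relatively compact. By compactness, the images $\iota(0)$ for $\iota\in C_1\cup C_2$ lie in a finite set of vertices $F$ of $X$. If $\iota\circ\alpha^{-1}=\iota'$, then $\alpha(0)=\iota'^{-1}(\iota(0))$ lies in $\bigcup_{\iota'\in C_2}\iota'^{-1}(F)$. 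This set is finite, since $\iota'$ ranges over a compact set that is locally determined by a finite subcomplex, i.e.\ over finitely many values of $\iota'|_K$. Hence $\alpha$ lies in finitely many cosets of the compact open stabiliser $(M_i)_0$, which is the required relative compactness.

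The main obstacle is the transitivity step. One must check carefully that the element $\alpha$ relating two members of one fibre really lies in $M_i$, rather than merely in the group $M_i'$ or in some other coset. The first attempt is to prove directly that stabilising the $\simeq$-class is a closed condition, and that this stabiliser coincides with $M_i$ because $\simeq$ absorbs the $M_i$-orbits by definition.
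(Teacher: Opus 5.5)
You follow the paper's proof closely: surjectivity by moving $\sigma_+$ and then $\sigma_-$ with perspectivities and $\sim_\pm$; transitivity via the equivariant map $\hat\Fsingular_i\to\Aut^0(\Ssingular_i)/M_i$, whose fibre over the base coset is $\Fsingular_i$; freeness from injectivity; and properness from compactness of $(M_i)_0$. However, the step placing $\alpha=\iota^{-1}\circ\iota'$ in $\Aut^0(\Ssingular_i)$ is wrong as justified.

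Types detect the bipartition of $T_i$ in the biregular cases $(B,2)$, $(C,1)$, but not in general. Take type $\tilde A_2$. Translation by $\check{\alpha}_1$ lies in $\tilde W$, fixes $\ell_1(\pm\infty)$, and maps the wall $\{\alpha_2=k\}$ to $\{\alpha_2=k-1\}$, so it moves the wall tree by one edge. The cells and types of $\Ssingular_1$ depend only on the $\R$-coordinate and the parity of the tree coordinate. Hence the $\R$-component of this translation, combined with any parity-swapping automorphism of $T_1$, is a type-preserving automorphism of $\Ssingular_1$ fixing $\xi_1,\zeta_1$ that is not in $\Aut^0(\Ssingular_1)$. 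The paper allows for such elements when it says $\Aut^0$ has index at most $4$.

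This is not specific to your route. The fibre of $\hat\Fsingular_i$ over $(u,v)$ is a torsor for the group of all type-preserving automorphisms of $\Ssingular_i$ fixing $\xi_i$ and $\zeta_i$. So the paper's assertion that $\Aut^0(\Ssingular_i)$ is transitive on these fibres is the same claim.

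Nor can it be rescued by showing that $\iota^{-1}\circ\iota'$ happens to preserve the bipartition whenever $\iota,\iota'\in\Fsingular_i$. In a Bruhat--Tits building of type $\tilde A_2$ over $K$, the projectivity group of a vertex at infinity acts on its wall tree as $\PGL_2(K)$ acts on its Bruhat--Tits tree. Realising an element of odd determinant valuation by a chain of $\sim_+$- and $\sim_-$-moves from $\iota$ produces $\iota'\sim\iota$ in the same fibre with $\iota^{-1}\circ\iota'\notin\Aut^0(\Ssingular_i)\supseteq M_i$. So no element of $M_i$ carries $\iota$ to $\iota'$.

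The clean repair is definitional. Let $\Aut^0(\Ssingular_i)$ be the group of all type-preserving automorphisms fixing $\xi_i$ and $\zeta_i$, and take $M_i$ to be the closure of the stabiliser inside it. Then $\alpha\in\Aut^0(\Ssingular_i)$ is tautological and the rest of your argument goes through as written. The direct-product splitting of \eqref{eq:extended_projectivity_group_extension} is then no longer available in general, but it is not used here.

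A minor point in the properness step: the finiteness of $\bigcup_{\iota'\in C_2}\iota'^{-1}(F)$ is best seen directly. Each $\iota'$ is an isometric embedding with $\iota'(0)\in F$, so this set consists of vertices within distance $\operatorname{diam}F$ of $0$.
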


\begin{proof}
  By transitivity of the action of the groupoid $\EProjGrpd$ on $\Delta_i$, the first projection $\Fsingular_i\to \Delta_i$ is surjective. Applying once again a projectivity it follows that the map  $\Fsingular_i\to \Deltaopv[i]$ is surjective.

  Since $\Aut^0(Y_i)$ acts transitively on each fibers of the map $\sigma_+\times\sigma_-:\hat\Fsingular_i\to \Deltaopv[i]$, by definition of $M_i$ the action of $M_i$ on the fibers of $\sigma_+\times\sigma_-$ is also transitive.

It is clear that the action of $M_i$ on $\Fsingular_i$ is free.  The properness of the action of $M_i$ follows from the fact that it is a closed subgroup of $\Aut(\Ssingular_i)^0$, and the properness of the action of this group on $\hat\Fsingular_i$.
\end{proof}

\begin{lemma}
  The action of $\Gamma$ on $\Fregular$ and on $\Fsingular_i$ is proper and cocompact.
\end{lemma}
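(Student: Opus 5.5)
For $\Fregular$ we use that $\Gamma$ acts properly and cocompactly on $X$. Fix a compact fundamental domain $C \subseteq X$ for the $\Gamma$-action; we may take it to be a finite subcomplex. Let $c_0$ be the chamber of $\Sregular$ at $0$.

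Cocompactness on $\Fregular$ follows by translating each embedding so that $c_0$ lands in $C$. Every $\iota \in \Fregular$ can be moved by some $\gamma \in \Gamma$ so that $\gamma\iota(c_0) \subseteq C$. Hence $\Fregular = \Gamma \cdot K$, where
\[
K = \{\iota \mid \iota(c_0) \subseteq C\}.
\]
This set is a finite union of sets of the form $U_{c_0}(\iota_1)$, one for each of the finitely many chamber images in $C$ with their type-preserving identifications. By the remark on topologies on $\Hom(A,B)$, each such set is compact, so $K$ is compact.

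Properness on $\Fregular$ reduces to properness on $X$. Suppose $\gamma K \cap K \neq \emptyset$. Then $\gamma$ maps some chamber of $C$ to a chamber of $C$. Since the action on $X$ is proper and $C$ is compact, only finitely many $\gamma$ do this.

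For $\Fsingular_i$ the same argument applies, with $\Sregular$ replaced by $\Ssingular_i$ and $c_0$ by the chamber at $0 \in \Sregular \subseteq \Ssingular_i$. Properness is identical, since any $\gamma$ with $\gamma K \cap K \ne \emptyset$ again moves a chamber of $C$ into $C$. Lemma~\ref{lem:2-transitive} shows $\Fsingular_i$ is closed in $\hat{\Fsingular}_i$. Therefore $K \cap \Fsingular_i$ is compact and properness restricts to it.

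The point needing care is cocompactness: translating by $\gamma$ must stay inside $\Fsingular_i$. This holds because $\Fsingular_i$ is $\Gamma$-invariant: it is a $\simeq$-class, and $\simeq$ contains the $\Gamma$-orbit relation. So for $\iota \in \Fsingular_i$ we may choose $\gamma$ with $\gamma\iota(c_0) \subseteq C$, and then $\gamma\iota \in K \cap \Fsingular_i$.

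The main (mild) obstacle is exactly this compactness of $K \cap \Fsingular_i$. The sets $U_{c_0}(\iota_1)$ inside $\hat{\Fsingular}_i$ are compact because $X$ is locally finite: an embedding of the wall space is determined by compatible choices on an exhaustion by finite subcomplexes, each of which has finitely many options, and a Tychonoff/König-type argument gives compactness. Intersecting with the closed subset $\Fsingular_i$ then preserves compactness.
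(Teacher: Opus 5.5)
Your proof is correct and follows essentially the paper's route: the paper transfers properness and cocompactness from $X$ via the continuous, proper, $\Gamma$-equivariant evaluation map $\iota \mapsto \iota(0)$ to the type-$0$ vertices, and your compact open set $K = \{\iota \mid \iota(c_0) \subseteq C\}$ (with $\Gamma K$ the whole space) encodes the same reduction, using a chamber in place of a vertex. Your explicit appeal to the closedness of $\Fsingular_i$ in $\hat{\Fsingular}_i$ to get compactness of $K \cap \Fsingular_i$ spells out a point the paper leaves implicit.
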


\begin{proof}
  Let $X_0$ be the set of vertices of type $0$ of $X$ and consider the map $\operatorname{ev}_0 \colon \Fsingular_i \to X_0, \iota \mapsto \iota(0)$ which is $\Gamma$-equivariant. The map is continuous and proper since $U_{\{0\}}(\iota)$ is compact open for every $\iota \in \Fsingular_i$. Continuity together with properness of $\Gamma \curvearrowright X_0$ shows that the action of $\Gamma$ on $\Fsingular_i$ is proper. That $\Gamma \curvearrowright \Fsingular_i$ is cocompact follows from surjectivity and properness of $\operatorname{ev}_0$ and cocompactness of $\Gamma \curvearrowright \Fsingular_i$. The proof for $\Fregular$ is the same.
\end{proof}

\begin{remark}
    If $\Gamma$ is an arithmetic lattice in a Bruhat-Tits building, then the group of extended projectivities is already the group of projectivities, and it is closed in $\Aut(T_i)$. We do not know in general if there exists examples in which the group of extended projectivities is not closed, or even in which the group of projectivities is of infinite index in the group of extended projectivities.
\end{remark}

While we do not equip $\ProjGrpd$ with the structure of a topological groupoid we need the following fact (which would be part of the verification that it is). 
For $n\geq 2$ let
\[
  \Delta_{i}^{n,\op}=\{(\xi_1,\xi_2,\dots,\xi_n)\in\Deltav[i]\times\Deltav[\bar{i}]\times\ldots\mid\xi_j\text{ opposite }\xi_{j+1},\ 1\le j<n\}
\]
(so $\Deltaopv[i]=\Delta_i^{2,\op}$). If $(\xi,v)\in\Deltaopv[i]$, we write $[\xi,\zeta]$ for the perspectivity from $\zeta$ to $\xi$ (either as a map $\Res(\zeta)\to\Res(\xi)$ or $T_\zeta\to T_\xi$). Similarly, if $(\xi_1,\xi_2,\dots,\xi_n)\in\Delta_i^{n,\op}$ we denote by $[\xi_1,\xi_2,\dots,\xi_n]$ the combinatorial projectivity obtained as the composition
\[
  [\xi_1,\xi_2,\dots,\xi_n]=[\xi_{1},\xi_2]\circ[\xi_{2},\xi_{3}]\circ\dots\circ[\xi_{n-1},\xi_n].
\]
Finally put $\Delta_i^{n,\op}(\xi,\zeta) = \{(\xi_1,\ldots,\xi_n) \in \Delta_i^{n,\op} \mid \xi_1 = \xi, \xi_n = \zeta\}$.

\begin{lemma}\label{lem:projectivity_multiplication_continuous}
  The map
  \begin{align*}
    \Delta_i^{n,\op}(\xi,\zeta) &\to \Homeo(T_\zeta,T_\xi)\\ (\xi_1,\ldots,\xi_n) &\mapsto [\xi_1,\ldots,\xi_n]
  \end{align*}
  is continuous.
\end{lemma}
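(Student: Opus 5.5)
The plan is to reduce to the case $n=2$ and then use that composition of tree isometries is continuous. Here $\Homeo(T_\zeta,T_\xi)$ carries the topology of pointwise convergence, which agrees with the compact-open topology on isometries of locally finite trees. For any locally finite trees, the composition map $\Homeo(T_b,T_a)\times\Homeo(T_c,T_b)\to\Homeo(T_c,T_a)$ is continuous in this topology. Indeed, agreement of two isometries on a finite subtree $K$ of $T_c$ only requires agreement of the second factor on $K$ and of the first factor on its (finite) image. So if the single perspectivity map $(\xi,\zeta)\mapsto[\xi,\zeta]$ is continuous, the $n$-fold case follows by induction.

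There is one subtlety: the intermediate trees $T_{\xi_k}$ vary with the point $\xi_k$. Two perspectivities $[\xi_1,\xi_2]$ and $[\xi_1,\xi_2']$ with $\xi_2\neq\xi_2'$ therefore live on different domains and cannot be compared directly. I would handle this by fixing an explicit tuple $(\xi_1,\ldots,\xi_n)$ and a finite subtree $K\subseteq T_\zeta$, and showing that for all nearby tuples $(\xi_1',\ldots,\xi_n')$ (with $\xi_1'=\xi$, $\xi_n'=\zeta$) the composite agrees with $[\xi_1,\ldots,\xi_n]$ on $K$. For this I would pick a base point $x\in X$ and represent points of $T_{\xi_k}$ by rays from bounded regions. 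Concretely, a vertex $[\rho]\in T_\zeta$ is represented by a wall ray towards $\zeta$, and its image under $[\xi_{n-1},\zeta]$ is obtained by extending $\rho$ backwards through the wall space $\Ssingular_{\xi_{n-1},\zeta}$ to a line with endpoint $\xi_{n-1}$ and reading off the ray class at $\xi_{n-1}$. For each of the finitely many vertices of $K$, this line can be chosen to pass through a fixed compact set $C$ depending only on the base point and $K$.

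The key step is a local lemma. If $\ell$ is a line from $\zeta$ to $\xi_{n-1}$ through $C$, and $\xi'_{n-1}$ lies in a sufficiently small neighbourhood of $\xi_{n-1}$ (defined by coincidence of rays from $x$ on $[0,r]$ with $r$ large), then there is a line $\ell'$ from $\zeta$ to $\xi'_{n-1}$ that agrees with $\ell$ on a long segment through $C$. This should follow from the description of $\Oppv[i](\zeta)$ in Section~\ref{sec:boundaries} as endpoints of lines coinciding with a ray on an infinite interval, together with the neighbourhood basis for $\Delta_i$. Consequently $\ell$ and $\ell'$ share a long segment, so the ray classes they define at $\xi_{n-1}$ and $\xi'_{n-1}$ can be compared on any prescribed finite set.

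To make the induction work, I would not compare $T_{\xi_k}$ with $T_{\xi_k'}$ abstractly. Instead I would track the whole chain of lines $\ell^{(k)}$ connecting $\xi_{k+1}$ to $\xi_k$, all passing through a common large compact region $C'$ of $X$. Perturbing $\xi_2,\ldots,\xi_{n-1}$ slightly, each line can be replaced by a line agreeing with it on $C'$, by the lemma applied at both ends. The final images at $\xi_1=\xi$ then agree for the finitely many vertices of $K$, since their ray classes at $\xi$ are determined by the last line on $C'$.

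The main obstacle is the uniformity in this local lemma. One has to guarantee a single neighbourhood of each intermediate $\xi_k$ that works simultaneously for all vertices of $K$ and is compatible with both adjacent perspectivities. I would handle this by choosing $C'$ first, as a compact convex hull containing all the relevant segments for the unperturbed chain, and then shrinking the neighbourhoods $U_{C'}$-style, as in the proof of Lemma~\ref{lem:opp_decomposition_topological}.
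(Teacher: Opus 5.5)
Your proposal is correct and, once you set aside the naive reduction to $n=2$ via continuity of composition, it is essentially the paper's argument: the paper also fixes a compact ball $B_1\subseteq T_\zeta$, follows the chain of line bundles $L_j$ through compact regions $C_j$ where consecutive bundles coincide, and takes $U_j$ to be the set of $\xi_j'$ to which the segments in $L_j\cap C_j$ extend as rays. The uniformity you flag as the main obstacle reduces to a finite intersection of open sets. The compatibility of consecutive perturbed lines at $\xi_k'$ is automatic, because the geodesic ray from a point to a given point at infinity is unique in a $\cato$ space.
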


\begin{proof}
  Let $(\xi_1,\ldots,\xi_n) \in \Delta_i^{n,\op}$ and put $\zeta = \xi_1$ and $\xi = \xi_n$. Let $p \in T_\zeta$ and $r > 0$ be arbitrary defining a compact neighborhood $B_1 \defeq \bar{B}_r(p) \subseteq T_\zeta$. We need to show that there are neighborhoods $U_j$ of $\xi_j$, $1 < j < n$, such that $[\zeta,\xi_2',\ldots,\xi_{n-1}',\xi]$ coincides with $[\zeta,\xi_2,\ldots,\xi_{n-1},\xi]$ when $\xi_j' \in U_j$.

  Let $L_1$ be the union of all lines in $X$ with endpoints $\zeta$ and $\xi_2$ whose projection to $T_\zeta$ lies in $\bar{B}_r(p)$: it is a subspace homeomorphic to $\bar{B}_r(p) \times \R$ and defines a subset $B_2 \subseteq T_{\xi_2}$. Inductively define $L_j$ to be the union of all lines with endpoints $\xi_j$ and $\xi_{j+1}$ whose projection to $T_{\xi_j}$ is in $B_{j}$ and defines a suset $B_{j+1} \subseteq T_{\xi_{j+1}}$.

  There is a point $c_2 \in X$ such that the ray from $c_2$ to $\xi_2$ is contained in both $L_1$ and $L_2$. By local finiteness of the tree we can moreover choose $c_2$ such that it has a relatively compact open neighborhood $C_2$ whose projection to $T_{\xi_2}$ contains $B_2$ and such that $L_1 \cap C_2 = L_2 \cap C_2$. Thus the restriction of the projectivity $[\zeta,\xi_2,\xi_3]$ to $B_1$ can be computed by tracing a ray from $\zeta$ to $C_2$ and then a ray from $C_2$ to $\xi_3$. Similarly there are set $C_j$ whose intersections with $L_{i-j}$ and $L_j$ coincide and whose projection to $T_{\xi_j}$ contains $B_j$. So the restriction of $[\xi_1,\ldots,\xi_n]$ to $B_1$ can be computed by tracing a ray from $\xi_1$ to $C_2$, then a segment to $C_3$, and so on, and finally a ray from $C_{n-1}$ to $\xi_n$. So it suffices to take $U_j$ to consist of all points $\xi_j'$ such that geodesic segments in $L_j \cap C_j$ admit extensions to rays to $\xi_j'$.
\end{proof}

\section{Convex geometry on trees}\label{sec:convex_geometry_on_trees}

In this section we study convex subcomplexes of complexes isometric to products of a line and a tree. 
Understanding inclusions of such complexes will be crucial in the next section, as it will allow us to finally construct the natural measure on $\Fsingular_i$. The restriction to a product of a line and a tree might seem artificial to the reader, but it turns out that this is in some sense optimal, 
as illustrated in Example~\ref{ex:not_product-symmetric}.

\subsection{Convex functions on trees}

Let $T$ be a (possibly finite) metric tree, more specifically, a simplicial tree equipped with a complete geodesic metric that gives every edge a length that is bounded away from $0$ (we will only need trees with finitely many edge lengths).

A function $f \colon T \to \R$ is \emph{convex} if for every geodesic $\gamma \colon [0,\ell] \to T$ the composition $f \circ \gamma$ is convex. It will be formally convenient to also allow the constant function $f \colon T \to \{\infty\}$ to be convex.

Our interest in convex functions stems from the following elementary observation:

\begin{lemma}\label{lem:convex_sets_and_functions}
  Let $f, g \colon T \to \R \cup\{\infty\}$ be convex functions with $f + g \ge 0$. Then
  \[
    C_{T,f,g} = \{(x,y) \in T \times \R \mid f(x) \le y \le -g(x)\} \subseteq T \times \R
  \]
  is closed convex. Conversely if $C \subseteq T \times \R$ is closed convex then $S = p_T(C)$ is a (possibly not simplicial) subtree and
  \begin{align*}
    g(x) &\defeq -\max\{ y \in p_T^{-1}(x) \} && \text{and} & f(x) & \defeq \min\{y \in p_T^{-1}(x)\}
  \end{align*}
  define convex functions $f,g \colon S \to \R \cup \{\infty\}$ such that $C = C_{S,f,g}$.
\end{lemma}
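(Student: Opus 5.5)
Both directions reduce to the fact that geodesics in the product $T \times \R$ are exactly the pairs $(\gamma,\lambda)$ with $\gamma$ a constant-speed geodesic in $T$ and $\lambda$ an affine map. This holds because $T \times \R$ carries the $\ell^2$ product metric and $T$ is uniquely geodesic. Convexity of a set therefore means: for $(x_0,y_0),(x_1,y_1)$ in it and $x_t = \gamma(t)$ the geodesic from $x_0$ to $x_1$, the point $(x_t,(1-t)y_0+ty_1)$ lies in it.

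\emph{First direction.} Take $(x_0,y_0),(x_1,y_1) \in C_{T,f,g}$. Convexity of $f\circ\gamma$ gives
\[
  f(x_t) \le (1-t)f(x_0)+tf(x_1) \le (1-t)y_0+ty_1 .
\]
Symmetrically, convexity of $g$ gives
\[
  -g(x_t) \ge (1-t)(-g(x_0)) + t(-g(x_1)) \ge (1-t)y_0 + ty_1 .
\]
So the geodesic stays in $C_{T,f,g}$.

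For closedness I would note that a convex function on a metric tree that is finite along a geodesic segment is continuous in its interior. This is not enough on its own, so I would interpret the hypotheses as $f,g$ being lower semicontinuous, or argue directly. Since the statement asserts closedness without extra hypotheses, the cleanest route is the following. The sublevel and superlevel conditions $f(x)\le y$ and $y\le -g(x)$ define closed sets when $f,g$ are lower semicontinuous. Convex functions on a finite-valence tree with finitely many edge lengths are continuous on the interior of their (convex) domain of finiteness, so the only issue is at boundary points of that domain. I expect this to be the main subtlety. I would handle it either by noting that in the intended application the functions arise from the converse construction, which produces lower semicontinuous ones, or by replacing $f,g$ by their lower semicontinuous envelopes, which are still convex and do not change the set's closure. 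If the authors intend closed only in the sense compatible with this, I would state it that way.

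\emph{Converse.} Let $C$ be closed and convex. The projection $p_T$ maps geodesics to geodesics, possibly constant ones, so $S=p_T(C)$ is a convex subset of $T$, that is, a subtree. Each fibre $C\cap p_T^{-1}(x)$ is a closed convex subset of $\{x\}\times\R$, hence a closed interval, possibly unbounded. So the $\min$ and $\max$ exist in $\R\cup\{\pm\infty\}$, and $C = C_{S,f,g}$ by definition. For convexity of $f$, take $x_0,x_1\in S$ and $\epsilon>0$. Pick points $(x_i,y_i)\in C$ with $y_i \le f(x_i)+\epsilon$, or $y_i$ arbitrarily negative if $f(x_i)=-\infty$. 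Convexity of $C$ then gives
\[
  f(x_t)\le (1-t)y_0+ty_1 ,
\]
and letting $\epsilon\to 0$ yields convexity of $f\circ\gamma$. The argument for $g$ is symmetric, using the $\max$. Closedness of $C$ makes $f$ and $g$ lower semicontinuous, which closes the loop with the first direction. Finally, $f+g\ge0$ holds because each fibre is nonempty over $S$.
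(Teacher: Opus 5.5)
The paper gives no proof of this lemma (it calls it an elementary observation). Your argument for convexity of $C_{T,f,g}$ and for the whole converse is the natural one, and it is correct:
\begin{itemize}
\item geodesics in $T\times\R$ are pairs consisting of a constant-speed geodesic and an affine map;
\item $p_T$ maps geodesics onto geodesics, so $S$ is convex;
\item fibres of a closed convex $C$ are closed intervals;
\item your $\epsilon$-argument gives convexity of $f$ and of $-\max$. You may even take $\epsilon=0$, since finite extrema of closed fibres are attained.
\end{itemize}
Your remark that closedness of $C$ makes $f$ and $g$ lower semicontinuous is also right.

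Your unease about closedness in the first direction is justified, and you should settle it rather than hedge: as stated, the claim is false. Let $T=[0,1]$ be a single edge, put $f(0)=1$ and $f\equiv 0$ on $(0,1]$, and let $g\equiv 0$. Then $f$ is convex on every segment, since it is nonnegative and vanishes at every interior point of every segment. Moreover $f+g\ge 0$, and $C_{T,f,g}=(0,1]\times\{0\}$ is not closed. With the corrected sign condition discussed below, take $g\equiv -2$ instead; the same failure occurs, since $(1/n,0)\in C_{T,f,g}$ converges to $(0,0)\notin C_{T,f,g}$.

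What is true is closedness under the extra hypothesis that $f$ and $g$ are lower semicontinuous. Then $\{f(x)\le y\}$ and $\{y\le -g(x)\}$ are closed, and nothing more needs proving. This hypothesis holds in every use in the paper, where $f$ and $g$ are piecewise affine and hence continuous. It is also automatic for real-valued convex functions on a locally finite tree without leaves, since every point is then interior to a geodesic. Passing to lower semicontinuous envelopes, as you suggest, does not prove the statement: it replaces $C_{T,f,g}$ by a different closed set rather than showing that $C_{T,f,g}$ itself is closed.

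Finally, the last sentence of your converse has the sign backwards. Nonempty fibres give $f(x)=\min\le\max=-g(x)$, that is, $f+g\le 0$. The hypothesis $f+g\ge 0$ in the statement is a typo: taken literally, it forces every fibre of $C_{T,f,g}$ to be at most a point. The codomain $\R\cup\{\infty\}$ is a typo of the same kind. For an unbounded fibre, your converse produces the value $-\infty$: for $f$ when the fibre is unbounded below, for $g$ when it is unbounded above. Your treatment of the case $f(x_i)=-\infty$ already handles this correctly.
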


In particular $C_{T,f,g}$ is convex if and only if $C_{T,f,\infty}$ and $C_{T,\infty,g}$ are convex.

We say that $f \colon T \to \R$ is \emph{affine} if it its restriction to every (closed) edge is affine. Thus an affine function is determined by the values it takes on the set $V$ of vertices.

For an affine function $f$ we let
\[
  f_{vw} = \frac{f(w) - f(v)}{d(v,w)}
\]
denote the slope of $f$ on the edge from $v$ to $w$. Note that $f_{vw} = -f_{wv}$.

\begin{lemma}
  An affine function $f \colon T \to \R$ is convex if and only if for every $v \in V$ and every $w,w' \in \lk(v)$ the inequality
  \begin{equation}\label{eq:locally_convex}
    f_{vw} + f_{vw'} \ge 0
  \end{equation}
  is satisfied.
\end{lemma}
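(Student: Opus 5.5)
The plan is to reduce convexity of $f$ to convexity of $f\circ\gamma$ along geodesics, and to observe that such a composition is piecewise affine with breakpoints only at vertices. For a geodesic $\gamma\colon[0,\ell]\to T$, the function $f\circ\gamma$ is continuous and affine on each subinterval mapped into a single edge. Such a function is convex if and only if at every interior breakpoint the slope does not decrease.

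\emph{Necessity.} Let $v\in V$ and $w,w'\in\lk(v)$.
\begin{itemize}
\item If $w\neq w'$, the path $w,v,w'$ is a geodesic, since $T$ is a tree. Parametrize it by arc length, passing through $v$ at time $t_0=d(w,v)$. The slope just before $t_0$ is $f_{wv}=-f_{vw}$ and the slope just after is $f_{vw'}$. Convexity forces $-f_{vw}\le f_{vw'}$, which is \eqref{eq:locally_convex}.
\item If $w=w'$, the inequality reads $2f_{vw}\ge 0$. This is not a statement about a single geodesic, so it must be read with the convention that $w,w'$ range over distinct neighbors. Alternatively, it holds automatically when $v$ has a second neighbor $w''$: then \eqref{eq:locally_convex} for $(w,w'')$ and $(w'',w)$ does not directly give it. So I will interpret the condition as ranging over $w\ne w'$. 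This is the only real subtlety, and I will state this interpretation at the outset.
\end{itemize}

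\emph{Sufficiency.} Let $\gamma$ be any geodesic and let $0<t_1<\dots<t_m<\ell$ be the parameters at which $\gamma$ passes through a vertex.
\begin{itemize}
\item On $[t_k,t_{k+1}]$ (and on the initial and final partial edges) $\gamma$ runs along one edge, so $f\circ\gamma$ is affine there.
\item At an interior vertex $v=\gamma(t_k)$, the geodesic enters from a neighbor $w$ and leaves towards a neighbor $w'\neq w$, since geodesics in trees do not backtrack.
\item The left slope is $f_{wv}=-f_{vw}$ and the right slope is $f_{vw'}$, so \eqref{eq:locally_convex} says precisely that the slope increases at $t_k$.
\end{itemize}
A continuous piecewise affine function on an interval whose slopes are nondecreasing at each breakpoint is convex, which can be verified by integrating its monotone derivative. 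Hence $f\circ\gamma$ is convex for every geodesic $\gamma$.

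Endpoints of $\gamma$ lying in the interiors of edges cause no issue, since only interior breakpoints matter. There is no genuine obstacle beyond this bookkeeping. The step to treat carefully is that geodesics in a tree cannot turn back at a vertex, which guarantees $w\neq w'$ and ties the convexity condition exactly to the link inequality.
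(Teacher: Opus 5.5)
Your argument is correct and is essentially the paper's proof written out in more detail. Both reduce to $f\circ\gamma$ along geodesics, note that it is affine inside edges, and observe that at a vertex where $\gamma$ enters from $w$ and leaves towards $w'\neq w$ the convexity condition is exactly $f_{vw}+f_{vw'}\ge 0$. The paper gets from local to global via ``locally convex implies convex''. You get there via ``nondecreasing slopes at the finitely many breakpoints'', which tacitly uses the standing assumption that edge lengths are bounded away from $0$.

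You are right that the condition must be read for distinct $w\neq w'$, but delete the muddled ``alternatively'' sentence. The case $w=w'$ is not automatic: it genuinely fails for convex affine functions. For example, $f(x)=-x$ on the line $\R$ with vertex set $\Z$ has $f_{vw}=-1$ for $w=v+1$. This distinct-neighbor reading is also what the paper's next lemma uses, since it allows one neighbor with $f(w)<f(v)$.
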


\begin{proof}
  Let $\gamma \colon [0,\ell] \to T$ be a geodesic. The function $f \circ \gamma$ is convex if and only if it is locally convex, so it suffices for $f$ to be convex on sets of the form $\gamma(U)$ where $U$ are arbitrarily small open. Such $\gamma(U)$ are contained in the union of two edges when $U$ is small enough. Since $f$ is affine on edges it is certainly convex in interior points of edges. In the vertex $v$ the given condition is visibly necessary and sufficient.
\end{proof}

Write $\inf(f) \defeq \inf \{f(x) : x \in T\}$ and put $M(f) = \{x \in T \mid f(x) = \inf(f)\}$ if $\inf(f) > - \infty$ and $M(f) = \emptyset$ otherwise.

\begin{lemma}
  Let $f \colon T \to \R$ be affine and convex. At every vertex $v \in V$ there is at most one neighbor $w \in \lk(v)$ with $f(w) < f(v)$ and the set $M(f)$ is either a simplicial subtree of $T$ or is empty.
\end{lemma}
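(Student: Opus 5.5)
The plan is to derive both assertions from the local convexity criterion \eqref{eq:locally_convex} of the preceding lemma, combined with the fact that a locally convex function on a tree is convex along geodesics.

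For the first assertion, fix a vertex $v$ and suppose $w \ne w'$ are neighbors of $v$ with $f(w) < f(v)$ and $f(w') < f(v)$. Then both slopes are negative, $f_{vw} < 0$ and $f_{vw'} < 0$, so $f_{vw} + f_{vw'} < 0$. This contradicts \eqref{eq:locally_convex} applied to the pair $w,w' \in \lk(v)$. Hence at most one neighbor has a strictly smaller value.

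For the second assertion, assume $M(f) \ne \emptyset$. The set $M(f)$ is a sublevel set $\{f \le \inf f\}$ of a convex function, so it is convex. It is also closed, because $f$ is continuous.

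To see that $M(f)$ is simplicial, first suppose an interior point $x$ of an edge $[v,w]$ lies in $M(f)$. Since $f$ is affine on the edge and attains its minimum over $T$ at an interior point, the slope must be $0$. Then $f$ is constant on the whole edge, so the closed edge lies in $M(f)$. Consequently $M(f)$ is a union of vertices and closed edges. Being convex, it is connected, so it is a simplicial subtree.

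One point needs care: $M(f)$ must contain a vertex. If $M(f)$ contained only an edge-interior point, the slope argument above already forces the whole edge, including its endpoints, into $M(f)$. So $M(f)$ always contains a vertex.

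Finally, the convention $M(f)=\emptyset$ when $\inf f = -\infty$ covers the remaining case. If $\inf f > -\infty$ but the infimum is not attained, then $M(f)$ as defined is empty, which is also allowed by the statement.

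None of these steps is a real obstacle. The only thing to verify carefully is that the convexity of $M(f)$ and the edge argument together really yield a subcomplex rather than some other convex subset.
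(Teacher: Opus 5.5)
Your proof is correct and follows the paper's argument. The first claim comes directly from \eqref{eq:locally_convex}, $M(f)$ is convex as a sublevel set (the paper phrases this via Lemma~\ref{lem:convex_sets_and_functions}), and an affine function on an edge that is minimized at an interior point is constant on that edge, so $M(f)$ is a subcomplex. Your separate paragraph about $M(f)$ containing a vertex is redundant, since the edge argument already covers it.
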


\begin{proof}
  The first claim is immediate from \eqref{eq:locally_convex}. That $M$ is convex is just Lemma~\ref{lem:convex_sets_and_functions} applied with $g$ constant to $\inf(f)$. But since $f$ is affine and convex if it is constant on a neighborhood of an interior point of an edge, it is constant on the whole edge. Thus $M(f)$ is a subcomplex.
\end{proof}

We put $f_v = \min \{f_{vw} : w \in \lk(v)\}$. The previous lemma tells us that if $f_v < 0$ then this minimum is attained in a unique neighbor.

\begin{remark}\label{rem:convex_without_chamber}
Note that there are three (non-disjoint) cases in which $C_{T,f,g}$ has dimension $<2$ (empty interior): if $T$ is a single vertex, if $T$ is a linear graph and $f + g = 0$, or if $f$ and $g$ are constant with $f + g = 0$. Indeed, if $f + g = 0$ and a vertex $v$ of $T$ has degree $\ge 3$ then $f_{vw} + f_{vw'} = 0$ for any two neighbors $w,w' \in \lk v$ and there is at most one neighbor $w$ with $f_{vw} > 0$ and at most one with $f_{vw} < 0$ so $f_{vw} = 0$ for all $w \in \lk(v)$. Of the three cases the third one is the only one that is not contained in a flat.
\end{remark}

\subsection{Elementary extensions in wall spaces}\label{sec:elementary_extensions}

Now let $\Ssingular = \Ssingular_i$ be a wall space equipped with its projections $p_T \colon \Ssingular \to T$ and $p_\R \colon \Ssingular \to \R$. In what follows we will make use of the orientation and origin of $\R$ (rather than just its metric) but it will be clear that nothing depends in a substantial way on the choices. Note that $T$ has two different simplicial structures: the coarser one has as vertices the points where $T$ branches, the finer one has as vertices the projections of vertices of $\Ssingular$. We refer to these as the \emph{coarse} and \emph{fine} structure respectively. The relevance is that if $C \subseteq \Ssingular$ is a convex subcomplex then it can be written as $C_{S,f,g}$ as in Lemma~\ref{lem:convex_sets_and_functions} where $S$ is a subtree and $f$ and $g$ are affine both with respect to the fine structure.

Since we will exclusively be interested in subcomplexes, by the convex hull of $M$ we will again mean the combinatorial convex hull, i.e.\ the least convex subcomplex $\conv M$ that contains $M$.

We may regard $\Ssingular$ as a non-thick building of the same type as $X$ and, in particular, speak of apartments. Note, however, that these have a very simple description: they are the sets $p_T^{-1}(L)$ where $L \subseteq T$ is a biinfinite line (an apartment of $T$).
If $v$ and $w$ are adjacent vertices of $T$ we denote by $T_{vw}$ the closure of the component of $T \setminus \{v\}$ that contains $w$; it is a rooted tree with root $v$.

Let $C \subseteq \Ssingular$ be a convex subcomplex and write it as $C_{S,f,g}$ as in Lemma~\ref{lem:convex_sets_and_functions}. We want to understand the minimal convex complexes $D$ that properly contain $C$. Leaving aside the case that $D$ has dimension $<2$ any such complex is the convex hull of $C$ and a chamber $c$ that shares an edge $e$ with $C$. If this convex hull is minimal among convex complexes properly containing $C$ we call it an \emph{elementary extension} and more specifically the elementary extension of $C$ by $c$ along $e$. Understanding exactly which chambers $c$ actually give rise to an elementary extension is slightly subtle as the following example illustrates.

\begin{figure}[htb]
  \begin{tikzpicture}[scale=.4]
  \pgfdeclarelayer{nodelayer}
  \pgfdeclarelayer{edgelayer}
  \pgfsetlayers{edgelayer,main,nodelayer}
	\begin{pgfonlayer}{nodelayer}
		\node [style=dot] (0) at (-2, 0) {};
		\node [style=dot] (1) at (0, 0) {};
		\node [style=dot] (2) at (2, 1.75) {};
		\node [style=dot] (3) at (2, -1.75) {};
		\node [style=dot] (4) at (-3.75, -2) {};
		\node [style=dot] (5) at (-3.75, 2) {};
		\node [style=dot] (6) at (2.5, 4.5) {};
		\node [style=dot] (7) at (4.75, 2.5) {};
		\node [style=dot] (8) at (5, -1) {};
		\node [style=dot] (9) at (3.25, -4) {};
		\node [style=dot] (10) at (3.75, -6.75) {};
		\node [style=dot] (11) at (6.25, -5) {};
		\node [style=dot] (12) at (7.25, -2.25) {};
		\node [style=dot] (13) at (6.75, 0) {};
		\node [style=dot] (14) at (7.25, 2.75) {};
		\node [style=dot] (15) at (6.25, 4.5) {};
		\node [style=dot] (16) at (4.75, 5.75) {};
		\node [style=dot] (17) at (2.75, 7) {};
		\node [style=dot] (18) at (-3.5, 4) {};
		\node [style=dot] (19) at (-6, 2.25) {};
		\node [style=dot] (20) at (-6.25, -2.5) {};
		\node [style=dot] (21) at (-4, -4) {};
	\end{pgfonlayer}
	\begin{pgfonlayer}{edgelayer}
		\draw [magenta] (0) to (1);
		\draw [->-] (1) to (2);
		\draw [->-] (2) to (6);
		\draw [->-] (2) to (7);
		\draw [->-,magenta] (3) to (9);
		\draw [->-] (3) to (8);
		\draw [] (0) to (4);
		\draw [magenta] (0) to (5);
		\draw [->-] (4) to (21);
		\draw [] (4) to (20);
		\draw [->-,red] (5) to (19);
		\draw [->>-,blue] (5) to (18);
		\draw [->>-] (6) to (17);
		\draw [->>-] (6) to (16);
		\draw [->>-] (7) to (15);
		\draw [->>-] (7) to (14);
		\draw [->>-] (8) to (13);
		\draw [->>-] (8) to (12);
		\draw [->>-,red] (9) to (11);
		\draw [->-,blue] (9) to (10);
		\draw [->-,magenta] (1) to (3);
	\end{pgfonlayer}
\end{tikzpicture}\hfill
\includegraphics[width=.49\textwidth]{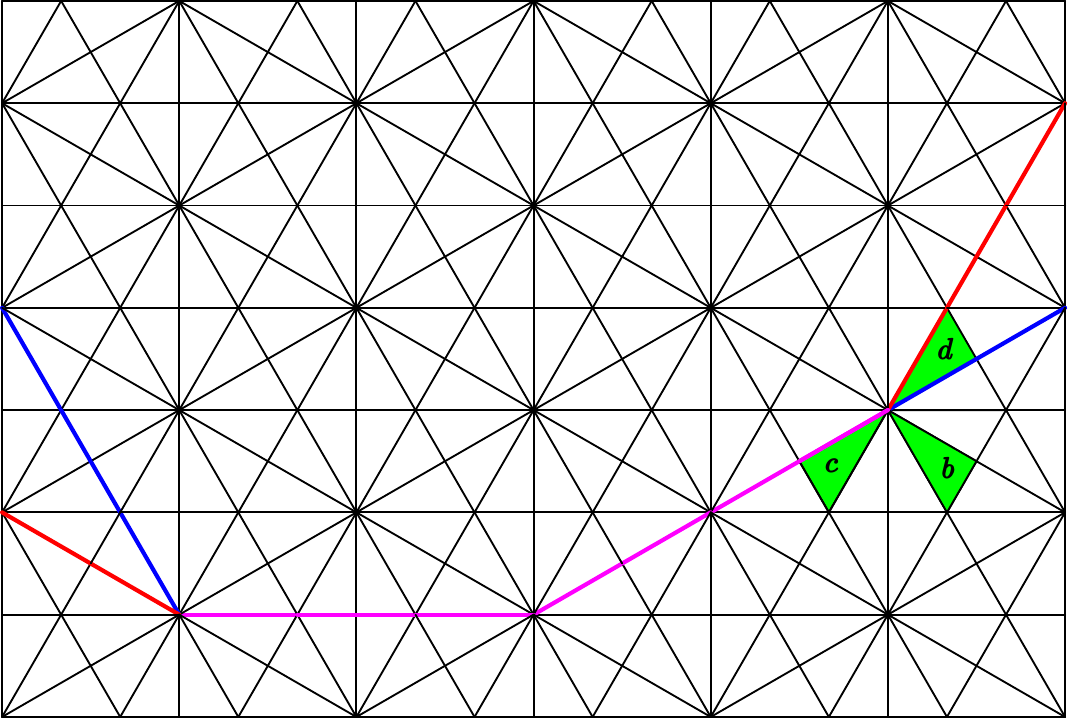}
\caption{A convex set $C = C_{S,f,\infty}$ in $\Ssingular_1$ in type $\tilde{G}_2$. The picture on the left is the tree $S$ together with each edge marked by arrows encoding the slope in the corresponding direction: no arrow means slope $0$ and more arrows mean increasingly steeper slopes among the ones possible in the type. Two maximal lines of $S$ are marked in red and blue respectively. Parts of the preimages of both lines under $p_T$ are (simulteanously) shown on the right. The convex set $C$ is the region above the graph of $f$. The lines show the graph of $f$ in the preimage of the line of the same color. In both pictures the overlap of red and blue is shown in magenta.
The chamber $d$ lies in the read apartment, the chamber $b$ lies in the blue apartment. Looking only at the red apartment it seems that $d$ does not lie in the convex hull of $C$ and $c$ but since $b$ lies in that convex hull, so does $d$.}
\label{fig:example_non-minimal}
\end{figure}
\begin{example}\label{ex:non-admissible_component}
  We consider the convex set $C_{S,f,\infty} \subseteq Y_1$ shown in Figure~\ref{fig:example_non-minimal}. We are interested in the convex hull of $C$ and the chamber $c$. Judging from the apartment indicated in red we would expect $d$ not to lie in the convex hull. However, the apartment indicated in blue shows that $b$ lies in the convex hull. Then looking simultaneously at the red and the blue rightmost strip shows that $d$ lies in the convex hull of $C$ and $b$. On one hand this illustrates that the convex hull of $C$ and $c$ cannot be understood by considering one apartment containing $c$ at a time; on the other hand it shows that the convex hull of $C$ and $c$ is not minimal, namely the convex hull of $C$ and $d$ is properly contained in it.
\end{example}

The rest of the section is technical but is entirely a consequence of understanding the example: If some vertex contains an upward slope that is steeper than the negative of its downward slope then any elementary extension first needs to aim at bringing some such upward slope down before eventually an elementary extension can move properly below the vertex. In order to bring the upward slope down it may be necessary to apply a similar process to a vertex further up. Some subtlety is added by the possibilities that there is no downward slope or that the downward slope is actually flat.

We embark on an investigation of elementary extensions of $C$ along edges of the form $[(v,f(v)), (w,f(w))]$, i.e.\ along the bottom of $C$. Extensions along edges $[(v,g(v)), (w,g(w))]$ are of course completely analogous and so are implicitly covered by the discussion. The remaining case of \emph{vertical} edges $e$, namely edges with $p_T(e)$ a single vertex, is more straightforward and will be treated later.

We continue to consider $C \subseteq \Ssingular$ written as $C_{S,f,g}$. If $S$ is a finite tree, a \emph{segment of constant slope} in $S$ is the image of a geodesic $\gamma \colon [0,\ell] \to S$ such that $f \circ \gamma$ has constant slope. It is \emph{maximal} if it admits no extension of constant slope. Combinatorially a segment of constant slope is a path of edges $[v_0,v_1] \cup \ldots \cup [v_{k-1},v_k]$ such that for $1 \le i < k$ the condition $f_{v_{i-1}v_i} = f_{v_iv_{i+1}}$ is satisfied. It is maximal if $f_{v_0w} + f_{v_0v_1} > 0$ for every $w \in \lk(v_0) \setminus \{v_1\}$ and $f_{v_kw} + f_{v_kv_{k-1}} > 0$ for every $w\in \lk(v_k)\setminus v_{k-1}$. If $S$ is infinite, $\gamma \colon I \to S$ may be defined on $I = \R_{\ge 0}$ or $\R$, and the path of edges $[v_i,v_{i+1}]$ may take index set $\N$ or $\Z$, the condition for maximality only applying to the finite end if there is one.

We say that two segments of constant slope \emph{meet} if they have an edge in common. A \emph{component of constant slope} (component for short) is a (non-empty, non-singleton) subgraph that contains every segment of constant slope that it meets and is minimal with respect to this condition. To get a combinatorial description we may define the relation $\sim$ by saying that $e \sim e'$ for edges $e = [v,w]$, $e'= [v,w']$ that share a vertex if $f_{vw} + f_{vw'} = 0$ and taking the transitive closure. Then components of constant slope are unions over the (closed) edges in an equivalence class. For orientation observe that the edges of minimal non-zero slope pointing rightward in Figure~\ref{fig:example_non-minimal} form two separate components as the two lowest ones cannot be extended to a common segment of constant slope despite having the same slope. Note that every component $R$ is a subtree whose leaves $v$ are endpoints of maximal segments of constant slope and therefore are characterized by the fact that $f_{vw} + f_{vw'} > 0$ for every $w' \in \lk(v) \setminus \{w\}$ where $w$ is the unique neighbor of $v$ in $R$. The relatively interior vertices $v$ have the property that there are two neighbors $w,w' \in R$ with $f_{vw} + f_{vw'} = 0$. They may however have neighbors $x$ with $f_{vw} + f_{vx} > 0$ for every $w \in \lk(v)$. We saw this in Example~\ref{ex:non-admissible_component} and it was the reason why the extension was not minimal.

This motivates calling an component $R$ \emph{admissible} if this does not happen, i.e.\ if for every interior vertex $v$ of $R$ there is a neighbor $w$ such that for every other neighbor $x$ (in $S$!) the equality $f_{vw} + f_{vx} = 0$ holds. This has a simple combinatorial description: every neighbor of $v$ in $S$ is already in $R$.

With these preparations in place, given $C = C_{S,f,g}$ and an admissible component $R$ we define the (downward) extension of $C$ along $R$ to be the complex $D$ characterized by how it intersects certain apartments as follows: if $\Sregular = p_T^{-1}(L)$ is an apartment such that $L$ contains a maximal segment of constant slope of $R$, if $e = [v,w]$  is an edge of $L \cap R$ and if $c$ is the unique chamber of $\Ssingular$ below the edge $[(v,f(v)),(w,f(w))]$ of $F$ then
\begin{equation}
  D \cap F = \conv_F((C \cap F) \cup c).\label{eq:elementary_extension} 
\end{equation}

If $R$ is an admissible component of $g$ the upward extension of $C$ along $R$ is defined analogously.

\begin{proposition}\label{prop:elementary_extension_well-defined}
  Let $R$ be an admissible component. The extension $D$ of $C$ along $R$ is well-defined, i.e.\ completely characterized by \eqref{eq:elementary_extension}, independent of the choice of $e$ and $c$, and is a convex subcomplex of $\Ssingular$.
\end{proposition}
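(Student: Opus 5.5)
The plan is to describe $D$ globally by an explicit new lower function and then check that this description satisfies \eqref{eq:elementary_extension}. Let $R$ be the admissible component, with constant slope $f_{vw}$ along each of its edges in the direction of the segments. On each apartment $F = p_T^{-1}(L)$ where $L$ contains a maximal segment of constant slope of $R$, the set $\conv_F((C\cap F)\cup c)$ is computed inside a Euclidean Coxeter complex. Since $C\cap F$ is bounded below by a piecewise affine graph, and that graph is affine with the same slope along the whole segment $L\cap R$, adding $c$ lowers the graph by one wall-step in that slope direction along exactly that maximal segment and nowhere else. Near the ends of the segment the convex hull may add some triangles; they are determined by $L\cap R$ and the slopes of $f$ at the endpoints.

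This suggests defining $f' \colon S' \to \R$ by $f' = f - \delta$ on $R$, with $\delta$ the shift to the next parallel wall of that slope class, and $f'=f$ off $R$. At the leaves of $R$ the graph has to be interpolated affinely, and this interpolation is fixed by the local picture in any one apartment through the leaf. We then put $D = C_{S',f',g}$.

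The steps are as follows.
\begin{enumerate}
\item Show that the local computation depends only on $R$ and not on the choice of $e$, $c$ or $L$. Any two maximal segments of $R$ are connected through interior vertices. At an interior vertex, admissibility says every neighbor in $S$ lies in $R$ and the slopes satisfy $f_{vw}+f_{vx}=0$. Hence the lowered graph in one apartment forces the same lowering at $v$ in every apartment through $v$, and propagating along $R$ shows the lowering is the same everywhere.
\item Check that $f'$ is affine for the fine structure and convex, using the criterion \eqref{eq:locally_convex}. At interior vertices of $R$ all slopes are unchanged. At leaves $v$ of $R$, the strict inequality $f_{vw}+f_{vw'}>0$ for $w'\notin R$ leaves room for the decrease by one step. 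At vertices outside $R$ nothing changes.
\item Conclude by Lemma~\ref{lem:convex_sets_and_functions} that $D$ is a closed convex set. It is a subcomplex because $f'$ takes values on walls at fine vertices.
\item Verify \eqref{eq:elementary_extension}. First, $D\cap F\supseteq \conv_F((C\cap F)\cup c)$ holds because $D$ is convex and contains $C$ and $c$. Conversely, the explicit description of $D\cap F$ as the region above $f'|_L$ coincides with the planar convex hull computed in the first paragraph.
\end{enumerate}
Since the apartments in question cover $D\setminus C$, the identity \eqref{eq:elementary_extension} determines $D$, which gives well-definedness.

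The main obstacle is the leaf analysis in step 2 and the matching planar computation in step 4. Leaves can have a flat or missing downward slope, and in types $\tilde B_2$ and $\tilde G_2$ there are several possible slope values. I would handle this by a case split on the slope of $R$ relative to the possible slopes at the leaf, working in a single apartment through the leaf and its lowest outgoing edge. I expect the strict inequality at leaves to be exactly what makes the one-step lowering remain convex.
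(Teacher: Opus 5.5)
Your proposal has a genuine gap: the planar computation that everything else rests on is wrong. The hull $\conv_F((C\cap F)\cup c)$ is \emph{not} obtained by shifting the graph of $f$ down by one wall-step along $M=L\cap R$.

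To see this, let $v$ be an endpoint of $M$ that is not a boundary point of $p_T(C)\cap L$. Let $w\in M$ be its neighbour in $M$ and $w'\in L$ its neighbour outside $R$, so that $f_{vw}+f_{vw'}>0$.
\begin{enumerate}
\item The graph of $f$ over $[v,w']$ lies in a wall $W'$ of $F$ through $(v,f(v))$. The slope of $W'$ differs from that of the wall $H$ containing the graph over $M$.
\item Since $f$ is convex, $C\cap F$ lies above $W'$.
\item The chamber $c$ has its edge $e\subseteq H$ over $M$, whose relative interior lies strictly above $W'$. A wall cannot cut a chamber, so $c$ lies above $W'$ as well.
\item Hence the half-apartment above $W'$ contains the whole hull, so the hull has no point below $(v,f(v))$ over $v$.
\end{enumerate}
So there is no lowering at all at such a leaf. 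In general, as the paper shows, the new lower boundary over $M$ lies above $\max\{h_{v,\lambda_v},h_{u,\lambda_u},f-\mu\}$. It starts at $f$ at such leaves, descends along the wall of slope $\lambda_v$ through $(v,f(v))$, and reaches $f-\mu$ only eventually, possibly never if $M$ is short.

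Consequently your $f'$ (equal to $f-\delta$ on $R$, with affine interpolation at the leaves) does not describe $D$, however the interpolation is read.
\begin{itemize}
\item If $f'$ drops by $\delta$ at a leaf $v$ while $f'(w')=f(w')$, the slope at $w'$ towards $v$ decreases by $\delta/d(v,w')$. Then \eqref{eq:locally_convex} fails at $w'$ whenever $f$ is straight there.
\item If instead the drop is taken over the single edge $[v,w]$, the new slope $f_{vw}-\delta/d(v,w)$ is in general neither a wall slope nor compatible with convexity at $v$. The strict inequality $f_{vw}+f_{vw'}>0$ does not by itself leave room for a full wall-step over one edge.
\end{itemize}
Either way, the reverse inclusion $D\cap F\subseteq\conv_F((C\cap F)\cup c)$ in your Step 4 fails. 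Your Step 3 is also unjustified: for a subcomplex you need each affine piece of the graph to lie in a wall, not just its values at fine vertices.

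Step 1 inherits the error. The amount of lowering at a point of $R$ depends on its distance to the leaves of the particular maximal segment through it. So ``the lowering is the same everywhere'' is false, and agreement of the hulls computed over different maximal segments through a branch vertex is not a formal consequence of admissibility. Likewise, in Step 2 the slopes at interior vertices of $R$ need not be unchanged once a tapering region reaches them.

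The paper avoids any global formula. It works in one apartment $E\supseteq p_T^{-1}(M)$ and proceeds as follows.
\begin{itemize}
\item It shows independence of $e$ and $c$ by mutual projections.
\item It uses $h$ only as an upper bound on the hull, to localize $D_E\setminus C_E$ over $M$ (or over $\hat M$ when a leaf lies on the boundary of $p_T(C)$) and to see that it depends only on data over $M$.
\item It then checks convexity locally at each vertex. Admissibility enters only at a branch vertex with two upward edges of equal slope $f_{vw}=f_{vw'}>0$ lying in different apartments. There it uses that the new boundary rises from a special vertex $(v,y)$ with slope at least $f_{vw}$.
\end{itemize}
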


\begin{proof}
  We prove the case where $R$ is a finite tree, the infinite case being similar with fewer constraints.

  Let $M \subseteq R$ be a maximal segment of constant slope, let $L \supseteq M$ be a bi-infinite line so that $E = p_T^{-1}(L)$ is an apartment of $\Ssingular$. Note that $M$ is a segment of $T$ with respect to the fine structure and let $\hat{M}$ be the least subcomplex with respect to the coarse structure that contains $M$. Since $T$ only branches in vertices of the coarse structure, $\hat{M}$ is a segment and $\hat{M} \subseteq L$. The following part of the proof is concerned with analysing the situation inside $E$ and so does not involve branching in the tree. This means for instance that we can identify $L$ with $\R$ and take affine functions to be piecewise affine functions on $\R$.

  Let $C_E = C \cap E$. Let $c$ be a chamber of $E$ as defined before \eqref{eq:elementary_extension}, and let 
  Let $D_E = \conv_E(C_E \cup c)$ be the convex hull of $C_E$ and $C$ inside $E$. According to \eqref{eq:elementary_extension} $D_E$ is supposed to be $D \cap E$. Our plan is to show first that $D_E$ does not depend on the choice of $c$ (or equivalently of the edge $e$) in $E$, and then that $D_E \setminus C_E$ not only is contained in $p_T^{-1}(\hat{M})$ but also is determined by the intersection of $C_E \cup c$ with that strip. Since every other apartment $E'$ that contains $p_T^{-1}(M)$ also contains $p_T^{-1}(\hat{M})$ independence of $E$ will follow.

  Write the convex subset $C_E$ of the Euclidean Coxeter complex $E$ as $C_{N,f,g}$. That is, $N = p_T(C) \cap L$ and we denote the restrictions of $f,g \colon p_T(C) \to \R$ by $f$ and $g$ as well. The function $f$ has constant slope on $M$ and is strictly convex in the endpoints of $M$. From this independence of $e$ (and thus $c$) within $E$ is easily seen: if $e = [(v,f(v)),(w,f(w))]$ and $e' = [(v',f(v')),(w',f(w'))]$ are two edges of $E$ with $v,w,v',w' \in M$ and $c$ and $c'$ are the chambers containing $e$ respectively $e'$ that are not in $C_E$ then they are each others mutual combinatorial projections: $\proj_{e'} c = c'$, $\proj_e c' = c$ since they both lie on the same side of the wall that $e$ and $e'$ lie on. So if $D_E$ contains $c$ then it contains $c'$ and vice versa.

  Now let $v$ be an endpoint of $M$ and let $w \in M$ be such that the segment $[(v,f(v)),(w,f(w))]$ is the edge of the boundary of $C$ that contains $(v,f(v))$ and lies above $M$.
  For a slope $\lambda$ let $x \mapsto (x,h_{v,\lambda}(x))$ with $h_{v,\lambda} \colon M \to \R$ be the ray through $(v,f(v))$ with slope $\lambda$. It is given by
    \[
      h_{v,\lambda} \left(v + t \frac{w-v}{d(v,w)}\right) = f(v) + t \cdot \lambda.
  \]
  The slopes $\lambda$ such that $h_{v,\lambda}$ lies inside a wall depends on the vertex $(v,f(v))$ and if $\lambda$ is one of them then so is $-\lambda$ (because walls extend through the vertex). Let $c$ be the chamber containing $e$ that is not contained in $C$ and let its third vertex be $(x,y)$.

  \begin{figure}
  \begin{tikzpicture}[scale=2]
  \coordinate (WW) at (-.7,-.1) {};
  \coordinate (V) at (0,0) {};
  \coordinate (W) at (.8,1) {};
  \coordinate (X) at (.6,.4) {};
  \node[dot] at (V) {};
  \node[dot] at (W) {};
  \node[dot] at (X) {};
  \node[dot] at (WW) {};
  \node[below] at (V) {$(v,f(v))$};
  \node[above,xshift=-17] at (W) {$(w,f(w))$};
  \node[below,xshift=5] at (X) {$(x,y)$};
  \node[above] at (WW) {$(w',f(w'))$};
  \node[right] at ($-1.7*(WW)$) {$\{(t,h_{v,\lambda}(t))\}$};
  \node at ($.33*(V)+.33*(X)+.33*(W)$) {$c$};
  \node at (-2,.5) {$E$};
  
  \draw ($1.7*(WW)$) -- ($-1.7*(WW)$);
  \draw (V) -- ($1.3*(W)$);
  \draw (V) -- (X) -- (W);
  
  \begin{scope}[shift={(0,-.7)}]
  \node[dot] at (-.7,0) {};
  \node[dot] at (0,0) {};
  \node[dot] at (.8,0) {};
  \node[dot] at (.6,0) {};
  \draw (-1.5,0) -- (1.5,0);
  \node[below] at (-.7,0) {$w'$};
  \node[below] at (0,0) {$v$};
  \node[below] at (.6,0) {$x$};
  \node[below] at (.8,0) {$w$};
  \node at (-2,0) {$L$};
\end{scope}
  \end{tikzpicture}
  \caption{A configuration in the proof of Proposition~\ref{prop:elementary_extension_well-defined}.}
  \label{fig:configuration}
\end{figure}

Assume first that $v$ is not in the boundary of $p_T(C)$ so there is a neighbor $w' \in p_T(C_E)$ of $v$ such that $v$ lies between $w$ and $w'$ (see Figure~\ref{fig:configuration}). By assumption $f_{vw'} + f_{vw} > 0$. If $f_{vw'}$ is the slope of a wall through $(v,f(v))$ so is the negative slope $f_{w'v}$. That is, the ray $t \mapsto (t,h_{v,f_{w'v}}(t))$ lies inside a wall. Since $f_{w'v} < f_{vw}$ it follows that $c$ lies above this ray and so $x$ is on the same side of $v$ in $T$ as $w$ and the slope $\lambda_v = (y - f(v))/d(v,x)$ is bounded below by $f_{w'v}$: $\lambda_v$ is the largest slope of a wall trough $(v,f(v))$ with $\lambda_v < f_{vw}$ (which exists by the previous discussion). The upshot is that $D_E$ is contained in the region above the ray $(p,h_{v,\lambda_v}(p))$ since $C$ and $c$ do and the region is convex (that region is $C_{L \cap p_T(C),h_{v,\lambda_v},\infty}$).

  If $u$ is the other endpoint of $M$ and it is also not an endpoint of $p_T(C)$ the same discussion applies leading to a slope $\lambda_u$ and a ray $(p,h_{u,\lambda_u}(p))$ such that $D_E$ lies in the region above this ray as well.
  
  For a last constraint on $D_E$ let $\mu > 0$ be minimal such that $\{(p,f(p)-\mu) : p \in M\}$ lies in a wall (parallel to the wall containing $\{(p,f(p)) : p \in M\}$).

  Now we define a function $h \colon p_T(C_E) \to \R$ by
  \[
    h(p) = \begin{cases}
      f(p) & p \in p_T(C_E) \setminus M\\
      \max\{h_{v,\lambda_v}(p),h_{u,\lambda_u}(p), f(p) - \mu\} & p \in M.
    \end{cases}
  \]
  Note that the definition only depends on $f|_M$. We claim that $h$ is convex. Convexity outside of $M$ follows from convexity of $f$, on the interior of $M$ it is clear by definition. In the endpoint $v$ of $M$ it follows from the fact that $f_{vw'} + f_{vw} > 0$ so that $f_{vw'} + \lambda_v \ge 0$ for every neighbor $w'$ of $v$ other than $w$ (the neighbor in $M$). In the endpoint $u$ an analogous argument applies.

  In summary we have that $C_{M,h,g}$ is a convex set that is described entirely in terms of what happens inside $p_T^{-1}(M)$, it contains $C_E$ and $c$ so it contains $D_E$. Thus $D_E$ consists of $C_E$ and the convex hull of $C_E \cup c$ in $C_{M,h,g}$. Thus $D_E \setminus C_E$ can be constructed in $p^{-1}(M)$ and depends only on $M$ and not on the choice of $E$ containing $M$.

  \medskip

  Now if $v$ is in the boundary of $p_T(C)$ it may be that there is no ray of slope $\lambda < f_{vw}$ contained in a wall, thus possibly $x$ is not in the interior of $M$, in fact, possibly not even in $M$. There are two possibilities how this can happen: $p_T^{-1}(v)$ is a wall, so $v$ is a vertex of the coarse structure and $x = v \in M$; or $p_T^{-1}(v)$ is not a wall, $v$ is a vertex of the fine structure but not of the coarse structure and $x \in \hat{M} \setminus M$. In either case one may define $h$ on $N \cup p_T(c)$ by
  \[
    h(p) = \begin{cases}
      f(p) & p \in N \setminus M\\
      \max\{h_{u,\lambda_u}(p),f(p)-\mu\} & p \in M \cup p_T(c)
    \end{cases}
  \]
  and apply the same reasoning. Note that convexity in $(x,y)$ is not an issue since $x$ lies in the boundary of $N \cup p_T(c)$. A subtlety is that if $x \in \hat{M} \setminus M$ then $g$ needs to be extended to $\hat{M} \setminus M$. This is done such that $g_{vx}$ is the maximal possible slope in $(v,g(x))$ so in particular $g_{vx} \ge g_{wv}$ meaning that $g_{vx} + g_{vw} \ge 0$ (keep in mind that the upper end of $C_E$ is the graph of $-g$).

  If both $u$ and $v$ lie in the boundary of $p_T(C)$ then $M = N$ and independence of the apartment is clear. We still define $h(p) = f(p) - \mu$ and observe that $D_E$ lies in the region above $h$. This concludes the verification that $D$ is well-defined by the condition that $D \cap E = D_E$ for every apartment $E = p_T^{-1}(L)$ with $L$ containing $M$. It is also the end of the part of the proof were we are working in the flat $E$ above $L$.

  Before turning to the proof of convexity, we draw a conclusion from the fact that $D_E$ lies in the region above $\{(p,f(p)-\mu) : p \in \hat{M}\}$. Let $v$ be an interior point of $M$, $w$ a neighbor, and $(v,y)$ and $(w,z)$ the points in the boundary of $D_E$. We assume that $(v,y)$ is a special vertex and that $z > y$. Then $(z-y)/d(v,w) \ge f_{vw}$.

  To see that $D$ is convex we need to show that for any two edges $e = [v,w]$ and $e' = [v,w']$ of $p_T(C)$ sharing a vertex $v$ the set $D \cap p_T^{-1}(e \cup e')$ is locally convex in $v$. Let $R$ be the component containing $e$. If $v$ is a leaf of $R$ or if $e \cup e'$ is constant slope (i.e.\ $f_{vw} + f_{vw'} = 0$) there is a maximal constant slope segment $M$ containing $e$ and a line $L$ containing $M$ and $e'$ so that $E = p_T^{-1}(L)$ is an apartment of the kind just discussed. As we have seen $D_E$ is convex in $E$ so in particular, it is locally convex in $v$. Note also that if $v$ is not a vertex of the coarse structure then any line containing $e$ also contains $e'$ so the same argument applies, so we can assume $v$ to be a vertex of the coarse structure. Since $R$ is admissible, the only remaining case is that there is an edge $e'' = [v,w'']$ so that both $e \cup e''$ and $e' \cup e''$ are constant slope and, in particular, $f_{vw} = f_{vw'} > 0$. In this case we use the observation from the last paragraph: if $(v,y)$, $(w,z)$ and $(w',z')$ are the points in the boundary of $D$ and if $(v,y)$ is special then $(w-v)/d(v,w)$ and $(w'-v)/d(w',v)$ are at least $f_{vw} > 0$ and thus their sum is positive. If $(v,y)$ is not special but $v$ is a vertex of the coarse structure, we observe that the only possible slope is $0$.
\end{proof}

As promised, the remaining kind of elementary extension is simple: Let $e$ be an edge of $C$ that is vertical, i.e.\ $p_T(e)$ is a single vertex $v$ (necessarily of the coarse structure). Let $c$ be some chamber not in $C$ containing $e$ and write $p_T(c) = [v,w]$. Then we let $S' = S \cup p_T(c)$ and extend $f$ and $g$ to $S'$ by taking $f_{vw}$ and $g_{vw}$ to be maximal among the possible slopes. Then $D = C_{S',f',g'}$ is the elementary extension of $C$ along $e$ by $c$. Since we took $f_{vw}$ maximal among the possible slopes $f_{vw} + f_{vw'} \ge 0$ is automatically satisfied for any neighbor $w$ of $v$ in $S$ which shows that $D$ is convex. We also keep in mind that $f'|_S = f$ and $g'|_S = g$. Note that the extension does not actually depend on $e$ so it is really an extension along $p_T^{-1}(v)$.

We come to our main result of the section.

\begin{proposition}\label{prop:elementary_extensions_exist}
  Let $C \subsetneq D \subseteq \Ssingular$ be convex subcomplexes and assume that $D$ contains a chamber. Then there is an elementary extension of $C$ contained in $D$.
\end{proposition}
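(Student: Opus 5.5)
We write $C = C_{S,f,g}$ and $D = C_{S',f',g'}$ as in Lemma~\ref{lem:convex_sets_and_functions}. Since $C \subseteq D$ we have $S \subseteq S'$, $f' \le f$ and $g' \le g$ on $S$. The plan is to locate a place where $D$ sticks out of $C$ and to show that one of the three kinds of elementary extension (downward along an admissible component of $f$, upward along one of $g$, or along a vertical wall $p_T^{-1}(v)$) is already contained in $D$. We first dispose of degenerate $C$. If $C$ has empty interior, then by Remark~\ref{rem:convex_without_chamber} it is a point, a segment, or a flat strip of a wall. In that case the convex hull of $C$ and a chamber of $D$ adjacent to $C$ will serve, provided it is minimal. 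We will handle this by choosing among the candidate extensions a minimal one, using that convex subcomplexes containing $C$ and contained in the locally finite $D$ satisfy the descending chain condition near $C$. So we focus on the case that $C$ contains a chamber.

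Case 1: $S' \ne S$. Then some edge $[v,w]$ of $S'$ has $v \in S$ and $w \notin S$. Here $v$ is a coarse vertex of $S$, since otherwise the line through $v$ is forced and convexity of $D$ would put $w$ into $p_T(C)$. Because $D$ is convex and contains points above $v$ and above $w$, $f'$ and $g'$ are defined on $[v,w]$ with slopes among the admissible ones. The vertical extension along $p_T^{-1}(v)$ takes the \emph{maximal} possible slopes for $f$ and $g$ on $[v,w]$. We check that this region over $[v,w]$ lies inside $D \cap p_T^{-1}([v,w])$, using $f'(v)\le f(v)$, $g'(v)\le g(v)$ and the maximality of the chosen slopes. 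Hence the vertical elementary extension is contained in $D$.

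Case 2: $S' = S$. Then without loss of generality $f' < f$ somewhere (the case of $g$ is symmetric). We want an admissible component $R$ of $f$ on which $f'$ drops. Start with an edge $e=[v,w]$ where $f' < f$ and let $R$ be its component. If $R$ is not admissible, there is an interior vertex $v$ of $R$ and a neighbour $x \in S\setminus R$ with $f_{vx} + f_{vw} > 0$ for all $w \in R$. Convexity of $f'$ at $v$, combined with $f'(v) < f(v)$ or with the drop along $R$, should force $f'(x) < f(x)$ on the edge $[v,x]$. This is precisely the phenomenon of Example~\ref{ex:non-admissible_component}: the drop propagates into $T_{vx}$ and lands in a component $R'$ strictly farther from $e$. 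Iterating produces a sequence of components moving outward in the tree. Since the region where $f' < f$ must be handled inside the compact part of $D$ near $C$ (or, when $S$ is infinite, by using the maximal slope to bound the depth), the iteration terminates at an admissible component $R$ on which $f' < f$ at some edge.

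Finally, for that admissible $R$, Proposition~\ref{prop:elementary_extension_well-defined} gives the extension $E$ of $C$ along $R$, characterised apartmentwise by \eqref{eq:elementary_extension}. For each apartment $F=p_T^{-1}(L)$ with $L$ containing a maximal constant-slope segment of $R$, the chamber $c$ below an edge of $R$ lies in $D$. This holds because $f' < f$ there and $f'$ is affine with respect to the fine structure, so $f' \le f - \mu$ on that segment. Hence $\conv_F((C\cap F)\cup c) \subseteq D\cap F$, and therefore $E \subseteq D$. Minimality of $E$ among convex complexes properly containing $C$ holds by construction (it is the hull of $C$ and one chamber, and admissibility rules out the shortcut of the example). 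If needed, we again pass to a minimal element below it.

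The main obstacle is the propagation step in Case 2: showing rigorously that a non-admissible component forces $f' < f$ on an edge of a strictly further component, and that this process terminates.
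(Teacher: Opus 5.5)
Your strategy (push a drop of $D$ below $C$ through non-admissible components until an admissible one is reached, and use vertical extensions otherwise) is the paper's. However, the decisive step is exactly the one you leave open, and the ingredient that makes it work is missing. Three things are needed.

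(i) Suppose a convex $D\supseteq C$ contains the chamber below one edge of the graph of $f$ over a component $R$. Then it contains the chamber below every such edge. As in the proof of Proposition~\ref{prop:elementary_extension_well-defined}, the chambers $c_1,c_2$ below two edges $e_1,e_2$ of a common constant-slope segment satisfy $\proj_{e_2}c_1=c_2$, and a convex subcomplex containing $c_1$ and $C\supseteq e_2$ contains this projection. Following the relation $\sim$ then covers all of $R$. Without (i) you cannot get from a drop on your starting edge $e$ to the offending vertex $v$, which may be far from $e$. Step (i) is also what puts a chamber $c\subseteq D$ into every relevant apartment in the admissible case. Your reason there, that affineness of $f'$ gives $f'\le f-\mu$ on the whole segment, is not valid.

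(ii) Let $v$ be an interior vertex of $R$ with $w,w'\in R$ and $f_{vw}+f_{vw'}=0$. By (i), $D$ contains the chambers below the straight wall segment through $(v,f(v))$ on both sides of $p_T^{-1}(v)$. By convexity in the flat strip over $[w,w']$ it then contains points directly below $(v,f(v))$. Hence $f'(v)<f(v)$, and $D$ contains the chamber below $[(v,f(v)),(x,f(x))]$ for every neighbour $x\notin R$.

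(iii) Termination comes neither from compactness nor from moving outward in the tree; neither $S$ nor $D$ need be compact. It comes from $f_{vx}+f_{vw}>0$, $f_{vx}+f_{vw'}>0$ and $f_{vw}=-f_{vw'}$, which give $\abs{f_{vx}}>\abs{f_{vw}}$. So the absolute slope strictly increases at each step, and it takes only finitely many values.

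Case~1 also rests on a false claim. The projection $S=p_T(C)$ can end at a fine vertex that is not coarse. For one of the two values of $i$, a single chamber of an apartment of type $\tilde C_2$ projects onto half of a coarse edge. Even at a coarse leaf $v$, the fibre $C\cap p_T^{-1}(v)$ can be a single point. In these situations there is no vertical edge at $v$, hence no vertical extension, and your Case~2 is unavailable because $S'\neq S$.

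What actually happens there: $-g-f$ vanishes at $v$. Take a line $L$ through $w$, $v$ and an edge of $S$ over which $C$ has a chamber. The concave function $-g'-f'\ge 0$ is positive somewhere on $S'\cap L$, so it cannot vanish at the interior point $v$. Hence $f'(v)<f(v)$ or $g'(v)<g(v)$, and what is needed is a downward or upward extension. So Case~2 should not assume $S'=S$.

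The paper avoids this case split entirely. It takes a chamber $c\subseteq D$, $c\not\subseteq C$, sharing an edge $e$ with $C$. If $e$ is vertical there is a vertical extension; otherwise it runs (i)--(iii) inside $\conv(C\cup c)\subseteq D$. Your appeals to a descending chain condition, for degenerate $C$ and for passing to a minimal element, are unsupported for infinite complexes and play no role in the actual argument.
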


In particular, it follows that the elementary extensions of $C$ are minimal among the convex complexes that properly contain $C$ and contain a chamber.

\begin{proof}
  Let $c$ be a chamber of $D$ that is not contained in $C$. It is easy to see that we may take $c$ to have an edge $e$ in common with $C$. If $e$ is vertical then there is an elementary extension by $c$ so we assume $e$ to be non-vertical. Write $C = C_{S,f,g}$. By changing orientation of $\R$ thus exchanging $f$ and $g$ if needed we may assume that $e$ is of the form $[(v,f(v)),(w,f(w))]$. If the component of constant slope $R$ of $e$ is admissible there is an extension $D$ of $C$ by $c$. If $L$ is a line in $T$ which contains a maximal segment of constant slope of $R$ and $E = p_T^{-1}(L)$ is the corresponding apartment of $\Ssingular$ we may take $c$ and $e$ inside $E$ and see that since $D$ contains $C$ and $c$ it contains the intersection of $E$ with the elementary extension. Since the difference of the elementary extension and $C$ is contained in the union of such apartments this shows that the extension is contained in $D$.

  So assume that $R$ is not admissible. Hence there are vertices $v,w,w'$ in $R$ with $w$ and $w'$ neighbors of $v$ as well as a neighbor $x$ of $v$ outside of $R$. Tracing along segments of constant slope it is not hard to see that the convex hull of $C$ and $c$ contains the chambers adjacent to $[(v,f(v)),(w,f(w))]$ and $[(v,f(v)),(w,f(w))]$. From this it follows that it contains the full link of $v$ in $p_T^{-1}([v,w] \cup [v,w'])$. But then it must also contain the chamber adjacent to $e' = [(v,f(v)),(x,f(x))]$.

  We replace $e$ by $e'$ which has bigger slope (in absolute value) and induct.
\end{proof}

We record another fact that we will need later on.

\begin{lemma}\label{lem:convex_extend_to_infinity}
  Let $C = C_{S,f,g} \subseteq \Ssingular$ be a convex subcomplex. There exists a minimal convex subcomplex $C_+$ that contains $C$ and for every $x \in T$ admits an $r_x$ such that $\{x\} \times [r_x,\infty) \subseteq C_+$. Writing $C_+ = C_{T,f',\infty}$ one has $f'|_S = f$. Symmetrically there is a least convex subcomplex $C_-$ that contains $C$ and for every $x \in T$ contains $\{x\} \times (-\infty,r_x]$ for $r_x$ sufficiently small and $C_- = C_{T,-\infty,g'}$ with $g'|_S = g$.
\end{lemma}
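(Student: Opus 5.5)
We construct $C_+$ explicitly as $C_{T,f',\infty}$ by extending $f$ from $S$ to all of $T$ in the "lowest-sloped" way that keeps convexity. We then show that every convex subcomplex with the stated property already lies above $f'$. The case $C_-$ is symmetric: reverse the orientation of $\R$, which exchanges $f$ and $g$.

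\emph{Construction.} First observe that $C_{S,f,\infty}$ is a convex subcomplex containing $C$, by Lemma~\ref{lem:convex_sets_and_functions} with $g\equiv\infty$. It therefore suffices to enlarge its projection $S$ to all of $T$. Process the edges of $T\setminus S$ in order of increasing distance from $S$. Each such edge has the form $[v,w]$, where $v$ is already in the current tree $S'$ and $w\notin S'$. Mimicking the vertical elementary extension described before Proposition~\ref{prop:elementary_extensions_exist}, set $f'_{vw}$ to be the maximal slope admissible at the vertex $(v,f'(v))$, meaning the steepest wall direction through that vertex. Inequality \eqref{eq:locally_convex} at $v$ then holds, because $f'_{vw}+f'_{vx}\ge 0$ for any other neighbour $x$ whenever $f'_{vw}$ is maximal (this is exactly the observation made for the vertical extension). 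At the new vertex $w$, local convexity holds automatically, since all later edges at $w$ again receive maximal slope. Also $f'$ stays affine on the fine structure and its graph runs along walls, so $C_{T,f',\infty}$ is a subcomplex. Local finiteness and the exhaustion of $T$ by balls make this a well-defined inductive construction. The result is a convex subcomplex $C_+'=C_{T,f',\infty}\supseteq C$ with $f'|_S=f$, and it visibly contains an upward ray above every $x\in T$.

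\emph{Minimality.} Let $C''$ be any convex subcomplex containing $C$ and an upward ray $\{x\}\times[r_x,\infty)$ over each $x$. By Lemma~\ref{lem:convex_sets_and_functions}, $C''=C_{T,f'',\infty}$ with $f''$ convex, and $f''\le f$ on $S$ because $C''\supseteq C$. We show $f''\le f'$ by induction along the same order of edges. Suppose $f''(v)\le f'(v)$ and consider an edge $[v,w]$ leaving the current tree. The half-wall-space $p_T^{-1}(T_{vw})\cap C''$ contains a vertical ray above $w$, and it contains the point $(v,f'(v))$, which lies above $(v,f''(v))$. Convexity of $C''$, read inside an apartment $p_T^{-1}(L)$ with $L\supseteq[v,w]$, together with the fact that $C''$ is a subcomplex, forces $C''$ to contain the combinatorial convex hull of the vertex $(v,f'(v))$ and an upward vertical ray above $w$. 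In a Euclidean Coxeter complex this hull is bounded below exactly by the steepest wall through $(v,f'(v))$ in direction $w$, so $f''(w)\le f'(w)$. Hence $C''\supseteq C'_+$, and $C_+\defeq C'_+$ is the least such complex.

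\emph{Main obstacle.} The delicate step is the geometric claim just used: the combinatorial convex hull of a vertex and a vertical ray (a ray pointing toward a boundary point of the wall direction $p_T^{-1}(\text{pt})$) is bounded below by the maximal-slope wall. This needs a short case check in the rank-two Coxeter complex. In particular, at non-special vertices or fine-but-not-coarse vertices the only available slope may be $0$, as noted at the end of the proof of Proposition~\ref{prop:elementary_extension_well-defined}. In those cases the "maximal slope" must be interpreted as the maximal slope actually realized by a wall through that vertex. I would handle this by noting that the hull is the intersection of the half-apartments containing the vertex and the ray; the half-apartment bounded by the steepest wall through the vertex is among these, and no half-apartment bounded by a steeper wall contains the ray.
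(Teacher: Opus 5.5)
Your argument is sound in outline, and it takes a genuinely different route from the paper. The paper never writes $f'$ down. It forms the directed union $C_1$ of all iterated upward elementary extensions of $C$, which removes the upper boundary and enlarges $S$ to its coarse hull $\hat S$. It then takes the directed union of all iterated extensions along vertical edges. Minimality comes from the fact that any $D$ with upward rays must contain each such elementary extension. You instead define $f'$ outright, giving every edge pointing away from $S$ the steepest available wall slope (which is exactly what those extensions produce), and prove minimality by the edge-by-edge comparison $f''\le f'$. Your version is more explicit and avoids admissible components and Proposition~\ref{prop:elementary_extension_well-defined} entirely. The paper's version presents $C_+$ as a union of elementary extensions, which meshes with the extension counting in Proposition~\ref{prop:wall_symmetric}.

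The step you flag as the main obstacle, however, is justified the wrong way round.
\begin{itemize}
\item Knowing that the half-apartment above the steepest wall through $p=(v,f'(v))$ contains the hull bounds the hull from \emph{above}. Minimality needs the opposite: $q=(w,f'(w))$ must lie \emph{in} the hull.
\item The claim that no half-apartment bounded by a steeper wall contains the ray is false. The upper half-apartment of every non-vertical wall contains every upward vertical ray.
\item Walls not passing through $p$ are never addressed.
\end{itemize}
The missing observation is this: no vertex of $\Ssingular$ projects strictly between the adjacent fine vertices $v$ and $w$, so $[p,q]$ lies in a single closed edge. Any wall meeting $[p,q]$ at a point other than $p$ and $q$ therefore contains that edge. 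Now suppose a half-apartment contains $p$ and the upward ray over $w$ but not $q$. Its boundary wall must meet $[p,q]$, and it can only do so at $p$. It cannot be the vertical wall over $v$, so it is a non-vertical wall through $p$ passing strictly above $q$. Its slope toward $w$ then exceeds the maximal one, a contradiction. Hence $q$ lies in the hull, and so in $C''$.

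Two smaller points need correcting.
\begin{itemize}
\item Your opening claim that $C_{S,f,\infty}$ is a subcomplex fails when a leaf of $S$ is a fine but not coarse vertex. In that case $C$ meets that column in a single vertex, and the ray above it is not a union of cells; this is why the paper's $C_1$ lives over $\hat S$. It is harmless, since your final complex lives over all of $T$.
\item The point $(w,f'(w))$ need not be a vertex: in type $\tilde G_2$, the steepest edge out of some vertices spans two fine edges. So your recipe should be read as ``steepest wall through the \emph{point}'', which at such a point just continues the edge. The argument above is unaffected.
\end{itemize}
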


\begin{proof}
  If $D \supseteq C$ contains $\{x\} \times [r_x,\infty)$ for every $x$ and sufficiently large $r_x$ then visibly $D$ needs to contain all upward elementary extensions of any convex subcomplex. So let $\calC_1$ be the set of all complexes that arise from $C$ by iterated upward extension. This system is directed and so $C_1 \defeq \bigcup \calC_1$ is convex. Note that $C_1 = C_{\hat{S},f',\infty}$ where $\hat{S}$ is the least subtree of $T$ with respect to the coarse structure containing $S$, $f'|_S = f$ and the slope $f_{vw}$ is maximal for $v$ a leaf of $S$ and $w \in \hat{S} \setminus s$ a neighbor.

  Since the assumption on rays is satisfied for all $x \in T$, we have $p_T(D) = T$. So $D$ contains for every convex subcomplex $C'$ and every leaf $v$ of $p_T(C')$ with $p_T^{-1}(v) \cap C'$ non-singleton the extension along the vertical edges $p_T^{-1}(v)$. Thus let $\calC_2$ be the set of all complexes that arise from $C_1$ by extensions along vertical edges. It is again directed and $C_2 \defeq \bigcup \calC_2$ is convex. Since $C_2 \subseteq D$ and $C_2$ has the required property of containing $\{x\} \times [r_x,\infty)$ for every $x$, it follows that $C_2$ is the desired $C_+$. That $f'|_S = f$ follows from the fact that extensions along vertical edges never affect $f'|_S$.
\end{proof}

\section{Pro-uniform measures}\label{sec:pro-uniform_measures}

Our goal in this section is to introduce pro-uniform measures. The general ideas for these constructions were developped in \cite{BFL}, but we will need them in a much greater generality, which is why we develop a different framework.  The general idea of this construction is to define a notion of a uniform measure on the set of embeddings of a given simplicial complex in another. However, in our more general setup we will need to put additional restrictions on these sets of embeddings.  

To do so, it will be convenient to employ basic categorical language, especially (inverse) limits and (directed) colimits. The general constructions in a categorical framework are developped in Appendix~\ref{sec:limit_measures} below, to which we will refer. Recall that a \emph{directed set} is a partially ordered set $I$ in which any two elements $i,j$ have a common upper bound $k \ge i,j$; it may be thought of as a category with objects $I$ and a single morphism $i \to j$ whenever $i \le j$. A \emph{directed} (respectively \emph{inverse}) system in a category $\catname{C}$ consists of an object $Z_i$ for every $i \in I$ and a morphism $Z_i \to Z_j$ (respectively $Z_i \leftarrow Z_j)$ for every $i \le j$ such that the triangles associated to $i \le j \le k$ commute; thus it is a covariant respectively contravariant functor from $I$ to $C$. A \emph{cocone} of a directed system in $\catname{C}$ consists of an object $Z$ and morphisms $Z_i \to Z$ such that the triangles associated to $i \le j$ commute; similarly a \emph{cone} of an inverse system has morphisms $Z_i \leftarrow Z$. A \emph{colimit} of a directed system $(Z_i)_i$ is a cocone $Z$ that is universal in the sense that if $Z'$ is any other cocone then there is a unique morphism $Z \to Z'$ through which every $Z_i \to Z'$ factors; similarly a \emph{limit} of an inverse system $(Z_i)_i$ is a universal cone. Limits or colimits may or may not exist but if they exist they are unique up to unique isomorphism. This justifies to denote them by $\lim_i Z_i$ respectively $\colim_i Z_i$. A basic fact \cite[Theorem~3.4.11]{Riehl16} is that for an object $X$ of $\catname{C}$ the functor $\Hom_\catname{C}(-,X)$ preserves colimits, that is, if $(Z_i)_i$ is a directed system whose colimit exists then
\[\Hom_{\catname{C}}(\colim_i Z_i,X) = \lim_i \Hom_{\catname{C}}(Z_i,X).\]

As usual all our categories will be locally small, meaning that the collection $\Hom_\catname{C}(Z,Z')$ of morphisms from $Z$ to $Z'$ is a set. We will usually equip $\lim_i \Hom_\catname{C}(Z_i,X)$ with the limit topology obtained by giving each $\Hom_{\catname{C}}(Z_i,X)$ the discrete topology. The point of what follows is to also equip it with a limit measure.

To motivate our degree of generality we give two examples that illustrate some complementary aspects. 

\begin{example}\label{ex:prouni_motivation_tree}
  Let $T$ be a $(q_1+1,q_2+1)$-biregular tree and let $L$ be a biinfinite linear graph whose vertices alternately correspond to the two types $1$ and $2$ of vertices of $T$. We wish to define a measure on the space $\calL = \{\iota \colon L \hookrightarrow T \mid \iota \text{ preserves types}\}$ of embeddings of the line $L$ into $T$. To do so we consider the connected subgraphs of $L$ and type-preserving embeddings among them. These data form a category $\catname{L}$ and $L$ is the directed union of the system $(F)_{F}$ of its finite subgraphs, which is a colimit in $\catname{L}$.

  For each subcomplex $M \subseteq L$ we may consider the embeddings $\{\iota \colon M \to T \mid \iota \text{ preserves types}\}$. We can form a category $\catname{L}^X$ which consists of $\catname{L}$ with $T$ as an additional object and where $\Hom_{\catname{L}^X}(M,T)$ is the set of embeddings just considered. We want to equip the $\Hom_{\catname{L}^X}(M,T)$ with measures and start by declaring that if $M = \{v_1\}$ is a vertex of type $1$ then the measure should be counting measure. The important observation is that if $\alpha \in \Hom_{\catname{L}}(M,M')$ is an embedding of subcomplexes, the corresponding restriction morphism $\alpha^* \colon \Hom_{\catname{L}^X}(M',T) \to \Hom_{\catname{L}^X}(M,T)$ is surjective and has all fibers of the same size $[M',M]$ (a polynomial in $q_1$ and $q_2$ depending only on $M$ and $M'$). Thus in order for the maps to be measure-preserving we are forced to equip $\Hom_{\catname{L}^X}(M,T)$ with $1/[M,\{v_1\}]$ times counting measure. This consistently extends to measures on all $\Hom_{\catname{L}^X}(M,T)$, for instance if $v_2$ is a vertex of type $2$ then $\Hom_{\catname{L}^X}(\{v_2\},T)$ receives $(q_2+1)/(q_1+1)$ times counting measure.

  Finally, the fact that $L = \colim_F F$ is a direct limit implies that $\Hom_{\catname{L}^X}(L,T) = \lim_F \Hom_{\catname{L}^X}(F,T)$ is an inverse limit and can be equipped with the limit measure.
\end{example}

\begin{example}\label{ex:prouni_motivation_haar}
  Let $X$ be a countable set. Let $G < \Sym(X)$ be a permutation group and assume that every orbit of  the stabilizer $G_x$ are finite for every $x\in X$ . We may form a category $\catname{X}$  whose objects are non-empty subsets $M$ of $X$ and where
  \[
    \Hom_{\catname{X}}(M,N) = \left\{\iota \colon M \to N \mid \substack{\text{for }F \subseteq M \text{ finite there exists }g \in G\\\text{ with }\iota(x) = g(x)\text{ for }x \in F}\right\}.
  \]
  Note that all maps $M \to N, m \mapsto g(m)$ for $g(M) \subseteq N$ are morphisms and in particular so are all inclusions by taking $g = 1$.

  Note that $\Hom_{\catname{X}}(X,X)$ is the closure $\overline{G}$ of $G$ in the topology of pointwise of convergence, which is locally compact by the assumption that the orbits of $G_x$ be finite. The construction is so that $X$ is the colimit $\colim_F F$ of its finite subsets with inclusion and so $\overline{G} = \Hom_{\catname{X}}(X,X)$ is the inverse limit of the $\Hom_{\catname{X}}(F,X)$.

  If $E\subset F$ are finite subsets of $X$, then the restriction to $E$ is a map $\Hom_{\catname{X}}(F,X)\to\Hom_{\catname{X}}(E,X) $. Using the definition of $\catname{X}$ it is easy to see that this map is surjective, with preimages of constant size $[F,E]:=[G_F:G_E]$ (where $G_F$ and $G_E$ are the pointwise stabilizers of $F$ and $E$ respectively).

  In order to equip these with a measure we pick a basepoint $o \in X$ and decide that $\Hom_{\catname{X}}(\{o\},X)$ should carry counting measure. Our goal is to turn $(\Hom_{\catname{X}}(F,X))_F$ into an inverse system of measure-preserving maps.  
  If a finite set $F$ contains $o$, and if we want the restriction to be measure-preserving we are forced to equip $\Hom_{\catname{X}}(F,X)$ with $1/[F,\{o\}]$ times the counting measure.
  
   For a general finite set $E$ we may choose $F$ containing $E$ and $o$ (for instance $F = E \cup \{o\})$) and consistently define
  \[
    [E,\{o\}] = \frac{[F,\{o\}]}{[F,E]} = \frac{[G_o : G_F]}{[G_E : G_F]}
  \]
  and equip $\Hom_{\catname{X}}(E,X)$ with $1/[E,\{o\}]$ times counting measure. The limit measure on $\overline{G}$ is the Haar measure that gives measure $1$ to $\overline{G}_o$.

  This construction applies for instance to a residually finite group $G$ with $X = \bigsqcup_{N <_{fi} G} G/N$. In that case $\overline{G}$ is the profinite completion; and to $G$ a tdlc group with $X = G/C$ where $C < G$ is compact open and $\bigcap_{g \in G} C^g = \{1\}$.
\end{example}

\begin{remark}
  If we define the category $\catname{X'}$ to have
  \[
    \Hom_{\catname{X'}}(M,N) = \{g|_M^N \mid g\in G, g(M) \subseteq N\}
  \]
  then $G = \Hom_{\catname{X'}}(X,X)$ but there are some colimits we would like to have which do not exist: if $(F_i)_i$ is an ascending sequence of subcomplexes with $\bigcup_i F_i = X$ and $g \in \overline{G} \setminus G$ then $g|_F \colon F \to X$ is a morphism in $\catname{X'}$ but $g$ is not. Hence $X$ is not the colimit of the $F_i$.
\end{remark}

Example~\ref{ex:prouni_motivation_tree} is mostly covered by the discussion in \cite[Sections~1--3]{BFL} (except for the preservation of types). In contrast, Example~\ref{ex:prouni_motivation_haar} needs a more general setup in order to implement the constraint that only certain maps $F \to X$ are admissible. What we gain from this general setup is the ability to define a measure on $\Fsingular_i$ as a limit that is naturally compatible with the Haar measure on $M_i$.

\subsection{Counting extensions}

The general challenge we are considering in this section is the following. We are given two complexes $Z_1$ and $Z_2$ and an embedding $\alpha \colon Z_1 \to Z_2$ and we want to say that the number of extensions $\kappa$ of $\iota$ along $\alpha$ in the commutative diagram
\begin{equation}\label{eq:extension}
\begin{tikzcd}
Z_2 \arrow[r, "\kappa"]                         & X \\
Z_1 \arrow[ru, "\iota"'] \arrow[u, "\alpha"] &  
\end{tikzcd}
\end{equation}
does not depend on $\iota$, i.e.\ the number of $\kappa$ with $\kappa \circ \alpha = \iota$ is the same for all $\iota \colon Z_1 \to X$. This can be phrased as saying that the restriction map $\alpha^* \colon \kappa \mapsto \kappa \circ \alpha$ has all fibers of the same size: it is \emph{constant-to-one} in the sense that there is an $n \in \N_{>0}$ such that $\abs{(\alpha^*)^{-1}(\iota)} = n$ for every $\iota$. The embeddings $\alpha$, $\iota$, and $\kappa$ are always type preserving embeddings, as in Example~\ref{ex:prouni_motivation_tree}, but sometimes are required to satisfy additional constraints, as in Example~\ref{ex:prouni_motivation_haar}. These constraints are implemented by requiring that $\alpha$ be a morphism in a category $\catname{C}$ and $\iota$ and $\kappa$ are morphisms in larger category $\catname{C}^X$ that contains $\catname{C}$ and the object $X$. Then $\alpha \in \Hom_{\catname{C}}(Z_1,Z_2)$ and $\alpha^* \colon \Hom_{\catname{C}^X}(Z_2,X) \to \Hom_{\catname{C}^X}(Z_1,X)$ and our goal will generally be to show that for certain $\alpha$ the map $\alpha^*$ is constant-to-one. In the language of \cite{BFL} this roughly corresponds to saying that $X$ is $\catname{C}$-symmetric but we have the new aspect that $\Hom_{\catname{C}^X}(Z,X)$ might not be all (type-preserving) embeddings.

 We therefore say that (the object of $\catname{C}^X$) $X$ is \emph{$\catname{C}^X$-symmetric} if for every morphism $\alpha \colon Z_1 \to Z_2$ in $\catname{C}^X$ the map $\alpha^* \colon \Hom_{\catname{C}^X}(Z_2,X) \to \Hom_{\catname{C}^X}(Z_1,X)$ is surjective and if it has a finite fiber then it is constant-to-one.

Letters are generally chosen compatibly so that if $\mathscr{C}$ consists of embeddings $C \to X$ then the corresponding category is denoted $\catname{C}$. In particular $\mathscr{C} = \Hom_{\catname{C}^X}(C,X)$ is satisfied.

Throughout the section fix a type $i \in \{1,2\}$.
Concretely we consider the following. Let $\Flat$ be the category whose objects are non-empty convex subcomplexes of $\Sregular$ and whose morphisms are type-preserving embeddings. Let $\FlatEmb$ be the category whose objects are convex subcomplexes of $\Sregular$ and $X$ and whose morphisms are type-preserving embeddings. Similarly, let $\WallHat_i$ be the category whose objects are non-empty convex subcomplexes of $\Ssingular_i$ and whose morphisms are type-preserving embeddings. Let $\WallHatEmb_i$ be the category that additionally has the object $X$ and all type-preserving embeddings.

As in the construction of the singular flow, we are actually interested in a more restricted category. We define the category $\Wall_i$ as follows. Its objects are, as before, non-empty convex subcomplexes of $\Ssingular_i$. However, its morphisms are given by elements of $M_i$: if $m \in M_i$, $Z_1 \subseteq \Ssingular_i$ is a convex subcomplex and $Z_2 \subseteq \Ssingular_i$ is a subcomplex containing $m(Z_1)$ then $Z_1 \to Z_2, x \mapsto m(x)$ is a morphism of $\Wall_i$. The category $\WallEmb_i$ has the objects of $\Wall_i$ as well as $X$. For $Z\in \Wall_i$ we define $\Hom_{\WallEmb_i}(Z,X)$
as the set of 
restrictions of elements of $\Fsingular_i$ to $Z$. Thus a morphism in $\WallEmb_i$ is either a map $Z_1 \to Z_2$ coming from $m \in M_i$ as above or a map $\iota|_{Z_1} \colon Z_1 \to X$ where $\iota \in \Fsingular_i$. Note that $\WallEmb_i$ is closed under composition as $\Fsingular_i$ is $M_i$-invariant. 

We will not always want to restrict to finite complexes and therefore make the following definition. A map $\alpha \colon Z_1 \to Z_2$ of non-empty connected affine cell complexes is \emph{cofinite} if $Z_2$ is the (combinatorial) convex hull of $\alpha(Z_1)$ and finitely many points. So an extension $\kappa \colon Z_2 \to X$ of $\iota \colon Z_1 \to X$ is uniquely determined by $\iota$ and the value of the finitely many points for which there are finitely many choices by local finiteness of $X$.

With these preparations in place we start with results, first for the easier case of $\Flat$ to see the general proof structure.

\begin{lemma}\label{lem:flat_symmetric}
  If $\alpha \colon Z_1 \to Z_2$ is a morphism in $\Flat$ then the restriction morphism $\alpha^* \colon \Hom_\Flat(Z_1,X) \leftarrow \Hom_\Flat(Z_2,X)$ is surjective. If $\alpha$ is cofinite then $\alpha^*$ is constant-to-one.
\end{lemma}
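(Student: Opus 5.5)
Surjectivity and the constant-to-one property are both local questions, so the plan is to reduce to single elementary steps and count there. Note that $\Hom_{\FlatEmb}(Z,X)$ consists of all type-preserving embeddings of $Z$ into $X$.

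\emph{Surjectivity.} Let $\iota \colon Z_1 \to X$ be a type-preserving embedding of a convex subcomplex $Z_1\subseteq\Sregular$ and let $\alpha\colon Z_1\to Z_2$. Since $\alpha$ is a type-preserving embedding of convex subcomplexes of $\Sregular$, it is the restriction of an element of $\tilde W$. So it suffices to extend $\iota\circ\alpha^{-1}$, defined on the convex set $\alpha(Z_1)$, to all of $\Sregular$ and then restrict to $Z_2$.
\begin{itemize}
\item If $Z_1$ is bounded, its image lies in an apartment by the building axioms: convex hulls of two simplices lie in a common apartment, and bounded convex sets are handled by standard retraction arguments.
\item If $Z_1$ is unbounded, write it as an increasing union of finite convex subcomplexes. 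A convex subset of a building isometric to a convex subset of $\Sregular$ lies in an apartment; this is a standard fact for Euclidean buildings, e.g.\ via the complete apartment system of a locally finite building and a compactness argument.
\end{itemize}
In either case the apartment gives an extension $\Sregular\to X$, hence an extension of $\iota$.

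\emph{Constant-to-one.} If $\alpha$ is cofinite, $Z_2$ is the convex hull of $\alpha(Z_1)$ and finitely many points. Since $\Sregular$ is locally finite, we can pass from $\alpha(Z_1)$ to $Z_2$ through a finite chain of convex subcomplexes $\alpha(Z_1)=C_0\subsetneq C_1\subsetneq\dots\subsetneq C_n=Z_2$ in which each $C_{k+1}$ is an elementary extension: the convex hull of $C_k$ and one chamber $c$ adjacent along a boundary panel $e$, or a lower-dimensional analogue. Fiber sizes multiply along compositions, so it suffices to show that each elementary step is constant-to-one with a number depending only on the step.

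For one step, fix an embedding $\kappa\colon C_k\to X$. An extension is determined by the image of $c$, which is one of the $q_{t}$ chambers on $\kappa(e)$ other than $\pr_{\kappa(e)}$ of the chamber of $\kappa(C_k)$ on $e$, where $t$ is the cotype of $e$. Thus there are $q_t$ candidates. We must show that every candidate $d$ yields a unique embedding of the convex hull.
\begin{itemize}
\item Existence: $\kappa(C_k)\cup d$ lies in an apartment. This follows by extending $\kappa$ to an apartment $\Sigma$ and then using the retraction onto $\Sigma$ centered at $d$, or the standard fact that a convex set together with an adjacent chamber through a boundary panel lies in a common apartment.
\item Uniqueness: the convex hull inside that apartment is determined, since any two apartments containing the set agree on its convex hull by axiom (2).
\end{itemize}
So the fiber has exactly $q_t$ elements, independent of $\kappa$. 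Lower-dimensional steps, where $C_k$ has empty interior, are handled the same way by counting chambers containing a simplex in a given position relative to the projection; by homogeneity of residues these counts depend only on types.

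\emph{Main obstacle.} The hard step is the existence claim that $\kappa(C_k)\cup d$ lies in a common apartment, together with making precise that every cofinite extension decomposes into such elementary steps. For the first I would use the retraction $\rho_{\Sigma,c'}$ onto an apartment $\Sigma\supseteq\kappa(C_k)$ centered at a chamber $c'$ of $\kappa(C_k)$ at $e$, combined with a half-apartment replacement: take the wall through $e$, the half of $\Sigma$ containing $C_k$, and glue it with $d$ to form a new apartment. Convexity of $C_k$ guarantees that $C_k$ lies on one side of that wall.
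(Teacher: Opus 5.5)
Your proposal is correct and follows essentially the paper's proof. Both arguments decompose a cofinite extension into elementary steps: one chamber added across a boundary panel, or the lower-dimensional link-counting steps. Both then count $q_t$ (respectively $q_t+1$, or a count of suitably positioned vertices in a link) choices, get uniqueness from $C_{k+1}=\conv(C_k\cup c)$, and get existence from an apartment containing the image.

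The only deviation is that you obtain surjectivity directly, by extending any embedding of a convex subcomplex to a full apartment. This is the Abramenko--Brown fact the paper itself uses in Lemma~\ref{lem:flat_contained_in_singular}. The paper instead passes through the inverse-limit argument, and your shortcut is fine.
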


\begin{proof}
  Suppose first that $\alpha$ is cofinite so that $\alpha^*$ is finite-to-one.

  Upon applying an isomorphism of $\Sregular$ we may assume that $\alpha$ is an inclusion.
  Assume first that ${Z_1}$ contains a chamber. Let $\calH_{Z_1}$ and $\calH_{Z_2}$ respectively be the set of half-appartments of $\Sregular$ that contain ${Z_1}$ and ${Z_2}$. Since the space of half-apartments is discrete and $\alpha$ is cofinite the set $\calH_{Z_2} \setminus \calH_{Z_1}$ is finite. So using induction we may assume that it is singleton, $\calH_{Z_2} \setminus \calH_{Z_1} = \{H\}$. Then there are a chamber $c$ of ${Z_1}$ and a chamber $d$ of ${Z_2}$ not in ${Z_1}$ that share an edge $e$ in $\partial H$. Let $i$ be the cotype of $e$. Now if $\iota \colon {Z_1} \to X$ is any embedding there are visibly $q_i$ ways to extend it to an embedding of $\kappa' \colon {Z_1} \cup c \to X$. Further, $\kappa'$ extends to at most one embedding $\kappa \colon {Z_2} \to X$ since ${Z_2} = \conv({Z_1} \cup c)$. That there is indeed an extension follows from the fact that there is an apartment containing the image of $\kappa'$ by \cite[Theorem~11.53]{AbramenkoBrown08}.

  It remains to treat the lower-dimensional cases. If ${Z_2}$ contains a chamber but ${Z_1}$ does not then there is a chamber $d$ of ${Z_2}$ not in ${Z_1}$ containing an edge $e$ of ${Z_1} \cap \partial H$. If $e$ has cotype $i$ an embedding $\iota \colon {Z_1} \to X$ extends in $q_i + 1$ ways to an embedding $\kappa' \colon {Z_1} \cup c \to X$. The rest of the argument is as before.

  Finally assume that ${Z_1} = [u,v]$ is a segment and ${Z_2} = [u,w]$ is a longer segment. Again using induction we assume that $[v,w]$ is an edge where $v$ is a vertex of type $i$ and $w$ of type $j$. Let $\{i,j,k\} = \{0,1,2\}$. If $\iota \colon {Z_1} \to X$ is an embedding then $\lk \alpha(v)$ is a spherical building whose type only depends on $i$ (and the type of $X$) and that has order $(q_j,q_k)$. If $u=v$ then the number of extensions $\kappa \colon {Z_2} \to X$ is the number of vertices of type $j$ in $\lk \alpha(v)$. If $u \ne v$ then the number of extensions is the number of vertices of type $j$ opposite a fixed vertex of the opposite type. Both of these numbers only depend on the type and order of $\lk \alpha(v)$.

  If $\alpha$ is not necessarily cofinite then $Z_2$ is a colimit of cofinite supercomplexes of $Z_1$. Consequently $\Hom_\Flat(Z_2,X)$ is a limit of sets that surject onto $\Hom_\Flat(Z_1,X)$ so $\alpha^*$ is surjective.
\end{proof}

Our main interest is in $\Wall_i$ of course, but before turning to it we treat $\WallHat_i$.

\begin{lemma}\label{lem:wallhat_symmetric}
  For $\alpha \colon Z_1 \to Z_2$ in $\WallHat_i$ the restriction map \[\alpha^* \colon \Hom_{\WallHatEmb_i}(Z_1,X) \leftarrow \Hom_{\WallHatEmb_i}(Z_2,X)\] is surjective. If $\alpha$ is cofinite then $\alpha^*$ is constant-to-one.
\end{lemma}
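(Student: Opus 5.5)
We follow the structure of the proof of Lemma~\ref{lem:flat_symmetric}, with elementary extensions in $\Ssingular_i$ (Section~\ref{sec:elementary_extensions}) replacing the addition of a single half-apartment. After applying an automorphism of $\Ssingular_i$ we may assume $\alpha$ is an inclusion $Z_1 \subseteq Z_2$.

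First let $\alpha$ be cofinite. The lower-dimensional cases are handled as in the flat case. If $Z_2$ contains no chamber, it lies in a flat, and the segment argument of Lemma~\ref{lem:flat_symmetric} applies verbatim: counts are numbers of vertices in links, or of vertices opposite a given one. If $Z_2$ contains a chamber but $Z_1$ does not, we first add a chamber along an edge of $Z_1$, which gives $q_k+1$ choices. We then take the convex hull inside an apartment of $\Ssingular_i$.

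So assume $Z_1$ contains a chamber. By Proposition~\ref{prop:elementary_extensions_exist} there is an elementary extension $Z_1 \subsetneq D \subseteq Z_2$. Since $\alpha$ is cofinite, only finitely many elementary extensions are needed to reach $Z_2$, so by induction we may assume $Z_2 = D$ is a single elementary extension. The number of steps is bounded, for instance by counting the chambers of $Z_2 \setminus Z_1$ modulo the unbounded directions, or by counting half-apartments of apartments of $\Ssingular_i$.

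\textbf{Vertical case.} Here $D = C_{S',f',g'}$ with $S' = S \cup p_T(c)$. An embedding $\iota \colon Z_1 \to X$ extends to $Z_1 \cup c$ in exactly $q_k$ ways, where $k$ is the cotype of the vertical wall panel, since the other chambers on that panel are not in $Z_1$. Each such extension determines at most one extension to $D = \conv(Z_1 \cup c)$. It determines at least one, because $D$ lies in the union of apartments of $\Ssingular_i$ through $c$, and any embedded convex piece of an apartment extends by the building axioms \cite[Theorem~11.53]{AbramenkoBrown08}.

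\textbf{Downward (or upward) case along an admissible component $R$.} Fix one edge $e$ of the bottom boundary over $R$ and the chamber $c$ below it. An embedding $\iota$ extends to $Z_1 \cup c$ in $q_k$ ways. By Proposition~\ref{prop:elementary_extension_well-defined}, $D$ is determined by $Z_1$ and $c$, so each such choice gives at most one extension to $D$.

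For existence we must show that $\kappa'$ on $Z_1 \cup c$ extends to all of $D$. Over each maximal segment $M \subseteq R$ of constant slope, with $E = p_T^{-1}(L)$, the piece $D\cap E=\conv_E((Z_1\cap E)\cup c_E)$ embeds. This holds because the image of $(Z_1 \cap E) \cup c_E$ lies in an apartment of $X$, and $c_E$ is determined by $c$ via projection. These local extensions agree on overlaps because $D\setminus Z_1$ over $M$ depends only on $p_T^{-1}(\hat M)$. They glue to a map that is injective and type-preserving; convexity of $D$ and the $\cato$ property give injectivity.

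This yields exactly $q_k$ extensions, independent of $\iota$. Hence $\alpha^*$ is constant-to-one.

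\textbf{Non-cofinite $\alpha$.} Write $Z_2$ as the directed union (colimit) of cofinite convex supercomplexes of $Z_1$. Then $\Hom(Z_2,X)$ is the inverse limit of sets, each surjecting onto $\Hom(Z_1,X)$ with finite fibers. A nonempty inverse limit of nonempty finite sets is nonempty, so $\alpha^*$ is surjective.

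The main obstacle is the existence of the glued extension in the downward case. An extension of $\iota$ across $c$ propagates along the whole component $R$, possibly through many branches of the tree, and we must check that the pieces in different apartments of $X$ are compatible and that the result is an embedding. The plan is to use the locality statement from the proof of Proposition~\ref{prop:elementary_extension_well-defined}, and to extend segment by segment outward from $e$ through $R$, using admissibility at interior vertices to guarantee that all neighbours already lie in $R$.
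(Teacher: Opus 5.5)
Your overall structure matches the paper's: reduce to an inclusion, peel off elementary extensions via Proposition~\ref{prop:elementary_extensions_exist}, treat downward/upward and vertical extensions separately, and handle non-cofinite $\alpha$ by an inverse limit. The gap is the existence of the extension in the downward case, which is the heart of the lemma. You leave it as a plan, and the sketch does not close it.

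Because you fix an \emph{arbitrary} bottom edge $e$ over $R$, some maximal constant-slope segments $M\subseteq R$ need not contain $e$. For each such $M$ you need a different chamber $c_M$, and you must assign it an image in $X$. Your claim that this image is ``determined by $c$ via projection'' and that the pieces glue is exactly what requires proof. The locality statement from the proof of Proposition~\ref{prop:elementary_extension_well-defined} only shows that $D$ is well defined inside the model $\Ssingular_i$. It says nothing about whether the images of the various $c_M$ in $X$ are compatible, or even avoid $\iota(Z_1)$, when they are computed in different apartments of $X$ over different branches of $R$.

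The missing idea is to choose $e$ at the bottom of $R$; this removes the propagation problem entirely.
\begin{enumerate}
\item By convexity ($f_{uw}+f_{uw'}\ge 0$), every vertex has at most one lower neighbour. So if the slope of $R$ is non-zero, no path in $R$ can go up and then down.
\item Hence $f|_R$ attains its minimum at a unique leaf $v$. Every maximal constant-slope segment of $R$ has $v$ as its lower endpoint: otherwise the increasing path from $v$ would arrive at its lower endpoint along an edge of opposite slope, which would extend the segment. So every such segment contains the edge $e$ of $R$ at $v$. If the slope is $0$, any edge works.
\item Consequently, for every $p\in Z_2$ there is a line $L\subseteq T$ through $e$, $p_T(p)$ and a maximal segment of $R$. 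Then $E=p_T^{-1}(L)$ is an apartment of $\Ssingular_i$ containing $c$, the chamber $d$ of $Z_1$ at $e$, and $p$, with $Z_2\cap E=\conv_E((Z_1\cap E)\cup c)$ for the \emph{same} chamber $c$ every time.
\item Given one of the $q_k$ choices $\bar c\neq\iota(d)$ at $\iota(e)$, take (Abramenko--Brown, Theorem~11.53) an apartment $\bar E$ of $X$ containing $\iota(Z_1\cap E)$ and $\bar c$. Define $\kappa$ on $Z_2\cap E$ as the isomorphism onto $\conv_{\bar E}(\iota(Z_1\cap E)\cup\bar c)$.
\item Two such definitions agree on overlaps, because intersections of apartments are convex in both and both restrict to $\iota$ and to $c\mapsto\bar c$.
\end{enumerate}

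A minor point: in the vertical case you assert exactly $q_k$ extensions. The paper counts $q_k+1-\ell$, where $\ell$ is the number of chambers of $Z_1$ containing the vertical panel. This still depends only on $Z_1$ and $e$, so constant-to-one is unaffected.
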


\begin{proof}
  Let $\alpha \in \Hom_{\WallHat_i}(Z_1,Z_2)$. There is no loss in assuming that $\alpha$ is the inclusion of a subcomplex $Z_1$ into $Z_2$. Assume first that $\alpha$ is cofinite and that $Z_1$ contains a chamber. Then by Proposition~\ref{prop:elementary_extensions_exist} there is an elementary extension of $Z_1$ contained in $Z_2$ and by induction it suffices to prove the case where $Z_2$ is that elementary extension. So we write $Z_1 = C_{S,f,g}$ and assume that $Z_2$ is a downward extension along an admissible component $R$ of $f$ (or symmetrically an upward extension along an admissible component $R$ of $g$) or an extension along a vertical edge.

  We consider the case of a downward extension first. If the slope of $R$ is non-zero let $v \in R$ be the unique leaf of $R$ in which $f|_R$ attains its minimum, if the slope is zero let $v$ be any vertex of $R$. Let $w \in R$ be such that $e = [(v,f(v)),(w,f(w))]$ is an edge that contains $(v,f(v))$ (unique in the case of non-zero slope). Let $d$ be the chamber of $Z_1$ containing $e$ and let $c$ be the other chamber of $Y_i$ adjacent to $e$. Then for every point $x$ of $T$ there is a biinfinite line $L$ that contains $e$, $x$ and a maximal segment of constant slope of $R$ and thus for every point $p = (x,y) \in Z$ there is an apartment $\Sregular = \pi_T^{-1}(L)$ that contains $c$, $d$, $p$ and is such that $Z_2 \cap \Sregular$ is the convex hull of $Z_1$ and $c$ in $\Sregular$, cf.\ Proposition~\ref{prop:elementary_extension_well-defined}. If $p$ is not in $R$ this is clear and if $p$ is in $R$ it follows from the choice of $e$. 

  Let $\iota \colon Z_1 \to X$ be a type-preserving embedding and let $k$ be the cotype of $e$ which is that of $\iota(e)$. Then $\iota(e)$ is contained in $q_k+1$ chambers, one of which is $\iota(d)$. We claim that for each choice $\bar{c}$ of one the other $q_k$ chambers there is a unique extension $\kappa \colon Z_2 \to X$ with $\kappa|_{Z_1} = \iota$ and $\kappa(c) = \bar{c}$. Uniqueness follows from the fact that $Z_2$ is the convex hull of $Z_1$ and $c$ so we only need to prove existence.

  Let $p \in Z_2$ be arbitrary. By the previous discussion there is an apartment $\Sregular$ that contains $c$, $d$ and $p$ and is such that $Z_2$ meets $\Sregular$ in the convex hull of $Z_1$ and $c$ in $\Sregular$. By \cite[Theorem~11.53]{AbramenkoBrown08} there is an apartment $\overline{\Sregular}$ of $X$ that contains $\iota(\Sregular \cap Z_1)$ and $c$. We take $\kappa|_{\Sregular \cap Z_2}$ to be the isomorphism from the convex hull of $Z_1 \cap \Sregular$ and $c$ to the convex hull of $\iota(Z_1) \cap \overline{\Sregular}$ and $\bar{c}$ extending the type-presering isomorphism $c \to \bar{c}$. Since every point $p \in Z_2$ is contained in such an apartment this defines $\kappa$ on all of $Z_2$. It is well-defined since the intersection of two apartments is convex in both of them.
  
  Now consider an extension along a vertical edge $e = [(v,y_1),(v,y_2)]$ where $v \in S$ is a leaf. In fact, let $R = p_T^{-1}(v)$ be the full segment of $Z_1$ that contains $e$. Let $d_1,\ldots,d_\ell$ be the chamber of $Z_1$ that contain $e$ and let $c$ be the chamber by which $Z_1$ is extended. For every $p = (x,y) \in Z_2$ there is an apartment $\Sregular$ of $\Ssingular_i$ that contains $p$, $R$ and $c$ and is such that the intersection of $\Sregular$ and $Z_2$ is the convex hull in $\Sregular$ of $\Sregular \cap Z_1$ and $c$: just take $\Sregular = p_T^{-1}(L)$ where $L$ contains $x$ and $p_T(c)$. If $e$ is of cotype $k$ then $\iota(e)$ is contained in $q_k+1$ chamber, $\ell$ of which are $\iota(d_1),\ldots,\iota(d_\ell)$. We claim that for any choice $\bar{c}$ of one of the remaining $q_k+1 - \ell$ chambers there is a unique extension $\kappa \colon Z_2 \to X$ with $\kappa|_{Z_1} = \iota$ and $\kappa(c) = \bar{c}$. The reasoning is as before: for an apartment $\Sregular$ that contains $R$ and $c$ let $\overline{\Sregular}$ contain $\iota(Z_1 \cap \Sregular)$ and $c$ and define $\kappa$ on $\Sregular \cap Z_2$ by the identification of $\Sregular$ and $\overline{\Sregular}$ that takes $c$ to $\bar{c}$.

  If $Z_1$ does not contain a chamber but $Z_2$ does then no change is necessary to the extension along a vertical edge. In the downward extension case the only change is that there are $q_k+1$ choices for $\bar{c}$ rather than $q_k$.

  If $Z_2$ contains no chamber then by Remark~\ref{rem:convex_without_chamber} there are three possibilities. Two of them are covered by Lemma~\ref{lem:flat_symmetric} and the last one uses the same local argument.

  If $\alpha$ is not cofinite there is an infinite ascending sequence of cofinite extensions of $Z_1$ whose union is $Z_2$. The extensions of $\iota$ to these intermediate complexes are the restrictions of an extension to $Z_2$.
\end{proof}

It is clear from the definition of $\WallHatEmb_i$ that the union of an ascending sequence of complexes is its colimit. In $\WallEmb_i$ this is not immediately clear (cf. the remark following Example~\ref{ex:prouni_motivation_haar}), but the following lemma asserts that it is still true. So in practice colimits in $\WallEmb_i$ can be computed in $\WallHatEmb_i$. The categorically inclined reader may use (the proof of) \cite[Theorem~1.5, Corollary~1.7]{AdamekRosicky94} to promote the statement to say that $\WallEmb_i$ has all $\omega$-small filtered colimits and that the inclusion $\WallEmb_i \to \WallHatEmb_i$ preserves $\omega$-small filtered colimits.

\begin{lemma}\label{lem:colimits_in_wallemb}
  If $(Z_k)_{k \in \N}$ is a directed system in $\WallEmb_i$ then its colimit exists in $\WallEmb_i$ and is a colimit in $\WallHatEmb_i$.
\end{lemma}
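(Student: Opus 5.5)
The plan is to take the union $Z = \bigcup_k m_k(Z_k)$ as the candidate colimit, after normalizing the transition maps, and to reduce all compatibility questions to closedness of $M_i$ and of $\Fsingular_i$. A directed system in $\WallEmb_i$ indexed by $\N$ consists of objects $Z_k \subseteq \Ssingular_i$ (if some $Z_k = X$ the system is eventually constant at $X$ and there is nothing to prove, since only identities leave $X$) and morphisms $\alpha_k \colon Z_k \to Z_{k+1}$ given by elements $n_k \in M_i$. Replacing $Z_k$ by $m_k(Z_k)$ with $m_k = n_{k-1}\cdots n_0$, which is an isomorphism in $\Wall_i$, we may assume every $\alpha_k$ is an inclusion. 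So the system is an ascending sequence of convex subcomplexes, and the candidate colimit is $Z = \bigcup_k Z_k$, a convex subcomplex of $\Ssingular_i$ with the inclusions as cocone. By the remark preceding the lemma, $Z$ is the colimit in $\WallHatEmb_i$. What remains is to verify the universal property in $\WallEmb_i$, i.e.\ that compatible cocones in $\WallEmb_i$ factor through a morphism of $\WallEmb_i$.

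There are two kinds of cocones to treat. First, a cocone to $X$ is a compatible family $\iota_k \in \Hom_{\WallEmb_i}(Z_k,X)$, so $\iota_k = \kappa_k|_{Z_k}$ with $\kappa_k \in \Fsingular_i$. These glue to a type-preserving embedding $\iota \colon Z \to X$, which is the unique factorization in $\WallHatEmb_i$; we must show $\iota = \kappa|_Z$ for some $\kappa \in \Fsingular_i$. Fix a vertex $o \in Z_0$. All $\kappa_k$ agree on $o$, so they lie in the compact set $U_{\{o\}}(\kappa_0)$, and a subsequence converges to some $\kappa \in \hat\Fsingular_i$. By Lemma~\ref{lem:2-transitive}, $\Fsingular_i$ is closed, so $\kappa \in \Fsingular_i$. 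Since $\kappa_l|_{Z_k} = \iota_k$ for $l \ge k$, pointwise convergence gives $\kappa|_{Z_k} = \iota_k$ for all $k$, hence $\kappa|_Z = \iota$.

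Second, a cocone to an object $W$ of $\Wall_i$ consists of maps $Z_k \to W$ given by elements $g_k \in M_i$ with $g_{k+1}|_{Z_k} = g_k|_{Z_k}$. The same compactness argument applies inside $\Aut^0(\Ssingular_i)$: the $g_k$ agree at $o$, so they lie in a compact open stabilizer coset; a subsequence converges to $g \in M_i$ because $M_i$ is closed, and $g|_Z$ agrees with every $g_k|_{Z_k}$. Since $g(Z) \subseteq W$, the map $g|_Z$ is a morphism $Z \to W$ of $\WallEmb_i$. In both cases uniqueness of the factorization is immediate, because a map on $Z$ is determined by its restrictions to the $Z_k$, and the same argument shows the cocone is a colimit in $\WallHatEmb_i$.

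The main obstacle I expect is the compactness step: we need $U_{\{o\}}(\kappa_0)$ to be compact in $\hat\Fsingular_i$ and the analogous neighbourhood to be compact in $\Aut^0(\Ssingular_i)$. Both follow from local finiteness, as noted in the remark on $\Hom$-spaces in Section~\ref{sec:flows}. We also need to check that pointwise limits of type-preserving embeddings of $\Ssingular_i$ are again embeddings; injectivity and cellularity are local conditions and are preserved under eventually constant convergence on finite subcomplexes.
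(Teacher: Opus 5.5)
Your proof is correct and is essentially the paper's argument. Both normalize the transition maps to inclusions, take the union, and get the factoring element of $M_i$ from compactness of the maps with prescribed value at a vertex together with closedness of $M_i$; you moreover treat cocones with target $X$ via closedness of $\Fsingular_i$ (Lemma~\ref{lem:2-transitive}), a case the paper's proof does not spell out even though it is exactly what Proposition~\ref{prop:wall_symmetric} uses.

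One small slip: to make the transition maps inclusions you must replace $Z_k$ by $m_k^{-1}(Z_k)$, i.e.\ use $n_0^{-1}\cdots n_{k-1}^{-1}$ rather than $n_{k-1}\cdots n_0$.
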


\begin{proof}
  For each $k > 0$ there is by definition an $g_k \in M_i$ such that $Z_{k-1} \to Z_{k}$ is $(g_k)|_{Z_{k-1}}^{Z_{k}}$, in particular $Z_{k-1} \subseteq g_k^{-1}(Z_{k})$. Putting $m_k = g_1^{-1} \cdots g_k^{-1}$ and $Z_k' = m_k \cdot Z_k$ it follows that $Z_k' \subseteq Z_\ell'$. Let $Z = \bigcup_k Z_k'$. It comes with morphisms $Z_k \to Z$ represented by $m_k$ such that $(m_k \cdot g_k)|_{Z_{k-1}} = m_{k-1}|_{Z_{k-1}}$ by construction. Let $Z'$ is another subcomplex with morphisms $Z_k \to Z'$ represented by $n_k \in \Aut(Y)$ in such a way that $(n_k \cdot g_k)|_{Z_{k-1}} = n_{k-1}|_{Z_{k-1}}$. Then $(n_\ell \cdot m_\ell^{-1})|_{Z_k}$ is the same for all $\ell \ge k$: indeed $(m_\ell^{-1} \cdot m_k)|_{Z_k} = (g_\ell \cdots g_{k+1})|_{Z_k} = (n_\ell^{-1} \cdot n_k)|_{Z_k}$. Thus
  \[
    U_k \defeq U_{Z_k}(n_km_k^{-1}) = \{m \in \Aut(Y) \mid m|_{Z_k} = n_km_k^{-1}\}
  \]
  is a descending sequence of compact (open) sets so there is an $m \in U \defeq \bigcap_k U_k$ such that $m|_{Z_k} = n_k \cdot m_k^{-1}$ thus defining a morphism $Z \to Z'$ that is compatible with the $m_k$ and $n_k$. If $m' \in U$ represents another compatible morphism $Z \to Z'$ then $m' \in U$ and so $m|_Z = m'|_Z$ is the same morphism. This shows that $m|_Z$ is the colimit in $\WallHatEmb_i$.

  Now the point is that if the $n_k$ lie in $M_i$ then since $M_i < \Aut(Y)$ is closed $U_k \cap M_i$ is compact as well and so $U \cap M_i$ is non-empty, so $m$ may be taken in $M_i$.
\end{proof}

\begin{proposition}\label{prop:wall_symmetric}
  For $\alpha \colon Z_1 \to Z_2$ in $\Wall_i$ the restriction map \[\alpha^* \colon \Hom_{\WallEmb_i}(Z_1,X) \leftarrow \Hom_{\WallEmb_i}(Z_2,X)\] is surjective. If $\alpha$ is cofinite then $\alpha^*$ is constant-to-one.
\end{proposition}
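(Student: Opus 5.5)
We deduce the statement from Lemma~\ref{lem:wallhat_symmetric}, the extra input being that $\Fsingular_i$ is a full $\simeq$-class and hence determined by a ``local'' condition. Upon replacing $Z_1$ by its image under the element of $M_i$ representing $\alpha$, and precomposing embeddings with that element (which preserves $\Fsingular_i$ because $\Fsingular_i$ is $M_i$-invariant), we may assume that $\alpha$ is the inclusion $Z_1 \subseteq Z_2$. Then $\Hom_{\WallEmb_i}(Z,X)$ is the set of restrictions $\iota|_Z$ with $\iota \in \Fsingular_i$, and this is a subset of $\Hom_{\WallHatEmb_i}(Z,X)$.

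\emph{Surjectivity.} By definition, every $\iota|_{Z_1}$ with $\iota\in\Fsingular_i$ is the restriction of $\iota|_{Z_2}$, and $\iota|_{Z_2} \in \Hom_{\WallEmb_i}(Z_2,X)$. So surjectivity is immediate, in the cofinite and the non-cofinite case alike.

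\emph{Constant-to-one for cofinite $\alpha$.} Fix $\iota_1 = \iota|_{Z_1}$ with $\iota \in \Fsingular_i$. By Lemma~\ref{lem:wallhat_symmetric}, the set $E$ of all type-preserving extensions $\kappa\colon Z_2\to X$ of $\iota_1$ has a cardinality $n$ that does not depend on $\iota_1$. I will show that every $\kappa \in E$ is the restriction of some element of $\Fsingular_i$, so that the fiber of $\alpha^*$ is all of $E$.

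To extend $\kappa$ to a marked wall space $\hat\kappa\in\hat\Fsingular_i$, apply Lemma~\ref{lem:wallhat_symmetric} to the inclusion $Z_2\subseteq\Ssingular_i$. The remaining task is to show $\hat\kappa\simeq\iota$. For this I will compare $\hat\kappa$ and $\iota$ directly. Both are embeddings $\Ssingular_i\to X$ that agree on $Z_1$, and $Z_1$ is non-empty. The planned route:
\begin{itemize}
\item[(i)] The images $\hat\kappa(\Ssingular_i)$ and $\iota(\Ssingular_i)$ are wall spaces. The vertices at infinity $\sigma_\pm(\hat\kappa)$ and $\sigma_\pm(\iota)$ can be compared by projectivities. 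Concretely, when $Z_1$ contains a chamber, I will use Lemma~\ref{lem:convex_extend_to_infinity} together with the flexibility in choosing $\hat\kappa$. We are free to choose $\hat\kappa$ to agree with $\iota$ on $C_+$ for $C=Z_1$, or at least along rays to $\xi_i$ from the points of $Z_1$. This will give $\hat\kappa\sim_+$-equivalence modulo $A_i$ to an element related to $\iota$, possibly after passing through an intermediate element that agrees with $\iota$ on $(Z_1)_+$ and with $\hat\kappa$ on $(Z_2)_-$.
\item[(ii)] Alternatively, there is an element $m\in\Aut^0(\Ssingular_i)$ with $\hat\kappa = \iota\circ m$, and $m$ fixes $Z_1$ pointwise. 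Then $\hat\kappa\in\Fsingular_i$ if and only if $m\in M_i$. So it suffices that the pointwise stabilizer of $Z_1$ in $\Aut^0(\Ssingular_i)$ acts on extensions through elements of $M_i$. This is a local statement: extensions are controlled by the finitely many chamber choices in the elementary extension steps of Lemma~\ref{lem:wallhat_symmetric}. Each such choice is a choice of chamber on a panel, and it can be realized by a projectivity, since $\EProjGrp_i\supseteq\ProjGrp_i$ is $2$-transitive on ends (Lemma~\ref{lem:2-transitive}) and, more strongly, projectivities act with rich local freedom on $T_i$. For the $A_i$-direction I use that $M_i\supseteq A_i$.
\end{itemize}
I would pursue (ii): reduce by induction to a single elementary extension, and then show that the $q_k$ (or $q_k+1-\ell$) chamber choices are permuted transitively by the stabilizer of $\iota(Z_1)$ in the $M_i$-coset picture.

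\emph{Main obstacle.} The key point is that the fiber of $\alpha^*$ equals all of $E$, i.e.\ that $M_i$ is locally as large as $\Aut^0(\Ssingular_i)$ on finite pieces. I expect to need the construction of $\EProjGrpd$: composing perspectivities through vertices close to $\xi_i$, combined with Lemma~\ref{lem:projectivity_multiplication_continuous}, realizes any prescribed finite local permutation of branches of $T_i$ fixing a given finite subtree. If this fails, the fallback is weaker but still sufficient. $M_i$ acts transitively on the fibers of $\sigma_+\times\sigma_-$ (Proposition~\ref{prop:restricted_wall_trees}), and the stabilizer $(M_i)_{Z_1}$ is compact open. Hence every fiber of $\alpha^*$ is an orbit of a conjugate of $(M_i)_{Z_1}$ on $(M_i)_{Z_1}/(M_i)_{Z_2}$, and it therefore has the constant size $[(M_i)_{Z_1}:(M_i)_{Z_2}]$, independent of $\iota_1$.
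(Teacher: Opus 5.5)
Your surjectivity argument is correct, and simpler than the paper's route through Lemma~\ref{lem:colimits_in_wallemb}: $\Hom_{\WallEmb_i}(Z,X)$ is by definition the set of restrictions of elements of the $M_i$-invariant set $\Fsingular_i$. The constant-to-one part has a genuine gap. Your main claim is false in general, and your fallback is false in exactly the cases where the main claim is true.

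\emph{Main claim.} It is not true that every type-preserving extension $\kappa\in E$ comes from $\Fsingular_i$. Suppose it held for all cofinite inclusions. Starting from $\{0\}$ (Lemma~\ref{lem:flat_contained_in_singular}), every type-preserving embedding of every finite convex $Z\ni 0$ would then be a restriction of an element of $\Fsingular_i$. Since $\Fsingular_i$ is closed in $\hat\Fsingular_i$ (Lemma~\ref{lem:2-transitive}), this forces $\Fsingular_i=\hat\Fsingular_i$, hence $M_i=\Aut^0(\Ssingular_i)$. That fails, for instance, in Bruhat--Tits buildings, where $\EProjGrp_i$ is essentially $\PGL_2$ of the local field.

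Concretely, take $Z_1=p_T^{-1}(S)$ with $S$ a star of three edges at a branch vertex $v$, and $Z_2=p_T^{-1}(S\cup[v,w])$. Because $Z_1$ contains lines parallel to $\ell_i$, Proposition~\ref{prop:restricted_wall_trees} shows the fiber is the orbit, under the pointwise stabilizer of $S$ in $\EProjGrp_i$, of one of the remaining branches at $v$. By contrast, $E$ contains all of them. Over $\mathbb{Q}_p$ the local action is through $\PGL_2(\F_p)$, whose three-point stabilizers are trivial. So the fiber is a singleton while $E$ has $p-2$ elements. Two-transitivity on $\partial T_i$ gives nothing like the ``rich local freedom'' you invoke.

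\emph{Fallback.} Transitivity of $M_i$ on fibers of $\sigma_+\times\sigma_-$ only relates elements of $\Fsingular_i$ with the same endpoints, and a finite $Z_1$ does not pin these down. In an upward or downward elementary extension, the $q_k$ extensions come from elements with pairwise different endpoints. Indeed, two with the same endpoints $(\xi,\zeta)$ would give two new chambers of $Y_{\xi,\zeta}$ on a non-vertical panel, and such a panel lies in only two chambers there. So these fibers are not $(M_i)_{Z_1}$-orbits. Moreover, an element of $M_i$ fixing a point acts as $\id_\R\times q$. Hence $(M_i)_{Z_1}=(M_i)_{Z_2}$ whenever $p_T(Z_1)=p_T(Z_2)$, and your index formula would give $1$ instead of $q_k$. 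The first step of (ii), writing $\hat\kappa=\iota\circ m$, has the same problem.

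\emph{What is missing.} You need the paper's case split together with an enlargement of $Z_1\subseteq Z_2$. Reduce to a minimal extension, which by Proposition~\ref{prop:elementary_extensions_exist} is upward, downward, or sideways.
\begin{itemize}
\item[(a)] For a downward extension, pass to $(Z_1)_+\subseteq(Z_2)_+$ (Lemma~\ref{lem:convex_extend_to_infinity}). The $\WallHatEmb_i$-count is unchanged. Any extension to $\Ssingular_i$ agrees with $\iota$ on upward rays over all of $T$, hence satisfies $\kappa\sim_+\iota$ and lies in $\Fsingular_i$. So here your main claim holds, and your idea (i) points this way. Upward extensions are symmetric, using $(\cdot)_-$.
\item[(b)] For a sideways extension, pass to $p_T^{-1}(p_T(Z_1))\subseteq p_T^{-1}(p_T(Z_2))$, which fixes both endpoints. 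Now your fallback is correct: the fiber is $(M_i)_{\hat Z_1}/(M_i)_{\hat Z_2}$. Its size is independent of $\iota$ because these are pointwise stabilizers, and it is in general strictly smaller than $\abs{E}$.
\end{itemize}
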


\begin{proof}
  First assume that $\alpha$ is cofinite. Then $\alpha_*$ is finite-to one.

  Upon applying an element $m \in M_i$ we may assume that $\alpha$ is an inclusion and we first assume that $Z_2$ is minimal among the proper convex supercomplexes of $Z_1$. Let us write $Z_1 = C_{S,f,g}$, $Z_2 = C_{S',f',g'}$ and let $\overline{S}$ be the least complex with respect to the coarse structure that contains $S$. Then combining Proposition~\ref{prop:elementary_extensions_exist} and Remark~\ref{rem:convex_without_chamber} there are three possibilities: $Z_2$ may be an upward extension in the sense that $f'|_S = f$ but $g'|_S \ne g$, it may be a downward extension in the sense that $g'|_S = g$ but $f'|_S \ne f$ or it may be a sideways extension with $f'|_S = f$, $g'|_S = g$ but $S'$ strictly containing $\overline{S}$ (what would be an extension by a vertical edge if $Z_2$ contains a chamber).

  In each case we want to replace $Z_1$ and $Z_2$ by larger complexes $\hat{Z}_1$ and $\hat{Z}_2$ as follows (see Lemma~\ref{lem:convex_extend_to_infinity}).
  In the first case $\hat{Z}_1$ is the least convex complex $(Z_1)_-$ that contains $Z_1$ and for every $x \in T$ some ray $\{x\} \times (-\infty,r_x]$  and $\hat{Z}_2$ is $(Z_2)_-$. In the second case we analogously take $\hat{Z}_1 = (Z_1)_+$ and $\hat{Z}_2 = (Z_2)_+$. In the third case we take $\hat{Z}_1 = p_T^{-1}(p_T(Z_1))$ and $\hat{Z}_2 = p_T^{-1}(p_T(Z_2))$ (which are $((Z_1)_+)_-$ and $((Z_2)_+)_-$ respectively). In all cases it is clear that an extension of $\iota|_{\hat{Z}_1}$ to $\hat{Z}_2$ gives rise to an extension of $\iota|_{Z_1}$ to $Z_2$ and we want to show that there are as many such extensions as possible.


  For upward and downward extensions the proof of Lemma~\ref{lem:wallhat_symmetric} asserts that there are as many extensions from $\iota|_{Z_1}$ to $Z_2$ in $\WallHatEmb_i$ as there are from $\iota|_{\hat{Z}_1}$ to $\hat{Z}_2$. The lemma further asserts that an extension to $\hat{Z}_2$ can be further extended to an extension $\kappa$ on all of $\Ssingular_i$. The point is now that since $\iota$ and $\kappa$ coincide on $\hat{Z}_1$ they are in relation $\sim_+$ or $\sim_-$ and so if $\iota \in \Fsingular_i$ then $\kappa \in \Fsingular_i$ as well showing that $\kappa|_{Z_2}$ is a morphism in $\WallEmb_i$.

  Sideways extensions are understood in terms of $M_i$: since $\hat{Z}_1$ contains a line parallel to $\ell_i(\R)$, the embedding $\iota|_{\hat{Z}_1}$ determines a pair of endpoints $(\iota(\ell_i(\infty)), \iota(\ell_i(-\infty))) \in \Deltaopv[i]$ and any extension $\kappa$ will have the same endpoints. Thus by Proposition~\ref{prop:restricted_wall_trees} $M_i$ acts transitively on the possible extensions. Identifying 
\begin{align*}
  \iota|_{\hat{Z}_1}&& \text{with} && \Stab_{M_i}(\iota_{\hat{Z}_1}) = U^{M_i}_{\hat{Z}_1}(\iota) = \{m \in M_i \mid (m.\iota)|_{\hat{Z}_1} = \iota|_{\hat{Z}_1}\}
\end{align*}
and any extension
\begin{align*}
  \kappa|_{\hat{Z}_2} &&\text{with} &&\Stab_{M_i}(\kappa|_{\hat{Z}_2}) = U^{M_i}_{\hat{Z}_2}(\kappa) = \{m \in M_i \mid (m.\kappa)|_{\hat{Z}_2} = \kappa|_{\hat{Z}_2}\}
\end{align*}
gives an identification of the possible extensions $\kappa$ of $\iota$ with the coset space $\Stab_{M_i}(\iota|_{\hat{Z}_2})/\Stab_{M_i}(\iota|_{\hat{Z}_1})$. Note that both groups are compact open so the coset space is finite and the number of extensions is the index.

Finally, if $Z_2$ is not minimal and more generally if $\alpha$ is not cofinite there is an infinite ascending sequence $(Z_{2-1/n})_{n \in \N_{>0}}$ of cofinite supercomplexes of $Z_1$ whose union is $Z_2$ (taking $\alpha$ to be an inclusion for simplicity). Lemma~\ref{lem:colimits_in_wallemb} asserts that $Z_2$ is in fact the colimit of this sequence sequence. The proof so far shows that the inverse system $(\Hom_{\WallEmb_i}(Z_{2-1/n},X))_n$ consists of surjections and it follows that $\Hom_{\WallEmb_i}(Z_2,X) = \lim_n \Hom_{\WallEmb_i}(Z_{2-1/n},X)$ surjects onto $\Hom_{\WallEmb_i}(Z_1,X)$.
\end{proof}

\begin{lemma}\label{lem:flat_contained_in_singular}
  The category $\Flat$ is contained in $\Wall_i$, and the category $\FlatEmb$ is contained in $\WallEmb_i$. That is, for every complex $Z \subseteq \Sregular$ and every type-preserving embedding $\iota_0 \colon Z \to X$ there is a $\iota\in\Fsingular_i$ such that $\iota|_Z=\iota_0$.
\end{lemma}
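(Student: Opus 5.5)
The plan is to reduce to the case $Z = \Sregular$ and then use the flexibility in choosing a marked wall space around a flat. First extend $\iota_0$ to an apartment. By Lemma~\ref{lem:flat_symmetric} (surjectivity part, applied to the inclusion $Z \hookrightarrow \Sregular$ in $\Flat$) there is a type-preserving embedding $\iota_1 \colon \Sregular \to X$ with $\iota_1|_Z = \iota_0$. So it suffices to show that every $\iota_1 \in \Fregular$ is the restriction of some $\iota \in \Fsingular_i$. Applying this to the inclusion $Z \subseteq \Sregular \subseteq \Ssingular_i$ then also gives the categorical containments. For objects this is immediate, since convex subcomplexes of $\Sregular$ are convex subcomplexes of $\Ssingular_i$. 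Morphisms of $\Flat$ are type-preserving embeddings between subcomplexes of $\Sregular$, and these extend to elements of $\tilde W$. One then checks that such elements come from $M_i$, using the same extension statement applied to $\iota_0 \circ w$.

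Next, extend $\iota_1$ to a marked wall space. Put $\xi = \iota_1(\ell_i(\infty))$ and $\zeta = \iota_1(\ell_i(-\infty))$. Then $\iota_1(\Sregular)$ is an apartment of the wall space $Y_{\xi,\zeta}$, i.e.\ it is $p_T^{-1}$ of a line in $T_{\xi,\zeta}$. Choose any type-preserving isomorphism $\Ssingular_i \to Y_{\xi,\zeta}$ that maps the apartment $\Sregular \subseteq \Ssingular_i$ onto $\iota_1(\Sregular)$ and agrees with $\iota_1$ there. Such an isomorphism exists because the tree automorphism group acts transitively on lines with prescribed identifications: extend the line isomorphism $L \to \iota_1$-line to an isomorphism of the (bi)regular trees, compatibly with bipartition and the $\R$-coordinate. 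This gives some $\hat\iota \in \hat\Fsingular_i$ with $\hat\iota|_{\Sregular} = \iota_1$.

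Then correct by $M_i$ so that the marking lies in the restricted class. By Proposition~\ref{prop:restricted_wall_trees} there is $\iota' \in \Fsingular_i$ with the same endpoints $(\xi,\zeta)$. Since $\Aut^0(\Ssingular_i)$ acts transitively on the fibre of $\sigma_+ \times \sigma_-$, we have $\hat\iota = \iota' \circ m$ for some $m \in \Aut^0(\Ssingular_i)$. We need an $m' \in M_i$ with $(\iota' \circ m')|_{\Sregular} = \hat\iota|_{\Sregular}$, i.e.\ $m'$ agrees with $m$ on $\Sregular$. Under the splitting $M_i \cong A_i \times \EProjGrp_i$, the $\R$-part is handled by $A_i$ (the translations along $\ell_i$ preserving types), so it remains to find an element of $\EProjGrp_i$ mapping a given line $L \subseteq T_i$ to a prescribed line $L'$ with prescribed parametrization. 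A type-preserving automorphism of $T_i$ fixing the bipartition is determined on a line by the images of its two ends and one vertex. By Lemma~\ref{lem:2-transitive}, $\EProjGrp_i$ is $2$-transitive on $\partial T_i$, so we can match the endpoints of $L$ to those of $L'$.

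The main obstacle is this last step: after matching ends we still need to realize the correct translation along the line. Stabilizers of two boundary points in $\EProjGrp_i$ need not contain translations of the right length. My first attempt is to absorb the translation into the $A_i$-factor together with the type constraint. Alternatively, one could use that projectivities $[\xi,\zeta',\xi]$ act on a fixed line as translations (hyperbolic elements of the projectivity group), and that $\EProjGrp_i$ contains such elements of every admissible translation length, as in Knarr's argument.
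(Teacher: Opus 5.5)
\paragraph{Your route is the paper's, but the final step is missing.}
Up to the last step you follow the paper's proof. You extend to a flat, take an element of $\Fsingular_i$ with the same endpoints (Proposition~\ref{prop:restricted_wall_trees}), and use $2$-transitivity of $\EProjGrp_i$ on $\partial T_i$ (Lemma~\ref{lem:2-transitive}) to move $L_0=p_T(\Sregular)$ onto the correct line with the correct ends. After that the two marked flats agree on $\partial\Sregular$, so they differ by a translation $a\in A$. Everything then hinges on realizing this $a$ inside $M_i$, i.e.\ finding $m\in M_i$ with $m|_{\Sregular}=a$. That is exactly the step you flag and leave open, so as written the proposal is not a proof.

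\paragraph{Neither of your two suggestions closes it.}
The first cannot work. $A_i$ is the kernel of $M_i\to\EProjGrp_i$, so it acts trivially on $T_i$ and fixes $L_0$ pointwise. It can only correct a shift along $\ell_i$, and only by elements of $\gen{\weight[i]}$. But any $a\in A$ not parallel to $\weight[i]$ induces a non-trivial translation of $L_0$.

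The second is an unproved assertion. Knarr's lemma gives $2$-transitivity on residues and says nothing about translation lengths. From $2$-transitivity you do get hyperbolic elements of $\EProjGrp_i$ with axis $L_0$: a group without hyperbolic elements fixes a vertex, an edge midpoint or an end, which is incompatible with $2$-transitivity on $\partial T_i$. However, this only shows that the translations of $L_0$ induced by the stabilizer of its two ends form a non-trivial subgroup. It does not show that this subgroup contains the translation coming from the particular $a$ you need to undo.

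\paragraph{What the paper does, and what you need to supply.}
The paper handles this point in one line: ``Applying this element of $A$ to $\iota_2$, we find $\iota_3$\dots''. This uses, without further argument, that $a$ extends to an element of $M_i$, so that the translated marking stays in $\Fsingular_i$; recall that $\Fsingular_i$ is only known to be $\Gamma\times M_i$-invariant. To complete your argument you must justify precisely this fact: for each $a \in A$ there is an $m\in M_i$ agreeing with $a$ on $\Sregular$.

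\paragraph{A separate correction.}
Drop the claim in your first paragraph that every morphism of $\Flat$ comes from $M_i$. Elements of $M_i$ fix $\xi_i$ and $\zeta_i$ and act on the $\R$-coordinate by a constant translation. So a $w\in\tilde W$ whose linear part does not fix $\weight[i]$ agrees with no element of $M_i$ on any chamber: the vertices of a chamber are affinely independent, so agreement there would force the two affine functions $p_\R\circ w$ and $p_\R+s$ to coincide on $\Sregular$. The content of the lemma, and all the paper's proof establishes, is the extension statement following ``That is''.
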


\begin{proof}
  Using \cite[Theorem~11.53]{AbramenkoBrown08} we may assume that $Z = \Sregular$. Let $\xi = \iota_0(\ell_i(\infty))$ and $\xi' = \iota(\ell_i(\infty))$. By Proposition~\ref{prop:restricted_wall_trees} there is $\iota_1\in\Fsingular_i$ with $\sigma_+\times\sigma_-(\iota)=(\xi,\xi')$, and the $M_i$-orbit of $\iota_1$ is in $\Fsingular_i$. 
  Furthermore by Lemma~\ref{lem:2-transitive}, the action of $M_i$ is 2-transitive on $\partial_\infty T_i$, so that there exists $\iota_2$ in the $M_i$-orbit of $\iota_1$ such that $\iota_2(\Sregular)=\iota_0(\Sregular)$, and in fact $\iota_2$ and $\iota_0$ coincide on the boundary $\partial \Sregular$, and therefore differ by an element of $A$.
  Applying this element of $A$ to $\iota_2$,  we find $\iota_3$ such that $\iota_3|_{\Sregular} = \iota$. 
\end{proof}

\begin{lemma}\label{lem:morphism_independence}
  For cofinite $\alpha \colon Z_1 \to Z_2$ in $\Wall_i$ the fiber size of the restriction morphism $\alpha_* \colon \Hom_{\WallEmb_i}(Z_1,X) \leftarrow \Hom_{\WallEmb_i}(Z_2,X)$ only depends on (the isomorphism type in $\Wall_i$ of) $Z_1$ and $Z_2$, not on $\alpha$.
\end{lemma}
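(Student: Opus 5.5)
The idea is to factor any morphism $\alpha$ through an isomorphism followed by an inclusion. Then constant-to-one-ness does the rest.

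First I fix what a morphism looks like. A morphism $\alpha \colon Z_1 \to Z_2$ in $\Wall_i$ is given by some $m \in M_i$ with $m(Z_1) \subseteq Z_2$, so it factors as
\[
  Z_1 \xrightarrow{\ m|_{Z_1}\ } m(Z_1) \hookrightarrow Z_2 .
\]
The first map is an isomorphism in $\Wall_i$, with inverse $m^{-1}|_{m(Z_1)}$. Since restriction is contravariant, $\alpha^* = (m|_{Z_1})^* \circ \mathrm{inc}^*$.

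Next I note that $(m|_{Z_1})^*$ is a bijection. It sends $\kappa\colon m(Z_1)\to X$ to $\kappa\circ m|_{Z_1}$. This map stays inside $\WallEmb_i$ because $\Fsingular_i$ is $M_i$-invariant, and its inverse is precomposition with $m^{-1}$. By Proposition~\ref{prop:wall_symmetric}, $\alpha^*$ is constant-to-one. Combining the two facts, the fiber size of $\alpha^*$ equals the fiber size $n(m(Z_1),Z_2)$ of the inclusion $m(Z_1)\hookrightarrow Z_2$. It therefore remains to show that this number depends only on the isomorphism types of the source and target.

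For that, suppose $\alpha'\colon Z_1'\to Z_2'$ is another cofinite morphism with isomorphisms $\phi_1 \colon Z_1 \to Z_1'$ and $\phi_2 \colon Z_2 \to Z_2'$ in $\Wall_i$. These are again given by elements of $M_i$. I want to compare the fiber sizes of $\alpha^*$ and $\alpha'^*$. The cleanest route is a double count. Compute $\abs{\Hom_{\WallEmb_i}(Z_2,X)}$ relative to a common finite reference:
\begin{enumerate}
\item Pick a vertex $o$ of $Z_1$; since $\alpha$ is cofinite, everything is finite over a fixed $\iota|_{\{o\}}$.
\item Fiber sizes multiply along compositions of constant-to-one maps. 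Hence $n(Z_1,Z_2)\cdot n(\{o\},Z_1) = n(\{o\},Z_2)$, and the fiber size of $\alpha^*$ is the ratio $n(\{o\},Z_2)/n(\{o\},Z_1)$.
\item The numerator and denominator come from the inclusions $\{\alpha(o)\}\hookrightarrow Z_2$ and $\{o\}\hookrightarrow Z_1$ respectively. The inclusion $\{\alpha(o)\}\hookrightarrow Z_2$ may not be cofinite if $Z_2$ is infinite, so I restrict the double count to a finite convex subcomplex in the infinite case.
\end{enumerate}

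The main obstacle is that single-vertex counts may depend on which vertex is chosen. I would handle this by showing that $n(\{v\},Z)$ depends only on the $M_i$-orbit of $v$ in $Z$ up to isomorphism. One way would be to use that $M_i$ acts transitively on vertices of a given type in $\Ssingular_i$ (via Lemma~\ref{lem:2-transitive} and the $A_i$-action), giving the ratio as a ratio of indices of compact open stabilizers, as in the sideways case of Proposition~\ref{prop:wall_symmetric}.

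A more robust alternative avoids the ratio altogether and transports the whole situation by isomorphisms. The isomorphisms carry the pair $(Z_1, m(Z_1)\subseteq Z_2)$ to $(Z_1', \phi_2 m\phi_1^{-1}(Z_1')\subseteq Z_2')$, with fiber sizes preserved by the bijection argument above. This reduces the lemma to comparing two inclusions $Y \hookrightarrow Z$ and $Y' \hookrightarrow Z$ with $Y \cong Y'$. Here I would choose a chain of elementary extensions from each and compare the factors through the counts in the proofs of Lemma~\ref{lem:wallhat_symmetric} and Proposition~\ref{prop:wall_symmetric}. In that chain the local counts $q_k$, $q_k+1-\ell$ and the stabilizer indices appear. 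Equality of the products is the delicate point, and it is where I expect the real work to lie.
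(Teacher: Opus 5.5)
Your reduction is sound and matches the paper's in substance. You factor $\alpha$ as an isomorphism followed by an inclusion and use multiplicativity of fiber sizes, $n(\alpha)=n(\{\alpha(o)\},Z_2)/n(\{o\},Z_1)$. For two morphisms $\alpha,\alpha'\colon Z_1\to Z_2$ everything then comes down to one claim: $n(\{v\},Z)=n(\{w\},Z)$ for two vertices $v,w$ of the \emph{same} complex $Z$ of the same type. (The paper encodes this reduction as comparability, reducing to morphisms out of a one-vertex complex.)

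That claim is the gap: you leave it unproved, and the tool you propose does not give it. An element $m\in M_i$ with $m(v)=w$ only yields $n(\{v\},Z)=n(\{w\},m(Z))$. It moves the target complex along with the vertex, so it is the same transport-of-structure bijection you already used, not a comparison of two vertices inside one $Z$. The stabilizer-index count from the sideways case of Proposition~\ref{prop:wall_symmetric} is not available either. It relies on $M_i$ acting transitively on the set of extensions, which happens only when the smaller complex contains a line parallel to $\ell_i(\R)$, not when it is a single vertex. Your alternative via chains of elementary extensions is explicitly left open at the ``delicate point''.

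The missing idea is a local symmetry. Let $E$ be the convex hull of two vertices $v_1,v_2$ of the same type at minimal distance. It lies in a flat and has a type-preserving reflection swapping $v_1$ and $v_2$. By Lemma~\ref{lem:flat_contained_in_singular}, $\Hom_{\WallEmb_i}(E,X)$ and $\Hom_{\WallEmb_i}(\{v_k\},X)$ consist of \emph{all} type-preserving embeddings. So precomposition with the reflection is a bijection intertwining the two restriction maps, whence $n(\{v_1\},E)=n(\{v_2\},E)$.

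Next, enlarge $Z$ cofinitely to $Z'$ so that any two vertices of $Z$ of that type are joined by a chain in which consecutive vertices lie in a common copy $E'$ of $E$ inside $Z'$. The identity $n(\{v\},Z')=n(E',Z')\,n(\{v\},E')$ propagates the equality along the chain. The identity $n(\{v\},Z')=n(Z,Z')\,n(\{v\},Z)$ then brings it back to $Z$. Inserted into your ratio formula, this completes the argument.

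A global alternative also closes the gap. Let $F(x,y)$ be the number of $\iota\in\Hom_{\WallEmb_i}(Z,X)$ with $\iota(v)=x$ and $\iota(w)=y$; this is $\Gamma$-invariant. Sum $F$ over $\Gamma$-orbits weighted by $1/|\Gamma_x|$ in either variable (mass transport, using cocompactness). Since the row sums are $n(\{v\},Z)$ and the column sums are $n(\{w\},Z)$, this gives $n(\{v\},Z)=n(\{w\},Z)$.
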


\begin{proof}
  Let $\Wall_{i,\text{cofin}}$ be the subcategory of $\Wall_{i,\text{cofin}}$ having all objects but only cofinite morphisms. Let $\WallEmb_{i,\text{cofin}}$ be the subcategory of $\WallEmb_{i,\text{cofin}}$ that has all objects, the morphisms of $\Wall_{i,\text{cofin}}$ and all embeddings to $X$. We apply Lemma~\ref{lem:comparable} to $\Wall_{i,\text{cofin}}$ and the functor $\Hom_{\WallEmb_{i,\text{cofin}}}(-,X)$ and need to show that any two morphisms $Z_1 \to Z_2$ in $\Wall_{i,\text{cofin}}$ are comparable.

  Let $V_i \in \Wall_i$ be a complex consisting of a single vertex of type $i$. Since for every complex $Z \in \Wall_i$ there is a morphism $V_i \to Z$ for some $i$, it suffices to show that any two morphisms $\iota,\kappa \colon V_i \to Z$ are comparable.

  Let $E_i$ be a complex that is the convex hull of two vertices of type $i$ at minimal distance. Note that $E_i$ admits a type-preserving (reflection) symmetry $\kappa_i$ that exchanges the two vertices of type $i$, i.e.\ the two morphisms $V_i \to E_i$ (it exists in $\Wall_i$ by Lemma~\ref{lem:flat_contained_in_singular}). Now the proof follows in spirit Example~\ref{ex:unicomparable}. If $Z \in \Wall_i$ is arbitrary it may not be the case that any two vertices of type $i$ are connected by a sequence of vertices of type $i$ where any two consecutive ones lie in a common copy of $E_i$. However, there is a cofinite supercomplex $Z' \in \Wall_i$ with an embedding $\omega \colon Z \to Z' \in \Wall_i$ such that any two vertices of type $i$ in $Z$ are connected by such a sequence in $Z'$. By \eqref{item:factor} in the definition of comparability is suffices to show that two morphisms $\iota,\kappa \colon V_i \to Z'$ factoring through $Z$ are comparable. If both $\iota$ and $\kappa$ factor through $E_i$ and differ by $\kappa_i$ then they are comparable. The general case follows by transitivity.
\end{proof}

We end the paragraph by showing that restricting the space we embed to $\Ssingular_i$ (hence considering the space 
 $\hat{\Wall}_i$) cannot be avoided. A first naive idea would be to consider the category $\catname{C}$ of all subcomplexes of $X$ and all type-preserving embeddings. But then it
 is easy to see that if $X$ was $\catname{C}^X$-symmetric then the stabilizer of an edge in $\Aut(X)$ would need to act as the symmetric group on the chambers containing the edge (the chosen permutation on the star $S$ of an edge regarded as an embedding $S \to X$ would need to extend to an embedding $X \to X$). Even in Bruhat--Tits buildings this action is $\PGL_2(q)$ rather than $\Sym(q+1)$ and in exotic buildings it will typically be trivial or very small.

 Knowing the situation for $\tilde{A}_2$-buildings, the next best thing one might hope for is to consider embeddings of products (for example products of trees). For example we might ask for $X$ to be $\catname{C}^X$-symmetric where $\catname{C}$ consists of all complexes that metrically decompose as a direct product and type-preserving embeddings among them. 
 
 The following example illustrates that that is not true either by involving a product in which neither factor is a segment. Thus the correct class of complexes to consider consists of products where one of the factors is a segment, meaning that the complexes are subcomplexes of $\Ssingular_1$ or $\Ssingular_2$.

\begin{figure}[htb]
\includegraphics[page=3,width=.3\textwidth]{figs/c2_example}\hfill
\includegraphics[page=2,width=.3\textwidth]{figs/c2_example}\hfill
\includegraphics[page=4,width=.3\textwidth]{figs/c2_example}\\
\hfill $Y$\hfill\hfill $Z$ \hfill\hfill $Y'$ \hfill{}
\caption{Subcomplexes of the building of type $C_2$ over $F_2$.}
\label{fig:c2_example}
\end{figure}

\begin{example}\label{ex:not_product-symmetric}
  Figure~\ref{fig:c2_example} shows a building $X$ of type $C_2$ over $\F_2$ together with three convex subcomplexes $Y \subseteq Z$ and $Y'$. Taking the cone, the complexes can be regarded as subcomplexes of the star of a vertex in any building of type $\tilde{C}_2$ of thickness $3$. Note that $Z$ is a non-thick building that is a subdivision of the join of two $3$-element sets (a building of type $A_1 \times A_1$ over $F_2$). For $\iota \colon Y \to Z$ the inclusion $\alpha \colon Y \to X$ extends to the inclusion $Z \to X$. However, the complexes $Y$ and $Y'$ are isomorphic in an obvious way and the composition $\alpha' \colon Y \cong Y' \to X$ does not extend along $\iota$: it is elementary to verify that the convex hull of $Y'$ extended by the two edges out of the rightmost red vertex is all of $X$. Thus the building cannot be symmetric with respect to $Y$, $Y'$, $Z$, $X$ and all type-preserving embeddings among them.
\end{example}

\subsection{The measures on $\Fregular$ and $\Fsingular$}

Using Proposition~\ref{prop:wall_symmetric} the method from Appendix~\ref{sec:limit_measures} to construct measures becomes available.

Let $(Z)_{Z \in \Wall_i \text{ finite}}$ in $\Wall_i$ be the directed system of all non-empty finite convex subcomplexes of $\Ssingular_i$ and inclusions between them. Then $\Ssingular_i = \colim_Z Z$. By Proposition~\ref{prop:wall_symmetric}, $(\Hom_{\WallEmb_i}(Z,X))_Z$ is an inverse system of sets and constant-to-one maps. Recall that $0 \in \Sregular \subseteq \Ssingular_i$ is a special vertex. According to Proposition~\ref{prop:cto_limit_measure} we can equip $\Hom_{\WallEmb_i}(\{0\},X)$ with the counting measure, each $\Hom_{\WallEmb_i}(Z,X)$ with the relatively uniform measure and \[\Fsingular_i = \Hom_{\WallEmb_i}(\Ssingular_i,X) = \Hom_{\WallEmb_i}(\colim_Z Z, X) = \lim_Z \Hom_{\WallEmb_i}(Z,X)\] with the limit measure which we denote by $\mu_\Fsingular$.

Similarly taking the directed system $(Z)_{Z \in \Flat\text{ finite}}$ in $\Flat$ of all non-empty finite convex subcomplexes of $\Sregular$ and inclusions between them, we obtain the limit measure $\mu_\Fregular$ on
\[
  \Fregular = \Hom_{\FlatEmb}(\Sregular,X) = \Hom_{\FlatEmb}(\colim_Z Z, X) = \lim_Z \Hom_{\FlatEmb}(Z,X).
\]

\begin{proposition}\label{prop:measure_singular_to_regular}
  The map $\Fsingular \to \Fregular, \iota \mapsto \iota|_{\Sregular}$ is measure-preserving.
\end{proposition}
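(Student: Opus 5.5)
The plan is to reduce the statement to a comparison of the two inverse systems defining $\mu_\Fsingular$ and $\mu_\Fregular$, using the formalism of Appendix~\ref{sec:limit_measures}. The finite convex subcomplexes of $\Sregular$ form a subsystem of the finite convex subcomplexes of $\Ssingular_i$ (they are convex in $\Ssingular_i$ since $\Sregular$ is an apartment of $\Ssingular_i$). For each finite $Z \subseteq \Sregular$, Lemma~\ref{lem:flat_contained_in_singular} identifies $\Hom_{\WallEmb_i}(Z,X)$ with $\Hom_{\FlatEmb}(Z,X)$: both are the sets of all type-preserving embeddings $Z \to X$. Moreover, the restriction map $\Fsingular_i \to \Fregular$ is compatible with the projections to these finite levels. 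So $\Fsingular_i \to \Fregular$ is the map of limits induced by restricting the index category from all finite $Z \subseteq \Ssingular_i$ to finite $Z \subseteq \Sregular$.

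The key steps, in order, are as follows.

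\emph{Step 1 (levels agree).} The normalizations agree. Both systems give $\Hom(\{0\},X)$ counting measure, and the vertex $0$ lies in $\Sregular$. For $Z \subseteq \Sregular$ finite containing $0$, the relatively uniform measure on $\Hom(Z,X)$ is counting measure divided by the fiber size of $\Hom(Z,X) \to \Hom(\{0\},X)$. This fiber size is computed by the same extension count in both categories, because the sets coincide. For $Z$ not containing $0$, one passes through $Z \cup \conv\{0\}$ as in Proposition~\ref{prop:cto_limit_measure}; this again stays inside $\Sregular$. Hence the finite-level measures agree, and Lemma~\ref{lem:morphism_independence} guarantees that no choice of morphism intervenes.

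\emph{Step 2 (pushforward of the limit measure).} For each finite $Z \subseteq \Sregular$, the projection $\Fsingular_i \to \Hom_{\WallEmb_i}(Z,X)$ pushes $\mu_\Fsingular$ to the relatively uniform measure. This is part of the construction of the limit measure in Proposition~\ref{prop:cto_limit_measure}, and it needs the maps to be surjective and constant-to-one (Proposition~\ref{prop:wall_symmetric}). Composing with the restriction map, the pushforward of $\mu_\Fsingular$ to $\Fregular$ and $\mu_\Fregular$ have the same image at every finite level of the flat system.

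\emph{Step 3 (uniqueness).} Cylinder sets $\{\iota \in \Fregular \mid \iota|_Z = \iota_Z\}$ form a $\pi$-system generating the Borel $\sigma$-algebra, and both measures are $\sigma$-finite, since the level $\{0\}$ is countable with finite masses. Uniqueness of extension then gives equality of the two measures. If the appendix contains a general statement of the form ``a cofinal or full subsystem induces a measure-preserving map of limits'', I would invoke it directly instead.

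The main obstacle I expect is Step 1. One must make sure that the relatively uniform measure on a level depends only on that level's set and its constant-to-one maps to $\Hom(\{0\},X)$, and not on the ambient directed system. In the wall system there are more objects, and possibly different paths of morphisms from $\{0\}$ to $Z$, namely those coming from elements of $M_i$. The resulting fiber counts must be consistent. This is exactly what Lemma~\ref{lem:morphism_independence} (via Lemma~\ref{lem:comparable}) supplies. I would check carefully that an inclusion $\{0\} \to Z$ inside $\Sregular$, viewed in $\Wall_i$, yields the same index as in $\Flat$; this holds because both equal the cardinality of the same fiber.
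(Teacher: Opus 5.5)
Your proposal is correct and follows the paper's argument. The flat system of finite convex subcomplexes of $\Sregular$ sits inside the wall system with the same counting-measure normalization at $\{0\}$, Lemma~\ref{lem:flat_contained_in_singular} identifies the levels $\Hom_{\WallEmb_i}(Z,X)=\Hom_{\FlatEmb}(Z,X)$, and the limit projections being measure-preserving together with uniqueness of the limit measure gives the claim. (Lemma~\ref{lem:morphism_independence} is not needed here: the directed system consists only of inclusions, so elements of $M_i$ never enter, and Proposition~\ref{prop:cto_limit_measure} already makes the level measures independent of the path from $\{0\}$.)
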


\begin{proof}
  This follows from the fact that for $Z \subseteq \Ssingular$ with $Z' = Z \cap \Sregular$ non-empty the restriction map $\Hom_{\WallEmb_i}(Z,X) \to \Hom_{\WallEmb_i}(Z',X)$ is measure-preserving by construction together with Lemma~\ref{lem:flat_contained_in_singular} which asserts that $\Hom_{\WallEmb_i}(Z',X) = \Hom_{\FlatEmb}(Z',X)$.
\end{proof}

Since each of the basic open sets $U_C(\iota) = \{\iota' \mid \iota'|_C = \iota|_C\}$ has positive measure by construction, we observe:

\begin{lemma}\label{lem:fullsupport}
  The measures $\mu_{\Fregular}$ and $\mu_{\Fsingular_i}$ have full support.
\end{lemma}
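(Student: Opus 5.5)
The plan is to use the explicit description of the limit measure together with the fact that the sets $U_C(\iota)$ form a basis of the topology. Full support means every non-empty open set has positive measure. Since $\Fsingular_i = \lim_Z \Hom_{\WallEmb_i}(Z,X)$ carries the limit topology, a basis is given by the preimages of singletons under the projections $\Fsingular_i \to \Hom_{\WallEmb_i}(Z,X)$, for $Z$ ranging over non-empty finite convex subcomplexes of $\Ssingular_i$. These preimages are exactly the sets $U_Z(\iota) = \{\iota' \in \Fsingular_i \mid \iota'|_Z = \iota|_Z\}$. Moreover, every compact $C$ is contained in such a finite convex $Z$, and $U_Z(\iota) \subseteq U_C(\iota)$. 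So it suffices to show $\mu_{\Fsingular}(U_Z(\iota)) > 0$ for finite convex $Z$ and $\iota \in \Fsingular_i$.

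By the construction in Proposition~\ref{prop:cto_limit_measure}, the limit measure pushes forward under the projection to $\Hom_{\WallEmb_i}(Z,X)$ to the relatively uniform measure on that set. This measure is a positive multiple of counting measure, namely $1/[Z,\{0\}]$ times counting measure, with the constant defined via a common upper bound $Z' \supseteq Z \cup \{0\}$ as in Example~\ref{ex:prouni_motivation_haar}. The point $\iota|_Z$ therefore has strictly positive mass, so $\mu_{\Fsingular}(U_Z(\iota)) = 1/[Z,\{0\}] > 0$.

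The constant is well defined and finite because, by Proposition~\ref{prop:wall_symmetric}, the restriction maps along cofinite inclusions are surjective and constant-to-one with finite fibers. By Lemma~\ref{lem:morphism_independence}, the fiber sizes depend only on the complexes involved. Surjectivity is what guarantees that the projection from the limit onto each stage is surjective, so the fiber $U_Z(\iota)$ is non-empty and carries the full mass of its point.

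The same argument applies verbatim to $\mu_{\Fregular}$, using Lemma~\ref{lem:flat_symmetric} in place of Proposition~\ref{prop:wall_symmetric}. The only step needing care is the identification of the measure of a cylinder set with the mass of its image at the finite stage. This is built into the limit-measure construction of the appendix: the limit measure is characterized by making all projections measure-preserving. I expect no real obstacle there.
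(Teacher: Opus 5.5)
Your proposal is correct and takes essentially the same approach as the paper, which justifies the lemma only by the remark that each basic open set $U_C(\iota)$ has positive measure by construction. You unpack this by reducing to finite convex $Z$ and identifying $U_Z(\iota)$ with the fiber over the point $\iota|_Z$ of the measure-preserving projection to $\Hom_{\WallEmb_i}(Z,X)$, where that point carries positive mass under a rescaled counting measure.
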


Note that one might regard $\Fregular$ as $\Fsingular$ equipped with the coarser $\sigma$-algebra consisting of preimages of measurable sets under the map $\Fsingular \to \Fregular$. Similar remarks apply to much of what follows. From that perspective we will be considering $\Fsingular$ equipped with various $\sigma$-algebras. This perspective may be useful when we apply martingale convergence.

From Lemma~\ref{lem:morphism_independence}, Lemma~\ref{lem:flat_contained_in_singular}, and Proposition~\ref{prop:limit_invariant} we get

\begin{lemma}\label{lem:flow_measure_invariance}
  The measure $\mu_\Fregular$ is $\Gamma \times \tilde{W}$-invariant, the measure $\mu_{\Fsingular_i}$ is $\Gamma \times M_i$-invariant.
\end{lemma}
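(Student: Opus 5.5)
Invariance under $\Gamma$ and invariance under the "model side" group ($\tilde{W}$ for $\Fregular$, $M_i$ for $\Fsingular_i$) are handled separately. Both measures are limit measures of the inverse systems $(\Hom_{\FlatEmb}(Z,X))_Z$ and $(\Hom_{\WallEmb_i}(Z,X))_Z$. On each finite level they are a scalar multiple $1/[Z,\{0\}]$ of counting measure. So it suffices to check that the relevant actions are compatible with the inverse system and preserve these finite-level measures; the limit statement then follows from Proposition~\ref{prop:limit_invariant}.

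For $\Gamma$: an element $g\in\Gamma$ acts by postcomposition, $\iota\mapsto g\circ\iota$. It commutes with all restriction maps $\alpha^*$ and maps $\Hom(Z,X)$ bijectively to itself. For $\Fsingular_i$ we use that $\Fsingular_i$ is $\Gamma$-invariant by construction, being a $\simeq$-class and $\simeq$ containing the $\Gamma$-orbit relation. Hence $g$ preserves counting measure on each $\Hom(Z,X)$, and therefore preserves the relatively uniform measure, which is counting measure times a constant depending only on $Z$. Passing to the limit gives $\Gamma$-invariance.

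For the model side, an element $m\in M_i$ (respectively $w\in\tilde{W}$) acts by precomposition, $\iota\mapsto\iota\circ m^{-1}$. It sends $\Hom(Z,X)$ to $\Hom(m(Z),X)$, so it permutes the objects of the directed system rather than fixing each level. The map $\Hom(Z,X)\to\Hom(m(Z),X)$ is a bijection: it is the restriction along the isomorphism $m|_Z\colon Z\to m(Z)$ in $\Wall_i$, and for $\Flat$ we embed $\tilde{W}$ into $M_i$-type morphisms via Lemma~\ref{lem:flat_contained_in_singular}. This bijection preserves the relatively uniform measures exactly when the normalizing constants agree, $[Z,\{0\}]=[m(Z),\{0\}]$.

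This equality is the main obstacle. The constant $[Z,\{0\}]$ is computed through some chain of cofinite morphisms connecting $\{0\}$ and $Z$. For it to be invariant, the fiber size of a cofinite restriction must depend only on the isomorphism types of source and target and not on the chosen morphism. That is precisely Lemma~\ref{lem:morphism_independence}, and it is also where Proposition~\ref{prop:limit_invariant}'s hypothesis of comparable morphisms enters.

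Concretely, I would take a finite $Z'\supseteq Z\cup\{0\}$. Then
\[
[Z,\{0\}]=\frac{[Z',\{0\}]}{[Z',Z]},\qquad [m(Z),\{0\}]=\frac{[m(Z'),\{m(0)\}]\,[m(Z'),m(0)\text{ vs }0]}{[m(Z'),m(Z)]},
\]
where the middle factor compares the two morphisms $\{0\}\to m(Z')\cup\{0\}$. By Lemma~\ref{lem:morphism_independence}, fiber counts agree for the isomorphic pairs $(Z',Z)\cong(m(Z'),m(Z))$. The remaining discrepancy is the ratio between the vertices $0$ and $m(0)$. Since $m$ preserves types, this ratio is trivial by the same lemma applied to the two morphisms $V\to W$ from a vertex to a complex containing both $0$ and $m(0)$.

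For $\tilde{W}$ we also need that type-preserving elements of $\tilde{W}$ give morphisms in $\Flat$. Such elements are type-preserving automorphisms of $\Sregular$, which is exactly what is needed. With both invariances in hand, and since the two actions commute (one acts by postcomposition, the other by precomposition), the measures are invariant under the product groups $\Gamma\times\tilde{W}$ and $\Gamma\times M_i$.
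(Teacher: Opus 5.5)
Your proposal is correct and follows essentially the paper's argument. The paper also treats $\Gamma$-invariance as immediate and reduces the $\tilde{W}$- and $M_i$-invariance, via Proposition~\ref{prop:limit_invariant}, to the normalization at $\{0\}$; it settles this with Lemma~\ref{lem:morphism_independence} (together with Lemma~\ref{lem:flat_contained_in_singular} in the flat case), which shows that counting measure on $\Hom(\{0\},X)$ induces counting measure on $\Hom(\{m(0)\},X)$, exactly as in your final step. Two small corrections: your displayed quotient formula is garbled, and comparability of morphisms enters through Lemma~\ref{lem:morphism_independence}, not as a hypothesis of Proposition~\ref{prop:limit_invariant} (which only requires $\alpha^*$ to be measure-preserving at the base level).
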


Recall from Section~\ref{sec:coxeter_complexes} that $\tilde{W} = A \rtimes W$, so in particular $\mu_\Fregular$ is $A$-invariant.

\begin{proof}
  Invariance under $\Gamma$ is clear. The only way in which the definition of the measure is not obviously $\tilde{W}$-invariant (respectively $M_i$-invariant) is the normalization to equip $\Hom(\{0\},X)$ with the counting measure. But Lemma~\ref{lem:morphism_independence} guarantees that for every vertex $v$ of type $0$ the set $\Hom(\{v\},X)$ is equipped with the counting measure as well.
\end{proof}

\begin{lemma}\label{lem:probaFregular}
  There exists a finite $A$-invariant measure $\mu_{\Gamma \backslash \Fregular}'$ on $\Gamma\backslash\Fregular$ such that for any measurable section $s \colon \Gamma \backslash \Fregular \to \Fregular$ the (surjective, generally non-injective) map $\Gamma \times \Gamma \backslash \Fregular \to \Fregular, (g,\Gamma \iota) \mapsto g.s(\Gamma \iota)$ is measure preserving.

\end{lemma}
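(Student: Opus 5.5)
The measure $\mu'_{\Gamma\backslash\Fregular}$ will be defined by restricting $\mu_\Fregular$ to a measurable fundamental domain, after correcting for finite stabilizers. Since $\Gamma$ acts properly on $\Fregular$ (by the lemma above) and $\Fregular$ is a second countable locally compact totally disconnected space, there is a Borel fundamental domain. Concretely, let $\pi\colon \Fregular\to\Gamma\backslash\Fregular$ be the quotient map and, for a measurable section $s$, put $D_s=s(\Gamma\backslash\Fregular)$; this set is Borel because $\Gamma$ is countable and the action is proper. Stabilizers $\Gamma_\iota$ are finite, and the map in the statement $(g,\Gamma\iota)\mapsto g.s(\Gamma\iota)$ is exactly $\abs{\Gamma_{s(\Gamma\iota)}}$-to-one over each point. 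To make it measure preserving, $\Gamma\times\Gamma\backslash\Fregular$ must carry counting measure times $\mu'$, with $\mu'$ defined by
\[
  \mu'(B)=\int_{s(B)}\frac{1}{\abs{\Gamma_\iota}}\,d\mu_\Fregular(\iota).
\]
The key steps are as follows. First, independence of $s$: if $s,s'$ are two sections then $s'=\gamma(\cdot).s$ for a measurable function $\gamma\colon\Gamma\backslash\Fregular\to\Gamma$. Partitioning $B$ into the countably many pieces where $\gamma$ is constant and using the $\Gamma$-invariance of $\mu_\Fregular$ from Lemma~\ref{lem:flow_measure_invariance}, together with $\abs{\Gamma_{g\iota}}=\abs{\Gamma_\iota}$, gives $\mu'$ independent of $s$. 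Second, the measure-preserving property: for $E\subseteq\Fregular$ Borel, the preimage of $E$ is $\bigsqcup_g\{g\}\times\pi(g^{-1}E\cap D_s)$. Its measure is $\sum_g\int_{g^{-1}E\cap D_s}\abs{\Gamma_\iota}^{-1}$, and unfolding this sum over orbits recovers $\mu_\Fregular(E)$, since each point of $E$ is counted $\abs{\Gamma_\iota}$ times.

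Finiteness uses cocompactness. Choose a compact $K\subseteq\Fregular$ with $\Gamma K=\Fregular$, and choose $s$ so that $D_s\subseteq K$; this is possible by taking $K$ to be a finite union of compact open sets $U_C(\iota_k)$ and picking the first index. Then $\mu'(\Gamma\backslash\Fregular)\le\mu_\Fregular(K)<\infty$, since $K$ is covered by finitely many $U_{\{0\}}(\iota)$, each of which has finite measure. Indeed, by construction $U_{\{0\}}(\iota)$ has mass $1$ because $\Hom(\{0\},X)$ carries counting measure.

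For $A$-invariance, the actions of $A$ and $\Gamma$ commute, so $A$ acts on $\Gamma\backslash\Fregular$. For $a\in A$, the map $a.s$ is again a section up to relabeling: $a.s(\Gamma a^{-1}\iota)$ is a section. Combining independence of the section with the $A$-invariance of $\mu_\Fregular$ (Lemma~\ref{lem:flow_measure_invariance}) and $\Gamma_{a\iota}=\Gamma_\iota$ gives $\mu'(aB)=\mu'(B)$. The main obstacle I expect is the measurability bookkeeping: Borel measurability of $D_s$ and of $\iota\mapsto\abs{\Gamma_\iota}$, and the existence of a measurable section landing in $K$. I would handle this using properness, which makes $\Gamma_\iota$ locally constant up to conjugacy on small compact open sets, and by working on the countable partition of $\Fregular$ into compact open pieces $U_C(\iota)$ on which the action is controlled.
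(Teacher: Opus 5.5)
Your proposal is correct and follows essentially the same route as the paper: the same weighted formula $\mu'(B)=\int_{s(B)}\abs{\Gamma_\iota}^{-1}\,d\mu_\Fregular$, the same unfolding over orbits for measure preservation, independence of $s$ and $A$-invariance from the $\Gamma$- and $A$-invariance of $\mu_\Fregular$, and finiteness from cocompactness via finitely many compact open sets $U_C(\iota_k)$. As a minor simplification, you can skip constructing a section that lands in $K$: since $\Gamma K=\Fregular$, the sets $\{\Gamma\iota \mid g.s(\Gamma\iota)\in K\}$ for $g\in\Gamma$ cover $\Gamma\backslash\Fregular$, so the measure-preserving property for an arbitrary section already gives $\mu'(\Gamma\backslash\Fregular)\le\mu_\Fregular(K)<\infty$.
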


 We define $\mu_{\Gamma \backslash \Fregular}$ to be the normalization of $\mu_{\Gamma \backslash \Fregular}'$ to a probability measure. In particular, Lemma \ref{lem:probaFregular} proves that quotient map $\pi:(\Fregular,\mu_\Fregular)\to (\Gamma\backslash\Fregular,\mu_{\Gamma \backslash \Fregular})$ is measure-class preserving.

\begin{proof}
  Since the action of $\Gamma$ on $X$ and therefore also on $\Fregular$ is proper, the stabilizer $\Gamma_\iota$ is finite for every $\iota \in \Fregular$. Continuity of the action implies that the map $m \colon \Fregular \to \N_{>0}, \iota \mapsto \abs{\Gamma_\iota} = \abs{\pi^{-1}(\pi(\iota))}$ is measurable. If $s \colon \Gamma \backslash \Fregular \to \Fregular$ is a measurable section, we can define a measure $\mu_{\Gamma \backslash \Fregular}'$ by
  \[
    \mu_{\Gamma \backslash \Fregular}'(U) = \int_{s(U)} \frac{1}{m(\iota)} d\mu_{\Fregular}(\iota).
  \]
  Then for any $n \in \N_{>0}$ and $V \subseteq m^{-1}(n)$ we have
  \begin{multline*}
    d\mu_{\Fregular}(V) = \int_{s\pi(V)} \frac{\abs{\{g \in \Gamma \mid g.\iota \in V\}}}{n} d\mu_\Fregular(\iota)\\ = \int_{\pi(V)} \abs{\{g \in \Gamma \mid g.s(\Gamma\iota) \in V\}} d\mu_{\Gamma \backslash \Fregular}'(\Gamma\iota)
  \end{multline*}
  showing that $\Gamma \times \Gamma \backslash m^{-1}(n) \to m^{-1}(n)$ is measure preserving and hence so is $\Gamma \times \Gamma \backslash \Fregular \to \Fregular$.
  Since $\mu_\Fregular$ is $\Gamma$-invariant, the definition of $\mu_{\Gamma \backslash \Fregular}'$ does not depend on the section $s$. Since $\mu_\Fregular'$ is $A$-invariant, so is $\mu_{\Gamma\backslash \Fregular}$.

  It remains to see that $\mu_{\Gamma \backslash \Fregular}'$ is finite. Since $\Gamma$ acts properly discontinuously on $\Fregular$, for every $\iota \in \Fregular$ there exists an $r > 0$ such that the pointwise stabilizer of $U_{B_r(0)}(\iota)$ fixes $\iota$ and by cocompactness $r$ can be chosen uniformly. Clearly the $U_{B_r(0)}(\iota)$ cover $\Fregular$ and since $\Gamma$ acts cocompactly there are $\iota_1,\ldots,\iota_\ell$ such that $\Gamma.\iota$ meets $U_{B_r(0)}(\iota_k)$ for exactly one $k$. Thus $\Omega = \bigcup_k U_{B_r(0)}(\iota_k)$ is a (compact open therefore) measurable fundamental domain. The map $s \colon \Gamma \backslash \Fregular \to \Fregular$ defined by $\Gamma\pi(\iota) \mapsto \iota$ for $\iota \in \Omega$ is measurable as $\Fregular \to \Gamma \backslash \Fregular$ is open. Since $\Omega$ is compact, $\mu_{\Gamma \backslash \Fregular}'$ is finite.
\end{proof}

The same proof gives:

\begin{lemma}\label{lem:probaFsingular}
  There exists a finite $A_i$-invariant measure $\mu_{\Gamma \backslash \Fsingular_i}'$ on $\Gamma\backslash\Fsingular_i$ such that for any measurable section $s \colon \Gamma \backslash \Fsingular_i \to \Fsingular_i$ the (surjective, generally non-injective) map $\Gamma \times \Gamma \backslash \Fsingular_i \to \Fsingular_i, (g,\Gamma \iota) \mapsto g.s(\Gamma \iota)$ is measure preserving.

  In particular, taking $\mu_{\Gamma \backslash \Fsingular}$ to be the normalization of $\mu_{\Gamma \backslash \Fsingular}'$ the quotient map $\Fsingular\to \Gamma\backslash\Fsingular$ is measure-class preserving.
\end{lemma}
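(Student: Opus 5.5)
The plan is to transcribe the proof of Lemma~\ref{lem:probaFregular} with $\Fregular$ replaced by $\Fsingular_i$ and $A$ replaced by $A_i$. Three inputs are needed, all already available for $\Fsingular_i$. First, the action of $\Gamma$ on $\Fsingular_i$ is proper and cocompact; this is the lemma following Proposition~\ref{prop:restricted_wall_trees}, proved via the proper map $\operatorname{ev}_0 \colon \Fsingular_i \to X_0$. Second, $\mu_{\Fsingular_i}$ is $\Gamma \times M_i$-invariant by Lemma~\ref{lem:flow_measure_invariance}, and $A_i < M_i$. Third, the basic sets $U_C(\iota)$ are compact open, so they have finite measure.

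The steps are as follows. By properness, each stabilizer $\Gamma_\iota$ is finite, and $m(\iota) = \abs{\Gamma_\iota}$ is measurable, indeed locally constant on a suitable cover, by continuity of the action. For a measurable section $s$, define
\[
\mu'_{\Gamma\backslash\Fsingular_i}(U) = \int_{s(U)} \frac{1}{m(\iota)}\, d\mu_{\Fsingular_i}(\iota).
\]
On each level set $m^{-1}(n)$, the counting computation from Lemma~\ref{lem:probaFregular} then shows that $\Gamma \times \Gamma\backslash\Fsingular_i \to \Fsingular_i$ is measure preserving. Independence of the section follows from $\Gamma$-invariance of $\mu_{\Fsingular_i}$, and $A_i$-invariance of the quotient measure follows from $A_i$-invariance upstairs, since the $A_i$- and $\Gamma$-actions commute.

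For finiteness, use properness and cocompactness to find a uniform $r$ such that the pointwise stabilizer in $\Gamma$ of $\iota|_{B_r(0)}$ fixes $\iota$. Here $B_r(0)$ is a ball in $\Ssingular_i$ around the special vertex $0$, which is compact since $\Ssingular_i$ is locally finite. Then choose finitely many $\iota_1,\dots,\iota_\ell$ so that $\Omega = \bigcup_k U_{B_r(0)}(\iota_k)$ is a compact open fundamental domain. The section through $\Omega$ is measurable because the quotient map is open, and compactness of $\Omega$ gives finiteness.

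The last claim follows once the quotient measure is normalized. Since $\Gamma \times \Gamma\backslash\Fsingular_i \to \Fsingular_i$ is measure preserving and surjective, a set is null downstairs if and only if its preimage is null. Hence the quotient map is measure-class preserving.

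The only point needing any care is the choice of fundamental domain. It requires that the $U_{B_r(0)}(\iota)$ be compact open in $\Fsingular_i$. This holds because $\Fsingular_i$ is closed in $\hat\Fsingular_i$ by Lemma~\ref{lem:2-transitive} and $X$ is locally finite. Everything else is formally identical to the $\Fregular$ case.
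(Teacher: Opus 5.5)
Your proposal follows the paper's route exactly: the paper proves this lemma only by remarking that the proof of Lemma~\ref{lem:probaFregular} carries over, and you transcribe that proof with the right inputs for $\Fsingular_i$, namely properness and cocompactness via $\operatorname{ev}_0$, $\Gamma\times M_i$-invariance of $\mu_{\Fsingular_i}$ together with $A_i\le M_i$, and compactness of the basic sets $U_C(\iota)$.

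One caveat, which you inherit from the paper: a uniform $r$ for which the stabilizer of $\iota|_{B_r(0)}$ fixes $\iota$ need not exist in general, for instance if a torsion element of $\Gamma$ fixes a wall space $Y_{\xi,\zeta}$ pointwise but acts non-trivially on $X$ (for the same reason $m$ is then only upper semicontinuous, not locally constant, though it is still measurable); finiteness does not need this step, because for a compact open $K$ meeting every $\Gamma$-orbit the measure-preserving property already gives $\mu'_{\Gamma\backslash\Fsingular_i}(\Gamma\backslash\Fsingular_i)\le\int\abs{\{g : g.s(x)\in K\}}\,d\mu'_{\Gamma\backslash\Fsingular_i}(x)=\mu_{\Fsingular_i}(K)<\infty$.
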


 We define $\mu_{\Gamma \backslash {\Fsingular_i}}$ to be the 
 normalization of $\mu_{\Gamma \backslash {\Fsingular_i}}'$ to a probability measure. In particular, Lemma \ref{lem:probaFsingular} proves that quotient map $\pi_i:(\Fsingular_i,\mu_{\Fsingular_i})\to (\Gamma\backslash\Fsingular_i,\mu_{\Gamma \backslash {\Fsingular_i}})$ is measure-class preserving.

\subsection{Measures on $\Delta$}\label{sec:measures_on_delta}

Let $o \in X$ be a vertex of type $0$. We work in the category $\FlatEmb_*$of pointed objects and morphisms in $\FlatEmb$. Let $((Z,0))_Z$ be the directed system of finite convex subcomplexes of the Weyl chamber $S$ that contain $0$ with basepoint $0$, and basepoint preserving inclusions among them. Then $(S,0) = \colim_Z Z$. By Lemma~\ref{lem:flat_symmetric}, $(\Hom_{\FlatEmb_*}((Z,0),(X,o)))_Z$ (where ) is an inverse system of sets and constant-to-one maps. Its limit is \[\calS_o = \{\iota \colon (S,0) \to (X,o) \mid \iota \text{ type-preserving}\}.\] By Proposition~\ref{prop:cto_limit_measure} we can equip  $\Hom_{\FlatEmb_*}((\{0\},0),(X,o))$ (which is a singleton space) with the counting measure, each $\Hom_{\FlatEmb_*}((Z,0),(X,o))$ with the relatively uniform measure and $\calS_o$ with the limit measure. Note that $\calS_o \to \Delta, \iota \mapsto \iota(S(\infty))$ is a homeomorphism (if we equip $\calS_o$ with the limit topology) via which we identify $\calS_o$ and $\Delta$. We denote by $\mu_\Delta^o$ the limit measure on
\[
  \Delta = \calS_o = \Hom_{\FlatEmb_*}(\colim_Z (Z,0), (X,o)) = \lim \Hom_{\FlatEmb_*}((Z,0),(X,o))
\]
and call it the \emph{pro-uniform measure centered at $o$}. Consider the compact open subset
\[\Fregular_o = \{\iota \in \Fregular \mid \iota(0) = o\}\]
of $\Fregular$. Just as in Proposition~\ref{prop:measure_singular_to_regular} the restriction $\Fregular_o \to \calS_o$ is measure-preserving, so we might as well have defined $\mu_\Delta^o$ to be the pushforward measure along the map $\Fregular_o \to \Delta, \iota \mapsto \iota(S(\infty))$ of the restriction of $\mu_\Fregular$ to $\Fregular_o$.

We define $\mu_{\Delta_i}^o$ in this way, namely as the pushforward of $\mu_{\calS_o}$ along $\calS_o \to \Delta_i^o, \iota \mapsto \iota(\ell_i(\infty))$ or equivalently as the pushforward of $\mu_\Fregular$ along $\Fregular_o \to \Delta_i^o, \iota \mapsto \iota(\ell_i(\infty))$. Clearly:

\begin{lemma}
  For every vertex $o \in X$ of type $0$ the map $(\Delta,\mu_\Delta^o) \to (\Delta_i,\mu_{\Delta_i}^o)$ that takes a chamber to its vertex of type $i$ is measure-preserving.
\end{lemma}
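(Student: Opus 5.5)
The measure $\mu_{\Delta_i}^o$ was \emph{defined} as the pushforward of $\mu_\Delta^o = \mu_{\calS_o}$ along the map $\calS_o \to \Delta_i$, $\iota \mapsto \iota(\ell_i(\infty))$. So the plan is to check that the map in the statement is this same map under the identification $\calS_o \cong \Delta$, $\iota \mapsto \iota(S(\infty))$. Measure preservation then holds by the definition of a pushforward.

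Concretely, take $\iota \in \calS_o$. The chamber $c = \iota(S(\infty))$ is the edge at infinity spanned by the endpoints of the rays $\iota(\R_{\ge 0}\weight[1])$ and $\iota(\R_{\ge 0}\weight[2])$. The ray $\ell_i|_{[0,\infty)}$ lies in $S$ and points in direction $\weight[i]$. By our convention on types, $\weight[i]$ points toward a vertex of type $i$, so $\iota(\ell_i(\infty))$ is exactly the vertex of $c$ of type $i$ (using that $\iota$ is type-preserving). Thus the composite $\calS_o \xrightarrow{\cong} \Delta \to \Delta_i$ equals $\iota \mapsto \iota(\ell_i(\infty))$. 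For any measurable $B \subseteq \Delta_i$ we then get $\mu_\Delta^o(\pi_i^{-1}(B)) = \mu_{\calS_o}(\{\iota \mid \iota(\ell_i(\infty)) \in B\}) = \mu_{\Delta_i}^o(B)$.

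The only point needing any care is measurability: the map must be Borel for the pushforward to make sense. This follows because $\pi_i$ is continuous. If two embeddings $\iota, \iota'$ agree on a large compact $K \subseteq S$, then they agree on an initial segment of the ray $\ell_i|_{[0,\infty)}$, which is exactly the neighborhood basis of $\Delta_i$ described in Section~\ref{sec:boundaries}. There is no real obstacle. The alternative description through $\Fregular_o$ agrees as well, since the restriction $\Fregular_o \to \calS_o$ is measure-preserving, as noted just above.
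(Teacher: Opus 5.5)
Your proposal is correct and matches the paper, which states the lemma with ``Clearly'' right after defining $\mu_{\Delta_i}^o$ as the pushforward of $\mu_{\calS_o}$ along $\iota \mapsto \iota(\ell_i(\infty))$. Your identification of this map with ``chamber $\mapsto$ its type-$i$ vertex'' under $\calS_o \cong \Delta$ is exactly the observation that makes it tautological. The continuity remark is a harmless extra.
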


We conclude this section by proving that the action of $\Aut^0(X)$ on $L^\infty(\Delta_i)$ is faithful.

\begin{lemma}\label{lem:aefaithful}
    Let $g\in \Aut^0(X)$ be such that $g$ fixes $\mu_{\Delta_i}^o$-almost every point in $\Delta_i$ for $i=1$ or $2$. Then $g$ is the identity.
\end{lemma}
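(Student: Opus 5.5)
The plan is to show that the fixed-point set of $g$ in $\Delta_i$ is a closed set, and that having full measure forces it to be everything; then to show that an automorphism fixing every vertex at infinity of type $i$ is trivial.

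\emph{Step 1: full support.} Since $g$ acts continuously on $\Delta_i$, its fixed set $F = \{\xi \in \Delta_i \mid g\xi = \xi\}$ is closed, and so $\Delta_i \setminus F$ is open. I claim $\mu_{\Delta_i}^o$ has full support on $\Delta_i$. A basic open set of $\Delta_i$ is the set of endpoints $\iota(\ell_i(\infty))$ for $\iota \in U_K(\iota_0) \cap \Fregular_o$ with $K$ compact. Its preimage in $\Fregular_o$ contains the basic open set $U_K(\iota_0)$, which has positive $\mu_\Fregular$-measure by Lemma~\ref{lem:fullsupport}. The measure $\mu_{\Delta_i}^o$ is the pushforward of $\mu_\Fregular|_{\Fregular_o}$, so every nonempty open subset of $\Delta_i$ has positive measure. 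Since $\Delta_i \setminus F$ is open and null, it is empty, and $g$ fixes every point of $\Delta_i$.

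\emph{Step 2: fixing all of $\Delta_i$ forces the identity.} Take a special vertex $x \in X$ and an apartment $\Sigma \ni x$. The lines in $\Sigma$ through $x$ in the direction $\weight[i]$, and its $W$-conjugates, have endpoints that are vertices of type $i$ or $\bar{i}$. First suppose $\bar{i} = i$, that is, types $B$, $C$, $G$. Then the pair of opposite type-$i$ endpoints of each such line is fixed by $g$, so $g$ preserves the union $Y_{\xi,\zeta}$ of lines joining them. Using at least two non-parallel such singular directions, every point of $\Sigma$ is the intersection of two such walls. Each wall is determined by its endpoints together with a point of the wall tree, and I need to pin down that point.

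Concretely, I will show that $g$ fixes every apartment. Let $\Sigma$ be an apartment. Its boundary $\partial\Sigma$ is a circle, and $g$ fixes its type-$i$ vertices. The type-$\bar{j}$ vertices are adjacent to two type-$i$ vertices of $\partial \Sigma$ (each edge at infinity has one vertex of each type). Because $\partial X$ is a thick spherical building, a vertex of type $j$ is determined by its neighbours: in the generalized polygon, two distinct type-$j$ vertices share at most one neighbour once the girth is at least $6$. So $g$ fixes $\partial\Sigma$ pointwise. An apartment is the unique one with a given boundary, so $g\Sigma = \Sigma$, and $g$ acts on $\Sigma$ as a type-preserving isometry fixing the boundary, hence as a translation.

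\emph{Step 3: ruling out translations.} The translation is the same on apartments sharing a sector, hence it is constant on all of $X$ by connectivity through sectors. Now pick a type-$i$ vertex $\xi$ and a wall ray in a sector that branches, using thickness. A nonzero translation $a \in A$ acting on every apartment containing $\xi$ must act on the tree $T_\xi$. Either it moves the branch points, in which case $g$ would permute the ends of $T_\xi$, which are identified with $\Res\xi \subseteq \Delta_j$, nontrivially. That contradicts Step 2, which already fixed all type-$j$ vertices as well. Otherwise the translation is parallel to $\weight[i]$-walls, and I use a second vertex $\xi'$ in a transverse direction. This gives $a = 0$, so $g$ fixes every apartment and $g = \id$.

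\emph{Expected obstacle.} I expect the main obstacle to be this last step, namely excluding a global translation commuting with everything at infinity. The cleanest route is to note that such a $g$ would be a Clifford translation of $X$, and irreducible thick Euclidean buildings have none: a Clifford translation forces a Euclidean de Rham factor.
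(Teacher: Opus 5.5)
Your argument is correct, but it is organized differently from the paper's.

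The paper works through the hyperbolic/elliptic dichotomy. For hyperbolic $g$ with axis from $\xi_-$ to $\xi_+$, it shows $g^n\eta\to\xi_+$ for every $\eta$ opposite $\xi_-$, so the fixed set at infinity is null. For elliptic $g$ fixing a vertex $o$, it uses that each shadow $\{\xi\in\Delta_i \mid \rho(t)\in[o,\xi)\}$ has positive measure and hence contains a fixed point. So all rays from $o$ to $\Delta_i$ are fixed pointwise, and so is their convex hull, which is $X$.

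You instead use full support once, together with closedness of the fixed set, to get that $g$ fixes all of $\Delta_i$. After that the argument is purely geometric: the girth of the generalized polygon $\partial X$ spreads this to all of $\partial X$, $g$ acts on every apartment as a translation, and the wall trees kill the translation.

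What each route buys:
\begin{itemize}
\item Your route avoids the dichotomy, which in the paper tacitly uses that automorphisms of $X$ have no parabolic elements.
\item It also shows exactly where irreducibility is needed: two type-$j$ vertices share at most one neighbour, and for a product of trees the statement is false.
\item The paper's route is shorter and needs neither the incidence geometry of $\partial X$ nor the trees $T_\xi$.
\end{itemize}

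Your Step~3 can be stated more cleanly, and it needs neither global constancy of the translation nor Clifford translations. $g$ fixes $\Res\xi\cong\partial T_\xi$ pointwise, and an isometry of a thick tree fixing all ends is the identity, so $g$ acts trivially on $T_\xi$. On the line of $T_\xi$ formed by the rays in $\Sigma$ towards $\xi$, $g$ translates by the component of $a$ orthogonal to the direction of $\xi$, so that component vanishes. Two non-parallel vertex directions of $\partial\Sigma$ then give $a=0$.

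One small slip: in Step~2 you mean type-$j$ vertices, not type-$\bar j$; these differ in type $\tilde A_2$.
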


\begin{proof}
  A hyperbolic isometry $h \in \Aut^0(X)$ translates an axis $\ell$, say from $\xi_-$ to $\xi_+$ \cite[Proposition~6.2]{BridsonHaefliger1999}. If $\eta\in \partial_\infty X$ is opposite $\xi_-$, then there is a geodesic $\ell'$ from $\xi_-$ to $\eta$ whose intersection with $\ell$ is a half-line $\rho$. Then $h^n \ell'$ is a geodesic from $\xi_-$ to $h^n \eta$, and it contains $h^n \rho$, which converges (pointwise) to $\ell$. This proves that $h^n \eta$ converges to $\xi_+$, hence is not fixed if $\eta \ne \xi_+$. In particular, if $c\in \Delta$ is a chamber which is opposite to the simplex supporting $\xi_-$ and not containing $\xi_+$, then it is not fixed by $h$, and both of its vertices in the boundary of $c$ are not fixed by $h$ either. Since almost all chambers are opposite any finite set of points, this proves that the set of points fixed by $h$ is of measure zero (in fact, one can show that it is the union of apartments containing $\xi_+$ and $\xi_-$). We conclude that $g$ cannot be hyperbolic.

  
   So $g$ is elliptic, and fixes a point $o$ in $X$, and since $g$ is type-preserving we may assume that $o$ is a vertex. If $\rho$ is a geodesic ray starting from $o$ to a point of $\Delta_i$, then by definition of the prouniform measures, for every $t>0$ the set $(\{\xi\in\Delta_i\mid \rho(t)\in [o\xi)\})$ has positive $\mu_{\Delta_i}^o$ measure, so that this set contains a point fixed by $g$. Hence $\rho(t)$ is fixed for every $t\geq 0$. Since this is valid for every $\rho$, it follows that all geodesic rays from $o$ to a point in $\Delta_i$ are pointwise fixed by $g$. Then $g$ fixes pointwise the convex hull of all these rays, which is all of $X$.
\end{proof}

\subsection{Measures on $\Opp(\zeta)$ and $\Oppv[i](\zeta)$}

By a \emph{fine point at infinity} of a metric space $Z$ we mean a geodesic ray $\rho \colon (-\infty,k] \to Z$ up to the relation of coinciding on an initial interval. It determines a point at infinity $\zeta = \rho(-\infty)$ but in addition is rigid with respect to translations toward $\zeta$ (like a Busemann function) as well as translations in the transverse direction (unlike a Busemann function (cf.\ the remark at the end of Section~\ref{sec:boundaries}); in particular, it determines a point of $T_\zeta$). For simplicity we will abusively refer to $\rho$ rather than its equivalence class as the fine point.
We define the category $\Flat_\partial$ to be $\Flat$ pointed at fine points at infinity: its objects are pairs $(Z,\rho)$ where $Z$ is an object of $\Flat$ and $\rho \colon (-\infty,k] \to Z$ is a fine point at infinity and a morphism $\alpha \colon (Z,\rho) \to (Z',\rho')$ is a map $Z \to Z'$ such that there is a $k \in \R$ with $(\alpha \circ \rho)_{(-\infty,k])} = \rho'|_{(\infty,k]}$, so in particular $\alpha(\rho(-\infty)) = \rho'(-\infty)$. For instance if $Z \subseteq Z'$ are subcomplexes of $\Sregular$ with $\ell_i((-\infty,k]) \subseteq Z$ then $(Z,\ell_i) \to (Z',\ell_i)$ is a morphism in $\Flat_\partial$ (we write $(Z,\ell_i)$ rather than $(Z,\ell_i|_Z)$ for legibility). We let $\FlatEmb_\partial$ be defined analogously. 

We pick a vertex $o \in X$ of type $0$ for normalization and consider a vertex $\zeta \in \Delta_{\bar{i}}$ of type $\bar{i}$.
Let $\rho_{\zeta,o} \colon (-\infty,0] \to X$ be the geodesic ray with $\rho_{\zeta,o}(-\infty) = \zeta$ and $\rho_{\zeta,o}(0) = o$. Then $(X,\rho_{\zeta,o})$ is an object of $\FlatEmb_\partial$.

Consider the directed system of (inclusions of) $(R,\ell_i)$ where $R$ is a convex subcomplex of $H_i$ that is a cofinite extension of $\ell_i((-\infty,t])$ for some $t$ (recall from Section~\ref{sec:coxeter_complexes} that $H_i$ is the half-space bounded by $\ell_i(\R)$ containing the Weyl chamber $S$). Its colimit is $(H_i,\ell_i)$. Consequently the inverse system $(\Hom_{\Flat_\partial}((R,\ell_i),(X,\rho_{\zeta,o})))_{R}$ has limit
\begin{align*}
  \calH_{\zeta,o} &\defeq \Hom_{\FlatEmb_\partial}((H_i,\ell_i),(X,\rho_{\zeta,o}))\\
                  &= \{\iota \colon H_i \to X \mid \iota \circ \ell_i|_{(-\infty,k]} = \rho_{\zeta,o}|_{(-\infty,k]} \text{ for some }k\}.
\end{align*}
The subsystem $(\ell_i((-\infty,t]),\ell_i)_{t}$ has colimit $(\ell_i(\R),\ell_i)$ and thus the inverse system obtained by applying $\Hom_{\FlatEmb_\partial}(-,(X,\rho_{\zeta,o}))$ has limit
\begin{align*}
  \calL_{\zeta,o} &\defeq \Hom_{\FlatEmb_\partial}((\ell_i(\R),\ell_i),(X,\rho_{\zeta,o}))\\
                  &= \{\iota \colon \ell_i(\R) \to X \mid \iota \circ \ell_i|_{(-\infty,k]} = \rho_{\zeta,o}|_{(-\infty,k]} \text{ for some }k\}.
\end{align*}
Note that the maps
\begin{align*}
  \calH_{\zeta,o} &\to \Opp(\zeta) && \text{and} & \calL_{\zeta,o} & \to \Oppv[i](\zeta)\\
  \iota & \mapsto \iota(S(\infty)) &&& \iota & \mapsto \iota(\ell_i(\infty))
\end{align*}
are bijections to the space of chambers respectively vertices opposite $\zeta$ (see Section~\ref{sec:boundaries}) so we may identify the spaces. Both of the systems in $\Hom_{\Flat_\partial}$ we are considering have cofinal connecting morphisms thus the inverse systems in $\Hom_{\FlatEmb_\partial}$ are constant-to-one by Lemma~\ref{lem:flat_symmetric}. Thus we may give $\Hom((\ell_i((-\infty,0]),\ell_i),(X,\rho_{\zeta,o}))$ the counting measure and equip $\Opp(\zeta) = \calH_{\zeta,o}$ with the limit measure which we denote by $\mu^o_{\Opp(\zeta)}$ and $\Oppv[i](\zeta) = \calL_{\zeta,o}$ with the limit measure which we denote by $\mu_{\Oppv[i](\zeta)}^o$. For orientation, note that the set of all vertices $\rho(\infty)$ where $\rho$ is a geodesic line such that $\rho|_{(\infty,0]}$ is the ray $(\zeta,o]$ has measure one. Similarly, the set of chambers $\iota(S(\infty))$ where $\iota$ ranges over embeddings $H_i \to X$ such that $\iota_{\ell_i((-\infty,0])}$ is that ray has measure one.

Let $\xi \in \Oppv[i](\zeta)$ corresponding to the geodesic line $\rho_{\zeta,\xi,o}$ in $\calL_{\zeta,o}$ with $\rho_{\zeta,\xi,o}(\infty) = \xi$ and $\rho_{\zeta,\xi,o}|_{(-\infty,k]} = \rho_{\zeta,o}{(-\infty,k]}$ for some $k$. Then the fiber of $\xi$ of the map $\Opp(\zeta) \to \Oppv[i](\zeta)$ corresponding to the restriction map $\calH_{\zeta,o} = \calL_{\zeta,o}$ is
\[
  \calL_{\zeta,\xi,o} = \{\iota \colon H_i \to X \mid \iota \circ \ell_i = \rho_{\zeta,\xi,o}\}.
\]
It is the limit of the inverse system of embeddings $(R,\ell_i)_R \to (X,\rho_{\zeta,\xi,o})$ where $R$ ranges over complex subcomplexes of $H_i$ that contain $\ell_i(\R)$. This system is constant-to-one and we give the singleton for $R = \ell_i(\R)$ counting measure and denote the limit measure by $\mu_{\Res(\xi)}^o$. Let $S' \subseteq H_i$ be the sector obtained by reflecting $-S$ in $\ell_i$. The maps
\begin{align*}
  \calL_{\zeta,\xi,o} &\to \Res(\xi) && \text{and} & \calL_{\zeta,\xi,o} & \to \Res(\zeta)\\
  \iota & \mapsto \iota(S(\infty)) &&& \iota & \mapsto \iota(S'(\infty))
\end{align*}
are bijective and fit into a commutative diagram
\begin{equation}\label{eq:zeta_xi}
\begin{tikzcd}
                                            & \Res(\xi) \arrow[dd, "\proj_\zeta"', bend right] \\
{\calL_{\zeta,\xi,o}} \arrow[rd] \arrow[ru] &                                                  \\
                                            & \Res(\zeta) \arrow[uu, "\proj_\xi"', bend right]
\end{tikzcd}
\end{equation}
We regard $\mu_{\Res(\xi)}^o$ as a measure on $\Res(\xi)$. Since each $R$ cofinitely containing $\ell_i(\R)$ is a colimit of $R'$ cofinitely containing $\ell_i((-\infty,t])$ such that restriction $\iota \colon \iota|_{R'}$ is bijective for $\iota \in \calL_{\zeta,\xi,o}$ the measure on $\Res(\zeta)$ does not depend on $\xi$ and vice versa.
By construction it is clear that precomposing with the isomorphism $H_i \to H_{\bar{i}}$ that takes $\ell_i(t)$ to $\ell_{\bar{i}}(-t)$ also preserves the measure. It follows that the measures for $\xi \in \Delta_i$ regarding it as $\iota(\ell_i(\infty))$ and regarding it as $\iota(\ell_{\bar{i}}(-\infty))$ coincide.

We obtain a measure-theoretic analogue of Lemma~\ref{lem:opp_decomposition_topological}.

\begin{proposition}\label{prop:disintegration_opp}
  The map
  \begin{align*}
    (\Res(\zeta) \times \Oppv[i](\zeta), \mu_{\Res(\zeta)}^o \times \mu_{\Oppv[i](\zeta)}^o) &\to (\Opp(\zeta), \mu_{\Opp(\zeta)}^o)\\
    (d,\xi) &\mapsto \proj_\xi (d)
  \end{align*}
  is a measure-preserving bijection. In other words, it induces a isometry \[L^1(\Opp(\zeta),\mu_{\Opp(\zeta)}^o) \to L^1(\Res(\zeta) \times \Oppv[i](\zeta), \mu_{\Res(\zeta)} \times \mu_{\Oppv[i](\zeta)})\] and for an element $f$ we have 
\begin{align}
  &\phantom{=}\int_{\Opp(\zeta)} f(\sigma) d\mu_{\Opp(\zeta)}^o(\sigma) \nonumber\\
  &= \int_{\xi \in \Oppv[i](\zeta)} \int_{\sigma \in \Res(\xi)} f(\sigma) d\mu_{\Res(\xi)}^o(\sigma) d\mu_{\Oppv[i](\zeta)}^o(\xi)\nonumber\\
  &= \int_{\xi \in \Oppv[i](\zeta)} \int_{\eta \in \Res(\zeta)} f(\pr_\xi \eta) d\mu_{\Res(\zeta)}^o(\eta) d\mu_{\Oppv[i](\zeta)}^o(\xi). \label{eq:disintegration_opp}
\end{align}
\end{proposition}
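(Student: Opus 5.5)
The plan is to deduce the statement from the disintegration machinery of Appendix~\ref{sec:limit_measures} applied to the inverse system defining $\mu^o_{\Opp(\zeta)}$. Bijectivity is already given by Lemma~\ref{lem:opp_decomposition_topological}, so only measure preservation needs proof. I would work on the level of embeddings. There $\Opp(\zeta) = \calH_{\zeta,o}$, $\Oppv[i](\zeta) = \calL_{\zeta,o}$, and the map $\Opp(\zeta)\to \Oppv[i](\zeta)$, $\sigma \mapsto \sigma_i$, is restriction along $\ell_i(\R) \subseteq H_i$. Its fiber over $\xi$ is $\calL_{\zeta,\xi,o}$, which by diagram \eqref{eq:zeta_xi} is identified with $\Res(\xi)$ and with $\Res(\zeta)$ compatibly with $\proj_\xi$. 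Hence the map $(d,\xi)\mapsto \proj_\xi(d)$ becomes the map that sends $(\iota|_{\text{fiber}},\xi)$ to the unique $\iota\in\calH_{\zeta,o}$ extending $\rho_{\zeta,\xi,o}$ with the given fiber datum. It therefore suffices to prove the first equality in \eqref{eq:disintegration_opp}: $\mu^o_{\Opp(\zeta)}$ disintegrates over $\mu^o_{\Oppv[i](\zeta)}$ with fiber measures $\mu^o_{\Res(\xi)}$. The second equality then follows from \eqref{eq:zeta_xi} together with the observation made before the proposition that the measure on $\Res(\zeta)$ transported from $\calL_{\zeta,\xi,o}$ does not depend on $\xi$. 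That observation is exactly what makes the measure a product and not merely a disintegration.

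For the disintegration, I would choose a cofinal family of objects of the form $R_{t,s}$. Each $R_{t,s}$ is the convex hull in $H_i$ of $\ell_i((-\infty,t])$ together with a finite complex $K_s$ at bounded distance, exhausting $H_i$ as $t,s\to\infty$. Alongside it I take the subfamily $\ell_i((-\infty,t])$. For each pair the restriction $\Hom(R_{t,s},X)\to\Hom(\ell_i((-\infty,t]),X)$ is constant-to-one by Lemma~\ref{lem:flat_symmetric}, and all maps in both systems are constant-to-one. At each finite stage the relatively uniform measure therefore disintegrates trivially. The measure on $\Hom(R_{t,s},X)$ is the pullback of the measure on $\Hom(\ell_i((-\infty,t]),X)$ times the uniform measure on fibers, each fiber of size $[R_{t,s}:\ell_i((-\infty,t])]$. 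Passing to limits, by the appendix's disintegration result for constant-to-one inverse systems, gives that $\mu^o_{\Opp(\zeta)}$ disintegrates over $\mu^o_{\Oppv[i](\zeta)}$ with fiber measure equal to the limit of uniform measures on fibers. Each fiber is the inverse system of extensions of a fixed line $\rho_{\zeta,\xi,o}$ to $R$ containing $\ell_i(\R)$, normalized so that the singleton for $R=\ell_i(\R)$ has mass one. This is by definition $\mu^o_{\Res(\xi)}$.

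The main obstacle is matching normalizations in the limit. The fiber index $[R_{t,s}:\ell_i((-\infty,t])]$ must factor as $[R_{t,s}\cup\ell_i(\R) : \ell_i(\R)]$ times $[\ell_i(\R)\cap\cdots : \ell_i((-\infty,t])]$, so that the two normalizations agree. I would handle this by noting that for $t$ large relative to $K_s$, restriction from $\conv(R_{t,s}\cup\ell_i(\R))$ to $R_{t,s}$ is a bijection. This is precisely the remark preceding the proposition, that extensions by the part of the line beyond $t$ are forced. Multiplicativity of the indices along the two composable constant-to-one maps then gives the factorization, and measurability and the $L^1$ isometry follow from measure preservation.
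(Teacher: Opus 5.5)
This is essentially the paper's proof. There one applies Theorem~\ref{thm:disintegration} to the two-parameter system of embeddings of the half-strips $R_{a,b}=\{0\le\alpha_j\le b,\ \alpha_i\le a\}\subseteq H_i$ with base index the ray $R_{0,0}=\ell_i((-\infty,0])$, identifies the limits with $\mu^o_{\Opp(\zeta)}$, $\mu^o_{\Oppv[i](\zeta)}$ and the fiber measures (coming from the strips $R_{\infty,b}$) with $\mu^o_{\Res(\xi)}$, and gets the second equality from \eqref{eq:zeta_xi}, just as you do.

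The normalization issue you single out is absorbed in the paper by Theorem~\ref{thm:disintegration}. If you check it by hand, three points need care:
\begin{enumerate}
  \item Restriction from $\conv(R_{t,s}\cup\ell_i(\R))$ to $R_{t,s}$ is a bijection only on the fibers over a fixed line $\rho_{\zeta,\xi,o}$. On the full Hom-sets its fibers are uncountable.
  \item Surjectivity on those fibers uses that, for $t$ large, $R_{t,s}$ lies in the Weyl sector with tip $\ell_i(t)$ bounded by $\ell_i((-\infty,t])$.
  \item What the bijection gives is simply $[R_{t,s}:\ell_i((-\infty,t])]=[\conv(R_{t,s}\cup\ell_i(\R)):\ell_i(\R)]$. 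This replaces your factorization, whose second factor is not well defined as written.
\end{enumerate}
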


\begin{proof}
  For $a \in \Z$ and $b \in \N$ let $R_{a,b}$ be the subset of $\Sregular$ defined by the equations $\alpha_j \ge 0$ (so it is a subset of $H_i$) $\alpha_j \le b$, and $\alpha_i \le a$ (the reader may refer to Section~\ref{sec:coxeter_complexes} to recall the notations). Then all $R_{a,b}$ contain $\zeta_i = \ell_i(-\infty)$ in their boundary and they form a directed system. The colimit is $\colim_{a,b} R_{a,b} = H_i$ while the colimit with $b = 0$ fixed is $\colim_a R_{a,0} = \ell_i(\R)$. If we allow $a = \infty$ respectively $b = \infty$ to mean that the corresponding constraint is lifted this reads $\colim_{a,b} R_{a,b}= R_{\infty,\infty}=H_i$ and $\colim_a R_{a,0} = R_{\infty,0}$.

  We apply Theorem~\ref{thm:disintegration} to the system $(\Hom_{\FlatEmb_\partial}(R_{a,b})_{a \in \Z,b \in \N},X)$ with base index $(0,0)$. Thus we put counting measure $\nu_0^0$ on the space of embeddings $\iota \colon R_{0,0} = \ell_i((-\infty,0]) \to X)$ with $\ell_i(-\infty) = \zeta$ and $\ell_i(0) = o$, equip every space of embeddings $R_{a,b} \to X$ with the relatively uniform measure. We put the limit measure $\mu$ on the space $\calH_{\zeta,o}$ of embeddings $H_i \to X$ which correspond to elements of $\Opp(\zeta)$; and the limit measure $\nu_0$ on the space $\calL_{\zeta,o}$ of embeddings $\ell_i(\R) \to X$ which correspond to elements of $\Oppv[i](\zeta)$. Here the category constrains all embeddings to coincide on a subray $\ell_i((-\infty,t])$ with the ray $\ell_i((-\infty,0]) \to (\zeta,o]$. In Theorem~\ref{thm:disintegration} we define a measure $\lambda^y$ associated to an embedding $y \colon \ell_i(\R) \to X$. Such an embedding is associated to a unique $\xi \in \Oppv[i](\zeta)$, and to define the measure $\lambda^y=\lambda^\xi$ we put the ``counting measure'' on the singleton $\{y\}$, equip the embeddings $R_{\infty,b} \to X$ with the relatively uniform measure, and put the limit measure $\lambda^\xi$ on the space $\calL_{\zeta,\xi,o}$ of embeddings $H_i \to X$ that restrict to $y$, which correspond to elements of $\Res(\xi)$.

  Then by definition $\mu = \mu^o_{\Opp(\zeta)}$, $\nu_0 = \mu^o_{\Oppv[i](\zeta)}$, and $\lambda^\xi = \mu^o_{\Res(\xi)}$. Theorem~\ref{thm:disintegration} asserts that
  \[
    \int_{\calH_{\zeta,o}} f(\iota) d\mu_{\Opp(\zeta)}^o(\iota) = \int_{\calL_{\zeta,o}} \int_{\calL_{\zeta,\xi,o}} f(\iota) d\mu_{\Res(\xi)}^o(\iota) d\mu_{\Oppv[i](\zeta)}^o(\upsilon).
  \]
  Up to identification of spaces this is the first equation in \eqref{eq:disintegration_opp}. The second equation follows from the fact that the maps in \eqref{eq:zeta_xi} are measure-preserving.
\end{proof}

The measure $\mu_{\Delta}^o$ disintegrates in a similar way.

\begin{proposition}\label{prop:disintegration_delta}
  Let $o \in X$ be a vertex of type $0$ and let $\{i,j\} = \{1,2\}$. Then for $f \in L^1(\Delta)$ we have
  \[
    \int_{\sigma \in \Delta} f(\sigma) d\mu_{\Delta}^o = \int_{\xi \in \Deltav[i]} \int_{\sigma \in \Res(\xi)_o} f(\sigma) d\mu_{\Res(\xi)}^o(\sigma) d\mu_{\Deltav[i]}^o(\xi)
  \]
  where $\Res(\xi)_o = \{\iota(S(\infty)) \mid \iota \circ \ell_i|_{[0,\infty)} = \rho\}$ with $\rho \colon [0,\infty) = [o,\xi)$ the ray from $o$ to $\xi$. In other words, $\sigma \in \Res(\xi)_o$ if and only if the projection of $\sigma$ to $o$ contains the projection of $\xi$ to $o$.
\end{proposition}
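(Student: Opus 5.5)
We prove this the same way as Proposition~\ref{prop:disintegration_opp}: realize all three measures as limit measures of a single two-parameter directed system of pointed subcomplexes of the Weyl chamber $S$, and then apply Theorem~\ref{thm:disintegration}. We work in $\FlatEmb_*$, pointed at $0 \mapsto o$. For $a, b \in \N$ let $R_{a,b} \subseteq S$ be the convex subcomplex cut out by $\alpha_j \ge 0$, $\alpha_i \ge 0$, $\alpha_j \le b$ and a bound $\le a$ on the coordinate along $\ell_i$. Concretely, $R_{a,0}$ is the segment $\ell_i([0,a])$, and $R_{a,b}$ is a parallelogram of width $b$ off that segment. These form a directed system with colimits $\colim_{a,b} R_{a,b} = S$ and $\colim_a R_{a,0} = \ell_i([0,\infty))$. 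Each connecting morphism is cofinite, so Lemma~\ref{lem:flat_symmetric} makes the inverse system $\Hom_{\FlatEmb_*}((R_{a,b},0),(X,o))$ constant-to-one. The base object $R_{0,0} = \{0\}$ has a singleton Hom-set, which carries counting measure.

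Next we identify the measures supplied by Theorem~\ref{thm:disintegration}. The limit over all $(a,b)$ is $\calS_o = \Delta$. Its limit measure is $\mu_\Delta^o$: it is defined through the cofinal subsystem of finite convex $Z \ni 0$ with the same normalization, and the relatively uniform measures agree on common cofinal subsystems by Proposition~\ref{prop:cto_limit_measure}. The limit along $b = 0$ is the space of rays from $o$, i.e.\ $\Deltav[i]$, with limit measure $\mu_{\Deltav[i]}^o$. Indeed, this measure is the pushforward of $\mu_\Delta^o$ under restriction to $\ell_i([0,\infty))$, and restriction maps are measure-preserving by construction. For a fixed ray $y = \rho = [o,\xi)$, the fiber is the space of $\iota \colon S \to X$ with $\iota \circ \ell_i|_{[0,\infty)} = \rho$, which is $\Res(\xi)_o$. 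Theorem~\ref{thm:disintegration} equips this fiber with the measure $\lambda^\xi$: counting measure on $\{y\}$ and relatively uniform measures on the embeddings of $R_{\infty,b}$ extending $y$. The theorem then yields the displayed integral formula with $\lambda^\xi$ in place of $\mu_{\Res(\xi)}^o$. The final characterization of $\Res(\xi)_o$ is elementary: $\sigma$ lies in it exactly when the sector from $o$ to $\sigma$ has its $i$-wall equal to $[o,\xi)$, i.e.\ when $\pr_o \sigma \supseteq \pr_o \xi$.

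The main obstacle is checking that $\lambda^\xi$ really is (the restriction to $\Res(\xi)_o$ of) $\mu_{\Res(\xi)}^o$. That measure was defined through embeddings of the half-apartment $H_i$ extending a full line through $\xi$, normalized by counting measure on the line. We would fix any line $\rho'$ from some $\zeta$ opposite $\xi$ through $\rho$, for example by extending $\rho$ backwards. We would then compare the two systems, $R_{\infty,b} \subseteq S$ extending the ray and the $H_i$-strips $\{0 \le \alpha_j \le b\}$ extending the line. Restricting an $H_i$-embedding to $S$ is injective on $\calL_{\zeta,\xi,o}$, onto its image, which is the set of elements of $\Res(\xi)$ lying in $\Res(\xi)_o$. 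The key point is that the fiber counts for the cofinite step from strip width $b$ to $b+1$ coincide in the two systems. Both counts are computed by the same elementary extensions along the wall $\alpha_j = b$, by the counting in the proof of Lemma~\ref{lem:flat_symmetric}, and that count is independent of the ray-versus-line base. This matches the two relatively uniform normalizations, which is the same mechanism used in the paragraph before Proposition~\ref{prop:disintegration_opp} to show independence of $\xi$ versus $\zeta$. Combining these facts gives the formula.
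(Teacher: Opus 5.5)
Your proposal is essentially the paper's proof: the same two-parameter system of parallelograms $S_{a,b}\subseteq S$ pointed at $0\mapsto o$, with the ray system $S_{\infty,0}$ as base in Theorem~\ref{thm:disintegration}. Your comparison with the $H_i$-strips around a line extending $\rho$ supplies the identification of the fiber measures with $\mu^o_{\Res(\xi)}$, which the paper only asserts (it becomes airtight once you note that the limit restriction map is onto, by the bijections $\calL_{\zeta,\xi,o}\to\Res(\xi)$ and $\calS_o\to\Delta$, so equal fiber counts force each levelwise restriction map to be a bijection); one caveat is that $\Res(\xi)_o$ is in fact all of $\Res(\xi)$, because the sector from $o$ toward any chamber containing $\xi$ has $[o,\xi)$ as its $i$-wall, so your ``i.e.\ $\pr_o\sigma\supseteq\pr_o\xi$'' only holds, trivially, for $\sigma\in\Res(\xi)$ --- for general $\sigma\in\Delta$ that condition fixes only the first edge of the $i$-wall and is strictly weaker.
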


\begin{proof}
  It suffices to prove the statement when restricted to an open set $U_o(\sigma)$. For $a,b \in \N$ let $S_{a,b}$ be the subset of $S$ defined by the equations $\alpha_i \le a$, $\alpha_j \le b$. Then $S$ is the colimit of the $S_{a,b}$. We consider type-preserving embeddings $S_{a,b} \to X$ that take $0$ to $o$. We put the counting measure on the singleton $S_{0,0} \to \{o\}$ and relatively prouniform measures on the remaining spaces. Applying Theorem~\ref{thm:disintegration} describes the measure on embeddings of $S_{\infty,\infty}$ as an integral over embeddings $S_{\infty,0}$ with respect to extensions from $S_{\infty,0}$ to $S_{\infty,\infty}$. These measures are $\mu_\Delta^o$, $\mu_{\Deltav[i]^o}$ and (in restriction to $\Res(\xi)_o$) $\mu_{\Res(\xi)}^o$.
\end{proof}

\subsection{Locally proportional measures}\label{sec:basic_open_sets}

We will want to specify basic (compact) open subsets of the various spaces just described. The general pattern is as follows: we have a space $\calZ$ (which can be $\calS_o$, $\calH_{\zeta,o}, \calL_{\zeta,o}$) of embeddings of $Z \subseteq \Sregular$ (respectively $S, H_i, \ell_i(\R)$) to $X$ that take $0$ to a base vertex $o \in X$. The space $\calZ$ can be identified with a space $\Omega$ (resp. $\Delta, \Opp(\zeta), \Oppv[i](\zeta)$) of boundary objects of $X$. A basic open neighborhood of $\iota \in \calZ$ is the set of embeddings that coincide with $\iota$ on a compact subset $C \subseteq Z$ which then corresponds to an open neighborhood in $\Omega$. Thus for $C \subseteq Z$ we define
\[
  U_C(\iota) = \{\iota' \in \calZ \mid \iota|_C = \iota'|_C\} \subseteq \calZ
\]
which corresponds to the neighborhood
\[
  U_C^o(\omega) = \{\iota'(Z(\infty)) \mid \iota|_C = \iota'|_C\} \subseteq \Omega
\]
where $\iota \in \calZ$ is uniquely determined by $\iota(0) = o$ and $\iota(Z(\infty)) = \omega$. If $C = \{p\}$ is just a single point we just write $U_p^o$ for $U_C^o$. In the case $\Omega = \Delta$ by far the most common set $C$ to consider will be the chamber $c_0$ at the tip of $S$ (the one containing $0$); for that reason we introduce the additional notation
\begin{equation}
  U^o(\sigma) = U^o_{c_0}(\sigma) = \{\sigma' \in \Delta \mid \pr_o \sigma' = \pr_o \sigma\}\label{eq:Uo}
\end{equation}
in that case.

A useful observation using this notation is that while the inclusion \[(\Opp(\zeta),\mu_{\Opp(\zeta)}^o) \to (\Delta,\mu_\Delta^o)\] is not measure-preserving, its restriction to $U_0^o$ is measure-preserving up to a constant factor. We want to generalize this observation by showing that various measures that we define on $\Delta$ not only define the same measure class but are in fact locally proportional.

One way to think about this is in terms of expectations: If $(Z,\mu)$ is a measure space, $f\in L^1(Z)$ and $U\subset Z$ is of finite, positive measure, we denote by $\fint_U f(x) d\mu(x)$ the average (or expectation) of $f$ over $U$, that is, 
\[\fint_U f(x) d\mu(x)=\frac{1}{\mu(U)}\int_U f(x)d\mu(x).\]
We want to say that for sufficiently small $U$ the expectation does not depend on $U$. This corresponds to the fact that the Radon-Nikodym derivative in \cite[Proposition 6.1]{RemyTrojan} is locally constant.

\begin{proposition}\label{prop:locally_pro_uniform}
  There is a basis $\calB$ for the topology of $\Delta$ consisting of compact open sets and for each $U \in \calB$ a probability measure $\mu_U$ such that the following hold:
  \begin{enumerate}
    \item For every finite set $F$ of vertices of type $0$ and every $\sigma \in \Delta$ there is a neighborhood $U \in \calB$ of $\sigma$ such that for all $o \in F$
      \begin{equation}\label{eq:locally_pro_uniform}
        \mu_U = \frac{1}{\mu^o_\Delta(U)} \mu^o_{\Delta}.
      \end{equation}\label{item:locally_pro_uniform_sufficiently_small}
    \item For every $U \in \calB$ the set $A$ of vertices $o$ of type $0$ such that \eqref{eq:locally_pro_uniform} holds is non-empty and its set of accumulation points in $\Delta$ has non-empty interior.\label{item:locally_pro_uniform_sufficiently_far}
  \end{enumerate}
\end{proposition}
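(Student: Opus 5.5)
The basis will consist of \emph{sectorial} sets: for a type-preserving embedding $\iota\colon S \to X$ with $\iota(0)=p$ a vertex of type $0$, put
\[
U_\iota = \{\iota'(S(\infty)) \mid \iota' \colon S\to X,\ \iota'|_{c_0} = \iota|_{c_0}\},
\]
i.e.\ the sets $U^p(\sigma)$ from \eqref{eq:Uo} for varying basepoints $p$. Since the $U^p_{C}(\sigma)$ with $C$ compact form a neighborhood basis, and $U^p_C(\sigma)$ contains $U^{p'}(\sigma)$ for $p'=\iota(x)$ with $x$ deep enough in $S$ (shadows of far-out chambers shrink), these sets form a basis of compact open sets. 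For $U=U^p(\sigma)$ I define $\mu_U$ as the normalized restriction of $\mu_\Delta^p$ to $U$.

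The key computation is the following. Suppose $o$ is a vertex of type $0$ such that $p$ lies in the sector $\iota_o(S)$ from $o$ to $\sigma$ and, moreover, the convex hull of $o$ and $c=\iota(c_0)$ is contained in $\iota_o(S)$ in such a way that every $\sigma'\in U$ has $\pr_o\sigma'=\pr_o\sigma$ and its sector from $o$ contains $\conv(o,c)$. In other words, $U\subseteq U^o(\sigma)$ and $U$ equals the set of $\sigma'$ whose sector from $o$ contains the finite complex $K=\conv(\{0\}\cup x+c_0)$, where $\iota_o(x)=p$. Then $U$ is the cylinder set over $\Hom((K,0),(X,o))$. By construction of the limit measure (Proposition~\ref{prop:cto_limit_measure}) its conditional measure on cylinders of larger finite $Z\supseteq K$ is the relatively uniform measure, i.e.\ $\mu_\Delta^o(\,\cdot\cap U)/\mu_\Delta^o(U)$ gives each extension to $Z$ weight $1/[Z:K]$. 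The same holds for $\mu^p_\Delta$: the cylinders of $U$ under $p$-based embeddings of $S$ correspond bijectively to those under $o$-based ones (the sector from $p$ to $\sigma'$ is a translated subsector, determined by and determining the $o$-sector outside $K$), and the extension counts agree because fiber sizes depend only on the isomorphism type of the pair by Lemma~\ref{lem:morphism_independence} (applied in $\Flat$) together with Lemma~\ref{lem:flat_symmetric}. Hence the normalized measures coincide. This is condition \eqref{eq:locally_pro_uniform} for every such ``good'' $o$; I expect making this cylinder bijection precise (the cofinal systems of subcomplexes of $S$ and of $x+S$ must be matched) to be the main technical point.

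For \eqref{item:locally_pro_uniform_sufficiently_small}: given finite $F$ and $\sigma$, pick a vertex $p$ of type $0$ lying in the intersection of the sectors from all $o\in F$ to $\sigma$ (sectors to the same chamber at infinity eventually coincide, by the common-subsector property). Choose it deep enough that each $o\in F$ is good for $U=U^p(\sigma)$; this works because all those sectors share a subsector containing $p$ far from its walls. For \eqref{item:locally_pro_uniform_sufficiently_far}: the good set $A$ for $U=U^p(\sigma)$ contains all $o=\iota(y)$ with $y\in -S$ in the apartment $\iota(\Sregular)$ for every extension $\iota$ of the sector to an apartment, and more generally all $o$ from which the sector through $c$ contains $p$. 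Its accumulation points in $\Delta$ include all chambers $\tau$ opposite $\sigma$ in the shadow $\{\iota'(-S(\infty))\}$ where $\iota'$ ranges over apartments through $c$ with the same orientation. This is the open set $U^p_{c}$ of chambers $\tau$ with $\pr_p\tau$ opposite $c$ in the link of $p$, which has non-empty interior.
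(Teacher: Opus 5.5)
Your proposal is correct and follows essentially the same route as the paper: the basis of shadows $U^p(\sigma)$, the criterion that $p$ lie in the sector $\conv(o,\sigma)$, the translation bijection between $o$-based and $p$-based sectors in $U$ showing that the two relatively uniform measures agree up to normalization, and the same choices of vertices for the two items (a common point of the sectors $\conv(o,\sigma)$, $o\in F$, for the first; vertices in sectors $\conv(p,\tau)$ with $\pr_p\tau$ opposite $\pr_p\sigma$ for the second). Your extra ``moreover'' hypothesis and the requirement that $p$ be chosen deep are harmless but unnecessary, since $\pr_o\sigma'=\pr_o\sigma$ and the containment of $\conv(o,c)$ in the $o$-sector of every $\sigma'\in U$ already follow from $p\in\conv(o,\sigma)$.
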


Note that \eqref{eq:locally_pro_uniform} passes to subsets $V \subseteq U$ of positive measure.

\begin{proof}
  We let $\calB$ consist of the sets $U^p(\sigma)$ with $\sigma \in \Delta$ and $p \in X$ a special vertex of type $0$. We claim that if $p,o \in X$ are special vertices, the restrictions of $\mu^o_\Delta$ and $\mu^p_\Delta$ are proportional on $U^p(\sigma)$ if $p$ lies in the sector $\conv(o,\sigma)$.

  To see this let $\iota_0 \colon S \to X$ be the type-preserving embedding with $\iota_0(0) = o$ and $\iota_0(\infty) = \sigma$. Then by the assumption on $p$ there is an $a \in A$ be such that $p = \iota(-a + 0)$. In what follows we use additive notation for the action of $A$ on $\Sregular$ but multiplicative notation for the action on $\Fregular$: $(a.\iota)(x) = \iota(-a + x)$.

  Then $U = U_{-a + c_0}^o(\sigma)$ consists of $\iota(\infty)$ where $\iota(0) = o$ and $\iota(-a + c_0) = \iota_0(-a + c_0)$ but also of $\iota(\infty)$ where $\iota(0) = p$ and $\iota(c_0) = \iota_0(-a + c_0)$:
  \[
    U = U_{-a + c_0}^o(\iota_0) = U_{c_0}^p(a.\iota_0|_S).
  \]
  The map $U_{-a + c_0}^o(\iota_0) \to U_{c_0}^p(a.\iota_0|_S), \iota \mapsto a\iota|_S$ is a bijection because $S$ is the convex hull of $0$ and $aS$ so there is a unique extension of $\iota|_{-a + S}$ to $\iota$ with $\iota(0) = o$.

  It remains to recall that that the limit measures on the space of embeddings $S \to X$ along relatively uniform measures based on counting measures on the spaces of embeddings of $0$, of $-a + 0$ or of the convex hull $\conv(0,-a + 0)$ are all proportional to each other by Lemma~\ref{lem:flat_symmetric}.
    
  Now \eqref{item:locally_pro_uniform_sufficiently_small} follows from the fact that the intersection $\bigcap_{o \in F} \conv(o,\sigma)$ is non-empty and thus contains suitable $p$s.

  For \eqref{item:locally_pro_uniform_sufficiently_far} one observes that for $p$ and $\sigma$ fixed any vertex $o \in \conv(p,\tau)$ of type $0$ is suitable as long as $\pr_p \sigma$ and $\pr_p \tau$ are opposite in the link of $p$.
\end{proof}

We conclude:

\begin{corollary}
  Let $o,o' \in X$ be special vertices. The measures $\mu^o_\Delta$ and $\mu^{o'}_\Delta$ are mutually absolutely continuous and the Radon--Nikodym derivative $d\mu^o_\Delta/d\mu^{o'}_\Delta$ is locally constant.
\end{corollary}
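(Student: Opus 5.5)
The plan is to deduce the corollary directly from Proposition~\ref{prop:locally_pro_uniform}\eqref{item:locally_pro_uniform_sufficiently_small} applied to the two-element set $F = \{o,o'\}$. For every $\sigma \in \Delta$ this provides a compact open neighborhood $U_\sigma \in \calB$ such that
\[
  \frac{1}{\mu^o_\Delta(U_\sigma)}\,\mu^o_\Delta|_{U_\sigma} = \mu_{U_\sigma} = \frac{1}{\mu^{o'}_\Delta(U_\sigma)}\,\mu^{o'}_\Delta|_{U_\sigma}.
\]
The equality of restrictions is the intended reading of \eqref{eq:locally_pro_uniform}; concretely, in the proof the sets are $U^p(\sigma)$ with $p \in \conv(o,\sigma)\cap\conv(o',\sigma)$. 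Hence on $U_\sigma$ the two measures differ by the constant factor $c_\sigma = \mu^o_\Delta(U_\sigma)/\mu^{o'}_\Delta(U_\sigma)$.

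For this constant to make sense I need $0 < \mu^o_\Delta(U_\sigma), \mu^{o'}_\Delta(U_\sigma) < \infty$. Finiteness is immediate, since both are probability measures. Positivity holds because $U_\sigma$ is a nonempty open set and $\mu^o_\Delta$ has full support. The support claim follows just as in Lemma~\ref{lem:fullsupport}: each basic open set $U^o_C(\sigma)$ corresponds to a fiber of the constant-to-one system $\Hom((Z,0),(X,o))$, and that fiber receives positive relatively uniform mass. Since every open set contains such a basic set, positivity follows.

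Next comes the globalization. $\Delta$ is compact, so finitely many $U_{\sigma_1},\dots,U_{\sigma_n}$ cover it. The sets in $\calB$ have the form $\{\sigma' \mid \pr_p\sigma' = \pr_p\sigma\}$, so any two of them are either nested or disjoint only in favorable cases. Rather than arguing about that structure, I would refine the cover to a finite partition into compact open sets $V_1,\dots,V_m$ by taking Boolean combinations, each $V_k$ contained in some $U_{\sigma}$. The remark after Proposition~\ref{prop:locally_pro_uniform} says proportionality passes to subsets of positive measure, and subsets of measure zero are harmless. So on each $V_k$ we have $\mu^o_\Delta|_{V_k} = c_k\,\mu^{o'}_\Delta|_{V_k}$ with $c_k>0$.

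Defining $h = \sum_k c_k \mathbf{1}_{V_k}$ then gives a locally constant, strictly positive, bounded function with $\mu^o_\Delta = h\,\mu^{o'}_\Delta$. This shows $\mu^o_\Delta \ll \mu^{o'}_\Delta$ and $d\mu^o_\Delta/d\mu^{o'}_\Delta = h$ a.e. Since $1/h$ is also bounded, the reverse absolute continuity follows as well.

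The only point needing care is the consistency of the constants on overlaps. This is automatic: if $W\subseteq U_\sigma\cap U_\tau$ has positive $\mu^{o'}_\Delta$-measure, the two expressions for $\mu^o_\Delta(W)$ force $c_\sigma=c_\tau$. So using the partition sidesteps any real obstacle.
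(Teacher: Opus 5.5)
Your proposal is correct and follows the argument the paper intends: the corollary is stated as an immediate consequence of Proposition~\ref{prop:locally_pro_uniform}\eqref{item:locally_pro_uniform_sufficiently_small} applied to $F=\{o,o'\}$, which makes the two measures proportional on a neighborhood of every $\sigma\in\Delta$. Your use of compactness of $\Delta$ to refine to a finite partition into compact open sets spells out the globalization that the paper leaves implicit.
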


Thus the measures $\mu^o_\Delta$ are locally proportional. They are also locally proportional to the measures on $\Opp(\zeta)$. Note the measures defined on $\Opp(\zeta)$ are globally proportional with varying $o$, so the choice of base vertex does not matter.

\begin{proposition}\label{prop:opp_locally_proportional}
  Let $\zeta \in \Deltav[\bar{i}]$ and let $o \in X$ be special of type $0$. There is a basis $\calB_\zeta$ for the topology on $\Opp(\zeta)$ refining the basis from Proposition~\ref{prop:locally_pro_uniform} such that for every $U \in \calB$ we have
  \[
    \mu_U = \frac{1}{\mu^o_{\Opp(\zeta)}(U)} \mu_{\Opp(\zeta)}^o
  \]
  where $\mu_U$ is the measure from Proposition~\ref{prop:locally_pro_uniform}.

  In particular, the inclusion $\Opp(\zeta) \to \Delta$ is measure-class preserving.
\end{proposition}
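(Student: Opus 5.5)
The plan is to mimic the proof of Proposition~\ref{prop:locally_pro_uniform}, replacing the pointed sector $(S,0)$ by the half-apartment $(H_i,\ell_i)$ pointed at the fine point at infinity. Take $\calB_\zeta$ to consist of the sets $U^p(\sigma)=U^p_{c_0}(\sigma)$ with $\sigma\in\Opp(\zeta)$ and $p$ a special vertex of type $0$ chosen so that two conditions hold. First, $p$ lies in the sector $\conv(o,\sigma)$, so that $p$ is admissible for Proposition~\ref{prop:locally_pro_uniform}. Second, $p$ lies on the image $\iota(H_i)$ of the embedding $\iota\in\calH_{\zeta,o}$ representing $\sigma$, inside the subsector $\iota(S)$ and far enough from $\iota(\ell_i(\R))$. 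Such $p$ exist arbitrarily deep in $\iota(S)$, because the sectors $\conv(o,\sigma)$ and $\iota(S)$ share a subsector. Hence these sets form a neighbourhood basis in $\Opp(\zeta)$. For $p$ deep enough, every $\sigma'$ with $\pr_p\sigma'=\pr_p\sigma$ is still opposite $\zeta$, so $U^p(\sigma)\subseteq\Opp(\zeta)$.

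Next, fix such $U=U^p(\sigma)$ and $\iota$, and write $p=\iota(-a+0)$ with $a\in A$. As in the earlier proof, I identify $U$ with the set of $\iota'\in\calH_{\zeta,o}$ satisfying $\iota'|_{-a+c_0}=\iota|_{-a+c_0}$. The key geometric claim is that an embedding of $-a+S$ that agrees with $\iota$ on $-a+c_0$ extends uniquely to an element of $\calH_{\zeta,o}$. Uniqueness holds because $H_i$ is the convex hull of $-a+S$ and the ray $\ell_i((-\infty,0])$, together with the requirement that the ray be sent onto $\rho_{\zeta,o}$ eventually. Existence holds because $\zeta$ is opposite $\sigma'$, and I would invoke Lemma~\ref{lem:flat_contained_in_singular} or \cite[Theorem~11.53]{AbramenkoBrown08}. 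This gives a bijection between $U$ and the space of embeddings of $S$ based at $p$ with prescribed tip chamber, which is exactly how $\mu^p_\Delta|_U$ is computed.

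For the measures, I would compare the two limit systems. The first is $R$ cofinitely containing $\ell_i((-\infty,t])$, normalised by counting measure on rays to $o$. The second is the pointed sectors at $p$. Use the directed family of complexes $\conv(R\cup(-a+Z))$ containing both, and apply Lemma~\ref{lem:flat_symmetric}. Every connecting map is constant-to-one, so on the subsystem restricted to embeddings fixing $-a+c_0$ both limit measures are relatively uniform measures of one inverse system, with different normalisations. They are therefore proportional on $U$, with the constant fixed by the normalisation. Normalising gives $\mu_U=\mu^o_{\Opp(\zeta)}/\mu^o_{\Opp(\zeta)}(U)$. The final claim follows because $\Opp(\zeta)$ is then covered by countably many sets on which the two measures are proportional with positive constants. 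Moreover, $\Delta\setminus\Opp(\zeta)$ is $\mu^o_\Delta$-null, being a countable union of sets $\{\sigma' : \pr_p\sigma'\ \text{not opposite}\ \pr_p\zeta\}$ whose measures tend to $0$ as $p$ moves toward $\zeta$.

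The main obstacle is the middle step: showing that the fine point constraint is compatible with cofinal extensions from $-a+c_0$. Concretely, one needs the extension count from $-a+c_0$ to $\conv(-a+c_0,\ell_i((-\infty,t]))$ to be uniform. I would handle this by applying Lemma~\ref{lem:flat_symmetric} in $\Flat_\partial$, building up the convex hull by elementary half-apartment extensions as in that lemma's proof.
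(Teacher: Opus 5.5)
Your strategy is the paper's. On a small basic open set, restriction $\iota'\mapsto\iota'|_S$ matches half-apartment embeddings with sector embeddings, and both measures are relatively uniform over matching constant-to-one systems, hence proportional. The difference is the basepoint, and that is where your argument has a hole.

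The paper compares at $o'=\iota(0)$, on the boundary line of the half-apartment of $\sigma$. You compare at an interior point $p=\iota(-a)$ of $\iota(S)$. Identifying $U^p(\sigma)$ with $\{\iota'\in\calH_{\zeta,o}:\iota'|_{-a+c_0}=\iota|_{-a+c_0}\}$ then requires that every sector $\kappa$ at $p$ with tip chamber $\pr_p\sigma$ extends to an element of $\calH_{\zeta,o}$.

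\begin{itemize}
\item Opposition of $\zeta$ and $\sigma'=\kappa(S(\infty))$ does not give this. It only produces \emph{some} $\iota'\in\calH_{\zeta,o}$ with $\iota'(S(\infty))=\sigma'$, whereas the content of the claim is that this $\iota'$ passes through $p$ at $-a$.
\item By convexity, every $\iota'$ in your set agrees with $\iota$ on $K=\conv(\ell_i((-\infty,0])\cup(-a+c_0))$, so in particular $\iota'(0)=o'$. What must be shown is that every extension of $\iota|_{-a+c_0}$ to $-a+Z$ is compatible with $\iota|_K$.
\item Lemma~\ref{lem:flat_contained_in_singular} and Theorem~11.53 of Abramenko--Brown cannot be invoked directly. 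What you would feed them is the map $\iota|_K\cup\kappa$, and whether this is an embedding of $K\cup(-a+S)$ is exactly what is in question.
\item For the same reason, the obstacle you single out (uniform extension counts from $-a+c_0$ to its hull with a ray) is not the real one. The real one is surjectivity of restriction on these fixed sets.
\end{itemize}

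Either of two repairs works.

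\emph{Repair 1 (the paper's route).} Base at $o'$. Suppose the type-$i$ germ of a sector $\kappa$ at $o'$ is opposite the germ of $[o',\zeta)$.
\begin{itemize}
\item Then $[o',\zeta)\cup\kappa(\ell_i([0,\infty)))$ is a line.
\item It lies in an apartment of the wall space $Y_{\kappa(\ell_i(\infty)),\zeta}$ having $\kappa(S(\infty))$ at infinity. Such an apartment contains $\kappa(S)$, and half of it is the required element of $\calH_{\zeta,o}$.
\item So restriction is a bijection from $\{\iota'\in\calH_{\zeta,o}:\iota'(0)=o'\}$ onto an open subset of $\calS_{o'}$, with both systems normalized by the single map $0\mapsto o'$.
\item The inclusion $U^p(\sigma)\subseteq U^{o'}(\sigma)$ for $p\in\iota(S)$, together with Proposition~\ref{prop:locally_pro_uniform}, then gives the statement for $\mu_U$.
\end{itemize}

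\emph{Repair 2 (keep $p$).} Use Lemma~\ref{lem:flat_symmetric} as a count.
\begin{itemize}
\item The half-apartments of $\Sregular$ containing $-a+c_0$ but not $-a+Z$ are exactly those containing $K$ but not $-a+Z$. Indeed, a positive root is $\ge 0$ at $-a\in S$ and $\le 0$ on $\ell_i((-\infty,0])$, while a negative-root half-apartment containing $-a+c_0$ contains all of $-a+S$.
\item In the proof of that lemma, each removed half-apartment contributes a factor depending only on its wall. So it gives equal fibre sizes for $K\to\conv(K\cup(-a+Z))$ and for $-a+c_0\to -a+Z$.
\item Restriction is injective, so equal fibre sizes upgrade it to a bijection.
\end{itemize}

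\emph{The nullity step.} Your final step also needs repair.
\begin{itemize}
\item $\Delta\setminus\Opp(\zeta)$ is contained in every $B_p$; it is not a union of them.
\item Since $B_{p_n}\cap\Opp(\zeta)$ decreases to $\emptyset$ as $p_n$ runs along $\rho_{\zeta,o}$ toward $\zeta$, the decay $\mu^o_\Delta(B_{p_n})\to0$ is equivalent to the nullity you want. It therefore needs its own proof.
\item One way is the step-by-step uniform description of $\mu^o_{\Delta_i}$ along $[o,\xi)$. At each special vertex, the next germ is opposite the direction of $\zeta$ with conditional probability bounded below, and once that happens $\xi$ is opposite $\zeta$.
\end{itemize}
You are right that this nullity needs an argument. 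Openness and density of $\Opp(\zeta)$, which is all the paper cites, do not give it by themselves.
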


\begin{proof}
  Let $\sigma \in \Opp(\zeta)$ let $\Sigma$ be the apartment of $X$ containing $\sigma$ and $\zeta$ in its boundary and let $o' \in \Sigma$ be the vertex such that the rays $(-\infty,0] \to (\zeta,o]$ and $(-\infty,0] \to (\zeta,o']$ coincide on an initial interval. We claim that the restrictions of $\mu_{\Delta}^{o'}$ and $\mu_{\Opp(\zeta)}^{o'}$ coincide on $U \defeq U^{o'}_0(\sigma)$. It then follows that the restrictions to $U^p_0(\sigma)$ (which is open in $U$) are proportional.

  Since the $\mu^o_\Delta$ and $\mu^{o'}_\Delta$ are proportional on a basis of $\Delta$ by Proposition~\ref{prop:locally_pro_uniform} and $o$ and $o'$ define the same measure on $\Opp(\zeta)$ there is no loss in considering $o$ instead of $o'$.

  The map $\calH_{\zeta,o} \to \calS_o, \iota \mapsto \iota|_S$ is bijective when restricted and corestricted to $U^o_0(\sigma)$. Since both measures are limits of relatively uniform measures putting the counting measure on embeddings $\{0\} \to X$ (of which there is a single one $0 \mapsto o$ in $U^o_0$) both measures coincide.

  The last statement follows because $\Opp(\zeta)$ is open and dense in $\Delta$.
\end{proof}

Carrying out the same proof in the direction of $\zeta$ gives the following statement:

\begin{lemma}\label{lem:projection_measure-preserving}
  Let $\zeta \in \Deltav[\bar{i}]$ and let $o \in X$ be a special vertex of type $0$.
  The map $\pr_\zeta \colon (\Delta,\mu^o_\Delta) \to (\Res(\zeta),\mu^o_{\Res(\zeta)})$ is measure preserving.
\end{lemma}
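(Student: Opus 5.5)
We follow the proof of Proposition~\ref{prop:opp_locally_proportional}, but now working near $\zeta$ instead of near the chamber at infinity. The plan is to first reduce to a local statement on basic open sets, then get it from a bijection between spaces of embeddings, and finally sum over a finite partition.

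\emph{Reduction.} Both $\mu^o_\Delta$ and $\mu^o_{\Res(\zeta)}$ are limit measures in which the embeddings of a single base vertex carry counting measure. By Proposition~\ref{prop:locally_pro_uniform}, changing $o$ only changes $\mu^o_\Delta$ by locally constant factors. Since the measure on $\Res(\zeta)$ was shown not to depend on the choices made, it suffices to check the statement on a fixed partition of $\Delta$ into compact open pieces.

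\emph{Partition.} Let $\rho=(\zeta,o]$ be the ray from $\zeta$ to $o$. We partition $\Delta$ according to $\pr_o$ of the chamber, i.e.\ into the sets $U^o(\sigma)$ of \eqref{eq:Uo}. Up to a null set, every chamber lies in $\Opp(\zeta)$. For $\sigma\in\Opp(\zeta)$, Lemma~\ref{lem:opp_decomposition_topological} and Proposition~\ref{prop:disintegration_opp} give $\pr_\zeta\sigma=d$, where $\sigma=\pr_\xi d$. On $\Opp(\zeta)$ we can therefore compute $(\pr_\zeta)_*\mu^o_{\Opp(\zeta)}$ directly. Integrating \eqref{eq:disintegration_opp} in the form $\int f(\pr_\zeta\sigma)\,d\mu^o_{\Opp(\zeta)} = \int_\xi\int_\eta f(\pr_\zeta\pr_\xi\eta)\,d\mu^o_{\Res(\zeta)}\,d\mu^o_{\Oppv[i](\zeta)}$ and using $\pr_\zeta\pr_\xi=\id$ on $\Res(\zeta)$ shows that this pushforward is $\mu^o_{\Oppv[i](\zeta)}(\Oppv[i](\zeta))\cdot\mu^o_{\Res(\zeta)}$. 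Because $\Hom$ of the base ray carries counting measure, this total mass equals $1$.

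\emph{Main obstacle.} It remains to compare $\mu^o_\Delta$ with $\mu^o_{\Opp(\zeta)}$ globally rather than only locally. Proposition~\ref{prop:opp_locally_proportional} gives agreement only on $U^{o'}_0(\sigma)$ for $o'$ on the ray from $\zeta$ to $\sigma$. To handle this we repeat its proof ``in the direction of $\zeta$''. We realize $\Delta=\calS_o$ as a limit over sectors $S_{a,b}$ and $\Res(\zeta)$ as a limit over pointed complexes containing the ray $\rho$. We then use the bijection $\calS_o\supseteq U\to$ (embeddings of $H_i$ that agree with $\rho$ near $-\infty$). This bijection is obtained by gluing the sector $\conv(o,\sigma)$ with the ray $\rho$, whose union is convex in an apartment by \cite[Theorem~11.53]{AbramenkoBrown08}.

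The crucial count is the following. On each finite level, the restriction maps from embeddings of $\conv(\rho|_{[-t,0]}\cup S_{a,b})$ to embeddings of $S_{a,b}$ and to embeddings of $\rho|_{[-t,0]}$ are constant-to-one by Lemma~\ref{lem:flat_symmetric}. Hence the two relatively uniform measures agree after the normalizations at $o$ are matched, and passing to the limit via Theorem~\ref{thm:disintegration} yields that $\pr_\zeta$ pushes $\mu^o_\Delta$ forward to $\mu^o_{\Res(\zeta)}$. I expect the delicate point to be checking that these normalizations match, i.e.\ that the fiber counts coincide. This is exactly where the counting-measure normalization at $o$ in both constructions is used.
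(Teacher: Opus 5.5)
There is a genuine gap, and it sits exactly at the normalization you flag as delicate.

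\emph{The normalization is wrong.} $\mu^o_{\Oppv[i](\zeta)}$ is not a probability measure. The set carrying counting measure consists of all embeddings of $\ell_i((-\infty,0])$ that agree with $\rho_{\zeta,o}$ on \emph{some} interval $(-\infty,k]$. This set is infinite, since such rays may leave $(\zeta,o]$ at any vertex. Only the set of $\xi$ whose line $\rho_{\zeta,\xi,o}$ actually passes through $o$ has mass one, as the paper remarks right after the definition.

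So your computation gives $(\pr_\zeta)_*\mu^o_{\Opp(\zeta)}=\infty\cdot\mu^o_{\Res(\zeta)}$. Since $\mu^o_\Delta$ is a probability measure, no global comparison of $\mu^o_\Delta$ with $\mu^o_{\Opp(\zeta)}$ can yield the lemma; the two are only locally proportional, with a non-constant density. Note also that $\mu^o_{\Res(\zeta)}$ is independent of $\xi$ but not of $o$: it is harmonic measure on $\partial T_\zeta$ seen from the point $[(\zeta,o]]$. So you cannot move the base point freely, contrary to your reduction step.

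\emph{The gluing covers only part of $\Delta$.} Let $\xi$ be the type-$i$ vertex of $\sigma$. Then $\rho\cup\conv(o,\sigma)$ spans a half-flat bounded by a line from $\zeta$ to $\xi$ only when $[o,\xi)$ prolongs $(\zeta,o]$ geodesically, i.e.\ when $\pr_o\xi$ is opposite $\pr_o\zeta$ in $\lk(o)$. On those pieces $U^o(\sigma)$ your bijection exists. Even there, once the segment is pinned to $\rho$, the restriction to embeddings of $S_{a,b}$ is injective but not surjective. One finds $\mu^o_\Delta=c\,\mu^o_{\Opp(\zeta)}$ with $c=\mu^o_{\Deltav[i]}(\{\xi : \pr_o\xi\text{ opposite }\pr_o\zeta\})<1$, so these pieces push forward to $c\,\mu^o_{\Res(\zeta)}$, not to $\mu^o_{\Res(\zeta)}$.

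The remaining chambers have measure $1-c>0$. For them the half-flat representing $\sigma$ in $\calH_{\zeta,o}$ misses $o$, and your argument says nothing about them. They carry the real content of the lemma. There, the conditional measure of $\mu^o_\Delta$ on $\Res(\xi)$ (Proposition~\ref{prop:disintegration_delta}) is harmonic measure seen from $[o,\xi)\in T_\xi$. The conditional measure of $\mu^o_{\Opp(\zeta)}$ (Proposition~\ref{prop:disintegration_opp}) is harmonic measure seen from $\pr_\xi[(\zeta,o]]$, which is a different point.

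\emph{The paper's route.} The paper avoids any global comparison. It covers $\Delta$ by small product sets $U=V\times W\subseteq U^o(\sigma)$, with $V\subseteq\Res(\zeta)$ and $W\subseteq\Oppv[i](\zeta)$. On such $U$, $\mu^o_{\Opp(\zeta)}$ is a product and $\mu^o_\Delta$ is a constant multiple of it. Since $\pr_\zeta$ is the projection to the $V$-factor, $\mu^o_\Delta|_U$ pushes forward to a multiple of $\mu^o_{\Res(\zeta)}|_V$. The paper reads this multiple off the two disintegrations as $\mu^o_{\Deltav[i]}(W)$, and over a partition these sum to $\mu^o_{\Deltav[i]}(\Oppv[i](\zeta))=1$. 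The mass one therefore comes from the probability measure $\mu^o_{\Deltav[i]}$, not from $\mu^o_{\Oppv[i](\zeta)}$.

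If you follow this route, be careful on the pieces with $o\notin Y_{\xi,\zeta}$. There the multiple must include the local ratio of the two harmonic measures on $\Res(\xi)$, and you must show that this ratio averages to one over $\xi$. This is a mean-value property of harmonic measures on $T_\zeta$, and it is the ingredient your proposal is missing.
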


\begin{proof}
  It suffices to show that the measure is preserved in restriction to a sufficiently small open set $U$. Specifically we take $U$ to be contained in some $U^o(\sigma) = \{\sigma' \in \Delta \mid \pr_o \sigma' = \pr_o \sigma\}$ with $o$ a special vertex of type $0$, to lie in the basis $\calB_\zeta$ from Proposition~\ref{prop:opp_locally_proportional} and to decompose as $V \times W$ with $V \subseteq \Res(\zeta)$ and $W \subseteq \Oppv[i](\zeta)$ as in Proposition~\ref{prop:disintegration_opp}. More precisely then $U = \{\pr_\xi(\sigma) \mid \sigma \in V, \xi \in W\}$.
  Then
  \[
    \mu_{\Opp(\zeta)}^o|_U = \mu_{\Res(\zeta)}^o|_V \cdot \mu_{\Oppv[i](\zeta)}^o|_W
  \]
  showing that $(U,\mu_{\Opp(\zeta)}^o) \to (\Res(\zeta),\mu_{\Res(\zeta)}^o)$ preserves the measure up to a constant $\mu_{\Oppv[i](\zeta)}^o(W)$ since $\pr_\zeta$ does not depend on the factor $W$.
  Combining Proposition~\ref{prop:disintegration_opp} and Proposition~\ref{prop:disintegration_delta} we find that
  \[
    \frac{\mu_{\Delta}^o(U)}{\mu_{\Opp(\zeta)}^o(U)}= \frac{\mu_{\Deltav[i]}^o(W)}{\mu_{\Oppv[i](\zeta)}^o(W)}
  \]
  is proportionality constant of $\mu_\Delta^o$ and $\mu_{\Opp(\zeta)}^o$ on $U$.
\end{proof}

\subsection{The measures on $\Deltaop$ and $\Deltaopv[i]$}

To construct a measure on $\Deltaop$ we consider the map
\begin{align*}
  \pi \colon \Fregular &\to \Deltaop\\
  \iota & \mapsto (\iota(S(\infty)),\iota(-S(\infty))).
\end{align*}

We would like to define a measure on $\Deltaop$ by pushing $\mu_\Fregular$ forward by $\pi$, but this is not directly possible because $\mu_\Fregular$ is infinite. In order to circonvene this difficulty, we need to fix an origin, of which we will get rid later.

Let $\iota \in \Fregular$ be arbitrary and let $o = \iota(0)$ be the vertex of type $0$ that $\iota$ maps the vertex $0 \in \Sregular$ to. The restriction of $\pi$ to $\Fregular_o$ (defined in Section~\ref{sec:measures_on_delta}) is injective but not surjective: its image is the compact open set $\Deltaop_o = U_0^o(\iota)$ consisting of pairs of opposite chambers $c,d \in \Delta$ such that the unique apartment in whose boundary they lie contains $o$. Let $\mu_{\Deltaop}^o$ denote the measure on $\Deltaop_o$ obtained by pushing $\mu_\Fregular$ along the map above.

The key observation that makes the definition of the measure on $\Deltaop$ possible and which plays the role of the computations \cite[Theorem 3.17]{Parkinson2} and \cite[Proposition 6.1]{RemyTrojan} is the following consequence of Lemma~\ref{lem:morphism_independence}. In essence it asserts that when changing the basepoint, the measure on $\iota(S(\infty))$ is scaled up by the same amount that the measure on $\iota(S(-\infty))$ is scaled down, so both effects cancel. To appreciate the subtlety note that $o$ and $o'$ need not lie in the same $\Gamma$-orbit.

\begin{lemma}\label{lem:muop_basepoint_change}
  For two special vertices $o, o' \in X$ of type $0$ the restrictions of $\mu_{\Deltaop}^o$ and of $\mu_{\Deltaop}^{o'}$ to the compact open set $\Deltaop_o \cap \Deltaop_{o'} \subseteq \Deltaop$ coincide.
\end{lemma}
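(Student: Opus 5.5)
Both measures are pushforwards of the same measure $\mu_\Fregular$, restricted to different compact open subsets, and the plan is to compare them there. Let $\Omega \defeq \Deltaop_o \cap \Deltaop_{o'}$. Each pair $(c,d) \in \Omega$ determines a unique apartment $\Sigma_{c,d}$ containing $o$ and $o'$. So $\pi^{-1}(\Omega)$ contains the sets
\[
  \Fregular_o^\Omega \defeq \Fregular_o \cap \pi^{-1}(\Omega) \quad\text{and}\quad \Fregular_{o'}^\Omega \defeq \Fregular_{o'} \cap \pi^{-1}(\Omega),
\]
and $\pi$ restricts to a bijection from each of them onto $\Omega$. Composing one bijection with the inverse of the other gives a map $\Phi \colon \Fregular_o^\Omega \to \Fregular_{o'}^\Omega$. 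It sends $\iota$ to the unique $\iota'$ with the same boundary chambers, hence the same image apartment, and with $\iota'(0) = o'$. Since $\iota'$ and $\iota$ are two type-preserving parametrizations of one apartment with the same chamber $\iota(S(\infty))$ at infinity, they differ by a translation: $\iota' = a.\iota$ for the unique $a \in A$ with $\iota(-a) = o'$. The claim is therefore that $\Phi$ preserves $\mu_\Fregular$.

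The key point is that $a$ is not constant, because it depends on where $o'$ sits in $\iota(\Sregular)$. The plan is therefore to partition $\Fregular_o^\Omega$ into finitely many pieces on which $a$ is constant. Both $o$ and $o'$ lie in $\iota(\Sregular)$, so the convex hull $K = \conv(o,o')$, which lies in every apartment containing both, is mapped onto by $\iota|_{\conv(0,-a)}$. Any apartment through $o,o'$ contains $K$, and the position of $o'$ relative to $\iota$ is determined by $\iota|_{\conv(0,-a)}$. So the pieces are the compact open sets
\[
  V_{a,\kappa} = \{\iota \in \Fregular_o \mid \iota|_{\conv(0,-a)} = \kappa\},
\]
for the finitely many $a \in A$ and embeddings $\kappa \colon \conv(0,-a) \to X$ onto $K$ with $\kappa(0)=o$. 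Each piece is contained in $\pi^{-1}(\Omega)$, and together they cover $\Fregular_o^\Omega$.

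On a single piece $V_{a,\kappa}$, the map $\Phi$ is the restriction of the translation $\iota \mapsto a.\iota$. This translation preserves $\mu_\Fregular$ by Lemma~\ref{lem:flow_measure_invariance}, which ultimately rests on Lemma~\ref{lem:morphism_independence} and Lemma~\ref{lem:flat_symmetric}. Since $\Phi$ is a bijection and its restriction to each piece of a finite measurable partition preserves measure, $\Phi$ preserves measure. The images of the pieces are disjoint because $\Phi$ is injective. Hence
\[
  \mu_{\Deltaop}^o|_\Omega = \pi_*\bigl(\mu_\Fregular|_{\Fregular_o^\Omega}\bigr) = \pi_*\bigl(\mu_\Fregular|_{\Fregular_{o'}^\Omega}\bigr) = \mu_{\Deltaop}^{o'}|_\Omega .
\]

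The main obstacle I expect is the bookkeeping in the partition step. One must check that $\Phi$ really coincides with the translation by a fixed $a$ on each piece: two embeddings agreeing on $\conv(0,-a)$ must place $o'$ at the same point $-a$, which follows from injectivity of $\iota$. One must also check that the pieces are open, so that $\Phi$ is measurable. The genuinely nontrivial input, that the translation invariance of $\mu_\Fregular$ holds even when $o$ and $o'$ are not in the same $\Gamma$-orbit, is already contained in Lemma~\ref{lem:flow_measure_invariance}.
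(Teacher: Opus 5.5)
Your proposal is correct and follows the paper's route. Both proofs identify the $o$-marked and $o'$-marked lifts of a pair in $\Deltaop_o\cap\Deltaop_{o'}$ via a translation $a\in A$, and conclude from the $A$-invariance of $\mu_\Fregular$ (Lemma~\ref{lem:flow_measure_invariance}). Your explicit partition into pieces $V_{a,\kappa}$, on which the translation vector is constant, is a useful refinement: the paper writes the intersection as $\pi(U_0(\iota)\cap U_{0'}(\iota))$ for a single $\iota$, which covers only the pairs where $o'$ sits at the fixed position $0'$ of the marking.
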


Thus we can simply define $\mu_{\Deltaop}$ to be the measure that coincides with $\mu_{\Deltaop}^o$ on $\Deltaop_o$ for every $o$.

\begin{proof}
  Let $\iota \in \calF$ be such that $\iota(0) = o$ and $\iota(0') = o'$ for some vertex $0'$ of type $0$. Then $\Deltaop_o \cap \Deltaop_{o'} = \pi(U_0(\iota) \cap U_{0'}(\iota))$ and the restriction of $\mu^o_{\Deltaop}$ to the intersection is the pushforward from $U_0(\iota)$, so by injectivity of $\pi$ on $\Fregular^o$  we have $\mu^o_{\Deltaop}( \Deltaop_o \cap \Deltaop_{o'} )   = \mu_\Fregular(U_0(\iota)\cap U_0(\iota'))$.

  Let $a \in A$ be the element such that $a0 = 0'$. Then $\iota' \defeq a^{-1} \cdot \iota$ has the property that $\iota'(0) = \iota(a0) = \iota(0') = o'$ and $\iota'(a^{-1}0) = \iota(0) = o$. Thus $\Deltaop_o \cap \Deltaop_{o'} = \pi(U_0(\iota') \cap U_{a^{-1}0}(\iota'))$ and similarly as for $\mu^o$ we get that $\mu^{o'}_{\Deltaop_o}(
 \Deltaop_o \cap \Deltaop_{o'}) = \mu_\Fregular(U_0(\iota')\cap U_{a^{-1}0}(\iota')).$

 Note that
  \begin{align*}
    U_0(\iota') &= U_0(a^{-1} \iota) = a^{-1} \cdot U_{a \cdot 0}(\iota) = a^{-1} \cdot U_{0'}(\iota) & \text{and}\\
    U_0(\iota) &= U_0(a \iota') = a \cdot U_{a^{-1} \cdot 0}(\iota') 
  \end{align*}
  so $U_0(\iota') \cap U_{a^{-1}0}(\iota') = a^{-1} (U_0(\iota) \cap U_{0'}(\iota))$.
  The point is now that the measure $\mu_\Fregular$ is invariant by $A$ by Lemma~\ref{lem:flow_measure_invariance}, so $\mu_\Fregular(U_0(\iota')\cap U_{a^{-1}0}(\iota'))=\mu_\Fregular(U_0(\iota)\cap U_0(\iota'))$, which proves the result.
  
\end{proof}

The construction of the measure on $\Deltaopv[i]$ is analogous. The map
\begin{align*}
  \pi_i \colon \Fregular &\to \Deltaopv[i]\\
  \iota &\mapsto (\iota(\ell_i(\infty)),\iota(\ell_i(-\infty)))
\end{align*}
restricts to a map of pointed embeddings $\Fregular_o \to \Deltaopv[i]$ and we can define the measure on the image to be the pushforward measure $\mu_{\Deltaopv[i]}^o$. We can then define $\mu_{\Deltaopv[i]}$ to be the measure that restricts to these since the proof of Lemma~\ref{lem:muop_basepoint_change} also gives:

\begin{lemma}\label{lem:muopv_basepoint_change}
  For two special vertices $o, o' \in X$ of type $0$ the restrictions of $\mu_{\Deltaopv[i]}^o$ and of $\mu_{\Deltaopv[i]}^{o'}$ to the compact open set $\pi_i(\Fregular_o) \cap \pi_i(\Fregular_{o'})$ coincide.
\end{lemma}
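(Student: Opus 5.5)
We will repeat the argument of Lemma~\ref{lem:muop_basepoint_change} verbatim, replacing $\pi$ by $\pi_i$. The one new point is that $\pi_i$ is not injective on $\Fregular_o$, so we cannot transport measures through it. Instead, we compare the two pushforwards directly.

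First, fix $(\xi,\zeta)\in\pi_i(\Fregular_o)\cap\pi_i(\Fregular_{o'})$. Any $\iota$ with $\pi_i(\iota)=(\xi,\zeta)$ has image an apartment containing the wall-line joining $\zeta$ and $\xi$. The points $o$ and $o'$ both lie in the wall space $Y_{\xi,\zeta}$, but not necessarily on a common line. So we cannot directly choose one $\iota$ with $\iota(0)=o$ and $\iota(0')=o'$. Instead we work with the restricted measures. For a Borel set $B\subseteq\pi_i(\Fregular_o)\cap\pi_i(\Fregular_{o'})$ we need
\[
  \mu_\Fregular(\Fregular_o\cap\pi_i^{-1}(B))=\mu_\Fregular(\Fregular_{o'}\cap\pi_i^{-1}(B)).
\]

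Second, we reduce to the case where $o$ and $o'$ differ by a translation in $A_i$ along lines, or by a translation transverse to them. Every element of $\pi_i^{-1}(B)$ is determined by its pair of endpoints together with data in the wall tree, and the set $\pi_i^{-1}(B)$ is $A$-invariant. The cleaner route is to use the $A$-invariance of $\mu_\Fregular$ (Lemma~\ref{lem:flow_measure_invariance}) as follows. Partition $\pi_i^{-1}(B)\cap\Fregular_o$ into countably many pieces $V_a$, $a\in A$, where $V_a$ is the set of $\iota$ with $\iota(a0)=o'$. Where there is no such $a$, we further decompose by which vertex of type $0$ in $\iota(\Sregular)$ lies closest to $o'$ in the tree direction. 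On $V_a$ the map $\iota\mapsto a^{-1}\iota$ lands in $\Fregular_{o'}\cap\pi_i^{-1}(B)$, since $\pi_i$ is $A$-invariant. It also preserves $\mu_\Fregular$, exactly as in the proof of Lemma~\ref{lem:muop_basepoint_change}. This gives an injective, measure-preserving correspondence between the two sides, and its inverse is built in the same way, so the two sides have equal measure.

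The main obstacle is the case where $o'$ does not lie in the apartment $\iota(\Sregular)$ for the $\iota$ in question. To handle it, we will first reduce to pairs $(o,o')$ that are adjacent in a common apartment and argue by chaining through intermediate special vertices. For such a pair, we split $\pi_i^{-1}(B)\cap\Fregular_o$ according to whether the apartment contains $o'$. On the pieces whose apartment does contain $o'$ the translation argument above applies. For the remaining pieces, the measure disintegrates over the wall space (Proposition~\ref{prop:measure_singular_to_regular} and the $M_i$-invariance of $\mu_{\Fsingular_i}$). The $p_\R$-coordinate of $o$ and $o'$ then determines the comparison, so the tree direction contributes symmetrically to both sides and cancels.
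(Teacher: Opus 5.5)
Your translation step is what the paper does. Its entire proof of this lemma is the remark that the proof of Lemma~\ref{lem:muop_basepoint_change} carries over. You are right that, because \(\pi_i|_{\Fregular_o}\) is not injective, this step only matches the part of \(\Fregular_o\cap\pi_i^{-1}(B)\) whose apartments contain \(o'\) with the corresponding part of \(\Fregular_{o'}\cap\pi_i^{-1}(B)\).

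A small correction: such apartments always exist, since any line of the wall tree through \(p_T(o)\) and \(p_T(o')\) gives one. So an \(\iota\) with \(\iota(0)=o\) and \(\iota(0')=o'\) can be chosen. The real issue is that most apartments of \(Y_{\xi,\zeta}\) through \(o\) miss \(o'\).

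Your completion has a genuine gap.
\begin{itemize}
\item In your second paragraph, the pieces indexed by ``the vertex closest to \(o'\)'' are translated into \(\Fregular_{o''}\) with \(o''\neq o'\). They give no correspondence with \(\Fregular_{o'}\); your bijection exists only on \(\bigsqcup_a V_a\).
\item The final claim that the tree direction ``contributes symmetrically and cancels'' is the whole content of the lemma. Once the parts through both points are matched, equality of the complementary parts is equivalent to \(\mu^o_{\Deltaopv[i]}(B)=\mu^{o'}_{\Deltaopv[i]}(B)\). Nothing you cite yields this without further work.
\item Chaining does not help. It is legitimate, since \(\conv(o,o')\subseteq Y_{\xi,\zeta}\) by convexity, but adjacent vertices can still project to different points of the wall tree.
\end{itemize}

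The missing idea is that you never need the complementary parts; you need the \emph{conditional mass} of the matched part.
\begin{enumerate}
\item Disintegrate \(\mu_{\Fregular}|_{\Fregular_o}\) along restriction to \(\ell_i(\R)\), using Theorem~\ref{thm:disintegration} with Lemma~\ref{lem:flat_symmetric}. The base measure is \(\mu^o_{\Deltaopv[i]}\), viewed on lines through \(o\). The fiber measures \(\lambda^y\) are probability measures built from relatively uniform measures.
\item Split \(B\) into finitely many clopen pieces. On each piece the position of \(o'\) relative to the line \(y\) through \(o\) is a fixed type-\(0\) vertex \(0'\in\Sregular\), up to the reflection in \(\ell_i\).
\item The event \(\iota(0')=o'\) depends only on \(\iota|_K\) with \(K=\conv(\ell_i(\R)\cup\{0'\})\). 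Exactly one of the \(N\) extensions of \(y\) to \(K\) realizes it, where \(N\) is the fiber size of the constant-to-one map \(\Hom(K,X)\to\Hom(\ell_i(\R),X)\). Hence \(\mu_{\Fregular}(V)=\mu^o_{\Deltaopv[i]}(B)/N\) for \(V=\{\iota\in\Fregular_o\cap\pi_i^{-1}(B):\iota(0')=o'\}\).
\item In the same way \(\mu_{\Fregular}(a^{-1}V)=\mu^{o'}_{\Deltaopv[i]}(B)/N'\), where \(N'\) is the fiber size for \(K'=\conv(\ell_i(\R)\cup\{-0'\})\).
\item \(N=N'\): both count the non-backtracking paths of a fixed length in the wall tree, issuing from \(p_T(o)\) and \(p_T(o')\) respectively. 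These two vertices have the same degree, because their walls parallel to \(\ell_i\) are \(A\)-translates of each other.
\end{enumerate}
The \(A\)-invariance \(\mu_{\Fregular}(V)=\mu_{\Fregular}(a^{-1}V)\) from your translation step then gives the lemma.
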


\begin{remark}
Note that since $\Fsingular_i \to \Fregular$ is measure-preserving, we could equivalently have used the map $\Fsingular_i \to \Deltaopv[i]$ or, in the other direction, the set of embeddings $\{\ell_i(\R) \to X\}$, to define the measures on $\Deltaopv[i]$.
\end{remark}

The fibers of $\pi$ are apartments of $X$ without a fixed basepoint but with the same orientation, so they are precisely $A$-orbits: $\pi^{-1}(\pi(\iota)) = A.\iota$ for $\iota \in \Fregular$. In other words $\pi$ induces a bijection $A \backslash \Fregular \to \Deltaop$. In fact there is a local trivialization:

\begin{lemma}\label{lem:F/A}
  The map $\Fregular \to \Deltaop$ is a locally trivial principal bundle with bundle group $A$. The trivializations \begin{align*}\tau_o \colon A . \Fregular_o &\to A \times \Deltaop_o\\a.\iota &\mapsto (a,(\iota(S(\infty)),\iota(-S(\infty))))\end{align*} are measure-preserving. In particular, any measurable section $s \colon \Deltaop \to \Fregular$ induces a measure preserving bijection $\Fregular \to A \times \Deltaop$.
\end{lemma}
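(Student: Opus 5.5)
The plan is to verify three things in turn: that $\tau_o$ is a well-defined homeomorphism onto $A\times\Deltaop_o$, that these charts cover $\Fregular$ and are compatible with the right structure on overlaps, and that each $\tau_o$ is measure-preserving. The last item carries all the content.

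\emph{Bijectivity.} Every $\iota\in\Fregular$ lies in $A.\Fregular_{o}$ for $o$ the image of some vertex of type $0$, so the sets $A.\Fregular_o$ cover $\Fregular$. They are open because $\Fregular_o$ is compact open and $A$ acts by homeomorphisms. We saw that $\pi$ restricted to $\Fregular_o$ is injective with image $\Deltaop_o$, and that fibers of $\pi$ are $A$-orbits. The stabilizer of an element of $\Fregular$ in $A$ is trivial, since translations act freely on $\Sregular$. Hence $a.\iota$ with $\iota\in\Fregular_o$ determines $a$ and $\iota$ uniquely: if $a.\iota=a'.\iota'$ with $\iota,\iota'\in\Fregular_o$, then $\pi(\iota)=\pi(\iota')$ forces $\iota=\iota'$, and freeness gives $a=a'$. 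So $\tau_o$ is a bijection.

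\emph{Continuity and equivariance.} Continuity of $\tau_o$ and of its inverse follows from the description of basic open sets $U_C(\iota)$ and of the topology on $\Deltaop$ by agreement on compact sets, together with discreteness of $A$. Equivariance for the $A$-action on the first factor is built into the definition. On an overlap $A.\Fregular_o\cap A.\Fregular_{o'}$, the transition map is $(a,x)\mapsto(a\,a_{o,o'}(x),x)$, where $a_{o,o'}(x)$ is the locally constant translation carrying the basepoint $o$ to $o'$ in the apartment determined by $x$. This gives a principal $A$-bundle.

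\emph{Measure preservation.} Give $A$ counting measure. Since $A$ is discrete, the set $A.\Fregular_o$ is the disjoint union of the translates $a.\Fregular_o$. Each translate is carried by $\tau_o$ to $\{a\}\times\Deltaop_o$, and the map $a.\Fregular_o\to\Deltaop_o$ is $\pi\circ a^{-1}$. By Lemma~\ref{lem:flow_measure_invariance}, $\mu_\Fregular$ is $A$-invariant, and by definition $\mu^o_{\Deltaop}=\pi_*(\mu_\Fregular|_{\Fregular_o})$. It follows that $\tau_o$ maps $\mu_\Fregular|_{a.\Fregular_o}$ to $\delta_a\times\mu^o_{\Deltaop}$. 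Lemma~\ref{lem:muop_basepoint_change} shows $\mu^o_{\Deltaop}$ agrees with $\mu_{\Deltaop}$ on $\Deltaop_o$. Summing over $a$ then gives that $\tau_o$ is measure-preserving onto $A\times\Deltaop_o$ with the product of counting measure and $\mu_{\Deltaop}$.

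\emph{The global statement.} Let $s$ be a measurable section, and consider $A\times\Deltaop\to\Fregular$, $(a,x)\mapsto a.s(x)$. It is a bijection because fibers are free $A$-orbits. To check that it is measure-preserving, I will cover $\Deltaop$ by countably many sets $\Deltaop_o$; countably many suffice since the vertices of $X$ form a countable set. I will then make the cover disjoint measurably, with pieces $B_n\subseteq\Deltaop_{o_n}$. On $B_n$ the section differs from the $\tau_{o_n}$-section $x\mapsto\pi|_{\Fregular_{o_n}}^{-1}(x)$ by a measurable function $b\colon B_n\to A$. Further partitioning $B_n$ into the countably many pieces $b^{-1}(a)$, the map on each piece is $(a',x)\mapsto\tau_{o_n}^{-1}(a'a,x)$. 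This map is measure-preserving because translation on $A$ preserves counting measure, and the $A$-invariance of $\mu_\Fregular$ handles the reindexing.

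The main obstacle I anticipate is this bookkeeping in the final step: writing the measurable section as a piecewise translate of the local sections and invoking countability so that the measures glue. The earlier steps follow directly from Lemma~\ref{lem:flow_measure_invariance}, Lemma~\ref{lem:muop_basepoint_change}, and freeness of $A$.
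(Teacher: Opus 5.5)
Your proposal is correct and follows essentially the paper's route: bijectivity from injectivity of $\pi$ on $\Fregular_o$ together with freeness of the $A$-action, measure preservation from the $A$-invariance of $\mu_\Fregular$, and the global statement from the fact that a measurable section differs from the local sections by fiberwise translations of $A$. The only difference is in the last step: the paper compares fiber counts in the identity $\mu_\Fregular(U)=\int\abs{\tau(U)\cap(A\times\{x\})}\,d\mu_{\Deltaop}(x)$, whereas you partition $\Deltaop$ into countably many pieces on which that translation is constant, which makes the measurability bookkeeping more explicit.
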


\begin{proof}
  For $\iota \in \Fregular_o$ and $\iota'$ with $\pi(\iota) = \pi(\iota')$ there is a unique $a$ with $a.\iota(0) = \iota'(0)$ and then $a.\iota = \iota'$. This shows bijectivity of the trivialization. The map $\Fregular_o \to \Delta_o^\op$ is measure-preserving by definition and it extends to a measure preserving map on the $A$-orbit since the measure on $\Fregular$ (and on $A$) is $A$-invariant.

  A measurable section $s \colon \Delta^\op \to \Fregular$ amounts to a measurable map $s' \colon \Delta^\op \to X_0,(c,c) \mapsto s(c,c')(0)$ such that $s'(c,c')$ lies in the apartment that has $c$ and $c'$ in the boundary. It gives rise to a map
  \begin{align*}
    \tau_s \colon \Fregular &\to A \times \Delta^\op\\
    \iota &\mapsto \tau_{s'(0)}(\iota)
  \end{align*}
  that is measurable. If $U \subseteq A.\Fregular_o$ is measurable then
  \begin{multline*}
    \mu_{\Fregular}(U) = \int_{\Delta_o^\op} \abs{\tau_o(U) \cap (A \times \{(c,c'\})} d \mu_{\Delta_o^\op}(c,c')\\ = \int_{\Delta_o^\op} \abs{\tau_s(U) \times (A \times \{(c,c')\})} d\mu_{\Delta_o^\op}(c,c'),
  \end{multline*}
since $\tau_s$ and $\tau_o$ differ in the fibers of $A \times \Delta^\op \to \Delta^\op$ but not their size. This shows that $\tau_s$ is a measure preserving.
\end{proof}
From the construction we see:

\begin{lemma}\label{lem:FregMeasPres}
  The map $\Deltaop \to \Deltaopv[i]$ that takes a pair $(c,c')$ of opposite vertices to the pair $(\xi,\xi')$ where $\xi$ is the vertex of type $i$ of $c$ and $\xi'$ is the vertex of type $\bar{i}$ of $c'$ is measure preserving. The natural map $\Fregular \to \Deltaop$ is measure-class preserving.

  In particular, the maps $\Fregular \to \Deltaop \to \Deltaopv[i]$ are measure-class preserving.
\end{lemma}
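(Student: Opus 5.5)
The plan is to check both claims locally on the compact open pieces $\Deltaop_o$, using the basepoint-independence of Lemmas~\ref{lem:muop_basepoint_change} and~\ref{lem:muopv_basepoint_change}.

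\emph{First claim.} Write $\pi'\colon\Deltaop\to\Deltaopv[i]$ for the map in the statement. We have $\pi'\circ\pi=\pi_i$ on $\Fregular$, because $\iota(\ell_i(\infty))$ is the type $i$ vertex of $\iota(S(\infty))$ and $\iota(\ell_i(-\infty))$ is the type $\bar i$ vertex of $\iota(-S(\infty))$. On $\Deltaop_o$ the measure $\mu_{\Deltaop}$ is by definition the pushforward of $\mu_\Fregular|_{\Fregular_o}$ under the injective map $\pi$. Hence its pushforward under $\pi'$ is the pushforward of $\mu_\Fregular|_{\Fregular_o}$ under $\pi_i$, and this is $\mu_{\Deltaopv[i]}^o=\mu_{\Deltaopv[i]}|_{\pi_i(\Fregular_o)}$.

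To pass from this local statement to a global one, take a measurable $B\subseteq\Deltaopv[i]$. Over a fixed pair $(\xi,\xi')$, different chambers give different apartments, so a single $o$ does not cover the whole preimage. I therefore partition $\pi'^{-1}(B)$ instead. Enumerate the countably many type $0$ vertices $o_1,o_2,\dots$ and set $E_n=\Deltaop_{o_n}\setminus\bigcup_{m<n}\Deltaop_{o_m}$. By the local statement, $\mu_{\Deltaop}(\pi'^{-1}(B)\cap E_n)$ equals the $\mu_\Fregular$-measure of the corresponding set of $\iota\in\Fregular_{o_n}$. The main obstacle is to identify the sum over $n$ of these quantities with $\mu_{\Deltaopv[i]}(B)$.

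For this step I would use the fibration structure directly. Fix a line $\ell$ from $\xi'$ to $\xi$ through $o$. The flats $\iota\in\Fregular_o$ with $\iota\circ\ell_i=\ell$ are parametrized by the pairs of sectors on either side of the line. Under $\pi'$ the apartments containing that line account for all pairs $(c,c')$ over $(\xi,\xi')$. Each such pair is counted once for every special vertex on the line, and these vertices are exactly the $A_i$-orbit along the line. The $A$-invariance of $\mu_\Fregular$ (Lemma~\ref{lem:flow_measure_invariance}) makes this counting consistent.

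If the bookkeeping becomes awkward, I would fall back on the local trivialization of Lemma~\ref{lem:F/A}. This gives $\Fregular\cong A\times\Deltaop$, and analogously $\Fregular$ should fiber over $\Deltaopv[i]$ with group $A_i$ and transverse fibers given by the residues. Then $\pi'$ is measure preserving because both sides are quotients of $\mu_\Fregular$ by compatible translation groups.

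\emph{Second claim.} That $\Fregular\to\Deltaop$ is measure-class preserving follows from Lemma~\ref{lem:F/A}. Under a measurable section, $\mu_\Fregular$ corresponds to counting measure on $A$ times $\mu_{\Deltaop}$. Consequently a set in $\Deltaop$ is null if and only if its preimage in $\Fregular$ is null. Composing this with the first claim shows that $\Fregular\to\Deltaop\to\Deltaopv[i]$ is measure-class preserving.
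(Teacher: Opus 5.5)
Your first paragraph is exactly the paper's proof. Since $\pi'\circ\pi=\pi_i$, the map $\pi'$ carries $(\Deltaop_o,\mu^o_{\Deltaop})$ measure-preservingly onto $(\pi_i(\Fregular_o),\mu^o_{\Deltaopv[i]})$, and this pointed statement is all that the paper's ``measure-preserving by definition'' establishes.

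The gap is in the globalization. The identity $\pi'_*\mu_{\Deltaop}=\mu_{\Deltaopv[i]}$ you then try to prove is false, so the step you single out as the main obstacle cannot be carried out. The faulty claim is that the apartments containing a fixed line $\ell$ through $o$ account for all pairs over $(\xi,\xi')$. In fact, pairs $(c,c')$ over $(\xi,\xi')$ correspond to arbitrary oriented lines $L$ of the wall tree $T_{\xi,\xi'}$, the apartment being $p_T^{-1}(L)$. That apartment contains $o$ only when $L$ passes through the single point $p_T(o)$. The type $0$ vertices of $Y_{\xi,\xi'}$ are spread over the whole tree, not along one $A_i$-orbit.

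Concretely, take $B=\pi_i(\Fregular_o)$. Then $\mu_{\Deltaopv[i]}(B)=\mu_\Fregular(\Fregular_o)=\mu_{\Deltaop}(\Deltaop_o)$. On the other hand, $\pi'^{-1}(B)\setminus\Deltaop_o$ is a non-empty open set, since lines of $T_{\xi,\xi'}$ avoiding $p_T(o)$ exist. By full support it has positive measure.

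It is even infinite. The conditional measure on a fibre gives mass $1$ to the lines through each vertex $p_T(o')$. A ball of radius $R$ in $T_{\xi,\xi'}$ contains exponentially many such vertices, while a line meets only $O(R)$ of them. Your fallback fails for the same reason: $\Deltaop=A\backslash\Fregular$, but the fibres of $\Deltaop\to\Deltaopv[i]$ are these spaces of lines in a tree, not orbits of a translation group.

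What is true, and what is used later in the paper, is measure-class preservation, and it follows directly from your pointed statement. Because $\pi'(\Deltaop_o)\subseteq\pi_i(\Fregular_o)$, for every measurable $N\subseteq\Deltaopv[i]$ and every $o$ one has $\mu_{\Deltaop}(\pi'^{-1}(N)\cap\Deltaop_o)=\mu_{\Deltaopv[i]}(N\cap\pi_i(\Fregular_o))$. The sets $\Deltaop_o$ and the sets $\pi_i(\Fregular_o)$ each form countable covers, so $N$ is null if and only if $\pi'^{-1}(N)$ is. So you should drop the globalization and replace it by this null-set argument. Your treatment of $\Fregular\to\Deltaop$ via Lemma~\ref{lem:F/A}, and of the composite, coincides with the paper's and is fine.
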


\begin{proof}
  The first map takes $(\iota(S(\infty)),\iota(S(-\infty)))$ to $(\iota(\ell_i(\infty)),\iota(\ell_i(-\infty))$ and is measure-preserving by definition.

  The second map is measure-class preserving by Lemma~\ref{lem:F/A}.
\end{proof}

Similarly to Lemma~\ref{lem:F/A} we have:

\begin{proposition}\label{prop:Y/M}
  The map $\pi_i \colon \Fsingular_i \to \Deltaopv[i]$ is a principal bundle with bundle group $M_i$. Any measurable section $s \colon \Deltaopv[i] \to \Fsingular_i$ induces an isomorphism $\Fsingular_i \to M_i \times \Deltaopv[i]$.
\end{proposition}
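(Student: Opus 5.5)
The argument will follow Lemma~\ref{lem:F/A}, with the abelian translation group $A$ replaced by the locally compact group $M_i$ and counting measure on $A$ replaced by Haar measure. It splits into a topological part and a measure-theoretic part.

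\emph{Topological part.} By Proposition~\ref{prop:restricted_wall_trees}, $\pi_i = \sigma_+\times\sigma_-$ is surjective, $M_i$ acts freely and properly on $\Fsingular_i$, and $M_i$ is transitive on each fiber. Hence the fibers are exactly the $M_i$-orbits and $\pi_i$ induces a bijection $M_i\backslash\Fsingular_i \to \Deltaopv[i]$. For local triviality I would take a compact open set $V = U_K(\iota)$, where $K$ is a segment of $\ell_i(\R)$ through $0$; equivalently $V$ consists of the embeddings whose line passes through fixed vertices. Then $\pi_i(V)$ is compact open. I would construct a continuous section over it by fixing, for each pair $(\xi,\zeta)$ in $\pi_i(V)$, an extension of the prescribed embedding of the line. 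This extension should be chosen via finitely many compatible local choices, since everything is determined on compact sets up to a compact open stabilizer; continuity then follows from the compact-open description of the topologies. The section gives a map $M_i \times \pi_i(V) \to M_i.V$, $(m,p)\mapsto m.s(p)$, which is a continuous bijection. By properness, together with the openness of $U_K$-type sets, it should be a homeomorphism.

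\emph{Measure part.} Given a measurable section $s$, I will define $\tau_s(\iota) = (m,\pi_i(\iota))$, where $m$ is the unique element with $\iota = m.s(\pi_i(\iota))$. Measurability follows from the local trivializations. To see that $\tau_s$ is measure-preserving (up to the normalizations, which match because both sides are normalized at the vertex $0$), it suffices to check on a basic set $U_0(\iota)=\{\iota' : \iota'(0)=\iota(0)\}$. There I will apply the disintegration Theorem~\ref{thm:disintegration} to the directed system of convex subcomplexes of $\Ssingular_i$ containing $0$, with base subsystem the segments of $\ell_i(\R)$. This expresses $\mu_{\Fsingular_i}$ restricted to $U_0(\iota)$ as an integral over $\pi_i(\Fregular_o)$, taken with respect to $\mu_{\Deltaopv[i]}^o$ (which equals $\mu_{\Deltaopv[i]}$ by the remark following Lemma~\ref{lem:muopv_basepoint_change}), of the fiber measures on extensions of a fixed line to all of $\Ssingular_i$. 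In each fiber, the extensions of a fixed line $y$ with $y(0)=o$ form a single orbit of the compact open subgroup of $M_i$ fixing $\ell_i(\R)$ pointwise. The relatively uniform measures on this orbit are, by the index computation in the sideways case of Proposition~\ref{prop:wall_symmetric}, exactly the uniform measures on the coset spaces $\Stab/\Stab$. Their limit is therefore Haar measure, normalized so that the stabilizer of $0$ has mass one. As in Example~\ref{ex:prouni_motivation_haar}, this matches the chosen normalization of the Haar measure of $M_i$.

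Combining these, $\tau_o$ is measure-preserving on $M_i.\Fregular$-type pieces. Changing the section only changes fibers by right translation in $M_i$. Haar measure on $M_i$ is unimodular, being a product of $A_i$ and a group with compact open stabilizers all of equal normalized mass by Lemma~\ref{lem:flow_measure_invariance}, so this does not affect the measure. The computation from Lemma~\ref{lem:F/A} then carries over verbatim. I expect the main obstacle to be the identification of the fiberwise limit measure with Haar measure, in particular making sure the fiber measures from Theorem~\ref{thm:disintegration} are precisely the orbit measures of the line stabilizer in $M_i$ and not of the larger group $\Aut^0(\Ssingular_i)$. For this I will rely on the definition of $\Hom_{\WallEmb_i}$ as restrictions of elements of $\Fsingular_i$.
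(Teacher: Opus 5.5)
Your proposal is correct and follows essentially the paper's route: restrict to the pointed piece $(\Fsingular_i)_o$, disintegrate via Theorem~\ref{thm:disintegration} over the line embeddings through $o$, and identify each fiber (an orbit of the line stabilizer $(M_i)_{\ell_i}=(M_i)_0$) with normalized Haar measure, using the sideways-extension indices from Proposition~\ref{prop:wall_symmetric}. Two of your additions go beyond the paper: the topological local-triviality sketch, where the paper only uses that $M_i$ acts regularly on the fibers; and your point that changing the section acts by right translation on the $M_i$-coordinate, so unimodularity of $M_i$ is what makes every section give a measure-preserving (not merely measure-class preserving) isomorphism, a step the paper covers only by citing $M_i$-invariance of $\mu_{\Fsingular_i}$.
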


\begin{proof}
  For $(\xi,\zeta) \in \Deltaopv[i]$ the fiber $\pi_i^{-1}(\xi,\zeta)$ consist of isomorphisms $\Ssingular_i \to Y_{\xi,\zeta}$ that are in $\Fregular_i$. The group $M_i$ acts regularly on these by precomposition. Since $\mu_{\Fsingular_i}$ is $M_i$-invariant by Lemma~\ref{lem:flow_measure_invariance} the truth of the proposition is not affected if we modify $s$ by a measurable map $\Deltaopv[i] \to M_i$. We use this freedom by considering an arbitrary vertex $o$ of type $0$ and restricting to $\pi_i \colon (\Fsingular_i)_o \to \pi_i((\Fsingular_i)_o)$ and assuming that $s$ splits this restriction, i.e.\ $s(\pi_i((\Fsingular_i)_o)) \subseteq (\Fsingular_i)_o$.

  The set $\Hom_{\WallEmb_i}(\Ssingular_i,\Ssingular_i)$ is $M_i$, not only as a set but as a measurable space as in Example~\ref{ex:prouni_motivation_haar} (this is where our efforts in formulating pro-uniform measures in this generality pays off). Similarly the space $\Hom_{\WallEmb_i,*}((\Ssingular_i,0),(\Ssingular_i,0))$ of pointed maps is the stabilizer $(M_i)_0$. Finally, writing $\ell_i$ for the set $\ell_i(\R)$ the space $\Hom((\Ssingular_i,\ell_i,0),(\Ssingular_i,\ell_i,0))$ of morphisms in the category of pointed pairs is the pointwise stabilizer $(M_i)_{\ell_i}$, that could conceivably have index $2$ in $(M_i)_0$.

  The fibers of the restriction map $\Hom_{\WallEmb_i}(\Ssingular_i,X) \to \Hom_{\WallEmb_i}(\ell_i,X)$ can be identified with $\Hom((\Ssingular_i,\ell_i,0),(\Ssingular_i,\ell_i,0))$: if $\iota \colon \Ssingular_i \to X$ lies in the fiber of $\mu \colon \ell_i \to X$, i.e.\ $\iota|_{\ell_i} = \mu$, then every element $\iota'$ in the fiber is obtained as $\iota \circ \alpha$ with $\alpha \in \Hom((\Ssingular_i,\ell_i,0),(\Ssingular_i,\ell_i,0))$, i.e.\ $\alpha \colon \Ssingular_i \to \Ssingular_i$ fixes $\ell_i$ pointwise (and lies in $M_i$).

  The statement of the previous paragraph remains true for the restricted map $\Hom_{\WallEmb_i,*}((\Ssingular_i,0),(X,o)) \to \Hom_{\WallEmb_i,*}((\ell_i,0),(X,o))$ whose domain is $(\Fsingular_i)_o$ and whose codomain can be identified with $\pi_i((\Fsingular_i)_o)$.

  We can obtain the measure on $\Hom_{\WallEmb_i,*}((\Ssingular_i,0),(X,o))$ as follows. We let $Z_a^b \cong \ell_i \times T$ for $a,b \in \N$ be the convex hull of the segment $\ell_i([-b,b])$ and of the points $(0,v)$ where $v$ has combinatorial distance $a$ from $0$. Then $Z_0^b = \ell_i([-b,b])$ and the complexes $Z_a^b$ exhaust $\Ssingular_i$. Then \[\lim_{a,b} \Hom_{\WallEmb_i,*}((Z_a^b,0),(X,o)) = (\Fsingular_i)_o\] as a measure space and \[\lim_b \Hom_{\WallEmb_i,*}((Z_0^b,0),(X,o))\] can be identified with $\pi_i((\Fsingular_i)_o)$. In fact, Theorem~\ref{thm:disintegration} implies that the limit measure is in fact the pushforward measure $\mu_{\Deltaopv[i]}|_{\pi_i(\Fsingular_i)}$ and moreover it provides a disintegration formula
  \[
    \int_{(\Fsingular_i)_o}g(\iota) d\mu_{\Fsingular_i} = \int_{\pi_i((\Fsingular_i)_o)} \int_{\iota \in \pi_i^{-1}(\xi,\zeta)} g(\iota) d\lambda^{(\xi,\zeta)}(\iota) d\mu_{\Deltaopv[i]}(\xi,\zeta)
  \]
  where $\lambda^{\xi,\zeta}$ is a measure on $\pi_i^{-1}(\xi,\zeta) = \Hom((\Ssingular_i,\ell_i,0),(X,(\zeta,o,\xi),o))$ where $(\zeta,o,\xi)$ denotes the line in $X$ from $\zeta$ to $\xi$ through $o$.

  The section picks a preferred element $s(\xi,\zeta)$ in this space, allowing to identify it with $(M_i)_{\ell_i} = \Hom((\Ssingular_i,\ell_i,0),(\Ssingular_i,\ell_i,0))$ via $(M_i)_{\ell_i} \to \pi_i^{-1}(\xi,\zeta), m \mapsto m.s(\xi,\zeta)$. It remains to observe that this identification is measure preserving since $(M_i)_{\ell_i}$ carries the limit measure of the $\Hom_{\Wall_i,*}((Z_a^{\infty},0),(\Ssingular_i,0))$ (note that $\Hom_{\Wall_i,*}((Z_0^1,0),(\Ssingular_i,0)) \to \Hom_{\Wall_i,*}((Z_0^0,0),(\Ssingular_i,0))$ has fiber size at most two and $\Hom_{\Wall_i,*}((Z_0^\infty,0),(\Ssingular_i,0)) \to \Hom_{\Wall_i,*}((Z_0^1,0),(\Ssingular_i,0))$ has fiber size one).
  Thus
  \[
    \int_{(\Fsingular_i)_o}g(\iota) d\mu_{\Fsingular_i} = \int_{\pi_i((\Fsingular_i)_o)} \int_{m \in (M_i)_{\ell_i}} g(m.s(\xi,\zeta)) d\mu_{M_i}(m) d\mu_{\Deltaopv[i]}(\xi,\zeta)
  \]
  showing that $(\Fsingular_i)_o$ is measure equivalent to $(M_i)_{\ell_i} \times \pi_i((\Fsingular_i)_o)$ as needed.
\end{proof}

Recall the definition of $\overline\Delta_i$, which is the set of all pairs $(u,\iota)$ with $u\in \Delta_i$ and $\iota : T\to T_u$, and is equipped with a map $\overline \sigma_+:\Fsingular_i\to \overline \Delta_i$. We equip $\overline \Delta_i$ with the pushforward of the measure class on $\Fsingular_i$ by $\sigma_+$: a set $A\subset \overline \Delta_i$ has measure $0$ if and only if $\sigma_+^{-1}(A)$ has measure 0. We define similarly the measure class on $\overline \Delta_{\overline i}$.

\begin{lemma}\label{lem:MeasDeltaBar}
    
    The measure class on $\overline\Delta_i$ is obtained by integrating the Haar measure on $M_i/A_i$ on each fiber along the prouniform measure on $\Delta_i$.  The projection $p_i:\overline\Delta_i\to \Delta_{ i}$ is measure-class preserving.

    Furthermore, the measure class $\mu_{\overline\Delta_i}$ is also equal to the pushforward of the measure class of $\mu_{\Fsingular_{\overline i}}$ by $\sigma_-$.

\end{lemma}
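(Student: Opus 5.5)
The plan is to factor $\overline\sigma_+$ through the principal bundle structure of Proposition~\ref{prop:Y/M}. By that proposition, a measurable section $s \colon \Deltaopv[i] \to \Fsingular_i$ gives a measure-class isomorphism $\Fsingular_i \cong M_i \times \Deltaopv[i]$, $(m,(\xi,\zeta)) \mapsto m.s(\xi,\zeta)$. Under this identification, $\overline\sigma_+(m.s(\xi,\zeta)) = (\xi, s(\xi,\zeta)_{+\infty} \circ p_T(m)^{-1})$, up to the convention of left versus right action. Indeed, precomposing by $m \in M_i$ changes the boundary map $\iota_{+\infty} \colon T_i \to T_\xi$ by the tree automorphism $p_T(m) \in \EProjGrp_i$. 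The subgroup $A_i = \ker p_T$ acts trivially, so $\overline\sigma_+$ factors through $M_i/A_i \times \Deltaopv[i]$. Composing with $\Deltaopv[i] \to \Delta_i$, we see that $\overline\sigma_+$ is the map $(mA_i,(\xi,\zeta)) \mapsto (\xi, s_+(\xi,\zeta)\circ mA_i)$.

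To identify the pushforward class, I will disintegrate $\mu_{\Deltaopv[i]}$ over the first projection $\Deltaopv[i] \to \Delta_i$. Locally, on $\pi_i(\Fregular_o)$, this is Proposition~\ref{prop:disintegration_opp} (and Proposition~\ref{prop:disintegration_delta}) read with the roles of $\xi$ and $\zeta$ exchanged, using the symmetry $H_i \to H_{\bar i}$ already noted. This shows that the projection $\Deltaopv[i] \to \Delta_i$ is measure-class preserving, with fibers $\Oppv[\bar i](\xi)$ carrying $\mu_{\Oppv[\bar i](\xi)}^o$.

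On a fiber over $\xi$, the map $(mA_i,\zeta) \mapsto s_+(\xi,\zeta)\circ mA_i$ takes values in $\{\xi\}\times(\text{an }\EProjGrp_i\text{-torsor})$, which is identified with $\EProjGrp_i \cong M_i/A_i$. For each fixed $\zeta$, this map is a Haar-measure-preserving translation in $\EProjGrp_i$. Integrating over $\zeta$ against a probability measure in the class of $\mu_{\Oppv[\bar i](\xi)}^o$ (on a compact open piece) therefore gives a left-translate-average of Haar measure. By invariance this average is again Haar measure, since $s_+(\xi,\zeta)$ differs from $s_+(\xi,\zeta_0)$ by an element of $\EProjGrp_\xi$ via Proposition~\ref{prop:groupoid}. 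This yields the first claim: the class is Haar on fibers integrated over $\mu_{\Delta_i}$. The projection claim $p_i$ then follows, because $p_i \circ \overline\sigma_+$ equals $\Fsingular_i \to \Deltaopv[i] \to \Delta_i$, and both maps are measure-class preserving (Proposition~\ref{prop:Y/M} and the disintegration above).

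For the last statement, I use the homeomorphism $\iota \mapsto \iota\circ s$ from $\Fsingular_i$ to $\Fsingular_{\bar i}$ introduced in Section~\ref{sec:extended_projectivity_groupoid}, which satisfies $\overline\sigma_+(\iota)=\overline\sigma_-(\iota\circ s)$. It then suffices to check that this homeomorphism is measure-class preserving. Precomposition by the fixed isomorphism $s$ carries the directed system of finite convex subcomplexes of $\Ssingular_i$ to that of $\Ssingular_{\bar i}$, and it conjugates $M_i$ to $M_{\bar i}$. The limit measures therefore agree up to the normalization at the vertex $0$, which $s$ fixes, together with Lemma~\ref{lem:morphism_independence}.

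The main obstacle I expect is measurability and the correct handling of the torsor identification in the second paragraph. Specifically, one must choose the section $s$ so that $\zeta \mapsto s_+(\xi,\zeta)$ is measurable and the fiberwise comparison with Haar measure is legitimate. I would handle this by working locally on compact open sets $\pi_i((\Fsingular_i)_o)$, where $s$ can be taken continuous on small pieces, as in the proof of Proposition~\ref{prop:Y/M}.
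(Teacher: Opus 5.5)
Your proposal is correct and follows the paper's route: identify $\Fsingular_i$ with $M_i \times \Deltaopv[i]$ via Proposition~\ref{prop:Y/M}, observe that $\overline\sigma_+$ kills $A_i$ and lies over the projection to $\Delta_i$, and obtain the last claim by transporting measures along $\iota \mapsto \iota\circ s$. You are more careful than the paper in one place, the $\zeta$-dependence of $\overline\sigma_+(s(\xi,\zeta))$. The paper asserts outright that $\overline\sigma_+$ becomes the product projection, which is literally true only for a section with $\overline\sigma_+\circ s$ depending on $\xi$ alone (available since $\Fsingular_i$ is $\sim_+$-saturated). Your averaging of translates of Haar measure instead works for an arbitrary section, at the level of measure classes.
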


\begin{proof}
    By proposition \ref{prop:Y/M} we can identify $\Fsingular_i$ to $M_i\times \Delta_i^{\op}$. By definition the map $\sigma_+$, with this identification, is the map obtained by quotienting $M_i$ by $A_i$ and taking the first projection $\Delta_i^\op\to\Delta_i$. Hence the measure-class on $\overline \Delta_i$ is the class of the product of Haar on $M_i/A_i$ and the prouniform measure on $\Delta_i$. This proves the first part.  With this identification $p_i$ is just the second projection and it is measure-class preserving. 

    For the last part let $s \colon \Ssingular_i \to \Ssingular_{\bar{i}}$ be a homeomorphism as in Section~\ref{sec:extended_projectivity_groupoid} and note that it can be taken so that the map $s^* \colon \iota \colon \iota \circ s$ takes $\Fsingular_i$ to $\Fsingular_{\bar{i}}$. Then $S^*$ is measure preserving and the claim follows from the fact that $\overline{\sigma}_+(\iota) = \overline{\sigma}_-(\iota \circ s)$.
\end{proof}

\section{Ergodicity of Cartan flows}

The main result of the section is that the singular Cartan flow is ergodic, Theorem~\ref{thm:SingularCartanErgodic}. Before we prove that the regular Cartan flow is ergodic, Theorem~\ref{thm:CartanErgodic}. Both proofs use the Hopf argument, which was conveniently packaged in \cite[Theorem~B.1]{BCL}. Two sequences $(a_n)_n$ and $(b_n)_n$ in a compact space $V$ are \emph{proximal} if every neighborhood $U$ of the diagonal in $V \times V$ contains all but finitely many $(a_n,b_n)$.

\begin{theorem}[Hopf argument]\label{thm:Hopf}
  Let $Z$ be a locally compact space with commuting proper actions $T \curvearrowright Z$, $\Lambda \curvearrowright Z$. Denote the quotients and quotient maps by $\phi \colon Z \to V \defeq \Lambda \backslash Z$ and $\psi \colon Z \to Y \defeq T \backslash Z$. Let $(\pi_i \colon Y \to W_i)_{i \in I}$ be a family of $\Lambda$-equivariant maps:
  \begin{equation}
\begin{tikzcd}
  & Z \arrow[ld, "\phi"'] \arrow[rd, "\psi"] &                                            &     \\
V &                                          & Y \arrow[ld, "\pi_i"'] \arrow[rd, "\pi_j"] &     \\
  & W_i                                      &                                            & W_j
\end{tikzcd}
  \end{equation}
  Let all spaces be equiped with measure classes making the maps measure-class preserving.

  Assume that
  \begin{enumerate}
    \item $T$ is amenable;
    \item $V$ is compact and the measure class on $V$ contains a $T$-invariant probability measure;
    \item for every $i \in I$ there is a central element $t_i \in T$ such that for all $z,z' \in Z$ with $\pi_i\psi(z) = \pi_i\psi(z')$ there exist $t,t' \in T$ such that the sequences $(t_i^nt \phi(z))_{n \in \N}$ and $(t_i^nt'\phi(z'))_{n \in \N}$ are proximal in $V$;
    \item every function in $\bigcap_{i \in I} \pi_i^*(L^\infty(W_i))^\Lambda$ is essentially constant.
  \end{enumerate}
  Then the actions $T \curvearrowright V$ and $\Lambda \curvearrowright Y$ are ergodic.
\end{theorem}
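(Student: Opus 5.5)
We prove the two ergodicity statements separately, both via the classical Hopf argument, exploiting the duality between $T$ acting on $V=\Lambda\backslash Z$ and $\Lambda$ acting on $Y=T\backslash Z$. Since the actions commute, $T$-invariant measurable functions on $V$ correspond to functions on $Z$ that are invariant under both $T$ and $\Lambda$. These in turn correspond to $\Lambda$-invariant functions on $Y$. So it suffices to show that every $T\times\Lambda$-invariant $f\in L^\infty(Z)$ is essentially constant. This single statement then gives ergodicity of both $T\curvearrowright V$ and $\Lambda\curvearrowright Y$. Throughout, one must check that the identifications respect the measure classes; this follows from the maps $\phi,\psi$ being measure-class preserving.

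Fix such an invariant $f$ and write $f=\bar f\circ\phi$ with $\bar f\in L^\infty(V)^T$. Let $m$ be the $T$-invariant probability measure on $V$ from assumption (2).

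\textbf{Step 1: approximation by a continuous function.} Since $V$ is compact, we approximate $\bar f$ in $L^1(V,m)$ by a continuous function $h$; $h$ is then uniformly continuous.

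\textbf{Step 2: forward averages.} For each $i$ we use the forward ergodic averages along the central element $t_i$:
\[
A_N h(v)=\frac1N\sum_{n<N}h(t_i^nv).
\]
By Birkhoff's theorem applied to the $m$-preserving map $t_i$, the averages $A_N h$ converge $m$-a.e. to $E(h\mid\text{$t_i$-invariant})$. Amenability of $T$, assumption (1), is used here: it lets us use Følner averages over $T$ and pass from $\langle t_i\rangle$-invariance to $T$-invariance. Since $\bar f$ is $T$-invariant, we have $\|E(h\mid\cdot)-\bar f\|_1\le\|h-\bar f\|_1$. Letting $h\to\bar f$ shows that $\bar f$ agrees a.e. with the limit $h^+$ of forward averages of continuous functions, up to an arbitrarily small $L^1$ error.

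\textbf{Step 3: constancy on $\pi_i$-fibers.} Now take $z,z'$ with $\pi_i\psi(z)=\pi_i\psi(z')$. Assumption (3) provides $t,t'$ such that the orbits $t_i^nt\phi(z)$ and $t_i^nt'\phi(z')$ are proximal. Uniform continuity of $h$ then gives that the forward averages at $t\phi(z)$ and at $t'\phi(z')$ have the same limit. By $T$-invariance of $\bar f$, we conclude $f(z)=f(z')$ for a.e. such pair.

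\textbf{Main obstacle.} The delicate point is the phrase ``for a.e. such pair''. The set where the averages converge is conull in $V$, but we need its preimage to meet almost every $\pi_i\psi$-fiber in a set that is conull for the fiber's measure class. We handle this by disintegrating the measure on $Z$ along $\pi_i\circ\psi$. We use that the set of good points is $T$-invariant, which is allowed because $t_i$ is central and so good points stay good under translation by $T$. Then a Fubini argument shows that for a.e. fiber, a.e. pair of its points is good. It is at this step that we need $t$ and $t'$ to move good points to good points; we arrange this by replacing the good set by its $T$-saturation intersected with a conull set, which is again conull by quasi-invariance.

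\textbf{Conclusion.} The descended function $\psi_*f$ is $\Lambda$-invariant and, by Step 3, lies in $\pi_i^*L^\infty(W_i)$ for every $i$. Assumption (4) then forces $\psi_*f$ to be essentially constant, hence $f$ is essentially constant.
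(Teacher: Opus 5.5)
Your overall strategy is the standard Hopf argument: reduce to $T\times\Lambda$-invariant functions, approximate by a continuous $h$, take Birkhoff averages along $t_i$, use proximality and uniform continuity, then disintegrate over $W_i$ and invoke (4). The paper gives no proof of its own and only cites \cite[Theorem~B.1]{BCL} for this packaging. However, the step you flag as the main obstacle rests on a false claim, so there is a gap.

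Let $G=\{v\in V : h_k^+(v)\to\bar f(v)\}$ be the set of good points. This set is \emph{not} $T$-invariant. Centrality of $t_i$ only gives $h^+(sv)=(h\circ s)^+(v)$. So translating by $s\in T$ replaces $h$ by $h\circ s$, and nothing forces $h_k^+(sv)\to\bar f(sv)$ just because $h_k^+(v)\to\bar f(v)$.

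Replacing $G$ by its $T$-saturation $TG$ does not help either. A point of $TG$ has \emph{some} good translate. Step~3, however, needs the specific translate $t\phi(z)$ produced by hypothesis~(3) to be good, and $t$ depends on $(z,z')$ in an uncontrolled way. As written, the identity $\bar f(\phi(z))=\lim_k h_k^+(t\phi(z))$ in Step~3 is unjustified, and that identity is the whole content of the argument.

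What you need is the invariant \emph{core}, not the saturation.
\begin{enumerate}
\item Fix a strictly $T$-invariant Borel version of $\bar f$. Define $h_k^+=\limsup_N A_Nh_k$ everywhere, with $\sum_k\|h_k-\bar f\|_1<\infty$. Then $G$ is conull, and proximality gives $h_k^+(a)=h_k^+(b)$ \emph{exactly} for every pair $a,b$ with proximal $t_i$-orbits.
\item If $T$ is countable, as in all applications in the paper, put $G^*=\bigcap_{s\in T}s^{-1}G$. It is conull by invariance of $m$, and it is $T$-invariant.
\item For $\phi(z),\phi(z')\in G^*$ in the same $\pi_i\psi$-fibre you then get $\bar f(\phi(z))=\bar f(t\phi(z))=\lim_k h_k^+(t\phi(z))=\lim_k h_k^+(t'\phi(z'))=\bar f(\phi(z'))$.
\item The set $\phi^{-1}(G^*)$ is conull and $T\times\Lambda$-invariant. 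It therefore descends to a conull $\Lambda$-invariant subset of $Y$ on which $\psi_*f$ is exactly constant on $\pi_i$-fibres. Only then do you need disintegration over $W_i$.
\end{enumerate}

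For non-discrete $T$ the core need not be conull, and you need one more observation. Proximality of $(t_i^ntv)_n$ and $(t_i^nt'v')_n$ implies proximality of $(t_i^nstv)_n$ and $(t_i^nst'v')_n$ for every $s\in T$. This holds because each $s$ is uniformly continuous on the compact space $V$ and commutes with $t_i$. Fubini on $T\times V$ then produces, for $v,v'$ in a conull $T$-invariant set, some $s$ with $stv,st'v'\in G$.

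Finally, your appeal to amenability and Følner averages in Step~2 is never actually carried out. The corrected argument above does not use hypothesis~(1) at all.
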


\subsection{Ergodicity of the Cartan flow} In preparation of the section's main goal, ergodicity of the singular Cartan flow, we first prove ergodicity of the regular Cartan flow:

\begin{theorem}\label{thm:CartanErgodic}
    The action of $\Gamma $ on $\Fregular/A$ is ergodic.
\end{theorem}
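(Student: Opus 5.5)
We apply the Hopf argument, Theorem~\ref{thm:Hopf}, with $Z = \Fregular$, $T = A$ acting by precomposition with translations, and $\Lambda = \Gamma$. Both actions are proper, and they commute. Then $V = \Gamma\backslash\Fregular$ and $Y = A\backslash\Fregular \cong \Deltaop$ by Lemma~\ref{lem:F/A}. For the family of maps $\pi_k \colon Y \to W_k$ we take the two projections $\Deltaop \to \Delta$, $(c,c') \mapsto c$ and $(c,c')\mapsto c'$. These are $\Gamma$-equivariant and measure-class preserving by Lemma~\ref{lem:FregMeasPres} together with the local product structure of Proposition~\ref{prop:opp_locally_proportional} and Proposition~\ref{prop:disintegration_opp}. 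The conclusion of Theorem~\ref{thm:Hopf} gives ergodicity of $\Gamma \curvearrowright Y = \Fregular/A$, as desired, and as a bonus ergodicity of $A \curvearrowright \Gamma\backslash\Fregular$.

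The hypotheses are checked in order.
\begin{enumerate}
\item $A \cong \Z^2$ is amenable.
\item $V$ is compact because $\Gamma$ acts cocompactly on $\Fregular$. It carries the $A$-invariant probability measure $\mu_{\Gamma\backslash\Fregular}$ of Lemma~\ref{lem:probaFregular}, whose class is the pushforward class.
\item This is the proximality condition, which is the geometric heart. For the projection to the first coordinate, choose $t_1 = a$ to be a regular translation pointing into $S$, i.e.\ with $\root[1](a),\root[2](a) > 0$. Suppose $\iota,\iota'$ satisfy $\iota(S(\infty)) = \iota'(S(\infty))$. Then the two sectors $\iota(S)$ and $\iota'(S)$ share a common subsector. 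After translating one of them by suitable $t,t'\in A$, we may assume $\iota$ and $\iota'$ agree on a translate $x+S$. Pushing by $a^n$ moves the basepoint deep into this common sector, so $a^n t.\iota$ and $a^n t'.\iota'$ agree on larger and larger balls $B_{r_n}(0)$ with $r_n\to\infty$. In $\Fregular$ this means they lie in a common basic neighborhood $U_{B_{r_n}(0)}$. Passing to $V$, this is exactly proximality, because the sets $\{(\Gamma\iota,\Gamma\iota') : \iota|_{B_r} = \iota'|_{B_r}\}$ form a neighborhood basis of the diagonal in the compact space $V$. The opposite projection is handled by the same argument with $t_2 = a^{-1}$.
\item We need that a function $f\in L^\infty(\Deltaop)$ depending only on $c$ and also only on $c'$ is essentially constant. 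By the local product decomposition, the measure on $\Deltaop$ is locally equivalent to $\mu_\Delta\times\mu_\Delta$, and $\Deltaop$ is conull in $\Delta\times\Delta$. A function on a set of full measure in a product that depends on each coordinate separately is a.e.\ constant, by Fubini.
\end{enumerate}
For (iv) it suffices that $\Deltaop$ is open and conull in $\Delta\times\Delta$ with respect to the product class; this holds since a.e.\ chamber is opposite a fixed chamber. Note that $\Gamma$-invariance is not even needed for (iv).

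The main obstacle is hypothesis (iii), specifically setting up the neighborhood basis of the diagonal of $V$ correctly. The relevant point is that agreement on a large ball around $0$ yields closeness in $\Gamma\backslash\Fregular$ uniformly. This follows from the compact-open description of the topology and properness of the action. The choice of $t,t'$ to align the two embeddings along the common subsector also requires care when the common subsector is only a setwise, not a parametrized, subsector; this is precisely what the translations $t,t'$ absorb.
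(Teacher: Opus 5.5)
Your proposal is correct and follows the paper's proof essentially verbatim. Both apply the Hopf argument with $T=A$, $\Lambda=\Gamma$, $Z=\Fregular$ and the two projections $\Deltaop\to\Delta$, get proximality from a regular translation pushing the basepoint into a common subsector, and settle the last hypothesis by Fubini on $\Delta\times\Delta$. Your check that agreement on large balls gives a neighborhood basis of the diagonal in $\Gamma\backslash\Fregular$ supplies a detail the paper leaves implicit.
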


The proof is basically the same as in \cite[Theorem 7.1]{BCL}.

\begin{proof}
  We apply Theorem~\ref{thm:Hopf} with $T=A$, $\Lambda=\Gamma$, $Z=\Fregular$, and $W_i=\Delta$ for $i=1,2$.
  By Lemma \ref{lem:F/A} we can identify in a measure-class preserving way $\Fregular/A$ with $\Delta^{\op}$, and the maps $\pi_+$ and $\pi_-$ will be the projections $\pi_\pm \colon (c_+,c_-) \mapsto c_\pm$ (induced by $\sigma_\pm \colon \Fregular \to \Delta$). By Lemma \ref{lem:probaFregular} we know that there is indeed an invariant probability measure on $\Gamma\backslash\Fregular$. That $f\in \pi_+^*L^\infty(\Delta)\cap \pi_-^*L^\infty(\Delta)$ consists of essentially constant functions follows from Fubini using that $\Deltaop$ and $\Delta \times \Delta$ are measure-class equivalent.

    The remaining assumption to check is the proximality condition. Let $A^+\subset A$ be the set of translations sending $0$ to some point in the the interior of the sector $S$. Let $t_- \in A^+$.
    
    If $\iota,\iota'\in\Fregular$ are such that $\pi_-(\iota)=\pi_-(\iota')$ then the image of $\iota$ and $\iota'$ contain a common subsector of the negative sector  $-S$. This means that there exists $t,t'\in A$ such that $\iota(x-t)=\iota'(x-t')$ for every $x\in -S$. Since $t_-$ is regular, we see that if $y\in F$ then for $n$ large enough we have $y-nt_-\in -S$. It follows that the sequences $(t_-^n t\iota)_{n\in\N}$ and $(t_-^nt' \iota')_{n\in\N}$ are proximal. The argument for $\pi_+$ is analogous.
\end{proof}

Next we want to extend the this ergodicity in a way that does not enter the proof of Theorem~\ref{thm:main_theorem} or Theorem~\ref{thm:factorintro} but is the first half of Theorem~\ref{thm:introergodic}.

Taking away the measure-theoretic subtleness ergodicity of the Cartan flow rests on two basic facts: the first is that any two elements of $\Deltaop$ are connected by a sequence of elements among which two consecutive ones coincide in the first or second coordinate; the second is that $A$ acts transitively on the fibers of $\Fregular \to \Deltaop$. For instance it trivially follows from both facts that an actual function $f \colon \Fregular \to \C$ that is $A$-invariant and constant on fibers to $\pi_\pm$ is constant. This is analogous to the geodesic flow on a hyperbolic manifold or more generally hyperbolic dynamical systems.

If one replaces $A$ by a subgroup $B$ that is not transitive on these fibers, the situation becomes analogous to partially hyperbolic dynamical systems such as the step-$1$-geodesic flow, see \cite{BurnsWilkinson10} \cite[Sections~8,9]{Pesin04}. The relevant notion to extend the proof is \emph{accessibility} which in our setting can be described as follows.

Let $\sim$ be the equivalence relation on the space $\calS$ of type-preserving embeddings $S \to X$ of coinciding on a subsector of $S$. Among the two equivalence relations in the remark at the end of Section~\ref{sec:boundaries} the more inclusive one defines $\Delta$ and this is the more restictive one. Let $\hat{\Delta} = \calS/\mathop{\sim}$: it admits a projection to $\Delta$ and is locally $\Delta \times A$. Let $\hat{\pi}_\pm \colon \Fregular \to \hat{\Delta}$ be defined by taking $\iota$ to the class of $\iota|_S$ respectively $\iota|_{-S} \circ -$. Note that two elements of $\Fregular$ coincide iff their images under $\hat{\pi}_+$ and $\hat{\pi}_-$ coincide. In other words $\hat{\pi}_\pm$ are the quotient maps modulo the relations $\sim_+$ of coinciding on a subsector of $S$ and $\sim_-$ of coinciding on a subsector of $-S$. With these notions the dynamical system $\Gamma \times B \curvearrowright \Fregular$ is accessible if the following lemma is applicable.

\begin{lemma}\label{lem:accessibility}
  The equivalence relation generated by $\sim_+$ and $\sim_-$ contains the orbit relation of the affine Coxeter group $\tilde{W}$. As a consequence if $f \colon \Fregular \to \C$ is constant on fibers of $\hat{\pi}_\pm$ then it is constant.
\end{lemma}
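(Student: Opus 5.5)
Since $\tilde W = A\rtimes W$ and the equivalence relation $\approx$ generated by $\sim_+$ and $\sim_-$ is invariant under precomposition with $\tilde W$, it suffices to show that $\iota \approx \iota\circ w$ for every generator $w$ of $\tilde W$ and every $\iota\in\Fregular$. If $\iota\approx\iota\circ w$ holds for all $\iota$, then it also holds for $\iota\circ w'$, so it propagates to products. We take as generators the reflections $s$ in the walls of $\Sregular$ through the chamber $c_0$ at the tip of $S$ (or, equivalently, all reflections in walls). So fix $\iota$ and a wall $H$ with reflection $s$. We produce a chain
\[
\iota \sim_+ \kappa \sim_- \iota\circ s
\]
(or a chain of bounded length), using thickness of $X$.

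The key construction goes as follows. Let $\alpha$ be the half-apartment of $\Sregular$ bounded by $H$ that contains a subsector of $S$ (if $H$ is not parallel to a wall of $S$, some translate of $S$ lies in one side; if it is parallel we choose a side containing a subsector). By thickness there is a third half-apartment $\beta$ in $X$ bounded by $\iota(H)$, distinct from $\iota(\alpha)$ and from $\iota(s\alpha)$. Then $\iota(\alpha)\cup\beta$ and $\iota(s\alpha)\cup\beta$ are apartments (the union of two half-apartments with common boundary wall is an apartment, e.g.\ by \cite[Theorem~11.53]{AbramenkoBrown08}). Let $\kappa$ be the embedding of $\Sregular$ onto $\iota(\alpha)\cup\beta$ agreeing with $\iota$ on $\alpha$. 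Then $\kappa\sim_+\iota$ whenever $\alpha$ contains a subsector of $S$.

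Next let $\kappa'$ be the embedding onto $\iota(s\alpha)\cup\beta$ with $\kappa'|_{\beta\text{-side}}=\kappa|_{\beta\text{-side}}$, that is, $\kappa'$ agrees with $\kappa$ on $s\alpha$, which maps to $\beta$. If $s\alpha$ contains a subsector of $-S$, then $\kappa'\sim_-\kappa$. Finally $\kappa'$ and $\iota\circ s$ agree on $\alpha$, since both send $\alpha$ onto $\iota(s\alpha)$ via the type-preserving identification fixing $\iota(H)$, so $\kappa'\sim_+\iota\circ s$ provided $\alpha$ contains a subsector of $S$. Thus we need walls $H$ separating a subsector of $S$ from a subsector of $-S$. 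These are exactly the walls not parallel to the walls of $S$, and for those a side containing a subsector of $S$ automatically has the opposite side containing a subsector of $-S$. Reflections in the remaining walls (those parallel to a boundary wall of $S$) can be written as products of reflections in regular-position walls together with translations, and translations $a\in A$ are in turn products of two reflections in parallel walls. So the main obstacle is this bookkeeping: checking that the reflections in walls transverse to both boundary directions of $S$ generate $\tilde W$. For $\tilde A_2,\tilde C_2,\tilde G_2$ there are at least three parallel classes of walls, so for each wall direction there is another class transverse to $S$ and $-S$. I would first verify directly that the reflections in the non-$S$-parallel classes generate $W$, and then that their translates generate $A$.

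For the consequence: if $f$ is constant on fibres of $\hat\pi_\pm$, then $f$ is constant on $\approx$-classes, hence $\tilde W$-invariant. Moreover $\approx$ relates any two $\iota,\iota'$. Using the Weyl-group invariance we may replace $\iota'$ by $\iota'\circ w$ so that $\iota'(S)$ and $\iota(S)$ lie in a common apartment with a common subsector after a chain through an apartment containing chambers at infinity of both. Concretely, pick $c\in\Delta$ opposite both $\iota(-S(\infty))$ and $\iota'(-S(\infty))$. Let $\kappa$ be the apartment through $c$ and $\iota(-S(\infty))$, marked so that $\kappa\sim_-\iota$, and $\kappa'$ through $c$ and $\iota'(-S(\infty))$ with $\kappa'\sim_-\iota'$. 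Then $\kappa\sim_+ a.\kappa'$ for some $a\in A$, and $A\subset\tilde W$ is already absorbed, so $f(\iota)=f(\iota')$.
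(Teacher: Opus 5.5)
Your chain $\iota \sim_+ \kappa \sim_- \kappa' \sim_+ \iota\circ s$ uses a third half-apartment along $\iota(H)$ supplied by thickness, and it is exactly the paper's construction of $\iota_1,\iota_2,\iota_3$. Your argument for the consequence is also correct: choose $c$ opposite both $\iota(-S(\infty))$ and $\iota'(-S(\infty))$, then use two $\sim_-$ moves and one $\sim_+$ move up to a translation. It is a more explicit version of the paper's one-line appeal to transitivity of $A$ on the fibres of $\Fregular \to \Delta^{\mathrm{op}}$.

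The gap is in the bookkeeping you flag as the main obstacle. You claim that the walls separating a subsector of $S$ from a subsector of $-S$ are exactly those not parallel to the walls of $S$. This is false: every wall has this property. Write the wall as $\{\alpha = k\}$ with $\alpha$ a positive root, so that $\alpha \ge 0$ on $S$. Then $\{\alpha \ge k\}$ contains $x+S$ whenever $\alpha(x) \ge k$, and $\{\alpha \le k\}$ contains $x - S$ whenever $\alpha(x) \le k$. This covers $\alpha = \alpha_1, \alpha_2$, i.e.\ the two walls of $c_0$ that bound $S$, which you first proposed as generators.

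Because of this false restriction, your proof of the first assertion breaks down in type $\tilde{A}_2$. There only one parallel class, $\{\alpha_1+\alpha_2 = k\}$, is transverse to the walls of $S$. Reflections in a single parallel class generate only an infinite dihedral group: its linear parts are $1$ and $s_{\alpha_1+\alpha_2}$, and its translations form a rank-one lattice. In particular it does not contain $s_{\alpha_1}$. Your fallback of writing translations as products of two reflections in parallel walls needs exactly the walls you excluded. Translations do not come for free either, since $\iota$ and $\iota\circ a$ never agree pointwise on a subsector when $a \neq 0$. In $\tilde{C}_2$ and $\tilde{G}_2$ the transverse classes do generate $\tilde{W}$, so there the detour is merely unnecessary.

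The repair is immediate. Drop the restriction and apply your chain to the three walls of $c_0$, which generate $\tilde{W}$. The proof then coincides with the paper's, which uses the fact above tacitly.

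One minor point: you assert that the generated relation is invariant under precomposition with $\tilde{W}$. That is not clear a priori, since precomposing with $w \in W$ sends subsectors of $S$ to subsectors of $w^{-1}S$. You never need it, though, because you prove $\iota \approx \iota\circ s$ for every $\iota$.
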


\begin{proof}
  Let $\iota \in \Fregular$. Let $H^+,H^- \subseteq \Sregular$ be complementary half-apartments separated by a wall $H = H^+ \cap H^-$. Since $X$ is thick there is a $\iota_1$ that coincides with $\iota$ precisely on $H^-$. Further there is a $\iota_2$ that coincides with $\iota_1$ on $H^+$ and that takes $H^-$ to $\iota(H^+)$. Finally there is a $\iota_3$ that coincides with $\iota_2$ on $H^-$ and whose image coincides with that of $\iota$. Then $\iota_3$ is the result of composing $\iota$ with the reflection in $H$. Since these reflections generate $\tilde{W}$ this proves the first claim. The second claim follows from transitivity of $A$ on fibers of the projection $\Fregular \to \Deltaop$.
\end{proof}

Now, the Hopf argument does not ensure that the function is actually constant on $\sim_+$ and $\sim_-$ but only essentially so and hence the system is only essentially accessible.

Putting the measure-theoretic subtleties back in we get the desired extension of Theorem~\ref{thm:CartanErgodic}.
Recall that an element $t\in A$ is \emph{regular} if it does not leave a wall of $\Sregular$ invariant.

\begin{theorem}\label{thm:regular_element_ergodic}
    If $t\in A$ is a regular element, then the action of $t$ on $\Gamma\backslash \Fregular$ is ergodic.
\end{theorem}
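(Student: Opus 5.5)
We apply the Hopf argument, Theorem~\ref{thm:Hopf}, with $T = \gen{t} \cong \Z$ (amenable), $\Lambda = \Gamma$ and $Z = \Fregular$. Now $Y = \gen{t}\backslash \Fregular$, and the maps $\pi_\pm$ are the maps $\hat{\pi}_\pm$ to $\hat{\Delta}$, which factor through $Y$ after quotienting $\hat{\Delta}$ by the induced action of $\gen{t}$. A cleaner alternative is to take $W_\pm = \gen{t}\backslash\hat{\Delta}$.

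Hypotheses (1) and (2) are then immediate. The group $\Gamma\backslash\Fregular$ is compact by cocompactness, and it carries the $A$-invariant, hence $t$-invariant, probability measure $\mu_{\Gamma\backslash\Fregular}$ of Lemma~\ref{lem:probaFregular}.

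Hypothesis (3), proximality, follows as in the proof of Theorem~\ref{thm:CartanErgodic}. Take $t_- = t$ for the relation $\sim_-$, after replacing $t$ by $t^{-1}$ if necessary so that $t$ points into the interior of a Weyl chamber. Since $W$ permutes Weyl chambers, we may precompose with an element of $W$ and assume $t \in A^+$; the case of $t^{-1}$ handles $\sim_+$. If $\iota \sim_- \iota'$ up to the $\gen{t}$-action, they coincide on a subsector $-S + s$. Because $t$ is regular and lies in $A^+$, for every compact $K \subseteq \Sregular$ we have $K - nt \subseteq -S+s$ for all large $n$. Hence $t^n\iota$ and $t^n\iota'$ agree on $K$, so the sequences are proximal in $\Fregular$ and therefore in $V$. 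Here the hypothesis uses the pair $(\iota, \iota')$ itself, with no extra translates $t, t'$, since $\sim_\pm$ as defined are parametrized coincidences.

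The main work is hypothesis (4): every $\Gamma$-invariant function $f$ in $\pi_+^*L^\infty \cap \pi_-^*L^\infty$ is essentially constant. First pull $f$ back to a $\gen{t}$-invariant function on $\Fregular$ that is essentially $\sim_+$- and $\sim_-$-saturated. The plan is to show that $f$ is essentially invariant under the whole group $\tilde{W}$, and in particular under $A$. Once that holds, $f$ descends to a $\Gamma$-invariant function on $\Gamma\backslash\Fregular/A$, which is constant by Theorem~\ref{thm:CartanErgodic}.

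To obtain $\tilde{W}$-invariance, make the three-step chain $\iota \sim_- \iota_1 \sim_+ \iota_2 \sim_- \iota_3 = \iota\circ s_H$ of Lemma~\ref{lem:accessibility} measurable. For a fixed wall $H$, choose $\iota_1$ and $\iota_2$ via measurable selections from the fibers. These fibers carry the relatively uniform (pro-uniform) measures, and by Proposition~\ref{prop:wall_symmetric} and Lemma~\ref{lem:flat_symmetric} the relevant restriction maps are constant-to-one. Consequently the composite correspondences $\iota \mapsto \iota_k$ are measure-class preserving Markov operators on $\Fregular$, obtained by averaging uniformly over the finitely many extensions at each finite stage. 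Fubini then shows that if $f(\iota) = f(\iota_1)$ for a.e.\ pair with respect to this coupling, and similarly at each step, then $f(\iota) = f(\iota\circ s_H)$ almost everywhere.

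The expected obstacle is exactly this step. One must verify that the coupling measure on pairs $(\iota, \iota_1)$ with $\iota \sim_- \iota_1$, with coincidence precisely on $H^-$, is absolutely continuous with respect to the measure class in which the essential $\sim_-$-invariance holds. I would handle this by disintegrating $\mu_\Fregular$ over $\hat{\pi}_-$, using Theorem~\ref{thm:disintegration} as in Proposition~\ref{prop:disintegration_opp}. Essential $\sim_-$-invariance then means that $f$ is a.e.\ constant on a.e.\ fiber, with respect to the conditional measures, and the uniform choice of $\iota_1$ within a fiber is absolutely continuous with respect to those conditionals. Reflections in walls generate $\tilde{W} \supseteq A$, which completes the argument.
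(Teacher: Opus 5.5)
Your proposal is correct in outline and follows the paper's architecture. You use Theorem~\ref{thm:Hopf} with $T=\gen{t}$, $\Lambda=\Gamma$, $Z=\Fregular$ and the maps $\hat{\pi}_\pm$. Your choice $W_\pm=\gen{t}\backslash\hat{\Delta}$ is the more careful formulation, but with it hypothesis (3) does need a translate, since equality in $W_-$ only gives $\hat{\pi}_-(\iota')=\hat{\pi}_-(t^k\iota)$. The reduction to $t\in A^+$ via $W$, the proximality argument, and the use of the reflection chain of Lemma~\ref{lem:accessibility} for hypothesis (4) are the same as the paper's.

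The routes differ in how that chain is made measure-theoretic. The paper fixes a $\tilde{W}$-adapted section $s\colon\Deltaop\to\Fregular$ and uses $\Fregular\cong\Delta\times A\times\Delta$ to treat the relevant functions as functions on the countable set $A$. Then, for each reflection $r$, a single open (hence positive-measure) set of triples realizing the chain suffices, and it ends with $L^\infty(A)^{\tilde{W}}=\C$. You stay on $\Fregular$, show via a coupling that $f$ itself is essentially $\tilde{W}$-invariant, and end with Theorem~\ref{thm:CartanErgodic}. This avoids the section and the identification of $\hat{\pi}_\pm$ with coordinate projections. The cost is that positivity of an open set no longer suffices: you need absolute continuity of every link of the chain, which you flag but leave open.

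It closes as follows.
\begin{enumerate}
\item Disintegrate $\mu_\Fregular$ over restriction to $H^-$, not over $\hat{\pi}_-$. The extensions $H^-\subseteq\conv(H^-\cup\{p\})$ are cofinite, so Theorem~\ref{thm:disintegration} and Lemma~\ref{lem:flat_symmetric} apply.
\item Draw $\iota_1$ at random from the conditional measure $\lambda^{\iota|_{H^-}}$, restricted to the event that $\iota_1$ and $\iota$ differ on the chamber of $H^+$ at a fixed panel of $H$. This event has conditional probability $1-1/q_k>0$ by thickness, and it is exactly what makes $\iota_2$ exist. A deterministic measurable selection would not work, since it may push $\mu_\Fregular$ onto the null set where $f\neq f_-\circ\hat{\pi}_-$.
\item The middle element $\iota_2$ is then determined, not selected: it equals $\iota_1$ on $H^+$ and $\iota\circ r$ on $H^-$. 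The map $J(\iota,\iota_1)=(\iota\circ r,\iota_2)$ is an involution preserving the resulting measure $\nu_H$ on pairs. Indeed, $\mu_\Fregular$ is $r$-invariant by Lemma~\ref{lem:flow_measure_invariance}. Moreover, by the constant extension counts, $\lambda^{\iota|_{H^-}}$ and $\lambda^{\iota\circ r|_{H^-}}$ agree on half-apartments opposite both $\iota(H^-)$ and $\iota(H^+)$.
\item Consequently $\iota$, $\iota_1$, $\iota_2$ and $\iota\circ r$ all have laws absolutely continuous with respect to $\mu_\Fregular$. The equalities $f(\iota)=f(\iota_1)=f(\iota_2)=f(\iota\circ r)$ then hold $\nu_H$-almost surely. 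Since the $\iota$-marginal of $\nu_H$ is $(1-1/q_k)\mu_\Fregular$, this gives $r$-invariance of $f$ almost everywhere.
\end{enumerate}
Only the flat counting of Lemma~\ref{lem:flat_symmetric} is needed here, not Proposition~\ref{prop:wall_symmetric}.
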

  
\begin{proof}
  We choose a continuous section $s \colon \Deltaop \to \Fregular$ defining an isomorphism of measure spaces $\Fregular \cong A \times \Deltaop$ by Lemma~\ref{lem:F/A}. This can be achieved as follows. Fix $\iota_0 \in \Fregular$ and enumerate the chambers of $\Sregular$ as $\{c_n\mid n\in\N\}$. Then for $n \in \N$ on $U_{c_n}(\iota_0) \setminus \bigcup_{i < n} U_{c_i}(\iota_0)$ define $s((\iota(S(\infty)),\iota(-S(\infty))) = \iota$.

  We additionally want $s$ to be $\tilde{W}$-invariant in the sense that if $\iota \in \Fregular$ corresponds to $(a,\iota(S(\infty)),\iota(-S(\infty))$ and $w \in \tilde{W}$ then $w.\iota$ corresponds to $(w.a,\iota(w^{-1}S(\infty)),\iota(-w^{-1}S(\infty)))$. This can be achieved by replacing each $U_{c_n}(\iota_0)$ by $\bigcup_{w \in W} U_{c_n}(w.\iota_0)$ in the construction above. Note that $U_{c_n}(\iota_0) \cap U_{c_n}(w.\iota_0) = \emptyset$ for $w \in \tilde{W} \setminus \{1\}$. Since every chamber of $\Delta$ is opposite some chamber of $\iota(\partial \Sregular)$ this also ensures that $s$ is defined everywhere (the definition according to the previous paragraph is defined outside a null-set).

  Let $t \in A$ be regular and let $f \in L^\infty(\Fregular)^{\Gamma \times \gen{t}}$. If $w \in W$ and $w.f$ is essentially constant then clearly $f$ is essentially constant, so we may act by $w$ and thus assume that $t$ pushes $S$ into itself, i.e.\ that $t.S$ lies in the interior of $S$. We want to apply Theorem~\ref{thm:Hopf} with $T = \gen{t}$ and the projections $\hat{\pi}_\pm \colon Y = \gen{t}\backslash \Fregular \to \hat{\Delta} = W_{\pm}$. Note that under the identification $\Fregular \cong A \times \Deltaop \cong \Delta \times A \times \Delta$ as measure-class spaces, $\hat{\pi}_+$ corresponds to projection onto the first two factors (say) and $\hat{\pi}_-$ to projection onto the last two factors. Let us denote by $s_A$ the resulting map $\Fregular \to A$.

  If $\hat{\pi}_-(\iota) = \hat{\pi}_-(\iota')$ then there is a subsector $S'$ of $-S$ such that $\iota|_{S'} = \iota'|_{S'}$. From this it follows that the images of $(t^n\iota)_n$ and $(t^n\iota')_n$ in $\Gamma \backslash \Fregular$ are proximal. Similarly $(t^{-n}\iota)_n$ and $(t^{-n}\iota')_n$ are proximal if $\hat{\pi}_+(\iota) = \hat{\pi}_+(\iota')$.

  From the previous discussion we see that $\hat{\pi}_+^*(\hat{\Delta}) \cap \hat{\pi}_-^*(\hat{\Delta}) \subseteq  L^\infty(A)$. Note that $A$ is discrete so in order to show that $f(a) = f(a')$ it suffices to show that the measure of $(\iota,\iota') \in s_A^{-1}(a) \times s_A^{-1}(a')$ with $f(\iota) = f(\iota')$ is positive.

  It remains to see that $\hat{\pi}_+^*(L^\infty(\hat{\Delta})) \cap \hat{\pi}_-^*(L^\infty(\hat{\Delta}))$ consists of constants.
  Let $H^+,H^- \subseteq \Sregular$ be complementary half-apartments that meet in the wall $H = H^+ \cap H^-$. Let $r \in \tilde{W}$ be the reflection in $H$. Let $C_+ \subseteq H_+$ be a finite subcomplex and let $C_- = r.C_+ \subseteq H^-$. The set of $(\iota_1,\iota_2,\iota_3) \in \Fregular^3$ such that $\iota_1|_{C_+} = \iota_2|_{C_+}$, $\iota_2|_{C_-} = \iota_3|_{C_-}$, $\iota_3|_{C_+} = \iota_1|_{C_-} \circ r$ is open and therefore of positive measure. By adding a further open constraint (for instance $\iota_1|_{c_0} = \iota_0|_{c_0}$, $\iota_3|_{c_0} = \iota_0|_{c_0} \circ r$), we can ensure that $s_A$ is constant on possible $\iota_1$, on possible $\iota_3$, and that it commutes with $r$. From this we conclude that for $a \in A$ we have that $f(a) = f(r.a)$. Since these reflections $r$ generate $\tilde{W}$ we conclude that $\hat{\pi}_+^*(\hat{\Delta}) \cap \hat{\pi}_-^*(\hat{\Delta}) \subseteq L^\infty(A)^{\tilde{W}}$ which is reduced to constants.
\end{proof}

\subsection{Ergodicity of the singular flow} 

The main goal of the section is ergodicity of the singular Cartan flow:

\begin{theorem}\label{thm:SingularCartanErgodic}
  For $i \in \{1,2\}$ the $\Gamma$-action on $\Fsingular_i/A_i$ is ergodic.
\end{theorem}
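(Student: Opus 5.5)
We apply the Hopf argument, Theorem~\ref{thm:Hopf}, in the same way as in Theorem~\ref{thm:CartanErgodic}, now with $Z = \Fsingular_i$, $T = A_i \cong \Z$ (amenable), $\Lambda = \Gamma$, $Y = \Fsingular_i/A_i$, and the two maps $\pi_\pm = \overline\sigma_\pm \colon Y \to W_\pm$, where $W_+ = \EDeltav[i]$ and $W_- = \EDeltav[\bar{i}]$. These carry the measure classes from Lemma~\ref{lem:MeasDeltaBar}. Let us check the hypotheses in turn. Compactness of $V = \Gamma\backslash\Fsingular_i$ holds because the action is cocompact. The $A_i$-invariant probability measure in the class is provided by Lemma~\ref{lem:probaFsingular}. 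Both $\overline\sigma_\pm$ are $\Gamma$-equivariant and measure-class preserving by Lemma~\ref{lem:MeasDeltaBar}.

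For proximality take $t_+ = \weight[i]$, viewed as a generator of $A_i$. Suppose $\overline\sigma_+(\iota) = \overline\sigma_+(\iota')$. Then $\iota \sim_+ \iota'$ up to an element of $A_i$: after translating by some $t,t' \in A_i$, the two embeddings agree on the subsets $\{x\} \times [r_x,\infty)$ of $\Ssingular_i$. Applying $t_+^n$ pushes any fixed compact set $K \subseteq \Ssingular_i$ into the region where they agree, except that $r_x$ depends on $x$. So I would first restrict to a compact neighbourhood: for $K$ finite, $\max_{x \in p_T(K)} r_x$ is finite. Hence $t_+^n t\iota$ and $t_+^n t'\iota'$ agree on $K$ for $n$ large. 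Agreement on larger and larger compacta is exactly convergence to the diagonal in the topology of $\Fsingular_i$, and this survives the passage to the quotient $V$. The case of $\overline\sigma_-$ is symmetric with $t_- = t_+^{-1}$.

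The main obstacle is hypothesis (4): every function in $\overline\sigma_+^*L^\infty(\EDeltav[i])^\Gamma \cap \overline\sigma_-^*L^\infty(\EDeltav[\bar i])^\Gamma$ is essentially constant. Unlike the regular case, $\Fsingular_i/A_i$ is not a product of its two boundary factors. By Proposition~\ref{prop:Y/M} it is $M_i/A_i \times \Deltaopv[i]$ up to measure class, whereas $\EDeltav[i] \cong Q_i \times \Delta_i$, so the pair $(\overline\sigma_+,\overline\sigma_-)$ sees $Q_i \times Q_i \times \Deltaopv[i]$ only along a diagonal-type subset. My plan is to use Fubini on $\Deltaopv[i] \sim \Delta_i \times \Delta_{\bar i}$ to show that a function $f$ in the intersection factors through the $Q_i$-coordinate, i.e.\ through the class of $\iota$ in $\Gamma\backslash\Fsingular_i/\!\simeq$ refined by $M_i$. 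Concretely, such an $f$ is invariant under $\sim_+$, under $\sim_-$, and under $\Gamma$ (essentially). It is therefore essentially invariant under the equivalence relation $\sim$, whose closure is $\simeq$. By the very definition of $\Fsingular_i$ as a single $\simeq$-class, and of $M_i$ as the closure of the stabiliser, $f$ should then be $M_i$-invariant. Since $M_i$ acts transitively on fibres of $\Fsingular_i \to \Deltaopv[i]$ (Proposition~\ref{prop:restricted_wall_trees}), $f$ descends to $\Deltaopv[i]$, and there Fubini gives constancy.

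The delicate point is promoting essential invariance under the generated relation $\sim$ to invariance under its closure. I would try an accessibility-type argument modelled on Lemma~\ref{lem:accessibility} and Theorem~\ref{thm:regular_element_ergodic}: chains $\iota_1 \sim_+ \iota_2 \sim_- \iota_3$ realising a given projectivity occur on open sets of positive measure, by Lemma~\ref{lem:projectivity_multiplication_continuous} and full support (Lemma~\ref{lem:fullsupport}). Thus $f$ is essentially invariant under a dense subset of $M_i$. Continuity of the $M_i$-action on $L^1$ then yields invariance under all of $M_i$.
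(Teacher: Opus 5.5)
Your set-up of the Hopf argument (diagram with $\overline\sigma_\pm$, $T=A_i$, compactness, the invariant probability measure) and your proximality argument are the paper's. Your ending via Fubini on $\Deltaopv[i]$, once $M_i$-invariance is known, is also fine, and slightly more economical than the paper's appeal to ergodicity of $\Gamma$ on $\Deltaopv[i]$. The gap is in hypothesis (4), at exactly the step you flag as delicate: the mechanism you propose does not work as stated.

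The relations $\sim_\pm$ have null graphs. The set $E$ on which $f=f_+\circ\overline\sigma_+=f_-\circ\overline\sigma_-$ is merely conull. So there is no positive-measure set of chains $\iota_1\sim_+\iota_2\sim_-\iota_3$ in $\Fsingular_i^3$, and nothing guarantees that chains starting from a.e.\ $\iota$ stay in $E$. More seriously, a fixed $m\in M_i$ corresponds, in the fibre over $u=\sigma_+(\iota)$, to the element $\iota_\infty\,p_T(m)\,\iota_\infty^{-1}\in\EProjGrp_u$, which varies with $\iota$. Meanwhile the loops at $u$ built from good perspectivities form only a dense subgroup of $\EProjGrp_u$. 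Lemma~\ref{lem:projectivity_multiplication_continuous} gives open sets of chains realising a prescribed projectivity only on a ball; the set realising a prescribed element exactly need not have positive measure. So you cannot obtain $f\circ m=f$ a.e.\ for $m$ in a dense subset of $M_i$, and the global continuity argument has nothing to act on.

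The missing idea is to disintegrate and run the density argument fibrewise.
\begin{itemize}
\item Let $f_u$ be the restriction of $f_+$ to the fibre of $\EDeltav[i]\to\Deltav[i]$ over $u$. By Lemma~\ref{lem:MeasDeltaBar} this fibre is a torsor under $M_i/A_i$. Define $f_v$ likewise from $f_-$.
\item Fubini gives a conull, $\Gamma$-invariant $D\subseteq\Deltaopv[i]$ such that $f_v=f_u\circ[u,v]$ a.e.\ on the fibre for $(u,v)\in D$.
\item Composing these a.e.\ identities, $f_u$ is a.e.\ invariant under every loop at $u$ in the groupoid generated by $\Gamma$ and the perspectivities $[u,v]$ with $(u,v)\in D$.
\item By Lemma~\ref{lem:FullMeasureDenseProj}, these loops are dense in $\EProjGrp_u$ for a.e.\ $u$. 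This is where your ingredients (Lemma~\ref{lem:projectivity_multiplication_continuous}, full support, Fubini) actually enter, at the level of sequences of vertices at infinity rather than elements of $\Fsingular_i$.
\item Lemma~\ref{lem:InvDenseSubgroup}, applied on the single fibre, makes $f_u$ invariant under $\EProjGrp_u$, which acts transitively there. Hence $f_u$ is constant, so $f$ is $M_i$-invariant, and your conclusion goes through.
\end{itemize}
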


We need two lemmas, one general and one about generic projectivities.

\begin{lemma}\label{lem:InvDenseSubgroup}
    Let $G$ be a locally compact second countable group and let $G'<G$ be a dense subgroup. If $f\in L^\infty(G)$ is $G'$-invariant, then it is $G$-invariant.
\end{lemma}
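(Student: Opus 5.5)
The plan is to show that $f$ agrees almost everywhere with a genuinely $G$-invariant function; this is a standard continuity argument for the translation action on $L^1$. Throughout I use left translation $(g.f)(x) = f(g^{-1}x)$ and left Haar measure $\mu$; the right-translation case is identical.

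First, I would record that for every $h \in L^1(G)$ the map $G \to L^1(G)$, $g \mapsto g.h$, is continuous. This is true for continuous compactly supported $h$ by uniform continuity, and it extends to all of $L^1$ by density. By duality, for fixed $h \in L^1(G)$ the function
\[
  g \mapsto \int_G (g.f)\, h \, d\mu = \int_G f\, (g^{-1}.h)\, d\mu
\]
is therefore continuous on $G$, since $f \in L^\infty$ and the pairing is continuous. In other words, $g \mapsto g.f$ is continuous from $G$ into $L^\infty(G)$ equipped with the weak-* topology.

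Next, I would use the hypothesis. For $g' \in G'$ we have $g'.f = f$ almost everywhere, so the continuous function above is constant, equal to $\int f h\, d\mu$, on the dense subgroup $G'$. Hence it is constant on all of $G$. Since $h \in L^1(G)$ was arbitrary, $g.f = f$ as elements of $L^\infty(G)$ for every $g \in G$. That is exactly $G$-invariance in the measure-theoretic sense, which is the meaning of invariance for an element of $L^\infty$.

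If a strictly invariant representative is wanted, I would add one more step, and this is the only point needing care. Apply Fubini to the set $\{(g,x) : f(g^{-1}x) \ne f(x)\}$: it is measurable by second countability and has null $g$-sections, so for almost every $x$ we get $f(g^{-1}x) = f(x)$ for almost every $g$. It follows that $f$ is almost everywhere equal to a constant, and the constant function is strictly invariant. The main obstacle is justifying the continuity of translation in $L^1$ and the measurability needed for Fubini. Both are standard for locally compact second countable groups, so I do not expect a serious difficulty.
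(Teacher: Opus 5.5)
Your argument is correct and is essentially the paper's: pair $g.f$ against integrable test functions, note that the pairing depends continuously on $g$, and conclude from the density of $G'$. Your justification via norm continuity of translation on $L^1(G)$ is slightly cleaner than the paper's appeal to dominated convergence for continuous $L^1$ test functions, which needs compactly supported test functions to supply a dominating function. Your final Fubini step, showing that $f$ is a.e.\ constant, is also correct but goes beyond what the lemma asks.
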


\begin{proof}
  Let $g\in G$ and let $(g_n)_n$ be a sequence of elements of $G'$ converging to $G$. Let $\varphi$ be a continuous $L^1$-function on $G$. Then, using Lebesgue's Dominated Convergence Theorem \cite[Theorem~2.11]{LeGall}, we have
    \begin{align*}
 \int_{G} f(x)\varphi(x) dx &= \int_{G} (g_n.f)(x)\varphi(x) dx
 = \int_{G} f(x)\varphi(g_n x) dx\\
 &\xrightarrow[n\to +\infty]{} \int_{G}f(x)\varphi(gx) dx=\int_{G} (g.f)(x)\varphi(x) dx
\end{align*}
(where all the integrals are taken with respect to the Haar measure on $G$). Since this is valid for every  $\varphi$, it follows that $f=g.f$, so that $f$ is $G$-invariant.
\end{proof}

\begin{lemma}\label{lem:FullMeasureDenseProj}
  Let $D\subset \Deltaopv[i]\cup \Deltaopv[\bar{i}]$ be a full measure subset (with respect to the measure $\mu_{\Deltaopv[i]} + \mu_{\Deltaopv[\bar{i}]}$). Consider the groupoid $\catname D$ generated by perspectivities $[u,v]$ for $(u,v)\in D$, and the action of $\Gamma$.

  Then for almost every $u,v \in \Deltav[i] \cup \Deltav[\bar{i}]$ the set $\catname D(u,v)$ of morphisms from $u$ to $v$ in $\catname D$ is dense in $\EProjGrpd(u,v)$. In particular, the isotropy group of $\catname D$ in $u$ is dense in $\EProjGrp_u$.
\end{lemma}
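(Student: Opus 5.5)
We reduce everything to the continuity statement Lemma~\ref{lem:projectivity_multiplication_continuous} together with a Fubini argument. Every morphism of $\EProjGrpd$ from $u$ to $v$ is a limit of composites of perspectivities and elements of $\Gamma$. Since $\Gamma$ is already contained in $\catname D$, and $\Gamma$-moves can be pushed to one end of a word by the equivariance $\gamma[\xi_1,\xi_2]\gamma^{-1}=[\gamma\xi_1,\gamma\xi_2]$, it suffices to show the following. For almost every $u,v$, every combinatorial projectivity $[u=\xi_1,\xi_2,\ldots,\xi_n=v]$, possibly postcomposed with some $\gamma\in\Gamma$, is a limit of morphisms of $\catname D$. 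Density in $\EProjGrpd(u,v)$ then follows because $\EProjGrpd(u,v)$ is the closure of such composites in $\Homeo(T_u,T_v)$.

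Fix $n$ and a chain $(\xi_1,\ldots,\xi_n)\in\Delta_i^{n,\op}(u,v)$. By Lemma~\ref{lem:projectivity_multiplication_continuous}, there are open neighborhoods $U_j$ of $\xi_j$, for $1<j<n$, such that every chain $(u,\xi_2',\ldots,\xi_{n-1}',v)$ with $\xi_j'\in U_j$ induces the same restriction to a given compact ball. It therefore suffices to find, for almost every $(u,v)$, such a modified chain all of whose consecutive pairs lie in $D$.

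Consider the set $G_n\subseteq(\Delta_1\cup\Delta_2)^n$ of chains with every consecutive pair opposite and in $D$. We equip $\Delta_i^{n,\op}$ with the measure class obtained by iterating the disintegrations of Proposition~\ref{prop:disintegration_opp}: first $\xi_1$ with respect to $\mu_{\Delta}$, then $\xi_2\in\Oppv(\xi_1)$, and so on. The key claim is that $G_n$ has full measure in $\Delta_i^{n,\op}$. This holds because each constraint $(\xi_j,\xi_{j+1})\in D$ is a full-measure condition on $\Deltaopv$, and $\Deltaopv$ is measure-class equivalent to $\Delta\times\Delta$ restricted to opposite pairs, using Lemma~\ref{lem:FregMeasPres} and Proposition~\ref{prop:opp_locally_proportional}. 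Hence the pullback of each constraint under the coordinate map $(\xi_j,\xi_{j+1})$ is conull, and a finite intersection of conull sets is conull.

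Next we disintegrate over the endpoints $(\xi_1,\xi_n)$. For almost every $(u,v)$, the fibre $G_n\cap\Delta_i^{n,\op}(u,v)$ is conull in $\Delta_i^{n,\op}(u,v)$ for the conditional measure. That conditional measure has full support on this open set, by Lemma~\ref{lem:fullsupport} and Proposition~\ref{prop:disintegration_opp}, so the fibre is dense. In particular it meets the open set $\prod U_j$ whenever that set is nonempty, which yields the required approximating chain. Countability enters in two places. Separability of $\Homeo(T_u,T_v)$, or a countable basis of such products of open sets, lets us intersect countably many conull conditions uniformly over all targets. Countability of $\Gamma$ handles the $\gamma$-postcomposition, since $\gamma D$ is again conull by $\Gamma$-invariance of the measure classes. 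The isotropy statement is the case $u=v$, taking $n$ even or including a $\Gamma$-step so that the relevant set of loops has positive measure.

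The main obstacle is the measure-theoretic step. We must check that the conditional measures on $\Delta_i^{n,\op}(u,v)$, with both endpoints fixed, are well defined and of full support, and that the set of fixed-endpoint chains is open and nonempty. This is where we would iterate Theorem~\ref{thm:disintegration} as in Propositions~\ref{prop:disintegration_opp} and~\ref{prop:disintegration_delta}, conditioning on both ends of the chain. If doing this with both endpoints fixed is awkward, we would first condition on $\xi_1$ and then treat $\xi_n$ as the last free variable, applying Fubini once more. This approach requires $n\ge3$, which we may assume by inserting an extra opposite vertex into the chain.
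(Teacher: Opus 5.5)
Your proposal is correct and is essentially the paper's argument. You move $\Gamma$ to one end via $\gamma\circ[u,v]\circ\gamma^{-1}=[\gamma u,\gamma v]$; you use Fubini to see that, for almost every pair of endpoints, the chains in $\Delta_i^{n,\op}(u,v)$ whose consecutive pairs lie in $D$ form a conull subset, hence a dense one by full support and openness; and you push density through the continuous map of Lemma~\ref{lem:projectivity_multiplication_continuous}.

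The measure-theoretic obstacle you anticipate disappears if, as the paper does, you measure chains by the plain product $\mu_{\Deltav[i]}^o\times\mu_{\Deltav[\bar i]}^o\times\cdots$ restricted to the open set $\Delta_i^{n,\op}$ (the paper notes this product is in the class of $\mu_{\Deltaopv[i]}$ on each pair), so that fixing both endpoints is ordinary Fubini and no conditional measures are needed. The same product structure settles the isotropy statement: the constraints $(\xi_1,\xi_2)\in D$ and $(\xi_{n-1},\xi_n)\in D$ are separately conull in the middle variables for $\xi_1$ and $\xi_n$ in conull sets, so the good endpoint pairs contain $(u,u)$ for almost every $u$. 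Also, in type $\tilde A_2$ loops need $n$ odd, not even.
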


\begin{proof}
  Note first that $\mu_{\Deltav[i]}^o \times \mu_{\Deltav[\bar{i}]}^o$ and $\mu_{\Deltaopv[i]}$ define the same measure-class by the definition of $\mu_{\Deltaopv[i]}$.
  Thus Fubini implies that if $D$ has full measure then for almost every $u \in \Deltav[i]$ the set $\{v \in \Deltav[\bar{i}] \mid (u,v) \in D\}$ has full measure in $\Deltav[\bar{i}]$.
    
   Note next that since $\catname D$ contains the action of $\Gamma$, it does not change if we replace $D$ by the union over its $\Gamma$-orbit, so we may assume $D$ to be $\Gamma$-invariant. Since $\gamma \circ [u,v]=[\gamma u,\gamma v]$, it follows that any element of $\catname D$ can be written as one element of $\Gamma$ composed with a sequence of perspectivities. The same argument holds for $\EProjGrpd$, so that it suffices to prove that projectivities in $M_D(u)$ are dense in the subgroup of $\ProjGrp_u$ of $\EProjGrp_u$ consisting of projectivities.

   For $n\geq 2$ recall from Lemma~\ref{lem:projectivity_multiplication_continuous} that we write $\Delta_{i}^{n,\op}$  for the set of sequences $(u_1,u_2,\dots,u_n)$ with $u_1\in\Delta_i$ and such that $u_i$ is opposite $u_{i+1}$. Write $\Delta_i^{n,\op}(u,v)$ for the sequences with $u_1 = u$ and $u_n = v$. Let $\Delta_i^{n,\op}|D$, respectively $\Delta_i^{n,\op}|D(u,v)$, be the subset of sequences for which $(u_k,u_{k+1}) \in D$ for every $k$.

   Since $D$ has full measure, by Fubini, for almost every $u,v$ the set $\Delta_i^{n,\op}|D(u,v)$ has full measure in $\Delta_i^{n,\op}(u,v)$. Since $\mu_{\Delta_i}$ and $\mu_{\Delta_{\overline i}}$ have full support in $\Delta_i$, the product measure on the $(n-2)$-fold alternating product $\Deltav[\bar{i}]$ and $\Deltav[i]$ also has full support. Hence $\Delta_i^{n,\op}(u,v)$ is dense in this product. Furthermore, $\Delta_i^{n,\op}(u,v)$ is an open subset of this product, so $\Delta_i^{n,\op}|D(u,v)$ is also dense in $\Delta_i^{n,\op}(u,v)$. 

   By Lemma~\ref{lem:projectivity_multiplication_continuous} the map $\Psi \colon \Delta_{\overline i}^{n,\op}(u,v)\to P_n(u,v), (u_1,\dots,u_n) \mapsto [u_1,\dots,u_n]$ is continuous, and it is surjective by definition. Hence $\Psi(\Delta_i^{n,\op}|D(u,v))$ is dense in $P_n(u,v)$.
\end{proof}

Recall from \eqref{eq:extended_projectivity_group_extension} that $M_i$ acts on $T_i$ through the quotient $Q_i$, and that the kernel is $A_i$; it is an infinite cyclic group which acts on the $\R$-factor of $\Ssingular_i$. The space $\Fsingular_i/A_i$ is equipped with the pushforward measure class from $\Fsingular_i$; the group $\Gamma\times M_i/A_i$ acts on $\Fsingular_i/A_i$ by preserving this measure class.

Also recall from Section~\ref{sec:extended_projectivity_groupoid} that we defined space $\EDeltav[i]$ as a set of pairs $(u,f)$ where $u\in \Delta_i$ and $f\in \Isom(T_i,T_u)$, a map $\overline \sigma_i:\Fsingular_i\to \overline \Delta_i$ (which factors obviously through $\Fsingular_i/A_i$), and the measure class on $\overline \Delta_i$ which is the pushforward of the measure class of $\mu_{\Fsingular_i}$ by $\overline\sigma_+$.

\begin{proof}[Proof of Theorem~\ref{thm:SingularCartanErgodic}]

  Our goal is to apply Theorem~\ref{thm:Hopf} with the following diagram:

\begin{align} \label{diag:singcartan}
\xymatrix{
& \Fsingular_i \ar[dl]_\phi\ar[dr]^\psi& &\\
\Fsingular_i/\Gamma & & \Fsingular_i/A_i   \ar[dl]_{\overline \sigma_+}\ar[dr]^{\overline \sigma_{-}}& \\
&  \overline \Delta_i &  & \overline \Delta_{\overline i}
}
\end{align}
(where $\phi$, $\psi$ are the obvious quotient maps).

First we need to check the proximality condition. Let $w,w'\in\Fsingular_i/A_i$ be such that $\overline \sigma_+(w)=\overline\sigma_{+}(w')$. We choose lifts of $w,w'$ under $\psi$ and denote them by $w,w'$ as well. We claim that, up to replacing the representatives of $w,w'$ (corresponding to $t,t'$ in Theorem~\ref{thm:Hopf}), there exists an $s\in A_i$ such that the sequences $(s^n w)_{n\in\N}$ and $(s^n w')_{n\in\N}$ are proximal.

Indeed, by definition if $\overline\sigma_+(w)=\overline\sigma_+(w')$ then for every $x\in Y_i$, $w$ and $w'$ agree on some subray of the ray from $x$ to $\xi_i$ (as sets and adjusting representatives even as maps). If we fix a ball $B$ in $Y_i$, it follows that there is $T>0$ such that for every $x\in B$, if $\ell_x$ is the line from $x$ to $\xi_i$, then $w=w'$ in restriction to $\ell_x([T,+\infty))$. Now take $s\in A_i$ to be a translation in the direction of $\xi_i$. It follows that for $n$ large enough, $s^n w$ and $s^n w'$ agree on $B$.

 Since $B$ is arbitrary, this means that the sequences $(s^n.w)_{n\in\N}$ and $(s^n.w')_{n\in\N}$ are proximal. The argument is similar for $\overline \sigma_-$.

 The last hypothesis of Theorem~\ref{thm:Hopf} that we need to check is that
 \[
   \left((\overline\sigma_+)^*L^\infty(\overline\Delta_i)\cap (\overline\sigma_-)^*L^\infty(\overline\Delta_{\overline i})\right)^\Gamma
 \]
 is reduced to the constant functions. Let $f$ be a function in that set, that is, a $\Gamma$-invariant function which is in the pullback of both $\overline\Delta_i$ and $\overline\Delta_{\overline i}$. We claim that $f$ is in fact in the pullback of the factor map $\sigma_+\times\sigma_-:\Fsingular_i/A_i\to \Delta_i\times \Delta_{\overline i}$.
Since $\Delta_i\times\Delta_{\overline i}$ (with the product measure class) is a factor of $\Fregular/A$, by Theorem \ref{thm:CartanErgodic} the $\Gamma$-action on $\Delta_i\times\Delta_{\overline i}$ is ergodic, so that it will follow that $f$ is constant.  

The claim is roughly proved as follows. Let $f_+\in L^\infty(\overline \Delta_i)$, $f_-\in L^\infty(\overline \Delta_{\overline i})$ such that $(\overline\sigma_\pm)^*f_\pm=f$. 
In other words, $f(\iota)=f_+(\overline\sigma_+(\iota))=f_-(\overline\sigma_-(\iota))$ for almost every $\iota\in\Fsingular_i$. The general idea is that $f$ is then invariant by the positive equivalence relation $\sim_+$, the negative equivalence relation $\sim_-$, and is $\Gamma$-invariant. By definition of $\Fsingular_i$ it follows that $f_+$ is constant on each fiber $\overline \Delta_i\to \Delta_i$ (and similarly for $f_-$), which would prove the claim.

However, the actual proof is a bit more subtle, as the $\sim_+$ and $\sim_-$ invariance is only true almost everywhere. The actual argument is exactly the same as in \cite[Theorem 7.3]{BCL}, and we will reproduce it here.

From now on we fix actual $\Gamma$-invariant functions $f$, $f_+$, $f_-$ representing their respective classes. Let $E\subset\Fsingular_i$ be the full measure subset on which $f_+\circ \sigma_+$ and $f_-\circ\sigma_-$ agree. We get that when $\iota,\iota'\in E$ are such that $\iota\sim_+\iota'$ then $f(\iota)=f(\iota')$.

For $u\in\Delta_i$ let $f_u$ be the restriction of $f_+$ to the fiber over $u$ in $\overline \Delta_i$. Similarly for $v\in\Delta_{\overline i}$ let $f_v$ be the restriction of $f_-$ to the fiber over $v$ in $\overline \Delta_{\overline i}$. By Proposition~\ref{prop:measure_singular_to_regular} and Lemma \ref{lem:FregMeasPres} we have that $f_u$ is defined for $\mu_{\Delta_i}$-almost every $u$ and,  for almost every $\iota\in\Fsingular$, 
$f(\iota)=f_u(\mathrm{pr}_2(\overline\sigma_+(\iota)))$.
Furthermore by Lemma \ref{lem:MeasDeltaBar}, $f_u$ is identified to a function in $L^\infty(M_i/A_i)$.

Let $D=\sigma(F)\subset \Deltaopv[i]$. Since $f$ is also $\Gamma$-invariant, it follows that if $(u,v)\in D$ then $f_v\circ [u,v]=f_u$ almost everywhere. Hence for every extended projectivity $m=[\gamma;u_1,\dots,u_n]$ such that $(u_i,u_{i+1})\in D$ we have $f_u\circ m =f_u$.
Hence $f_u$ is almost everywhere invariant by the group $M_D$ generated by all these extended projectivities. By Lemma \ref{lem:FullMeasureDenseProj} the group $M_D$ is dense in $p_T(M_i)$. 
 By Lemma \ref{lem:InvDenseSubgroup} it is in fact invariant by $p_T(M_i)$.

Since $f$ is also $A_i$-invariant, it follows that $f$ 
is in fact $M_i$-invariant. Hence $f$ factors through $\Fsingular_i/M_i$, which is isomorphic to $\Deltaopv[i]$ (Proposition \ref{prop:Y/M}). By Theorem \ref{thm:CartanErgodic}, the action of $\Gamma$ on $\Delta^\op$, hence on $\Deltaopv[i]$, is ergodic. Hence $f$ is a pullback from a constant function, so it is constant.
\end{proof}

This allows us to conclude the proof of Theorem \ref{thm:introergodic}.

\begin{corollary}
    Let $t\in A$ be a singular element. Then the action of $t$ on $\Gamma \backslash\Fregular$ is ergodic.
\end{corollary}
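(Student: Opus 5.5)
The plan is to deduce the corollary from Theorem~\ref{thm:SingularCartanErgodic} via the measure-preserving restriction map $\Fsingular_i \to \Fregular$ and a Moore-type (Mautner) argument. First, since $t$ is singular and nontrivial, it preserves a wall direction. After acting by an element of $W$ (which commutes with the $\Gamma$-action on $\Fregular$ and normalizes $A$), we may assume that $t$ lies in $A_i = \gen{\weight[i]}$ for some $i$; if $t$ is a rational multiple of a generator we may pass to a power. Ergodicity of $t^k$ implies ergodicity of $t$, so it suffices to treat $t = \weight[i]^k$, a nonzero translation along $\ell_i$.

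Let $f \in L^\infty(\Gamma\backslash\Fregular)$ be $t$-invariant, and regard it as a $\Gamma\times\gen{t}$-invariant function on $\Fregular$. Pull it back along the measure-preserving restriction $r\colon \Fsingular_i \to \Fregular$ (Proposition~\ref{prop:measure_singular_to_regular}). This restriction is $\Gamma\times A_i$-equivariant, so $r^*f$ is $\Gamma$-invariant and $t$-invariant on $\Fsingular_i$.

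The key step is to upgrade $t$-invariance to $A_i$-invariance and then to $M_i$-invariance. I would rerun the Hopf argument of Theorem~\ref{thm:SingularCartanErgodic} with $T=\gen{t}$ in place of $A_i$, following the template of Theorem~\ref{thm:regular_element_ergodic}. Here $Z = \Fsingular_i$, and the maps are the finer projections $\hat\sigma_\pm$ recording $\iota$ up to coinciding on subsets of the form $\{x\}\times[r_x,\infty)$ (respectively toward $\zeta_i$), without quotienting by $A_i$. Proximality holds exactly as before, because $t$ translates toward $\xi_i$. The Hopf argument then shows that a $\Gamma\times\gen t$-invariant function is essentially a pullback from both $\hat\sigma_+$ and $\hat\sigma_-$.

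As in the proof of Theorem~\ref{thm:SingularCartanErgodic}, such a function is almost-everywhere invariant under a dense subgroup of $M_i$. This subgroup is generated by the elements coming from chains of almost-every perspectivities, which, as in Lemma~\ref{lem:FullMeasureDenseProj}, are now lifted to $M_i$ rather than $Q_i$. It also contains the residual $A_i/\gen t$ discrepancy, which is a finite group. By Lemma~\ref{lem:InvDenseSubgroup}, the function is then $M_i$-invariant, hence factors through $\Fsingular_i/M_i \cong \Deltaopv[i]$ (Proposition~\ref{prop:Y/M}). There it is constant by Theorem~\ref{thm:CartanErgodic}.

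Finally, an $M_i$-invariant function on $\Fsingular_i$ that is pulled back from $\Fregular$ descends to a $\Gamma$-invariant function constant on $\Fregular$. Concretely, by Lemma~\ref{lem:flat_contained_in_singular} every $\iota\in\Fregular$ extends into $\Fsingular_i$ and $r$ is measure preserving, so $f$ itself is essentially constant.

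The main obstacle is the middle step. Theorem~\ref{thm:SingularCartanErgodic} only gives ergodicity of the full $A_i$, and the dense-subgroup argument there works on $M_i/A_i$. Recovering invariance under all of $A_i$ from $\gen t$ requires tracking the $\R$-coordinate through the perspectivity chains, which is the analogue of the reflection trick in Theorem~\ref{thm:regular_element_ergodic}. I would first try to show that chains of projectivities realize nontrivial translations in $A_i$ modulo $\gen t$, using thickness as in Lemma~\ref{lem:accessibility}.
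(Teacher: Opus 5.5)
Your setup matches the paper's. You lift along the measure-preserving restriction $\Fsingular_i \to \Fregular$, where $\gen{t}$ becomes a finite-index subgroup of $A_i$. You then run the Hopf argument with $T=\gen{t}$ and the refined relations $\sim_\pm$, which do not quotient by $A_i$.

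\textbf{The gap.} It comes right after the Hopf step, where you assert that the resulting function is invariant under a dense subgroup of $M_i$ that ``also contains the residual $A_i/\gen{t}$ discrepancy''. Nothing you cite gives this. Lemma~\ref{lem:FullMeasureDenseProj} only controls the image of the perspectivity chains in $\EProjGrp_i = M_i/A_i$; it says nothing about the translation parts of the lifted chains. A closed subgroup of $M_i$ that surjects onto $\EProjGrp_i$ can meet $A_i$ in a proper finite-index subgroup, for instance $kA_i \times \EProjGrp_i$ in the non-canonical splitting, or the graph of a continuous homomorphism $\EProjGrp_i \to A_i/kA_i$. In that case the argument of Theorem~\ref{thm:SingularCartanErgodic} together with Lemma~\ref{lem:InvDenseSubgroup} only shows that your function factors through a finite cover of $\Deltaopv[i]$ with fibres a finite quotient of $A_i$ by a subgroup containing $\gen{t}$. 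Such a function need not be constant. Equivalently, ergodicity of $A_i$ (Theorem~\ref{thm:SingularCartanErgodic}) does not pass to the finite-index subgroup $\gen{t}$: you need total ergodicity, i.e.\ no eigenfunctions with root-of-unity eigenvalues. So the proposal stops exactly at the one point that needs a new idea, as your last paragraph concedes.

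\textbf{What is missing.} You need an accessibility statement for the refined relations: the equivalence relation generated, almost everywhere, by $\sim_+$, $\sim_-$ and $\Gamma$ must move the $A_i$-coordinate through all of $A_i$ modulo $\gen{t}$. The paper handles this in two steps.
\begin{enumerate}
  \item As in Theorem~\ref{thm:SingularCartanErgodic}, it reduces to a function of the $A_i$-coordinate alone.
  \item It then appeals to the reflection argument from the proof of Theorem~\ref{thm:regular_element_ergodic}. That argument uses the triple of embeddings built from a wall and its two complementary half-apartments, as in Lemma~\ref{lem:accessibility}, to show that this residual function is $\tilde W$-invariant and hence constant.
\end{enumerate}
Some argument of this kind has to be carried out, adapted to $\Fsingular_i$. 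It is not a formal consequence of the Hopf argument plus density in $\EProjGrp_i$. Without it, the corollary is not proved.
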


\begin{proof}
    Consider the map $\Fsingular_i\to \Fregular$ as in Proposition \ref{prop:measure_singular_to_regular}. This maps is $\Gamma$-equivariant so descends to a map $\Gamma\backslash \Fsingular_i\to \Gamma\backslash \Fregular$. Choosing $i$ depending on $t$, the action of $t$ on $\Fregular$ lifts by this map to an action of a finite index subgroup of $A_i$.

    Let $\sim_{\pm}$ be the equivalence relation on $\Fsingular_i$ of coinciding on a convex subcomplex of $\Ssingular_i$ that contains $T_{\xi_i}$ respectively $T_{\zeta_i}$ in its boundary. And let $\hat{\pi}_\pm \colon \Fsingular_i \to \Fsingular_i/\sim_\pm$ be the projection. Applying Theorem~\ref{thm:Hopf} as in the proof of Theorem~\ref{thm:SingularCartanErgodic} we are left to showing that $\hat{\pi}_+^*(\Fsingular_i/\mathop{\sim_+})^\Gamma \cap \hat{\pi}_-^*(\Fsingular_i/\mathop{\sim_-})^\Gamma \subseteq L^\infty(A_i)^\Gamma$ is reduced to constants. As in the proof of Theorem~\ref{thm:regular_element_ergodic} one sees that $f \in \hat{\pi}_+^*(\Fsingular_i/\mathop{\sim_+})^\Gamma \cap \hat{\pi}_-^*(\Fsingular_i/\mathop{\sim_-})^\Gamma$ is $\tilde{W}$-invariant hence constant. Since the action of $t$ on $\Gamma\backslash \Fregular$ is a factor of that space, it is also ergodic.
\end{proof}

\section{Contracting sequences}\label{sec:contracting_sequences}

The following extends \cite[Proposition~4.8]{BFL}.

\begin{theorem}\label{thm:contracting}
  Let $i \in \{1,2\}$. For almost every pair of opposite vertices $(\xi,\zeta) \in \Delta^\op_i$ there exists a sequence $(\gamma_n)_n \in \Gamma^\N$ such that $\gamma_n$ converges to $\proj_\xi \circ \proj_\zeta$ in the compact-open topology on $C(\Opp(\zeta),\Opp(\zeta))$.

\end{theorem}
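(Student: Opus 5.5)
The plan is to follow the strategy of \cite[Proposition~4.8]{BFL}, using ergodicity of the singular Cartan flow (Theorem~\ref{thm:SingularCartanErgodic}) to get recurrence, and then turning recurrence into convergence of group elements. First I need to understand $\proj_\xi\circ\proj_\zeta$ as a limit of translations. Take $\iota\in\Fsingular_i$ with $\sigma_+(\iota)=\xi$ and $\sigma_-(\iota)=\zeta$, and let $s\in A_i$ be the translation towards $\zeta$. For a chamber $c\in\Opp(\zeta)$, the sequence $s^n$ applied to $\iota$ pushes the base point of the wall space towards $\zeta$. Seen from a compact piece $K$ of $\Ssingular_i$ near $\iota(0)$, the chamber $c$ lies in an apartment containing $\zeta$, and that apartment shares with $\iota$ a subray towards $\zeta$. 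After translating far enough towards $\zeta$, the image of $c$ is then determined by its projection to $\zeta$ and by the $\xi$-end of $\iota$. This is exactly the map $c\mapsto\proj_\xi\proj_\zeta c$. The convergence is uniform on compact subsets of $\Opp(\zeta)$, because a compact set in $\Opp(\zeta)$ corresponds, via Lemma~\ref{lem:opp_decomposition_topological}, to a bounded set of branching points along the rays towards $\zeta$.

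Next I produce group elements. By Theorem~\ref{thm:SingularCartanErgodic} together with the finite invariant measure of Lemma~\ref{lem:probaFsingular}, the action of $A_i$ (or of $\langle s\rangle$, where I would use the singular-element ergodicity or the Poincaré recurrence argument applied to the $A_i$-flow on $\Gamma\backslash\Fsingular_i$) is conservative. Hence for almost every $\iota$ there are $n_k\to\infty$ with $s^{-n_k}\iota\in\Gamma\, U_{B_k}(\iota)$. Here $U_{B_k}(\iota)$ is the set of embeddings agreeing with $\iota$ on the ball $B_k$ of radius $k$, and the recurrence into smaller and smaller neighbourhoods is guaranteed because the measure has full support (Lemma~\ref{lem:fullsupport}) and a countable basis is available. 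So there are $\gamma_k\in\Gamma$ with $\gamma_k\iota\circ s^{n_k}$ agreeing with $\iota$ on $B_k$, up to the orientation convention. By Proposition~\ref{prop:Y/M}, full measure in $\Fsingular_i$ projects to full measure in $\Deltaopv[i]$, which gives the ``almost every $(\xi,\zeta)$''.

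Then I verify convergence. For $c\in\Opp(\zeta)$ in a compact set $C$, choose $k$ large. The element $\gamma_k$ maps the translated configuration $\iota\circ s^{n_k}$, which sits deep towards $\zeta$, onto a configuration agreeing with $\iota$ on $B_k$. So $\gamma_k c$ has the same projection to $\xi$-rays through $\iota(B_k)$ as $\proj_\xi\proj_\zeta c$. Consequently $\gamma_k c$ and $\proj_\xi\proj_\zeta c$ lie in a basic neighbourhood $U^o_{C'}$ that shrinks as $k\to\infty$, uniformly in $c\in C$.

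The main obstacle is this last uniform comparison. I must check that the apartment through $c$ and $\zeta$ really coincides with the wall space $\iota(\Ssingular_i)$ on the part that gets pushed into $B_k$, and I must get the directions right, i.e.\ whether to use $s^{n}$ or $s^{-n}$ and which end is contracted. This requires controlling, uniformly over $C$, the time after which the rays from $c$'s apartment towards $\zeta$ merge into $\iota(\Ssingular_i)$. I would handle it using the compactness of $C$ and the description of the topology via $U_K(\iota)$.
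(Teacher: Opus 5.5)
\textbf{Verdict: the proposal has a genuine gap in its final step.}

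\medskip

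\textbf{What is correct.} Your first two steps are sound and amount to the paper's Lemma~\ref{lem:ergodicrecurrence}. Recurrence of the $A_i$-action on the finite-measure space $\Gamma\backslash\Fsingular_i$ (Lemma~\ref{lem:probaFsingular}) does the job; plain Poincar\'e recurrence together with a countable basis already suffices here. It yields, for almost every $(\xi,\zeta)$, elements of $\Gamma$ that act as translation by $n_k\weight[i]$, with $n_k\to\infty$, on larger and larger finite pieces of $Y_{\xi,\zeta}$.

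\textbf{The gap.} The third step is where the theorem is actually proved, and you leave it open, including the question of direction.
\begin{itemize}
\item The assertion that $\gamma_k c$ ``has the same projection to $\xi$-rays through $\iota(B_k)$ as $\proj_\xi\proj_\zeta c$'' is not justified.
\item Controlling, chamber by chamber, where the apartment of $c$ merges into $\iota(\Ssingular_i)$ is not how uniformity over compact sets is obtained.
\item With your normalization $\gamma_k\iota\circ s^{n_k}=\iota$ on $B_k$, the element $\gamma_k$ is known only on $\iota(s^{n_k}B_k)$, which lies deep towards $\zeta$. It is not controlled near any fixed point of $Y_{\xi,\zeta}$, so you cannot evaluate it directly on a fixed neighbourhood.
\end{itemize}

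\textbf{The missing idea.} Test compact-open convergence on basic sets adapted to the wall space, so that a whole compact set is governed by a single chamber of $Y_{\xi,\zeta}$.

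\begin{enumerate}
\item It suffices to show that eventually $\gamma(U^x(\sigma))\subseteq U^y(\tau)$, where $\tau=\proj_\xi\proj_\zeta\sigma$ and $\proj_\xi\proj_\zeta(U^x(\sigma))\subseteq U^y(\tau)$.
\item Let $\eta$ be the type-$i$ vertex of $\sigma$, and let $\ell_\xi\subseteq Y_{\xi,\zeta}$ share a ray towards $\zeta$ with a line $\ell_\eta$ from $\zeta$ to $\eta$. As in the paper, after adjusting $x$ and $y$ you may take $x$ to be a special vertex on $\ell_\eta\cap\ell_\xi$. Then $U^x(\sigma)=U^x(\tau)=\pr_x^{-1}(d)$, with $d=\pr_x\tau$ a chamber of $Y_{\xi,\zeta}$.
\item Write $x_n$ for the translate of $x$ by $n\weight[i]$ inside $Y_{\xi,\zeta}$ (towards $\xi$ for $n>0$). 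The corresponding translate of $d$ is $\pr_{x_n}\tau$. Hence any $\gamma$ acting on $d$ as this translation satisfies $\gamma U^x(\tau)=U^{x_n}(\tau)$.
\item The compact sets $U^{x_n}(\tau)$ decrease to $\Res(\xi)\cap U^x(\tau)$. On this set $\proj_\xi\proj_\zeta$ is the identity, so it lies in $\proj_\xi\proj_\zeta(U^x(\tau))\subseteq U^y(\tau)$. By compactness, $U^{x_n}(\tau)\subseteq U^y(\tau)$ for $n$ large.
\end{enumerate}

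\textbf{The direction.} This also settles your question about $s^n$ versus $s^{-n}$: $\gamma$ must push towards $\xi$, since the limit map collapses the $\Oppv[i](\zeta)$-coordinate onto $\xi$.

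\textbf{Using your $\gamma_k$.} You have two options.
\begin{itemize}
\item Run the recurrence for $s^{-1}$ instead, so that $\gamma$ translates a fixed ball towards $\xi$, as in the paper.
\item Alternatively, use the monotonicity $U^x(\tau)\subseteq U^{x_{-m}}(\tau)$ for $m\ge 0$, which follows from convexity of $\conv(x_{-m},d)$. For fixed $N$ and $k$ large this gives
\[
\gamma_kU^x(\tau)\subseteq\gamma_kU^{x_{N-n_k}}(\tau)=U^{x_N}(\tau).
\]
\end{itemize}
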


Instead of $\Opp(\zeta)$ we think about $\Fsingular$ first. We apply the following classical fact to $\Gamma \backslash \Fsingular$.

\begin{theorem}[Ergodic recurrence]
  Let $(X,\calF,\mu)$ be a probability space, and let $T \colon X \to X$ be measurable and measure-preserving. Let $A \subseteq X$ be measurable with $\mu(A) > 0$. Assume that $(X,T)$ is ergodic. Then for almost all $x \in X$ there exist infinitely many $n$ with $T^n(x) \in A$.
\end{theorem}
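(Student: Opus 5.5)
The plan is to avoid quoting Poincaré recurrence and instead argue directly with the sets of times at which orbits visit $A$, using only that $T$ is measure-preserving and ergodic. For $N \in \N$ put
\[
  C_N \defeq \bigcup_{n \ge N} T^{-n}(A),
\]
so that $x \in C_N$ exactly when $T^n(x) \in A$ for some $n \ge N$. The set of $x$ visiting $A$ infinitely often is then
\[
  A_\infty \defeq \bigcap_{N \ge 0} C_N .
\]
The goal is to show $\mu(A_\infty) = 1$, and I will do this in three steps.

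First, I show that $C_0$ is invariant modulo null sets. Directly from the definition, $T^{-1}(C_0) = C_1 \subseteq C_0$. Since $T$ is measure-preserving, $\mu(T^{-1}(C_0)) = \mu(C_0)$, and this value is finite because $\mu$ is a probability measure. Hence $\mu(C_0 \setminus T^{-1}(C_0)) = 0$, so $C_0$ and $T^{-1}(C_0)$ differ by a null set.

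Second, I apply ergodicity to conclude $\mu(C_0) \in \{0,1\}$. If the paper's notion of ergodicity only concerns strictly invariant sets, I first replace $C_0$ by the strictly invariant set $\bigcap_{k \ge 0} T^{-k}(C_0)$. This set still has the same measure as $C_0$, because the sets $T^{-k}(C_0) = C_k$ decrease and each has the same measure as $C_0$. On the other hand $A \subseteq C_0$, so $\mu(C_0) \ge \mu(A) > 0$. Together these give $\mu(C_0) = 1$.

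Third, I pass from a single visit to infinitely many visits. For each $N$ we have $C_N = T^{-N}(C_0)$, so $\mu(C_N) = \mu(C_0) = 1$. The sets $C_N$ decrease in $N$, so continuity of measure from above (legitimate since $\mu$ is finite) gives $\mu(A_\infty) = \lim_N \mu(C_N) = 1$. This is exactly the claim that almost every $x$ has $T^n(x) \in A$ for infinitely many $n$.

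The only step requiring any care is the second one, namely making sure that the almost-invariance of $C_0$ feeds correctly into whichever definition of ergodicity is in force (strict versus almost-sure invariance). Everything else is formal.
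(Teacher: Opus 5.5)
Your proof is correct and complete. The paper gives no proof of this statement: it is quoted as a classical fact and only used in the proof of Lemma~\ref{lem:ergodicrecurrence}, so there is no argument of the paper to compare against.

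Your argument is essentially the standard one. Your first and third steps do not use ergodicity, and already give $\mu(A_\infty)=\mu(C_0)$. Since $A\subseteq C_0$, this yields $\mu(A\setminus A_\infty)=0$, which is exactly Poincar\'e recurrence. So you have not really avoided Poincar\'e recurrence but reproved it in passing. Ergodicity enters only to upgrade $\mu(C_0)>0$ to $\mu(C_0)=1$.

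A small redundancy: the strictly invariant set $\bigcap_{k\ge 0}T^{-k}(C_0)=\bigcap_k C_k$ that you build in the second step is literally $A_\infty$. On that branch, your third step is therefore already done.
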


\begin{lemma}\label{lem:ergodicrecurrence}
 Let $i \in \{1,2\}$. For almost every pair of opposite vertices $(\xi,\zeta) \in \Delta^\op_i$, every finite subcomplex $F$ of the wall-space $Y_{\xi,\zeta}$ and every $N$ there exists $n>N$ and a $\gamma\in \Gamma$ which translates every point of $F$ on the line from $\zeta$ to $\xi$ by $n$.
\end{lemma}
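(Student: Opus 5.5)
The plan is to apply ergodic recurrence to the singular Cartan flow. Theorem~\ref{thm:SingularCartanErgodic} gives ergodicity of $\Gamma$ on $\Fsingular_i/A_i$; equivalently $A_i\cong\Z$ acts ergodically on $\Gamma\backslash\Fsingular_i$, which carries the finite invariant measure from Lemma~\ref{lem:probaFsingular}. The first step is to upgrade this to ergodicity of a single generator $s$ of $A_i$ (translation by $\weight[i]$, or the appropriate generator of $A_i$). Since $A_i$ is cyclic and generated by $s$, this is immediate; if one prefers to use a power $s^k$, one invokes the corollary above that singular elements act ergodically on $\Gamma\backslash\Fregular$, lifted to $\Fsingular_i$ as in its proof.

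Fix an exhaustion of $\Ssingular_i$ by finite subcomplexes $F_1\subseteq F_2\subseteq\cdots$. For each $k$, and for each $\Gamma$-orbit class of restrictions $\iota|_{F_k}$, the set
\[
A_{k,\iota}=\Gamma\backslash\Gamma U_{F_k}(\iota)\subseteq\Gamma\backslash\Fsingular_i
\]
is open, hence of positive measure by Lemma~\ref{lem:fullsupport}. There are only countably many such sets. By ergodic recurrence applied to $s$, for almost every $\Gamma\iota$ and for every such set containing $\Gamma\iota$, we have $s^n\Gamma\iota\in A_{k,\iota}$ for infinitely many $n$. Removing countably many null sets, almost every $\iota\in\Fsingular_i$ has the following property: for every $k$ and every $N$ there are $n>N$ and $\gamma\in\Gamma$ with $\gamma\,(s^n.\iota)|_{F_k}=\iota|_{F_k}$. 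Unwinding, $(s^n.\iota)(x)=\iota(x-n\weight[i])$, so $\gamma^{-1}$ (or $\gamma$, after fixing conventions) maps $\iota(x)$ to $\iota(x\pm n\weight[i])$ for all $x\in F_k$. This means it translates the image of $F_k$ along the $\R$-direction, that is, along the lines from $\zeta$ to $\xi$, by $n$ (in units of $\norm{\weight[i]}$, which is harmless). The sign can be fixed by using $s$ or $s^{-1}$.

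Finally, transfer the statement to $\Deltaopv[i]$. The map $\pi_i\colon\Fsingular_i\to\Deltaopv[i]$ is surjective (Proposition~\ref{prop:restricted_wall_trees}) and measure-class preserving (Proposition~\ref{prop:Y/M} together with Lemma~\ref{lem:FregMeasPres}). So the good set, whose complement is null, has null complement image-wise. More precisely, by the product decomposition $M_i\times\Deltaopv[i]$ and Fubini, for almost every $(\xi,\zeta)$ some, in fact almost every, $\iota$ in the fiber is good. For such $\iota$, $\iota(\Ssingular_i)=Y_{\xi,\zeta}$, and every finite subcomplex $F$ of $Y_{\xi,\zeta}$ lies in some $\iota(F_k)$, which gives the claim.

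The main obstacle I expect is this last measure-theoretic bookkeeping: making sure "almost every $\iota$" passes to "almost every $(\xi,\zeta)$", since the good property must hold for one $\iota$ with image $Y_{\xi,\zeta}$. The other point needing care is checking that the $\gamma$ obtained is genuinely a translation on the image of $F$ rather than a translation composed with a nontrivial element of $M_i$. Here one uses that the equality $\gamma\iota'=\iota$ on $F_k$, with $\iota'=s^n\iota$, is an equality of maps, not merely of images.
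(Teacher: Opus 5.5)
Your proposal is correct and follows the paper's proof essentially step by step. Both arguments apply ergodic recurrence for the generator of $A_i$ on $\Gamma\backslash\Fsingular_i$ to the countably many open sets $\Gamma U_F(h)$, intersect the resulting co-null sets, read off $\gamma_n h(f) = h(\weight[i]^{-n}f)$ on $F$, and push the co-null set down to $\Deltaopv[i]$ via the measure-class preserving map $\Fsingular_i \to \Deltaopv[i]$. Your aside about passing to a power $s^k$ is unnecessary: $A_i$ is infinite cyclic, so ergodicity of $A_i$ is exactly ergodicity of its generator.
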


\begin{proof}
  Let $i \in \{1,2\}$ . Let $F \subseteq Y_i$ be a finite subcomplex, so that for $h \in \Fsingular_i$ the set $U_F(h) = \{h' \in \Fsingular \mid h'|_F = h_F\}$ is an open subset, therefore of positive $\mu_{\Fsingular_i}$-measure by Lemma \ref{lem:fullsupport}. 

  The action of $A_i$ on $\Fsingular_i/\Gamma$ is ergodic by Theorem \ref{thm:SingularCartanErgodic}. Hence we can apply the Ergodic Recurrence with $T$ defined by $T(h)(x) = h(\weight[i]^{-1}x)$ and conclude that for almost every $y\in\Fsingular$ the set of all $n$ such that $T^n y\in \Gamma U_F(h)$ is of full measure. Let $E_{F,h}$ be this full measure subset of $\Fsingular$. Note that for a given $F$ there are countably many distinct sets $U_F(h)$, as there are countably many embeddings of $F$ into $X$.
  Let $E$ be the intersection of all $E_{F,h}$'s, which is therefore of full measure.

  For every $h\in E$ it follows that for every finite $F \subseteq \Ssingular_i$ there are infinitely many $n$ such that $T^nh|_F$ and $h|_F$ coincide up to an element of $\gamma$, meaning that there is a $\gamma_n \in \Gamma$ such that $T^nh|_F = \gamma_nh_F$, i.e.\ $\gamma_nh(f) = h(\weight[i]^{-n}f)$ for $f \in F$. In other words the pair $(\sigma_-(h),\sigma_+(h))$ satisfies the conclusion of the Lemma. Since $(\sigma_-,\sigma_+):\Fsingular\to \Delta^{op}_i$ is measure-class preserving we get the result.  
\end{proof}

\begin{figure}
  \begin{tikzpicture}
    \node[above] (xi) at (0,5) {$\xi$};
    \node[below] (zeta) at (0,-.5) {$\zeta$};
    \node[right] (eta) at (2,3) {$\eta$};
    \draw (zeta) -- (xi);
    \draw (0,1) .. controls (0,2) and (1,3) .. (eta) node[pos=.7,dot, inner sep=1pt] (x) {};
    \node[below] at (x) {$x$};
    \node[dot, inner sep=1pt] (z) at (0,0) {};
    \node[left] at (z) {$z$};
    \node[dot, inner sep=1pt] (y) at (0,4) {};
    \node[left] at (y) {$y$};
    \draw (y) -- ++(1,1);
    \draw (x) -- ++(1,-.5);
    \draw (z) -- ++(.75,.75) -- ++(-.75,0);
    \node at ($(x) + (1,-.1)$) {$\sigma$};
    \node at ($(y) + (.5,1)$) {$\tau$};
    \node at ($(z) + (.25,.5)$) {$d$};
  \end{tikzpicture}
  \caption{The setup in the proof of Theorem~\ref{thm:contracting}.}
  \label{fig:contracting}
\end{figure}

\begin{proof}[Proof of Theorem~\ref{thm:contracting}]
  Let $E \subseteq \Deltaopv[i]$ be the full measure set from  Lemma~\ref{lem:ergodicrecurrence} and for $(\xi,\zeta) \in E$ let $(\gamma_n)_n \in \Gamma^\N$ be as in the lemma.
  Let $\sigma \in \Opp(\zeta)$ and put $\tau = \proj_\xi \circ \proj_\zeta(\sigma)$. Let $x,y$ be special vertices such that
  \[
    \proj_\xi \circ \proj_\zeta(U^x(\sigma)) \subseteq U^y(\tau)
  \]
  (recall from \eqref{eq:Uo} that $U^x(\sigma)$ is the space of chambers that have the same projection to $x$ as $\sigma$). Then the set
  \[
    V = \{\iota \in C(\Opp(\zeta),\Opp(\zeta)) \mid \iota(U^x(\sigma)) \subseteq U^y(\tau)\}
  \]
  is an open set in $C(\Opp(\zeta),\Opp(\zeta))$ and we want to show that $\gamma_n \in V$ for large enough $n$.

Note that $V$ is smaller the closer $y$ is to $\tau$ and the further $x$ is from $\sigma$. Nonetheless is suffices to consider $x$ arbitrarily close to $\sigma$ as long we consider $y$ arbitrarily close to $\tau$ as well: let $c_0 \subseteq \Sregular$ be the chamber at the tip of $S$, let $\iota_x$ take $0$ to $x$ and $S(\infty)$ to $\sigma$ so that $U^x(\sigma)$ consists of the $\sigma_+(\iota)$ with $\iota \in U_{c_0}(\iota_x)$. Similarly $U^y(\tau)$ consists of the $\sigma_+(\iota)$ with $\iota \in U_{c_0}(\iota_y)$ when $\iota_y$ takes $0$ to $y$ and $S(\infty)$ to $\tau$. If $c_1 \subseteq S$ is a chamber then $U^x(\sigma)$ is a (disjoint) union of finitely many $\sigma_+(U_{c_0 \cup c_1}(\iota_x'))$ (of course $\sigma_+(U_{c_0 \cup c_1}(\iota_x')) = \sigma_+(U_{c_1}(\iota_x'))$, we include $c_0$ only to ensure $\iota_x'(0) = x$). Any $\varphi \in V$ takes $\sigma_+(U_{c_0 \cup c_1}(\iota_x'))$ into some $\sigma_+(U_{c_0 \cup c_1}(\iota_y'))$, and fixing the finitely many representatives $\iota_x'$ and $\iota_y'$ taking every $\sigma_+(U_{c_0 \cup c_1}(\iota_x'))$ into some $\sigma_+(U_{c_0 \cup c_1}(\iota_y'))$ is a characterization of lying in $V$. Consequently if we require that $\varphi(\sigma_+(U_{c_0 \cup c_1})(\iota_x')) \subseteq \sigma_+(U_{c_0 \cup c_1})(\iota_y)$ for every $\iota_x'$, which is an intersection of basic open sets, we have defined an open sub-neighborhood of $V$ in terms of smaller neighborhoods of $\sigma$ as claimed.

  Let $\eta$ be the vertex of type $i$ of $\sigma$. Let $\ell_\eta$ be the line from $\zeta$ to $\eta$ that contains a subray of $[x,\eta)$. By the remark in the previous paragraph, we may arrange $x \in \ell_\eta$ by adjusting $y$ accordingly. Let $\ell_\xi$ be the line from $\zeta$ to $\xi$ that coincides with $\ell_\eta$ on an infinite ray to $\zeta$ (see Figure~\ref{fig:contracting}). Then the assumptions on $\sigma$, $\tau$, $x$, $y$ guarantee that an infinite ray of $\ell_\xi$ is contained in $\conv(y,\tau)$. Let $z$ be a special vertex that is a relatively interior point of  $\ell_\eta \cap \ell_\xi$; this ensures that the direction from $z$ to $y$ (which is also the direction to $\xi$) coincides with the direction to $x$ (which is also the direction to $\eta$). Then $U^z(\sigma) = U^z(\tau)$ is a compact open set containing $U^x(\sigma)$ and satisfying $\proj_\xi \circ \proj_\zeta(U^z(\sigma)) \subseteq U^y(\tau)$. Replacing $x$ by $z$ leads to a smaller set $V$ thus to a stronger statement. This shows that there is no loss in assuming $U^x(\sigma) = U^x(\tau)$, which we do.

  Let $d = \pr_x \tau$ so that $U^x(\tau) = \pr_x^{-1}(d)$.
  Take $h \in Y_{\xi,\zeta}$ arbitrary and let $c = h^{-1}(d)$. Let $N$ be sufficiently large such that $h(\weight[i]^n c) \subseteq U^y(\tau)$ for $n \ge N$. By choice of the sequence there is an $n \ge N$ and a $\gamma_n \in \Gamma$ with
  \[
    \gamma_n.d = \gamma_n.h(c) = h(\weight[i]^nc) \in U^y(\tau)
  \]
  Thus $U^x(\sigma) = U^x(\tau)$ is taken by $\gamma_n^{-1}$ to
  \[
    \gamma_nU^x(\tau) = U^{\gamma_nx}(\tau) \subseteq U^y(\tau).
  \]
  as desired.
\end{proof}

\section{Martingale convergence}\label{sec:martingale_convergence}

We need a uniform way to to describe neighborhoods $U_n(\xi)$ for varying $\xi \in \Delta_i$. To do so we pick a base vertex $o \in X$ of type $0$ and put
\[
  U_n(\xi) = U^0_{\ell_i(n)}(\xi)
\]
in the notation of Section~\ref{sec:basic_open_sets}. That is, if $\rho$ is the geodesic ray from $o$ to $\xi$, then
\[
  U_n(\xi) = \{\rho'(\infty) \mid \rho|_{[0,n]} = \rho'|_{[0,n]}\}.
\]

Our goal for this section is to prove the following statement, which can be seen as a form of Lebesgue differentiation.

\begin{theorem}\label{thm:martingale_convergence_fV}
  Let $i \in \{1,2\}$, let $f \colon \Deltaop \to \C$ be measurable and bounded, and let $\xi_0 \in \Deltav[i]$ and let $V \subseteq \Res(\xi_0)$ be compact open. For almost every $(\xi,\zeta) \in \Deltaopv[i]$ we have that
  \[
    \fint_{U_n(\xi)} \int_{V_{\eta,\zeta}} f(\sigma) d\mu_{\Res(\eta)}(\sigma) d\mu_{\Oppv[i](\zeta)}(\eta) \xrightarrow{n \to \infty}
    \int_{V_{\xi,\zeta}} f(\sigma) d\mu_{\Res(\xi)}(\sigma).
  \]

  where $V_{\xi,\zeta} = \proj_\xi \circ \proj_\zeta(V)$.
\end{theorem}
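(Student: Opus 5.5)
The plan is to recast the averaged quantity as a conditional expectation with respect to a decreasing (or increasing) filtration and apply the martingale convergence theorem. Fix $\zeta$ first. The map $\eta \mapsto F_\zeta(\eta) \defeq \int_{V_{\eta,\zeta}} f(\sigma)\, d\mu_{\Res(\eta)}(\sigma)$ is a bounded function on $\Oppv[i](\zeta)$. By Proposition~\ref{prop:disintegration_opp} and the identification \eqref{eq:zeta_xi}, $F_\zeta(\eta) = \int_{\proj_\zeta(V)} f(\proj_\eta \theta)\, d\mu_{\Res(\zeta)}(\theta)$ (transporting $V$ to $\Res(\zeta)$ via $\proj_\zeta$, which is measure preserving up to the normalization issues handled by the base-point independence remarks). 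So it is measurable in $\eta$, being a fiber integral of $f$ composed with the measurable map $(\theta,\eta)\mapsto \proj_\eta\theta$.

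The left-hand side is then $\fint_{U_n(\xi)} F_\zeta\, d\mu_{\Oppv[i](\zeta)}$. The sets $U_n(\xi)$, for $n\in\N$ and $\xi$ varying, form a refining sequence of countable partitions $\calP_n$ of $\Delta_i$. Two points lie in the same atom iff the rays from $o$ agree up to time $n$; by local finiteness each $\calP_n$ is finite on compact pieces. These partitions generate the Borel $\sigma$-algebra, since the $U_n(\xi)$ form a neighborhood basis. I would check that, restricted to the compact open set of $\eta$ opposite $\zeta$ whose ray from $o$ passes near the appropriate region, $\mu_{\Oppv[i](\zeta)}$ is proportional to $\mu^o_{\Delta_i}$ on the atoms $U_n(\xi)$ for $n$ large. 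This is the analogue of Proposition~\ref{prop:opp_locally_proportional} for vertices, obtained by pushing forward along $\Delta\to\Delta_i$ and using Lemma~\ref{lem:projection_measure-preserving}. Hence the averages over $U_n(\xi)$ are exactly $\E[F_\zeta\mid\calP_n](\xi)$ with respect to a probability measure in the class of $\mu^o_{\Delta_i}$ localized to that set. Doob's martingale convergence theorem gives $\E[F_\zeta\mid \calP_n]\to F_\zeta$ $\mu$-almost everywhere on $\Oppv[i](\zeta)$.

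It remains to pass from ``for every $\zeta$, for a.e.\ $\xi$'' to ``for a.e.\ $(\xi,\zeta)\in\Deltaopv[i]$''. The set of good pairs is measurable because all quantities are measurable in $(\xi,\zeta)$. Since $\mu_{\Deltaopv[i]}$ is in the class of the product $\mu_{\Deltav[i]}\times\mu_{\Deltav[\bar i]}$, as used in Lemma~\ref{lem:FullMeasureDenseProj}, and for each $\zeta$ the measure $\mu_{\Oppv[i](\zeta)}$ is in the class of $\mu_{\Deltav[i]}$ restricted to the conull open set $\Oppv[i](\zeta)$, Fubini yields the claim.

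The main obstacle is the measurability and local-proportionality bookkeeping. One needs that the averages in the statement, which are taken with $\mu_{\Oppv[i](\zeta)}$ (normalized at $o$) over sets $U_n(\xi)$ defined via rays from $o$, genuinely coincide with conditional expectations for a single fixed measure. For each fixed $\zeta$ I would handle this by covering $\Oppv[i](\zeta)$ by countably many compact open sets on which $\mu_{\Oppv[i](\zeta)}$ and $\mu^o_{\Delta_i}$ are proportional, and applying Doob on each piece separately. The atoms $U_n(\xi)$ are eventually contained in one piece, so the limit is unaffected.
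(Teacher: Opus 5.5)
Your route is the paper's: fix $\zeta$, read the averages over $U_n(\xi)$ as conditional expectations for the filtration generated by the partitions $\calP_n$, apply martingale convergence, and pass to almost every pair by Fubini using the measure-class comparisons. Running Doob directly on $(\Oppv[i](\zeta),\mu_{\Oppv[i](\zeta)})$ for the Borel function $F_\zeta$ is a mild streamlining of the paper's Proposition. The paper instead works on $\Opp(\zeta)\cong\Oppv[i](\zeta)\times\Res(\zeta)$ with $\sigma$-algebras generated by the sets $U_n(\xi)\times V$, and then has to identify $\E(f\mid\calF_\infty)$ via the disintegration. In your version the limit is $F_\zeta$ automatically.

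The comparison with $\mu^o_{\Deltav[i]}$ that you flag as the main obstacle at the fixed-$\zeta$ stage is not needed. The averages in the statement are taken with respect to $\mu_{\Oppv[i](\zeta)}$ itself, so they are conditional expectations for that measure normalized on each compact open piece. Only the class equivalence $\mu_{\Oppv[i](\zeta)}\sim\mu^o_{\Deltav[i]}|_{\Oppv[i](\zeta)}$ is used, and only in the Fubini step.

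The genuine gap is the point you dispose of in one clause: measurability of the set $E\subseteq\Deltaopv[i]$ of good pairs. It is indispensable, since Fubini says nothing about a non-measurable set all of whose sections are null. It is also not automatic. The left-hand side integrates, against the $\zeta$-dependent measure $\mu_{\Oppv[i](\zeta)}$, a quantity that depends on $(\eta,\zeta)$ through $\pr_\eta\pr_\zeta$ and through the normalization of $\mu_{\Res(\eta)}$ defined via $\calL_{\zeta,\eta,o}$. Nothing established earlier says these are measurable families in $\zeta$.

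The paper fills this with a local product argument, which you could adopt.
\begin{enumerate}
\item For $(\xi,\zeta)$ with $\zeta$ opposite $\xi_0$, choose $\iota\in\Fsingular_i$ with these endpoints and a ball $B$ such that $\pr_\zeta(\iota(B))\supseteq\pr_\zeta V$.
\item Let $U=U_+\times U_-$ be the product neighbourhood of endpoint pairs of elements of $U_B(\iota)$. On $U$ the sets $V_{\xi',\zeta'}$, and hence convergence, do not depend on $\zeta'$.
\item Therefore $E\cap U=(E_+\cap U_+)\times U_-$. Here $U_+\setminus E_+$ is measurable and null by the fixed-$\zeta$ case, being the non-convergence set of a sequence of measurable functions.
\item Countably many such $U$ cover the open conull set of pairs with $\zeta$ opposite $\xi_0$. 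So $E$ is measurable, and your Fubini step then goes through.
\end{enumerate}
Alternatively, you could give an equivalent direct proof of joint measurability. For instance, prove joint continuity of $(\theta,\eta,\zeta)\mapsto\pr_\eta\pr_\zeta\theta$ and make a measurable choice of the normalizations.
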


Recall from \eqref{eq:zeta_xi} that
\begin{align*}
  \int_{V_{\xi,\zeta}} f(\sigma) d\mu_{\Res(\xi)}(\sigma) &= \int_{\pr_{\zeta}(V)} f(\pr_\xi(\sigma)) d\mu_{\Res(\zeta)}(\sigma)\\& = \int_{V} f(\pr_\xi \circ \pr_\zeta) d\mu_{\Res(\zeta)}(\sigma).
\end{align*}

We first prove the following version in which $\zeta$ is fixed.

\begin{proposition}\label{prop:martingale_convergence_fV}
  Let $i \in \{1,2\}$ and $\zeta \in \Delta_{\bar{i}}$.
  Let $f \in L^1(\Opp(\zeta))$ and $V \subseteq \Res(\zeta)$ basic open and define $f_V \in L^1(\Oppv[i](\zeta))$ by
  \[
    f_V(\xi) = \int_V f(\pr_\xi \sigma) d\mu_{\Res(\zeta)}(\sigma).
  \]
  For $\xi \in \Oppv[i](\zeta)$ let $U_n(\xi)$ be as above and define
  \[
    f_{n,V}(\xi) = \fint_{U_n(\xi)} f_V(\eta) d\mu_{\Oppv[i](\zeta)}(\eta).
  \]
  Then $f_{n,V}$ converges to $f_V$ almost surely and in $L^1$ as $n \to \infty$. In particular, for almost every $\xi \in \Oppv[i](\zeta)$ we have
  \begin{equation}\label{eq:martingale}
    \int_V \fint_{U_n(\xi)} f(\pr_\eta \sigma) d\mu_{\Oppv[i](\zeta)}(\eta) d\mu_{\Res(\zeta)}(\sigma) \to \int_V f(\pr_\xi \sigma) d\mu_{\Res(\zeta)}(\sigma)
  \end{equation}
  for $n \to \infty$.
\end{proposition}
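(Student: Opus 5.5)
The plan is to recognize $f_{n,V}$ as a martingale with respect to a filtration of $\Oppv[i](\zeta)$ by finite partitions, and to apply the Martingale Convergence Theorem.

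First, identify $(\Oppv[i](\zeta),\mu^o_{\Oppv[i](\zeta)})$ with the limit $\calL_{\zeta,o}$ of the inverse system of embeddings of $\ell_i((-\infty,t])$ that agree with $\rho_{\zeta,o}$ on a subray. For $n\in\N$ let $\calF_n$ be the $\sigma$-algebra generated by the restriction map to $\ell_i((-\infty,n])$. Its atoms are countably many compact open sets, and each has positive finite measure by Lemma~\ref{lem:flat_symmetric}, since the restriction maps are constant-to-one and the measures are relatively uniform. These atoms should coincide with the sets $U_n(\xi)$, possibly after restricting to the measure-one set of $\xi$ whose line passes through $o$, or after an index shift by the finite length of the segment where $[o,\xi)$ leaves $\rho_{\zeta,o}$. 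This identification of $U_n(\xi)$ with $\calF_n$-atoms is the one point that needs care. If it fails for the given basepoint, I would decompose $\Oppv[i](\zeta)$ into the countably many compact open pieces on which $[o,\xi)$ branches off $\rho_{\zeta,o}$ at a fixed point. On each piece the $U_n(\xi)$ are, for large $n$, atoms of a shifted filtration, and by Proposition~\ref{prop:opp_locally_proportional} the measure there is proportional to the fixed one, so averages are unchanged.

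Given this, $f_{n,V}=\E[f_V\mid\calF_n]$ on each atom, because averaging over an atom of positive measure is exactly conditional expectation. The $\calF_n$ increase, and together they generate the Borel $\sigma$-algebra, since the $U_n(\xi)$ form a neighborhood basis and the space is second countable. The measure is $\sigma$-finite and finite on each atom of $\calF_0$. On each such atom we can therefore normalize to a probability space and apply Lévy's upward theorem (the $L^1$ martingale convergence theorem) to get $f_{n,V}\to f_V$ almost surely and in $L^1$. Summing over the countably many atoms gives the global statement.

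It remains to check that $f_V\in L^1$, so that the theorem applies. By Proposition~\ref{prop:disintegration_opp} and Fubini,
\[\int|f_V|\,d\mu_{\Oppv[i](\zeta)}\le\int_{\Oppv[i](\zeta)}\int_{\Res(\zeta)}|f(\pr_\xi\sigma)|\,d\mu_{\Res(\zeta)}\,d\mu_{\Oppv[i](\zeta)}=\norm{f}_{L^1(\Opp(\zeta))}.\]
This also shows that $f_V$ is measurable.

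Finally, \eqref{eq:martingale} follows by applying Fubini to the left-hand side, exchanging $\int_V$ and the average over $U_n(\xi)$. This is justified because $f$ is integrable on $V\times U_n(\xi)$ through the same disintegration, and the exchange turns the left-hand side into $f_{n,V}(\xi)$.
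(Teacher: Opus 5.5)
Your overall strategy, Lévy's upward theorem applied to $f_V$ on $\Oppv[i](\zeta)$, works. However, the step you flag is handled incorrectly, and it is also unnecessary.

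\textbf{The flawed step.} The set of $\xi$ whose line in $\calL_{\zeta,o}$ passes through $o$ has measure one, not full measure, so restricting to it proves nothing almost everywhere. Your fallback (an index shift, or decomposing by where $[o,\xi)$ ``branches off'' $\rho_{\zeta,o}$) is tree intuition that fails in rank two.

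Suppose the line $L$ from $\zeta$ to $\xi$ leaves $\rho_{\zeta,o}$ at a vertex $p\ne o$. In type $\tilde A_2$, the germs of $[p,o]$ and $[p,\xi)$ at $p$ are two distinct vertices of the link of $p$, both opposite the germ towards $\zeta$. They are therefore at distance $2$ in that projective plane, so the angle between them is $2\pi/3$. An apartment containing $o$ and a sector at $p$ bounded by $[p,\xi)$ also contains $[o,\xi)$. In that apartment $[o,\xi)$ is parallel to $[p,\xi)$ and disjoint from $L$.

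So $[o,\xi)$ never passes through $p$ or runs back along $\rho_{\zeta,o}$; it eventually runs along a different line from $\zeta$ to $\xi$. Nothing then identifies $U_n(\xi)$ with an atom of a shifted restriction filtration of $\calL_{\zeta,o}$. Proposition~\ref{prop:opp_locally_proportional}, which compares $\mu_{\Opp(\zeta)}$ with $\mu_\Delta$, does not address this either.

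\textbf{The repair.} Take $\calF_n$ to be generated by the sets $U_n(\xi)\cap\Oppv[i](\zeta)$ themselves.
\begin{enumerate}
\item These sets are the fibres of $\xi\mapsto[o,\xi)|_{[0,n]}$, so they form an increasing sequence of countable partitions.
\item $\Oppv[i](\zeta)$ is open and $(U_n(\xi))_n$ is a basis of compact open neighbourhoods of $\xi$. Hence each $\xi$ has a least $N(\xi)$ with $U_{N(\xi)}(\xi)\subseteq\Oppv[i](\zeta)$.
\item The sets $U_{N(\xi)}(\xi)$ partition $\Oppv[i](\zeta)$ into countably many compact open pieces of finite positive measure (positivity by full support).
\item Inside a piece $P=U_N(\xi)$, the $\calF_n$-atoms for $n\ge N$ are exactly the sets $U_n(\eta)\subseteq P$, and together they generate the Borel sets of $P$.
\end{enumerate}
Normalising $\mu_{\Oppv[i](\zeta)}$ on $P$ and applying Lévy's theorem to $f_V|_P$ then gives your conclusion. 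This is the ``piece by piece'' remark the paper makes just before its proof. Your $L^1$ estimate for $f_V$ and the final Fubini step are fine.

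\textbf{Comparison with the paper.} With this repair your argument is correct and differs mildly from the paper's. The paper runs the martingale on $\Opp(\zeta)\cong\Oppv[i](\zeta)\times\Res(\zeta)$ for $f$ itself, with $\calF_n$ generated by the sets $U_n(\xi)\times V$, so that $\E(f\mid\calF_n)=f_{n,V}/\mu_{\Res(\zeta)}(V)$. It then has to identify $\E(f\mid\calF_\infty)$ with $f_V/\mu_{\Res(\zeta)}(V)$ by testing on sets $U\times V$ via Proposition~\ref{prop:disintegration_opp}.

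You push $f$ down to $f_V$ first, paying with the $L^1$ and measurability check from the same disintegration. After that the limit is $f_V$ with no identification needed, because $\calF_\infty$ is the full Borel $\sigma$-algebra. Your version is literally Lebesgue differentiation on $\Oppv[i](\zeta)$, which is slightly more transparent.
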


The proof is an application of Martingale convergence which we state combining Theorem~12.17 and Theorem~11.3 in \cite{LeGall}.

\begin{theorem}[Martingale convergence]\label{thm:martingale_convergence}
  Let $(X,\calF,\mu)$ be a probability space and let $(\calF_n)_{n \in \N}$ be an ascending sequence of sub-$\sigma$-algebras of $\calF$. Let $f \in L^1(X,\calF,\mu)$ and define $f_n \in L^1(X,\calF_n,\mu)$ by $f_n(x) = \E(f \mid \calF_n)(x)$.

  Then $(f_n)_{n \in \N}$ converges $\mu$-almost everywhere and in $L^1$ to $f_\infty \defeq \E(f \mid \calF_\infty) \in L^1(X,\calF_\infty,\mu)$ where $\calF_\infty = \gen{\calF_n,n \in \N}$.
\end{theorem}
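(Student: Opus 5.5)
The approach is the classical one: reduce to a dense class of functions for which convergence is trivial, then control the error uniformly in $n$ by a maximal inequality.

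\emph{Reduction.} First replace $f$ by $f_\infty = \E(f\mid\calF_\infty)$. By the tower property, $\E(f\mid\calF_n) = \E(f_\infty\mid\calF_n)$ for every $n$, since $\calF_n\subseteq\calF_\infty$. So we may assume $f$ is $\calF_\infty$-measurable and $f_\infty = f$.

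\emph{Density.} Next I would show that $\bigcup_N L^1(X,\calF_N,\mu)$ is dense in $L^1(X,\calF_\infty,\mu)$.
\begin{itemize}
\item Since simple functions are dense, it suffices to approximate indicators $1_B$ with $B\in\calF_\infty$.
\item The sets $B$ that can be approximated in measure by sets in $\bigcup_N\calF_N$ form a $\sigma$-algebra (in particular a monotone class).
\item This class contains the algebra $\bigcup_N\calF_N$, which is an algebra because the $\calF_N$ are ascending. Hence it contains $\calF_\infty$.
\end{itemize}

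\emph{Approximation.} Given $\varepsilon>0$, pick $g\in L^1(\calF_N)$ with $\norm{f-g}_1<\varepsilon$ and put $h = f - g$. For $n\ge N$ we have $\E(g\mid\calF_n) = g$, so
\[
  f_n - f = \E(h\mid\calF_n) - h .
\]
Conditional expectation is an $L^1$-contraction, so $\norm{f_n - f}_1\le 2\varepsilon$ for all $n\ge N$. This gives the $L^1$ convergence.

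\emph{Almost everywhere convergence.} From the same decomposition,
\[
  \limsup_n \abs{f_n - f} \le \sup_n \abs{\E(h\mid\calF_n)} + \abs{h} .
\]
It therefore suffices to have Doob's maximal inequality
\[
  \mu\Bigl(\sup_n \abs{\E(h\mid\calF_n)} > \lambda\Bigr) \le \norm{h}_1/\lambda .
\]
Combining it with Markov's inequality for $\abs{h}$ gives
\[
  \mu\bigl(\limsup_n\abs{f_n-f} > 2\lambda\bigr) \le 2\varepsilon/\lambda .
\]
Letting $\varepsilon\to 0$ and then $\lambda\to 0$ along a countable sequence yields $f_n\to f$ almost everywhere.

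\emph{Main obstacle: the maximal inequality.} This is the only nontrivial step.
\begin{itemize}
\item Apply it to the nonnegative submartingale $M_n = \abs{\E(h\mid\calF_n)}$, which is dominated in conditional expectation by $\E(\abs{h}\mid\calF_n)$.
\item Decompose the event $\{\max_{n\le K} M_n>\lambda\}$ according to the first time $n$ at which $M_n>\lambda$. Each such piece $A_n$ lies in $\calF_n$.
\item On $A_n$ we have $\lambda\,\mu(A_n) \le \int_{A_n} M_n \le \int_{A_n}\abs{h}$, using the defining property of conditional expectation on $\calF_n$-measurable sets.
\item Summing over the disjoint $A_n$ gives $\norm{h}_1/\lambda$ as a bound. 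Letting $K\to\infty$ and using monotone convergence finishes the proof.
\end{itemize}
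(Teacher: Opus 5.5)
Your proof is correct and complete. Two points make the argument work: finiteness of $\mu$ is what makes the approximable sets a $\sigma$-algebra, and the left side of $\mu(\limsup_n\abs{f_n-f}>2\lambda)\le 2\varepsilon/\lambda$ does not depend on $\varepsilon$, which is what justifies letting $\varepsilon\to 0$.

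The paper does not prove this statement at all; it quotes it from Le Gall's textbook (Theorems~12.17 and~11.3 there). There it is obtained along the standard route. Doob's upcrossing inequality gives almost sure convergence of every $L^1$-bounded martingale, uniform integrability of $(\E(f\mid\calF_n))_n$ upgrades this to $L^1$-convergence, and a monotone class argument identifies the limit as $\E(f\mid\calF_\infty)$.

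You instead use a Banach-principle scheme: convergence is trivial on the dense subspace $\bigcup_N L^1(X,\calF_N,\mu)$, and the weak-type maximal inequality controls the error uniformly in $n$. This is shorter and self-contained, and it identifies the limit automatically. It also gets $L^1$-convergence from contractivity alone, without uniform integrability. The price is that it relies on the martingale being closed, i.e.\ of the form $\E(f\mid\calF_n)$. So it does not recover convergence of general $L^1$-bounded martingales, which the paper never needs.

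Your argument is also exactly the maximal-function proof of Lebesgue differentiation. This fits the paper's own description of Theorem~\ref{thm:martingale_convergence_fV} as a form of Lebesgue differentiation.
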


Note that the theorem assumes $X$ to be a probability space which $\Opp(\zeta)$ is not. However, it is a union of probability spaces $U^o(\sigma)$ and in all applications the $\sigma$-algebra $\calF_n$ for large enough $n$ will contain the sets $U^o(\sigma)$. Thus we may apply martingale convergence piece by piece.

\begin{proof}[Proof of Proposition~\ref{prop:martingale_convergence_fV}]
    We identify $\Opp(\zeta)$ with $\Oppv[i](\zeta)\times \Res(\zeta)$ as in Proposition \ref{prop:disintegration_opp}. 
    For fixed $n \in \Z$ let $\calF_n$ be the $\sigma$-algebra generated by the sets $U_{n}(\xi)\times V$ for $\xi\in \Opp(\zeta)$,  and let $\calF_\infty$ be the $\sigma$-algebra generated by all $U_{n}(\xi)\times V$ with $n \in \Z$ and $\xi \in \Opp(\zeta)$, in other words by all the $U\times V$ for $U\subset \Oppv[i](\zeta)$ measurable.
  

  Note that the sets $U_{n}(\xi)\times V$ are minimal in $\calF_n$, and are of positive measure. Therefore we get
  $$\E(f\mid \calF_n)(\xi) = \E(f \mid U_{n}(\xi)\times V)=\frac1{\mu_{\Res(\zeta)}(V)}f_{n,V}(\xi).$$ 
  
  By Theorem \ref{thm:martingale_convergence}  there is an $\calF_\infty$-measurable function $f_\infty$ such that $(f_{n,V})_n$ converges to $f_\infty$ almost surely and in $L^1$, and $f_\infty$ is characterized (up to measure 0) by the equality $\E(f \mid U) = \E(f_\infty \mid U)$ for $U \in \calF_\infty$.

  Let $\overline f$ defined by $\overline f(\eta):=\frac{1}{\mu_{\Res(\zeta)}(V)}\int_V f(\proj_\eta\sigma) d\mu_{\Res(\zeta)}(\sigma)$. Then $\overline f$ is $\calF_\infty$-measurable. Furthermore by Proposition~\ref{prop:disintegration_opp} we get that for every $U\subset \Oppv[i](\zeta)$ of positive measure we have 
  \begin{align*}\E(f\mid U\times V) &= \fint_U \fint_V f(\proj_\eta\sigma)d\mu_{\Res(\zeta)}(\sigma)d\mu_{\Oppv[i](\zeta)}(\eta) \\
  &= \fint_U \overline f(\eta) d\mu_{\Oppv[i](\eta)} = \E(\overline f\mid U\times V). 
  \end{align*}
  
  Since the sets $U\times V$, for $U\subset \Oppv[i](\zeta)$ generate $\calF_\infty$ this proves that $\overline f=f_\infty$ almost everywhere.
\end{proof}

\begin{proof}[Proof of Theorem~\ref{thm:martingale_convergence_fV}]
  Note that in the context of the theorem $f$ is an honest map (not an equivalence class). Since it is measurable and bounded and the sets over which the integrals are taken are compact, the integrals exist. They are defined whenever $\zeta$ is opposite $\xi_0$ which is true on an open co-null set $E_0$ of $\Deltaop$. Let $E \subseteq E_0$ be the set on which the desired convergence takes place.

  We claim that it is measurable. In order to see this, let $(\xi,\zeta) \in E_0$ be arbitrary and let $f \in \Fsingular_i$ be such that $f(\ell_i(\infty)) = \xi$ and $f(\ell_i(-\infty)) = \zeta$. Let $B \subseteq \Ssingular_i$ be large enough so that $\pr_{\zeta}(f(B)) \supseteq \pr_\zeta V$. Consider
  \[
    U = \{(f'(\ell_i(\infty)),f'(\ell_i(-\infty))) : f' \in U_B(f)\}
  \]
  which is a direct product $U_+ \times U_-$.
  For $(\xi',\zeta') \in U$ the sets $V_{\xi',\zeta'}$ do not depend on $\zeta'$, in fact, these sets $\pr_{\zeta'} V$ and the sets $V_{\xi',\zeta'}$ are all mapped to each other under projection. Thus convergence does not depend on $\zeta'$ and the intersection of $U \cap E$ is a direct product $(U_+ \cap E_+) \times U_-$. The set $U_+ \setminus E_+$ has to be measurable (in fact, a null-set) by Proposition~\ref{prop:martingale_convergence_fV}, hence $U \cap E$ is measurable. This proves that every point of $E$ has a neighborhood $U$ such that $E\cap U$ is measurable. By separability of $E_0$, we see that $E$ is covered by a countable family of such neighborhoods, and therefore $E$ is measurable as a countable union of measurable sets. 

  Let $E_\zeta = \{\xi \mid (\xi,\zeta) \in E\}$. Then using Fubini and Proposition~\ref{prop:martingale_convergence_fV} we have that
  \[
    \int_{\Deltav[\bar{i}]} \int_{\Oppv[i](\zeta) \setminus E_\zeta} 1 d\mu_{\Oppv[i](\zeta)}(\xi) d\mu_{\Deltav[\bar{i}]}^o(\zeta) = 0.
  \]
  Since $\mu_{\Oppv[i](\xi)} \otimes \mu_{\Deltav[\bar{i}]}$ and $\mu_{\Deltaopv[i]}$ define the same measure class, we see that $\mu_{\Deltaopv[i]}(\Deltaopv[i] \setminus E) = 0$.
\end{proof}

\section{Weak-* Convergence}\label{sec:weak-stark_convergence}

The goal of this section is to prove the following extension of \cite[Theorem~4.20]{BFL}.

\begin{theorem}\label{thm:convergence}
  Let $f \in L^\infty(\Delta)$ and let $i \in \{1,2\}$. For almost every pair of opposite vertices $(\xi,\zeta) \in \Delta^\op_i$ there exists a sequence $(\gamma_n)_n \in \Gamma^\N$ such that $(\gamma_nf)_n$ converges to $x \mapsto f \circ \proj_\xi \circ \proj_\zeta (x)$ in weak-$*$ topology.
\end{theorem}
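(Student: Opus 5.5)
We take the sequence $(\gamma_n)_n$ from Theorem~\ref{thm:contracting}, for $(\xi,\zeta)$ in the intersection of its full-measure set with the one from Theorem~\ref{thm:martingale_convergence_fV} (applied to countably many $V$). Write $\varphi = \proj_\xi\circ\proj_\zeta$. Since $\norm{\gamma_n f}_\infty = \norm{f}_\infty$, the sequence is bounded, so weak-$*$ convergence only needs to be tested against a total family in $L^1(\Delta)$. By Proposition~\ref{prop:opp_locally_proportional} and Proposition~\ref{prop:disintegration_opp}, indicator functions of small basic sets $U\times V$ (with $U\subseteq\Oppv[i](\zeta)$ basic and $V\subseteq\Res(\zeta)$ basic), on which $\mu_\Delta^o$ is proportional to $\mu_{\Opp(\zeta)}$, suffice. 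Since $\Opp(\zeta)$ is conull, the goal becomes
\[
\int_{U\times V} f(\gamma_n^{-1}\sigma)\,d\mu_{\Opp(\zeta)}(\sigma) \longrightarrow \int_{U\times V} f(\varphi\sigma)\,d\mu_{\Opp(\zeta)}(\sigma).
\]
Note that $\varphi(U\times V) = V_{\xi,\zeta} \subseteq \Res(\xi)$, which is a null set. So the right-hand side should be read as $\mu(U)\int_V f(\proj_\xi\sigma)\,d\mu_{\Res(\zeta)}$, since $\varphi$ collapses the $U$-coordinate to $\xi$.

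The plan is to change variables. In the compact-open sense (Theorem~\ref{thm:contracting}), $\gamma_n$ maps $U\times V$ into ever smaller neighborhoods of $V_{\xi,\zeta}$. More precisely, I would show that for large $n$ the image $\gamma_n(U\times V)$ is exactly a product set $U_{m(n)}(\xi)\times V'$, where $m(n)\to\infty$ and $V'$ is the set corresponding to $V$ under the perspectivity. This uses the translation structure from Lemma~\ref{lem:ergodicrecurrence}: $\gamma_n$ acts on a large ball of the wall space $Y_{\xi,\zeta}$ as the translation by $n$ toward $\xi$, so it preserves the product decomposition over that ball. Then $\Gamma$-invariance of the measure class, together with local proportionality (Proposition~\ref{prop:locally_pro_uniform}), shows that $\gamma_n$ pushes the normalized measure on $U\times V$ to the normalized measure on $U_{m(n)}(\xi)\times V'$. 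The Radon--Nikodym cocycle is locally constant, so on these basic sets it is a constant which is absorbed by normalization. After normalizing, the left-hand side becomes $\mu(U\times V)$ times
\[
\fint_{U_{m(n)}(\xi)}\int_{V'_{\eta,\zeta}} f\,d\mu_{\Res(\eta)}\,d\mu_{\Oppv[i](\zeta)}(\eta),
\]
and Theorem~\ref{thm:martingale_convergence_fV} sends this to $\int_{V_{\xi,\zeta}} f\,d\mu_{\Res(\xi)}$. That is the desired limit, up to the same constant.

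The main obstacle is the exact-image and measure-scaling step. The compact-open convergence only gives inclusion into neighborhoods, not equality of sets with the correct product structure and uniform normalization. I would try to extract this from the proof of Theorem~\ref{thm:contracting} rather than its statement. There $\gamma_n$ agrees with $h\circ\weight[i]^n\circ h^{-1}$ on a finite subcomplex $F$ of $Y_{\xi,\zeta}$. Choosing $F$ to contain the finite data defining $U\times V$, which is possible after shrinking $U$ and $V$, makes $\gamma_n(U\times V)$ a set defined by the translated data. Because the measures are pro-uniform limits and $\gamma_n$ is an automorphism, relative measures of cylinder sets are preserved. A further caveat is that the limit set must hold for all $V$ at once; I would handle this by intersecting over a countable basis of $V$'s and of test sets $U$, using density in $L^1$.
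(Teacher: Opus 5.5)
Your route is the paper's: take the sequence from Theorem~\ref{thm:contracting}, test against sets $U\times V$, transport normalized measures by $\gamma_n$, and finish with Theorem~\ref{thm:martingale_convergence_fV}. The paper also asserts $\gamma_nV=V$ and then applies that theorem directly to $\fint_{\gamma_nU}f_V$, so the point below is implicit there too.

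However, the step you single out does not go through as you plan. The identity $\gamma_n(U\times V)=U_{m(n)}(\xi)\times V'$ cannot hold for all basic $U$. Since $\gamma_n$ is a bijection, disjoint $U_1,U_2$ have disjoint images, whereas any two sets $U_m(\xi)\times V'$ and $U_{m'}(\xi)\times V'$ meet.

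Concretely, $U$ is the shadow of a segment of a ray issuing from $\zeta$, and this ray lies in $Y_{\xi,\zeta}$ only if $\xi\in U$. Otherwise it leaves $Y_{\xi,\zeta}$ before the end of the segment and never returns. This follows from convexity, and from the fact that in $Y_{\xi,\zeta}\cong\R\times T$ every geodesic issuing from $\zeta$ is vertical. Lemma~\ref{lem:ergodicrecurrence} controls $\gamma_n$ only on finite $F\subseteq Y_{\xi,\zeta}$. So no choice of $F$ captures the data defining such a $U$, and shrinking $U$ makes it worse.

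You cannot discard these $U$, because your test family must be total in $L^1$. Writing $\chi_U=\chi_B-\chi_{B\setminus U}$ with $\xi\in B$ only moves the problem to $B\setminus U$, whose image is not a ball either. So in general $\gamma_nU$ is a small set near $\xi$ that does not contain $\xi$. Theorem~\ref{thm:martingale_convergence_fV} only speaks about averages over the balls $U_m(\xi)$, so it does not apply.

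Your translation argument does give the following.
\begin{itemize}
\item Let $B\supseteq U$ be the basic set cut out by the part of the defining ray of $U$ that still lies on a line $\ell$ from $\zeta$ to $\xi$. That part is in $Y_{\xi,\zeta}$, so $\gamma_nB$ is a ball around $\xi$. It is based at a vertex of $\ell$ rather than at $o$; since the base vertex in Theorem~\ref{thm:martingale_convergence_fV} is arbitrary, intersect the full-measure sets over all countably many vertices.
\item Local proportionality gives $\mu(\gamma_nU)/\mu(\gamma_nB)=\mu(U)/\mu(B)=:c>0$. Indeed, on the shadow of a segment, $\mu_{\Oppv[i](\zeta)}$ is a multiple of the pro-uniform measure of extensions of that segment, and $\gamma_n$ transports this.
\item The $V$-coordinate needs nothing extra. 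Apply compact-open convergence to the compact sets $U\times V$ and $U\times(\Res(\zeta)\setminus V)$ and the open sets $\Oppv[i](\zeta)\times V$ and $\Oppv[i](\zeta)\times(\Res(\zeta)\setminus V)$. Together with $\gamma_n(U\times\Res(\zeta))=\gamma_nU\times\Res(\zeta)$, this gives $\gamma_n(U\times V)=\gamma_nU\times V$ for large $n$.
\end{itemize}

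The missing ingredient is the Lebesgue-point form of Theorem~\ref{thm:martingale_convergence_fV}: $\fint_{U_m(\xi)}\lvert f_V-f_V(\xi)\rvert\,d\mu\to0$ for almost every $(\xi,\zeta)$. It follows from martingale convergence applied to the countably many functions $\lvert f_V-q\rvert$, with $q$ in a countable dense subset of $\C$, followed by the same Fubini argument. It yields $\bigl\lvert\fint_{\gamma_nU}f_V-f_V(\xi)\bigr\rvert\le c^{-1}\fint_{\gamma_nB}\lvert f_V-f_V(\xi)\rvert\to0$, which is exactly what your last step needs.
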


To prove it we will combine Theorem~\ref{thm:martingale_convergence_fV} and Theorem~\ref{thm:contracting}.

\begin{proof}[Proof of Theorem~\ref{thm:convergence}]
  First note that $L^\infty$ only depends on the measure class and that for $\zeta \in \Deltav[\bar{i}]$ the inclusion $\Opp(\zeta) \to \Delta$ is measure-class preserving by Proposition~\ref{prop:opp_locally_proportional} thus $L^\infty(\Delta)$ and $L^\infty(\Opp(\zeta))$ are isomorphic.
  For a sequence $(\gamma_n)_n \in \Gamma^n$ the sequence $(\gamma_n f)_{n\geq 0}$ is a bounded sequence in $L^\infty(\Opp(\zeta))$. In order for it to converge, by weak-* compactness of balls, it suffices to prove that every cluster value of this sequence is $f\circ \pr_\xi\circ \pr_\zeta$. Therefore we can start by taking a converging subsequence of $(\gamma_n f)_{n\geq 0}$, and to identify the limit it is sufficient to integrate it along a dense subset of $L^1(\Opp(\zeta))$. This justifies that it is sufficient to prove the convergence above for functions $\varphi$ that are characteristic functions $\chi_{W}$ where $W \subseteq \Opp(\zeta)$ are basic open:  the linear span of these is dense in $L^1(\Opp(\zeta))$. Thus we want to show that
  \begin{equation}\label{eq:weak-*-convergence}
    \int_{W} f(\gamma_n^{-1}.\sigma) d\mu(\sigma) \longrightarrow \int_W f \circ \proj_\xi \circ \proj_\zeta(\sigma) d\mu(\sigma)
  \end{equation}
  where we will take $W \subseteq \Opp(\zeta)$ and $\mu = \mu_{\Opp(\zeta)}$.

  Let $F \subseteq \Deltaopv[i]$ be a set of full measure on which Theorem~\ref{thm:contracting} holds. Fix $\xi_0 \in \Deltav[i]$ and let $E \subseteq \Deltaopv[i]$ be a set of full measure such that Theorem~\ref{thm:martingale_convergence_fV} for $V$ ranging over a countable basis for the topology of $\Res(\xi_0)$. We claim that the theorem holds for $(\xi,\zeta) \in E \cap F$, so take one and let $(\gamma_n)_n$ be the sequence whose existence is guaranteed by Theorem~\ref{thm:contracting}.

  Let $V \subseteq \Res(\xi_0)$ in the countable basis for which Theorem~\ref{thm:martingale_convergence_fV} holds and let $U \subseteq \Oppv[i](\zeta)$ be open. Let $W = \bigcup_{\xi \in U} \pr_\xi V$. In terms of the decomposition $\Opp(\zeta) \cong \Res(\zeta) \times \Oppv[i](\zeta)$ provided by Proposition~\ref{prop:disintegration_opp} the set $W$ is just $U \times V$. Therefore it suffices to show \eqref{eq:weak-*-convergence} of this form.
  
  Using that $\gamma_n \colon (W,\mu_W) \to (\gamma_n W,\mu_{\gamma_nW})$ is measure-preserving (cf.~Proposition~\ref{prop:locally_pro_uniform}) the left hand side of \eqref{eq:weak-*-convergence} is
  \begin{equation}\label{eq:weak_star_sequence_characteristic}
    \int_{W} \gamma_nf(\sigma) d\mu_{\Opp(\zeta)}(\sigma) = \frac{\mu(W)}{\mu(\gamma_nW)} \int_{\gamma_n W} f(\sigma) d\mu_{\Opp(\zeta)}(\sigma).
  \end{equation}
  
   By construction of the sequence $(\gamma_n)$, we know that for $n$ large enough, we have $\gamma_n V=V$, and we will consider only such $n$s.

  Note that for $\sigma\in \Res(\zeta)$ and $\eta\in \Oppv[i](\zeta)$ we have $\proj_\zeta\proj_\eta(\sigma)=\sigma$.
  Therefore using Proposition~\ref{prop:disintegration_opp}, the right hand side of \eqref{eq:weak-*-convergence} can be written
  \begin{align*}
    &\int_W f \circ \pr_\xi \circ \pr_\zeta(\sigma) d\mu_{\Opp(\zeta)}(\sigma)\\
    = &\int_U \int_V f(\pr_\xi \pr_\zeta \pr_\eta \sigma) d\mu_{\Res(\zeta)}(\sigma) d\mu_{\Oppv[i](\zeta)}(\eta) \\
    = &\int_U \int_V f(\pr_\xi \sigma) d\mu_{\Res(\zeta)}(\sigma) d\mu_{\Oppv[i](\zeta)}(\eta) \\
    = &\mu_{\Oppv[i](\zeta)}(U) \int_V f(\pr_\xi \sigma) d\mu_{\Res(\zeta)}(\sigma)  \\
    =& \mu_{\Oppv[i](\zeta)}(U) f_V(\xi)
  \end{align*}
  where $f_V$ is as in Theorem~\ref{thm:martingale_convergence_fV}.

  On the other hand \eqref{eq:weak_star_sequence_characteristic} is
  \begin{align*}
    &\frac{\mu(W)}{\mu(\gamma_nW)} \int_{\gamma_n W} f(\sigma) d\mu_{\Opp(\zeta)}(\sigma)\\
    =&\mu_{\Oppv[i](\zeta)}(U) \fint_{\gamma_n U} \int_V f(\pr_\eta \sigma) d\mu_{\Res(\zeta)}(\sigma) d\mu_{\Oppv[i](\zeta)}(\eta)\\
    =&\mu_{\Oppv[i](\zeta)}(U) \fint_{\gamma_n U} f_V(\eta) d\mu_{\Oppv[i](\zeta)}(\eta).
  \end{align*}

  Dividing by $\mu_{\Oppv[i](\zeta)}(U)$ the desired convergence is now the convergence in Theorem~\ref{thm:martingale_convergence_fV}.
\end{proof}

\section{Factor theorem}\label{sec:factor_theorem}

As a first step toward the Factor Theorem (Theorem~\ref{thm:factor}) we prove the following weaker topological version (extending \cite[Corollary~4.27]{BFL}):

\begin{proposition}\label{prop:topfactor}
    Any $\Gamma$-invariant closed (for the norm topology) subalgebra of $C(\Delta_1)$ is either only the constants or $C(\Delta_1)$. The analogous statement holds for $\Delta_2$.
\end{proposition}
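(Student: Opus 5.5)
Let $\calA \subseteq C(\Delta_1)$ be a $\Gamma$-invariant, norm-closed subalgebra containing a non-constant function. I will assume, as is implicit, that $\calA$ contains the constants; otherwise adjoin them. By Stone--Weierstrass it suffices to show that $\calA$ separates points of $\Delta_1$. The plan is to show that $\calA$ is invariant under (enough) projectivities and then use their high transitivity. The first step upgrades the contracting sequences of Theorem~\ref{thm:contracting} to the level of $\Delta_1$. Take a full-measure set of opposite pairs $(\xi,\zeta)\in\Deltaopv[i]$, where $i$ is chosen so that the relevant residues are subsets of $\Delta_1$. For such a pair, $\gamma_n \to \proj_\xi\circ\proj_\zeta$ uniformly on compact subsets of $\Opp(\zeta)$. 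Restricting to chambers, and then to their vertices, the perspectivity composite $\Res(\zeta)\to\Res(\xi)\to\Res(\zeta)$ of the vertex spaces is a uniform limit on compacta. For $a\in\calA$, the functions $a\circ\gamma_n^{-1}$ lie in $\calA$. On the compact set $\Res(\zeta)$ they converge uniformly to $a\circ p$, where $p$ is the relevant projectivity restricted to $\Res(\zeta)$. Hence the restriction algebra $\calA|_{\Res(\zeta)}$ is invariant under those projectivities.

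The second step uses density and full support. The good pairs are dense in $\Deltaopv[i]$ by Lemma~\ref{lem:fullsupport}, and projectivities depend continuously on the chain by Lemma~\ref{lem:projectivity_multiplication_continuous}. As in Lemma~\ref{lem:FullMeasureDenseProj}, this should give that, for a residue $\Res(\zeta)\subseteq\Delta_1$, the restriction $\calA|_{\Res(\zeta)}$ is invariant under a dense subgroup of $\ProjGrp_\zeta$, hence under all of it by continuity and closedness. By Lemma~\ref{lem:2-transitive} this group is $2$-transitive on $\Res(\zeta)$. A closed subalgebra of $C(\Res(\zeta))$ invariant under a $2$-transitive group of homeomorphisms is either constant or separating: the relation "not separated by $\calA$" is closed and invariant, and $2$-transitivity forces it to be the diagonal or everything. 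So on each residue, $\calA$ either separates points or is constant.

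The third step globalizes. Suppose first that $\calA$ restricted to some residue is non-constant; by transitivity of $\Gamma$-orbits and the projectivity argument this then holds for every residue, so $\calA$ separates any two points of $\Delta_1$ lying in a common residue, i.e.\ adjacent via a common neighbour in the spherical building $\partial X$. Since the spherical building is thick, any two points of $\Delta_1$ lie at bounded gallery distance, but separation is not transitive, so I would instead argue via the closed $\Gamma$-invariant equivalence relation $R$ of non-separation. Its classes contain no two points of a common residue; I would then show, using minimality-type arguments (contracting sequences pushing large sets into small neighbourhoods), that $R$ is trivial. Otherwise $\calA$ is constant on every residue, so $R$ contains the relation generated by "lying in a common residue", which is everything by connectivity of $\partial X$, and $\calA$ is constant. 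The main obstacle is the first step: passing from convergence on $\Opp(\zeta)$ of chambers to uniform convergence of functions on a whole residue inside $\Delta_1$, where the residue may meet the non-opposite set. I would handle this by working with compact subsets of $\Res(\zeta)$ opposite a fixed vertex and covering.
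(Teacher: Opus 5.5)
There is a genuine gap, and it starts in your first step. Theorem~\ref{thm:contracting} controls $\gamma_n$ only on compact subsets of $\Opp(\zeta)$, and $\Res(\zeta)$ lies entirely outside that set. Indeed, the type-$i$ vertex of any chamber meeting $\Res(\zeta)$ is within distance $2$ of $\zeta$ in $\partial X$, so it is never opposite $\zeta$. You therefore get no convergence at all on $\Res(\zeta)$, and covering by compact pieces cannot fix that.

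Where you do get convergence, on $\Res(\xi)$, the limit $\proj_\xi\circ\proj_\zeta$ is the identity, because $\proj_\xi|_{\Res(\zeta)}$ and $\proj_\zeta|_{\Res(\xi)}$ are mutually inverse perspectivities. The composite $\Res(\zeta)\to\Res(\xi)\to\Res(\zeta)$ you write down is trivial, so a single contracting sequence never produces a non-trivial self-map of a residue. What it does give, for norm-closed $\calA$, is a relation between two different residues: for $\zeta'$ opposite $\zeta$ one has $a|_{\Res(\xi)}\circ[\xi,\zeta,\zeta']\in\calA|_{\Res(\zeta')}$. (The function $a\circ\proj_\xi\circ\proj_\zeta$ itself need not lie in $\calA$, since the convergence is not uniform on $\Delta_1$.) To obtain invariance of a single $\calA|_{\Res(\xi)}$ under a dense set of projectivities, you would have to chain such relations along several generic good pairs. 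None of this is in the proposal, so Step~2 has nothing to stand on.

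The second gap is the globalization in your case (a), which you leave at ``minimality-type arguments pushing large sets into small neighbourhoods''. That is not what the contracting limits do. The map $\proj_\xi\circ\proj_\zeta$ collapses $\Opp(\zeta)$ onto $\Res(\xi)$ while \emph{preserving} the $\Res(\zeta)$-coordinate, and that preservation is the missing idea. The paper proceeds as follows:
\begin{enumerate}
\item It takes a non-separated pair $(c,c')$ of chambers in distinct $1$-residues.
\item It picks $\zeta$ in the non-empty open (hence positive-measure) set where $\proj_\zeta c\neq\proj_\zeta c'$, with $(\xi_0,\zeta)$ good.
\item The pointwise limit $(\proj_{\xi_0}\proj_\zeta c,\proj_{\xi_0}\proj_\zeta c')$ then lies in the closed orbit closure $D$ and is an off-diagonal pair inside the single residue $\Res(\xi_0)$.
\item $2$-transitivity and Lemma~\ref{lem:FullMeasureDenseProj} make $D\cap(\Res(\xi)\times\Res(\xi))$ dense for almost every $\xi$.
\item Continuity of $\pi$ upgrades this to collapse on every $2$-residue, and gallery-connectedness finishes.
\end{enumerate}
Working throughout with this closed $\Gamma$-invariant relation, via Gelfand duality and a continuous $\pi\colon\Delta\to Y$, also avoids your Step~1 problems: only pointwise convergence at two chambers is ever needed.

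Your dichotomy is also only available for almost every residue. Its ``separating'' alternative cannot be promoted to every residue, since neither continuity nor countable $\Gamma$-orbits do this. So the claim that $\calA$ separates any two points in a common residue is unjustified. The paper's route only ever uses the collapsing alternative, which does pass to limits.
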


\begin{proof}
  If $X$ is of type $\tilde A_2$, this is \cite[Corollary 4.27]{BFL}. So we consider the case where $X$ is of type $\tilde C_2$ or $\tilde G_2$, so that $\bar{i} = i$.
    
  By Gelfand--Naimark duality \cite[Theorem~1.1]{Khalkhali09}, the statement of the proposition is equivalent to saying that for every compact $\Gamma$-space $Y$, every equivariant continuous surjective map $\pi'\colon\Delta_1\to Y$ is either bijective or $Y$ is reduced to a point. So we need to show that if $\pi'\colon\Delta_1\to Y$ is non-injective then it is constant. Composing with the projection $\Delta \to \Delta_1$ we obtain a map $\pi \colon \Delta \to Y$ that is continuous, equivariant and constant on residues of type $1$. We assume that it admits chambers $c,c'$ from different $1$-residues with $\pi(c) = \pi(c')$ (corresponding to non-injectivity of $\pi'$) and need to show that it is constant on $2$-residues. Since $\Delta$ is gallery-connected it then follows that $\pi$ is constant.

  For $(\xi,\zeta) \in \Deltaopv[2]$ put $T_\zeta^\xi=\proj_\xi\circ\proj_\zeta$.  Note that the set of pairs $(p \circ T_\zeta^\xi)(c,c')$ with $(\xi,\zeta) \in \Deltaopv[2]$ and $p \in P_\xi$ (acting diagonally) includes all pairs $(d,d') \in \Delta \times \Delta$ with $d \ne d'$ lying in a common $2$-residue $\Res \xi$: take $\zeta$ opposite $\xi$ so that $\pr_\zeta c \ne \pr_\zeta c'$ (see below) and choose $p$ appropriately using $2$-transitivity of $Q_\xi$ (Lemma~\ref{lem:2-transitive}). If $\pi(d) = \pi(d')$ for every such pair then $\pi$ would have to be constant. 
  Let $D \subseteq \Delta \times \Delta$ be the closure of the $\Gamma$-orbit of $(c,c')$. By continuity of $\pi$ we get that $\pi(d) = \pi(d')$ for $(d,d') \in D$. Our goal is to show that $(p \circ T_\zeta^\xi)(c,c') \in D$ often enough to still conclude that $\pi$ is constant.

  Theorem \ref{thm:contracting} implies that $T_\zeta^\xi (c,c') \in D$ for almost every $(\xi,\zeta)$. Applying further Lemma~\ref{lem:FullMeasureDenseProj} and using the 2-transitivity of the projectivity group we get that for almost every $(\xi_0,\xi) \in \Delta_2^2$, if $(\Res(\xi_0) \times \Res(\xi_0)) \cap D$ contains $(d,d')$ with $d \ne d'$ then $(\Res(\xi) \times \Res(\xi)) \cap D$ is dense in $\Res(\xi) \times \Res(\xi)$. Using Fubini it is also true that for almost every $\xi_0$ if $(\Res(\xi_0) \times \Res(\xi_0)) \cap D$ contains $(d,d')$ with $d \ne d'$ then for almost every $\xi$ the space $(\Res(\xi) \times \Res(\xi)) \cap D$ is dense in $\Res(\xi) \times \Res(\xi)$. Let us call $\xi_0$ \emph{admissible} if it comes from a corresponding co-null set and call $\xi$ \emph{admissible for $\xi_0$} if it comes from the second co-null set.

  We claim that $E = \{\zeta \in \Delta_2 \mid \proj_\zeta(c) \ne \proj_\zeta(c')\}$ has positive measure in $\Delta_2$. Let $c \in \Delta$ be a chamber and let $\xi$ be its vertex of type $2$. Then $\Oppv[2](\xi) \subseteq \Delta_2$ is open and $\Oppv[2](\xi) \to \Delta, \zeta \mapsto \pr_\zeta c$ is continuous by Lemma~\ref{lem:opp_decomposition_topological}. It follows that $E$ is open and hence of positive measure since any $\mu_{\Delta_2}^o$ has full support (a witness that $E \ne \emptyset$ can be found inside an apartment containing $c$ and $c'$).

  As a consequence there is a pair $(\xi_0,\zeta)$ with $T_\zeta^{\xi_0}(c,c') \in D$, $\xi_0$ admissisble and $\zeta \in E$ and so applying $T_\zeta^{\xi_0}$ we may assume that $c$ and $c'$ lie in a common residue $\Res(\xi_0)$ of type $2$.

  Note that if $A = \{(c,c') \in \Res(\xi) \times \Res(\xi) \mid \pi(c) = \pi(c')\}$ is dense in $\Res(\xi) \times \Res(\xi)$ then $\pi|_{\Res(\xi)}$ is constant: the preimage under $\pi \times \pi$ of the complement of the diagonal in $\Res(\xi) \times \Res(\xi)$ is open and meets $A$ trivially, hence is empty. This is the case if $\xi$ is admissible for $\xi_0$.

  We have seen that for almost every $\xi \in \Delta_2$ the restriction $\pi|_{\Res(\xi)}$ is constant. Since the measure has full support, this is true on a dense subset of $\Delta_2$. By continuity of $\pi$ it follows that the same holds for every $\xi$. Hence $\pi$ is constant.
\end{proof}

Recall that our goal is to prove Theorem \ref{thm:factor}, which we recall here for the convenience of the reader, again in the formulation of Theorem~\ref{thm:factorintro}.

\begin{theorem}
    Let $A$ be a weak-* closed $\Gamma$-invariant subalgebra of $L^\infty(\Delta)$. Then $A$ is either $L^\infty(\Delta)$, $L^\infty(\Delta_1)$, $L^\infty(\Delta_2)$ or $\C$.
\end{theorem}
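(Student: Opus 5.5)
The plan follows the strategy of \cite[Section~4]{BFL}: the $\Gamma$-invariance of $A$ is upgraded to invariance under almost all maps $\proj_\xi\circ\proj_\zeta$, and the high transitivity of projectivities then pins down $A$. Let $A$ be a weak-$*$ closed $\Gamma$-invariant subalgebra. Let $f \in A$ and $i \in \{1,2\}$. By Theorem~\ref{thm:convergence}, for almost every $(\xi,\zeta)\in\Deltaopv[i]$ there is a sequence $\gamma_n$ with $\gamma_n f \to f\circ\proj_\xi\circ\proj_\zeta$ weak-$*$. Since $A$ is weak-$*$ closed and $\Gamma$-invariant, $f\circ \proj_\xi\circ\proj_\zeta \in A$.

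This function is constant on residues of type $\bar i$ (through $\proj_\zeta$ it depends only on the projection to $\Res(\zeta)$). More precisely, it lies in the subalgebra of functions factoring through $\Delta\to\Res(\zeta)$, which is identified with $L^\infty(\Delta_{j'})$ for the appropriate type $j'$ via the vertex of type $j'$ of $\proj_\zeta\sigma$.

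The first step is a dichotomy. I would use the principle ``either $A \subseteq L^\infty(\Delta_k)$ or $A$ contains enough functions not measurable for $\Delta_k$''. For each type $k$, consider the restriction of $f$ to a residue $\Res(\xi)$ for $\xi$ of type $k$. Disintegrate $\mu_\Delta$ along $\Delta\to\Delta_k$ (Proposition~\ref{prop:disintegration_delta}). Suppose that for a positive measure set of $\xi$ the restriction $f|_{\Res(\xi)}$ is non-constant. Then $g=f\circ\proj_\xi\circ\proj_\zeta$ is, for suitable $(\xi,\zeta)$, a function of the residue coordinate along $\Res(\xi)$ that is non-constant. From $g \in A$, one applies the Factor Theorem machinery in the residue: the algebra $A_\xi = \{h|_{\Res(\xi)}\}$ obtained in this way is invariant under the extended projectivity group $Q_\xi$. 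This invariance comes from applying Theorem~\ref{thm:convergence} repeatedly together with Lemma~\ref{lem:FullMeasureDenseProj} and Lemma~\ref{lem:InvDenseSubgroup}, since compositions $[\xi_1,\dots,\xi_n]$ with generic pairs are dense in $Q_\xi$. Then $2$-transitivity of $Q_\xi$ on $\partial T_\xi\cong\Res(\xi)$ (Lemma~\ref{lem:2-transitive}) should force $A_\xi$ to be all of $L^\infty(\Res(\xi))$. The mechanism is a measurable analogue of Proposition~\ref{prop:topfactor}: a $Q_\xi$-invariant factor of $\Res(\xi)$ corresponds to a $Q_\xi$-invariant measurable equivalence relation, and $2$-transitivity together with ergodicity of $Q_\xi$ on pairs forces it to be trivial or everything.

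The second step assembles the result. If this residual algebra is everything for type $\bar i$ residues, then $A$ contains $L^\infty(\Delta_i)$-functions pulled back via $\proj_\xi\circ\proj_\zeta$. Using $\Gamma$-invariance and the same ergodicity, $A\supseteq L^\infty(\Delta_i)$ should follow, because pullbacks of $L^\infty(\Res\zeta)$ under $\proj_\zeta$, as $\zeta$ varies, generate $L^\infty(\Delta_i)$ up to weak-$*$ closure. If both residue types give full algebras, then $A \supseteq L^\infty(\Delta_1)\vee L^\infty(\Delta_2)=L^\infty(\Delta)$, since a chamber is determined by its two vertices.

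The remaining cases are handled as follows. If $A\subseteq L^\infty(\Delta_1)$, the problem reduces to $\Gamma$-invariant subalgebras of $L^\infty(\Delta_1)$, and the same argument applied to functions on $\Delta_1$ (now $f\circ\proj_\xi\circ\proj_\zeta$ restricted to $\Res(\xi')$ for $\xi'$ of type $2$) gives $\C$ or all of $L^\infty(\Delta_1)$. If $A$ consists of functions constant on almost all residues of both types, then $A$ is constant by ergodicity of $\Gamma$ on $\Delta$ (Theorem~\ref{thm:CartanErgodic}).

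The main obstacle is the measurable $2$-transitivity step: showing that a $Q_\xi$-invariant weak-$*$ closed subalgebra of $L^\infty(\Res\xi)$ is $\C$ or everything. I would argue that $Q_\xi$ is a closed subgroup acting on the boundary of a tree, with compact open point stabilizers acting transitively on horospherical shells, so $L^\infty(\Res\xi)$ is spanned by characteristic functions of shadows. A non-constant invariant function, averaged over stabilizers, yields a non-trivial function depending on a finite-level partition, and $2$-transitivity then separates any two shadows. Handling the ``almost every $\xi$'' quantifiers carefully via Fubini is the technical nuisance.
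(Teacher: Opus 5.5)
Your proposal is correct and mostly follows the paper; the one genuinely different step is how you pass from residue algebras to all of $L^\infty(\Delta_j)$. Shared with the paper: Theorem~\ref{thm:convergence} puts $f\circ\proj_\xi\circ\proj_\zeta$ into $A$, and density of generic projectivities (Lemma~\ref{lem:FullMeasureDenseProj}) plus $2$-transitivity puts a whole residue algebra $R_\zeta=\proj_\zeta^*L^\infty(\Res\zeta)$ into $A$. The case analysis is also the paper's: ``$A\not\subseteq L^\infty(\Delta_1)\Rightarrow L^\infty(\Delta_2)\subseteq A$'', its symmetric version, and Lemma~\ref{lem:treillis}.

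For the residue step, the paper (Lemma~\ref{lem:radial}) uses that $2$-transitivity makes point stabilizers maximal, so $P/M$ has no intermediate factors. You use instead that the relation $\{\pi(x)=\pi(y)\}$ of a factor is an invariant subset of the off-diagonal, on which the action is transitive and hence ergodic. That argument already settles what you call the main obstacle, so the sketch in your last paragraph is not needed.

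The paper needs only one good $\zeta$. It takes a continuous non-constant $h$ on $\Res(\zeta)$ and notes that $h\circ\proj_\zeta$ is continuous on $\Delta_j$. This holds only outside type $\tilde A_2$, where $\proj_\zeta$ is undefined at $\zeta$. It then applies the topological factor theorem, Proposition~\ref{prop:topfactor}, a separate contraction argument quoted from \cite{BFL} in type $\tilde A_2$. You instead want $R_\zeta\subseteq A$ for almost every $\zeta$ and let these algebras generate. Your route is purely measure-theoretic, bypasses Proposition~\ref{prop:topfactor}, and is uniform in the type. You must, however, supply two things you only gestured at:
\begin{enumerate}
\item The conull set of $\zeta$. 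It is most easily obtained by Fubini: for a.e.\ $\zeta$, the $\xi$'s that are good for Theorem~\ref{thm:convergence} meet the positive-measure set of $\xi$ with $f|_{\Res\xi}$ non-constant. An ergodicity argument would first need measurability of $\{\zeta : R_\zeta\subseteq A\}$.
\item The generation. For $b\neq b'$ in $\Delta_j$, the set of $\zeta$ with $\proj_\zeta b\neq\proj_\zeta b'$ is open and non-empty. So a countable dense subset of the good set gives a countable separating family of Borel maps, which generates the Borel $\sigma$-algebra of the standard Borel space $\Delta_j$. Lemma~\ref{lem:projection_measure-preserving} makes the pullbacks well defined.
\end{enumerate}

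Two slips should be fixed.
\begin{enumerate}
\item The type in Step~2. For $\zeta\in\Delta_{\bar i}$, the type-$i$ vertex of a generic chamber is opposite $\zeta$, so $\proj_\zeta\sigma$ depends only on the type-$j$ vertex. Hence $R_\zeta\cong L^\infty(\Res\zeta)$ is a proper subalgebra of $L^\infty(\Delta_j)$, not identified with it. Step~2 therefore yields $L^\infty(\Delta_j)\subseteq A$, not $L^\infty(\Delta_i)$. As written, your claim fails for $A=L^\infty(\Delta_j)$, although your case analysis uses the correct version.
\item The trivial case. There the reason is $L^\infty(\Delta_1)\cap L^\infty(\Delta_2)=\C$ (Lemma~\ref{lem:treillis}, via Fubini on $\Deltaop$). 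Ergodicity of $\Gamma$ (Theorem~\ref{thm:CartanErgodic}) is irrelevant, since elements of $A$ are not individually $\Gamma$-invariant.
\end{enumerate}
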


The following is \cite[Lemmas~5.1,~5.2]{BFL}.

\begin{lemma}\label{lem:treillis}
  The two algebras $L^\infty(\Delta_1)$ and $L^\infty(\Delta_2)$ generate a weak-*-dense subalgebra of $L^\infty(\Delta)$ and their intersection is $\C$.
\end{lemma}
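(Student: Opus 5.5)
Two things must be shown: that $L^\infty(\Delta_1)\cdot L^\infty(\Delta_2)$ spans a weak-* dense subalgebra of $L^\infty(\Delta)$, and that $L^\infty(\Delta_1)\cap L^\infty(\Delta_2)=\C$. Both are purely measure-theoretic statements about $(\Delta,\mu^o_\Delta)$, so I would fix a special vertex $o$ of type $0$ and work with the pro-uniform measure $\mu^o_\Delta$, regarding $L^\infty(\Delta_i)$ as functions on $\Delta$ through $\pi_i$.

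\emph{Density.} For $n\ge 1$ consider the finite partition of $\Delta$ into the sets $U_{C_n}^o(\sigma)$ with $C_n=S\cap\{\root[1]\le n,\root[2]\le n\}$; these sets generate the Borel $\sigma$-algebra. Every such set is determined by the image of $C_n$, and $C_n$ is the convex hull of $\ell_1([0,a])\cup\ell_2([0,b])$ for suitable $a,b$. Hence an embedding of $C_n$ (with $0\mapsto o$) is determined by its restrictions to the two boundary segments, provided those restrictions extend jointly. Therefore $U_{C_n}^o(\sigma)=U^o_{\ell_1(a)}(\sigma_1)\cap U^o_{\ell_2(b)}(\sigma_2)$, where $\sigma_1,\sigma_2$ are the vertices of $\sigma$, and each factor is a cylinder set pulled back from $\Delta_1$ resp.\ $\Delta_2$. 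It follows that the characteristic function of each atom is a product $\chi_{A_1}\circ\pi_1\cdot\chi_{A_2}\circ\pi_2$. Linear combinations of these are norm dense in $L^1$, so they are weak-* dense in $L^\infty$ (the standard martingale argument).

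\emph{Intersection.} Suppose $f=g_1\circ\pi_1=g_2\circ\pi_2$ a.e. I would use the disintegration of Proposition~\ref{prop:disintegration_delta}: for a.e.\ $\xi\in\Delta_1$ the function $f$ is a.e.\ constant on $\Res(\xi)_o$, because it equals $g_1(\xi)$ there. Pushing $\mu^{o}_{\Res(\xi)}$ to $\Delta_2$ via the type-$2$ vertex, I then need that the image measures for different $\xi$ are linked, in the sense that a set $B\subseteq\Delta_2$ on which $g_2$ is constant must spread. Concretely, $g_2$ is a.e.\ constant on the $\pi_2$-image of $\Res(\xi)_o$. That image is the cylinder of vertices whose first edge at $o$ is fixed, namely the projection $\pr_o\xi$ together with its neighbours in $\lk(o)$. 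Applying the same argument with the roles of $1$ and $2$ exchanged, $g_1$ is constant on the analogous cylinders in $\Delta_1$. So $f$ is constant on unions of cylinders $U^o(\sigma)$ that are linked whenever two chambers of $\lk(o)$ share a vertex. Since $\lk(o)$ is a connected spherical building, $f$ is constant on all of $\Delta$ up to a null set.

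\emph{Main obstacle.} The subtle step is the intersection argument at finer scales. The constancy on the cylinder $U^o(\sigma)$ shown above only uses the link of $o$, so it must be repeated at every basepoint, or equivalently the cylinders must be refined. The cleaner route is to use the product structure: restricted to $U^o_{C_n}$, the measure $\mu^o_\Delta$ is locally a product of a measure on $\Delta_1$-cylinders and conditional measures. A function depending a.e.\ only on the first factor and a.e.\ only on the second factor is then constant on the atom, by Fubini, provided the conditional measures have full support. Gallery-connectedness of atoms at each level $n$ would then force $\E(f\mid\calF_n)$ to be constant. Martingale convergence (Theorem~\ref{thm:martingale_convergence}) finally gives that $f$ is constant. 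Verifying the local product and full-support claim via Theorem~\ref{thm:disintegration} is where I expect the real work to lie.
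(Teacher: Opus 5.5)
Your density argument is essentially the paper's: a basic open set is written as $\pi_1^{-1}(\cdot)\cap\pi_2^{-1}(\cdot)$. The paper does this for the one-chamber sets $U^o(\sigma)$ with $o$ ranging over all special vertices; you do it for deeper sets at a fixed $o$. One correction is needed. The parallelogram $C_n$ is never the convex hull of $\ell_1([0,a])\cup\ell_2([0,b])$: such a hull is cut off by a constraint $\alpha_0\le k$ on the highest root, so it misses the corner of $C_n$ opposite $0$ unless it sticks out of $C_n$ along $\ell_1$ or $\ell_2$. Simply define $C_n$ to be such a hull; then agreement on the two boundary segments does force agreement on $C_n$.

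The intersection part has a genuine gap, and your ``main obstacle'' paragraph understates it: the local product claim is false, not merely unverified.
\begin{itemize}
\item The $\pi_2$-image of $\Res(\xi)_o$ is not a cylinder. It is the set $N_\xi$ of type-$2$ vertices at infinity adjacent to $\xi$, a closed set with empty interior.
\item $N_\xi$ is $\mu^o_{\Delta_2}$-null. Disintegrate over $\Delta_1$: a fibre $\Res(\xi')$ with $\xi'\ne\xi$ meets $\pi_2^{-1}(N_\xi)$ in at most one chamber (two distinct vertices of a generalized polygon have at most one common neighbour), and $\mu^o_{\Res(\xi')}$ has no atoms.
\item Hence the push-forward of $\mu^o_{\Res(\xi)}$ to $\Delta_2$ is singular to $\mu^o_{\Delta_2}$. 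The statement ``$g_2$ is a.e.\ constant on $\pi_2(\Res(\xi)_o)$'' therefore says nothing about $g_2$ on any set of positive measure, in particular not on a cylinder.
\item Your fallback fails for the same reason. The map $(\pi_1,\pi_2)$ sends $\Delta$ injectively onto the incidence set $\{(\xi,\eta)\mid \xi\text{ adjacent to }\eta\}$, which is null for $\mu^o_{\Delta_1}\times\mu^o_{\Delta_2}$ by Fubini and the previous point. So no atom $U^o_{C_n}(\sigma)$ is a measure-theoretic product with $\pi_1$ and $\pi_2$ as coordinate maps.
\item In the disintegration of Proposition~\ref{prop:disintegration_delta}, the second factor is the fibre $\Res(\xi)_o$, and $\pi_2$ is injective on it. So $g_2\circ\pi_2$ is not a function of a ``second coordinate'', and Fubini cannot make $f$ constant on atoms.
\end{itemize}

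What is missing is a place where two coordinates genuinely are independent. The paper finds it in pairs of opposite chambers.
\begin{itemize}
\item By Lemma~\ref{lem:F/A}, the map $\iota\mapsto(\iota(S(\infty)),\iota(-S(\infty)))$ from $\Fregular$ to $\Delta\times\Delta$ is measure-class preserving for the product class $\mu^o_\Delta\times\mu^o_\Delta$.
\item By $\tilde{W}$-invariance of $\mu_\Fregular$ (Lemma~\ref{lem:flow_measure_invariance}), the maps $\iota\mapsto\iota(\sigma)$ and $\iota\mapsto\iota(u)$ are measure-class preserving for every chamber $\sigma$ and vertex $u$ of $\partial\Sregular$.
\item If $\sigma,\sigma'$ are adjacent chambers of $\partial\Sregular$ with common vertex $u$ of type $i$, then $f\in L^\infty(\Delta_i)$ gives $f(\iota(\sigma))=f(\iota(\sigma'))$ for a.e.\ $\iota$.
\item Following the finite gallery in $\partial\Sregular$ from $S(\infty)$ to $-S(\infty)$ gives $f(\iota(S(\infty)))=f(\iota(-S(\infty)))$ for a.e.\ $\iota$. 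Hence $f(\sigma)=f(\sigma')$ for $\mu^o_\Delta\times\mu^o_\Delta$-a.e.\ $(\sigma,\sigma')$, and Fubini shows that $f$ is essentially constant.
\end{itemize}
Gallery-connectedness enters only through this one finite gallery in an apartment at infinity, and no martingale argument is needed.
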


\begin{proof}
  The characteristic functions $\chi_U$ for $U$ in a basis for the topology of $\Delta$ are dense in $L^\infty(\Delta)$. So for the first statement it suffices to show that they are contained in the algebra generated by $L^\infty(\Delta_1)$ and $L^\infty(\Delta_2)$. So let $o \in X$ be a vertex of type $0$, let $\sigma \in \Delta$. Let $c$ be the projection of $\sigma$ to $o$ and let $v,w$ be the other two vertices of $c$, chosen in such a way that a ray from $o$ through $v$ ends in a vertex of type $1$ and the ray through $w$ in a vertex of type $2$.

  The set $V$ of chambers $\sigma \in \Delta$ such that the projection to $o$ contains $v$ is an open subset of $\pi_1^{-1}(\Delta_1)$. Similarly the $W$ set of chambers such that the projection to $o$ contains $w$ is an open subset of $\pi_2^{-1}(\Delta_2)$. The intersection $V \cap W$ is the set $U = U^o(\sigma)$ of chambers whose porjection to $o$ is the chamber containing $v$ and $w$ (cf.\ \eqref{eq:Uo}). It follows that $\chi_U = \chi_V \cdot \chi_W$ lies in the algebra generated by $L^\infty(\Delta_1)$ and $L^\infty(\Delta_2)$.

  For the second statement recall from Lemma~\ref{lem:F/A} that the map $(\Fregular,\mu_\Fregular) \to \Delta \times \Delta,\iota \mapsto (\sigma_+(\iota),\sigma_-(\iota))$ is measure class-preserving when $\Delta \times \Delta$ carries either of $\mu_{\Deltaop}$ or $\mu_{\Delta}^o \times \mu_{\Delta}^o$. Let $f \in L^\infty(\Deltav[1]) \cap L^\infty(\Deltav[2])$. Then for any two chambers at infinity $\sigma,\sigma'$ of $\Sregular$, if $\sigma$ and $\sigma'$ share a vertex $u$ (say of type $i$), then the set of $A_u$ of all $\iota \in \Fregular$ such that $f(\iota(\sigma)) \ne f(\iota(\sigma'))$ has measure zero, as $f \in L^\infty(\Delta_i)$. The intersection of all $A_u$, for $u$ in the finitely many vertices in the boundary of $\Sregular$, is therefore of measure $0$, so that there is a co-null set $E \subseteq \Fregular$ such that $f(\iota(\sigma))$ is the same for all chambers at infinity $\sigma$ of $\Sregular$. In particular, $\{(\sigma,\sigma') \in \Delta \times \Delta \mid f(\sigma) \ne f(\sigma')\}$ has measure zero. Applying Fubini we see that for almost all $\sigma \in \Delta$ (in particular, for some $\sigma \in \Delta$) and almost all $\sigma' \in \Delta'$ we have $f(\sigma') = f(\sigma)$, i.e.\ $f$ is essentially constant, equal to $f(\sigma)$.
\end{proof}

For $u\in \Delta_i$ let $R_u=(\proj_u)^*L^\infty(\Res(u),\mu_u^o)$. In other words, $f\in R_u$ if and only if $f$ is essentially constant on almost all the preimages by $\proj_u$. By Lemma~\ref{lem:projection_measure-preserving}, $R_u$ is a subalgebra of $L^\infty(\Delta)$, in fact, $R_u\subset L^\infty(\Delta_{\overline i})$.

By 2-transitivity of $P_i$ on $\Res(u)$ we get easily the following (see \cite[Lemma 5.8]{BFL})

\begin{lemma}\label{lem:radial}
    Any $P_i$-invariant weak-* closed subalgebra of $R_u$ is either $R_u$ or reduced to the constants.
\end{lemma}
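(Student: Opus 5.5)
Identify $R_u$ with $L^\infty(\Res(u),\mu^o_{\Res(u)})$ via the measure-preserving map $\pr_u$ (Lemma~\ref{lem:projection_measure-preserving}). Since $P_i = P_u$ acts on $\Res(u)\cong\partial T_u$ by homeomorphisms, a $P_i$-invariant weak-* closed subalgebra $B\subseteq R_u$ corresponds to a $P_u$-invariant weak-* closed subalgebra $B'$ of $L^\infty(\Res(u))$. For this to make sense, projectivities must preserve the measure class on $\Res(u)$. I would check this one perspectivity at a time using the commutative diagram \eqref{eq:zeta_xi}: perspectivities are measure-preserving between the $\mu^o_{\Res}$ measures, and $\mu^o_{\Res}$ changes with $o$ only by a locally constant density. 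By the measure-space/subalgebra correspondence \cite[Theorem~3.3.4]{AnantharamanPopa}, $B'$ then corresponds to a $P_u$-equivariant factor $\pi\colon \Res(u)\to Z$. The task is to show that $\pi$ is either essentially injective or essentially constant.

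The tool is $2$-transitivity of $P_u$ on $\Res(u)$ (Knarr, as used in Lemma~\ref{lem:2-transitive}), upgraded to a measure-theoretic statement. Consider the relation $\mathcal{R}=\{(c,c')\in\Res(u)^2 : \pi(c)=\pi(c')\}$. It is measurable and invariant under the diagonal action of $P_u$, and it contains the diagonal. By $2$-transitivity, $P_u$ acts transitively on the off-diagonal set $\Res(u)^{(2)}$. If I can show this action is ergodic for the product measure class, then $\mathcal{R}\cap\Res(u)^{(2)}$ is either null or conull. If it is null, then $\pi$ is essentially injective and $B=R_u$. If it is conull, then Fubini shows that $\pi$ is essentially constant, so $B=\C$.

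The main obstacle is turning transitivity into ergodicity, because $P_u$ need not be locally compact or closed. I would pass to the closure $\overline{P}_u\subseteq\Aut(T_u)$, which is locally compact and second countable. Invariance passes to the closure by the argument of Lemma~\ref{lem:InvDenseSubgroup}: the action on $L^\infty$ is continuous, being dominated-convergence continuous for an action by measure-class preserving homeomorphisms with locally constant Radon--Nikodym derivatives. A transitive continuous action of the lcsc group $\overline{P}_u$ on the locally compact space $\Res(u)^{(2)}$ identifies that space with $\overline{P}_u/H$ for $H$ the stabilizer of a pair. A standard fact then gives that every invariant measurable set is null or conull for the quasi-invariant measure class, which is unique. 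The remaining routine check is that the product class $\mu\times\mu$ restricted to $\Res(u)^{(2)}$ is quasi-invariant. This follows because each element acts on each factor measure-class preservingly.

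Alternatively, I could avoid the quotient $Z$ altogether. For a projection $e\in B$ onto a set $E\subseteq\Res(u)$, consider the $P_u$-invariant measurable set $(E\times E^c)\cup(E^c\times E)$ and apply the same ergodicity argument. This shows $E$ is null or conull, since otherwise this set and its complement off the diagonal are both non-null. So $B$ contains no nontrivial projections, and a weak-* closed (von Neumann) algebra with no nontrivial projections is $\C$. This settles the non-injective case directly. Combined with the factor dichotomy, it yields the statement: any $B\neq\C$ whose factor is injective is all of $R_u$.
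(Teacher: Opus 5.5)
There is a genuine gap: the dichotomy for $\mathcal R$ cannot separate injective factors from intermediate ones.

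\textbf{The null case does not give injectivity.} Write $\nu=\pi_*\mu$, where $\mu=\mu^o_{\Res(u)}$ is a probability measure. Then
$(\mu\times\mu)(\mathcal R)=\int\mu(\pi^{-1}(\pi(c)))\,d\mu(c)=\sum_{z}\nu(\{z\})^2$,
the sum running over the atoms of $\nu$. So $\mathcal R$ is null exactly when $\nu$ has no atoms. Every factor with non-atomic base has this property, whatever its fibres look like; a coordinate projection $[0,1]^2\to[0,1]$ is an example. Ergodicity of $P_u$ on $\Res(u)^{(2)}$ therefore only shows that the factor is either a point or non-atomic. It says nothing about intermediate non-atomic factors, and ruling those out is the whole content of the lemma. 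The step ``if it is null, then $\pi$ is essentially injective'' is false as stated.

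\textbf{The alternative argument fails too.} The set $(E\times E^c)\cup(E^c\times E)$ is $P_u$-invariant only if $E$ itself is invariant. Your algebra $B$ is invariant as a whole, not elementwise. If the argument worked, it would show that $R_u$ itself has no non-trivial projections, which is absurd.

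\textbf{What is missing is a way to turn set-theoretic primitivity into a measure-theoretic statement.} The paper identifies $\Res(u)$ with the homogeneous space $P_i/M$, where $M$ is the stabilizer of a chamber. It notes that $2$-transitivity implies primitivity, so $M$ is a maximal subgroup. It then uses that the equivariant factors of $L^\infty(P_i/M)$ are exactly the $L^\infty(P_i/K)$ with $M\le K\le P_i$. Behind this correspondence is a Mackey-type point realization: an a.e.\ equivariant map out of a transitive space of a locally compact second countable group agrees a.e.\ with a strictly equivariant one. Your passage to $\overline{P}_u$ is the right setting for that step.

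Once $\pi$ is strictly equivariant, your relation $\mathcal R$ is a genuinely invariant subset of $\Res(u)\times\Res(u)$ containing the diagonal. Transitivity on $\Res(u)^{(2)}$ then forces $\mathcal R$ to be either the diagonal or everything, with no ergodicity argument needed. Without this strictness step, ``null versus conull'' is too coarse to conclude.
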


\begin{proof}
  It suffices to prove the statement with $R_u$ replaced by $L^\infty(\Res(u),\mu_u^o)$. Since any two vertices of $\Delta_i$ are connected by a projectivity we may assume $u = \xi_i$. Using transitivity of $P_i$ on $\Res(u)$ we may identify $\Res(u)$ with $P_i/M$ where $M$ is the stabilizer of some $\sigma \in \Res(u)$. Since the action of $P_i$ is $2$-transitive (Lemma~\ref{lem:2-transitive}), it is primitive and hence $M$ is a maximal subgroup. Now the only $P_i$-equivariant factors of $L^\infty(P_i/M)$ are $L^\infty(P_i/K)$ for $M < K < P_i$ and the claim follows from maximality of $M$.
\end{proof}

\begin{proof}[Proof of Theorem \ref{thm:factor}]
Let $A\subset L^\infty(\Delta)$ be a weak-* closed $\Gamma$-invariant subalgebra. 

We claim that if $A$ contains a function that is not in $L^\infty(\Delta_1)$ then it contains $L^\infty(\Delta_2)$. This (and its symmetric statement) together with Lemma \ref{lem:treillis} implies the result.

So let $f\in A$ which does not lie in $L^\infty(\Delta_1)$, i.e.\ there is a subset of positive measure of $\xi\in\Delta_1$ such that $f$ is not $\mu_{\Res(\xi)}^o$-essentially constant on $\Res(\xi)$.

Since $A$ is closed and $\Gamma$-invariant, by Theorem \ref{thm:convergence} it follows that $f \circ T_\zeta^{\xi'} \in A$ for every $f \in A$ and almost every $(\xi',\zeta) \in \Deltaopv[i]$ where $T_\zeta^{\xi'} = \pr_{\xi'} \circ \pr_\zeta$ as before. By composing such elements and applying Lemma~\ref{lem:FullMeasureDenseProj} we see that for almost every $\xi' \in \Deltav[1]$, every $f \in R_{\xi'} \cap A$ and a $p$ in a dense subset of the projectivity group $P_{\xi'}$ we have $f \circ p \in A$.

Let $\xi$ be as in the second paragraph, and consider $g=f\circ T_\zeta^\xi \in A$. By our choice of $\xi$ we get that $g$ is not constant, and by definition $g\in R_\xi\cap A$. Since $R_\xi \cap A$ is weak-$*$ closed and invariant under a dense subset of $P_\xi$, it is $P_\xi$-invariant and thus all of $R_\xi$ by Lemma~\ref{lem:radial} we deduce that $R_\xi\subset A$. 

Now let $h\in C(\Res(\xi))$ be a continuous non-constant function, and let $\tilde h=h\circ \proj_\xi \in R_\xi$. If we are not in type $\tilde A_2$, then $\tilde h$ is a continuous function on $\Delta_2$, and it is non-constant. By Proposition \ref{prop:topfactor}, we deduce that $C(\Delta_2)\subset A$. Since $C(\Delta_2)$ is weak-* dense in $L^\infty(\Delta_2)$ we conclude that $L^\infty(\Delta_2)\subset A$, which concludes the proof.
\end{proof}

\section{Non-linearity}

In this section we explain how to derive Theorem~\ref{thm:intrononlinearity}. Unlike to the rest of the article we do not give a self-contained treatment but explain how to adapt \cite[Section~8]{BCL}. 

\begin{proof}[Proof of Theorem~\ref{thm:intrononlinearity}]
Since $\Gamma$ has property (T), by \cite[Corollary A]{BCL}, it suffices to prove that any finite index subgroup of $\Gamma_0<\Gamma$ has no representation $\rho:\Gamma\to \bfG(K)$, where $K$ is a local field, $\bfG$ is a connected, adjoint $K$-simple $K$-group and the image of the representation is Zariski dense and unbounded. Since $\Gamma_0$ is still a lattice of $X$, we can assume $\Gamma=\Gamma_0$. By contraposition, we assume that such a representation exist, and aim to prove that $\overline P_i$ has a representation into some $\GL_n(K)$.

To do so, the main tool we will use is Bader--Furman's theory of algebraic representations \cite{BaderFurman13}. Let $Z$ be a $\Gamma$-ergodic standard measure space. An \emph{algebraic representation} of $Z$ is a $\rho$-equivariant map $\phi:Z\to \bfG/\bfH(K)$, where $\bfH<\bfG$ is an algebraic subgroup. Among the algebraic representations there is an intial one \cite[Theorem 4.3]{BaderFurmanSRProducts}) $\phi_0 \colon Z \to \bfG/\bfH_Z(K)$. This means that for every representation $\phi$ there is a unique $\bfG$-equivariant map $\psi \colon \bfG/\bfH_Z \to \bfG/\bfH$ defined over $K$ such that $\phi = \psi(K) \circ \phi_0$. We call $\bfH_Z$ (which is unique up to isomorphism of $\bfG$) the \emph{gate} of $Z$.

The universal property of the gate implies that if $M$ is a group acting on $Z$, and such that this action commutes with that of $\Gamma$, there exists a homomorphism from $M$ to the automorphism group of the gate, which is (in the category we consider) the quotient $N(\bfH_Z)/\bfH_Z(K)$. Furthermore, if $M$ is a Polish group and the action is Borel, then this map is continuous \cite[Proposition 8.5]{BCL}.

  Now we consider ergodic actions of $\Gamma$ on $\Delta$, on $\Deltaop$, and on $\Fsingular_i/A_i$ respectively (with their respective measure classes), and let $\bfH_\Delta$, $\bfH_{\Deltaop}$, $\bfH_{\Fsingular_i/A_i}$ be their respective gates. Composing an algebraic representation of $\Deltaop$ with the map $\Fsingular_i/A_i\to \Deltaop$ gives a representation of $\Fsingular_i/A_i$, so that we have inclusions $\bfH_\Delta > \bfH_{\Deltaop} > \bfH_{\Fsingular_i/A_i}$. 

  The proof follows closely \cite[p.~541]{BCL}, and we divide the proof into three steps as well (the fourth step is applying Schleiermacher's Theorem, which we cannot). Step (1) is claiming that $\bfH_{\Fsingular_i/A_i}$ is a proper subgroup of $\bfG$, or equivalently, that there exists an algebraic representation of $\Fsingular_i/A_i$ which is not reduced to a point. By the previous remark, it suffices to prove the same for $\bfH_\Delta$. Now, as discussed in \ref{thm:action_amenable} and Proposition~\ref{prop:amenable_action_map}, the action of $\Gamma$ on $\Delta$ is amenable, so using \cite[Theorem 6.1]{BDL} (together with the ergodicity of the action of $\Gamma$ on $\Delta\times\Delta\simeq \Deltaop$ from Theorem~\ref{thm:CartanErgodic}) we get the existence of a non-trivial representation of $\Delta$.
 
  From the above discussion, since the group $M_i$ acts on $\Fsingular_i/A_i$, commuting with the action of $\Gamma$, we get a homomorphism $M_i\to N_G(\bfH_{\Fsingular_i/A_i})/\bfH_{\Fsingular_i/A_i}$. More down-to-earth, the group $\Z/2\Z=\{1,\tau\}$ acts on $\Deltaop$ by $\tau(c,d)=(d,c)$, and similarly we get a homomorphism $\Z/2\Z\to N_G(\bfH_{\Deltaop})/\bfH_{\Deltaop}(K)$.

  Step (2) of the proof claims that this morphism is not trivial. If it were, the two maps $\Deltaop \to \bfG/\bfH_{\Delta}(K)$ given by composing the first and second projection with the gate representation of $\Delta$ would coincide. But since $\Deltaop\simeq \Delta\times \Delta$ this would mean that this gate representation of $\Delta$ is essentially constant, which, as we discussed, is not the case.

  Before moving to step (3), let us discuss the projectivity group $G_i=\overline P_i$. This group acts 2-transitively on the boundary of $T_i$, so we can apply \cite[Theorem 3.7]{BCL} (compiling some results from Burger and Mozes \cite{BurgerMozesLocalGlobal}) to conclude that it has a cocompact closed subgroup $G^+_i$ that is topologically simple. 

  In step (3) we show that the action of $G^+_i$ is non trivial. Indeed, since $G_i^+$ acts 2-transitively on $\partial_\infty T_i$, there exists a $m\in G_i^+$ such that the map $d:\Fsingular_i/A_i\to \Deltaop$ is such that $d(m\iota) = \tau d(\iota)$ for every $\iota $. The group $G_i^+$ acts on $\Fsingular_i/A_i$, commuting with the action of $\Gamma$, so we get a continuous homomorphism $G_i^+\to N_G(\bfH_{\Fsingular_i/A_i})/\bfH_{\Fsingular_i/A_i}$. Using all the previous equivariant maps, we see that the action of $m$ on that space cannot be trivial. Since $G_i^+$ is topologically simple (and the action is continuous), it means that the map $G_i^+\to N_G(\bfH_{\Fsingular_i/A_i})/\bfH_{\Fsingular_i/A_i}$ is injective. Hence the group $G_i^+$ is linear over a local field.
\end{proof}

\appendix

\section{Limit measures}\label{sec:limit_measures}

\subsection{Constant-to-one maps and measures}

Let $\Meas$ be the category whose objects are $\sigma$-finite measure spaces and whose morphisms are surjective measure-preserving maps. Since the morphisms are maps rather than maps up to measure $0$, this is not the category one might usually want to consider for measure theory but it suits our purposes. Note that this choice means in particular, that there is a forgetful functor to the category $\Mble$ of measurable spaces and measurable maps, and one further to $\Set$.

\begin{definition}
  A map $f \colon X \to Y$ is \emph{$n$-to-one} if $\abs{f^{-1}(y)} = n$ for all $y \in Y$. It is \emph{constant-to-one} if it is $n$-to-one for some $n \in \N_{>0}$. We call $n$ the \emph{fiber size} of an $n$-to-one map.
\end{definition}

Note that being constant-to-one is more restrictive than being finite-to-one. Further note that constant-to-one maps are surjective. The composite of an $m$-to-one and an $n$-to-one map is an $(mn)$-to-one map, so sets and constant-to-one maps form a category which we denote by $\CTO$.

We now consider constant-to-one maps in the context of measure spaces.

\begin{definition}
  Let $Y$ be a countable set and let $\kappa_Y$ denote the counting measure on $Y$ (with respect to the discrete $\sigma$-algebra). For $r > 0$ the measure $r \cdot \kappa_Y$ is a \emph{rescaled counting measure}.
Let $f \colon X \to Y$ be an $n$-to-one map and let $Y$ be equipped with a rescaled counting measure $\nu$. The \emph{uniform measure relative to $\nu$} is the measure $\mu$ on $X$ with respect to the discrete $\sigma$-algebra given by $\mu(\{x\}) = \frac{1}{n}\nu(\{f(x)\})$. Thus if $\nu = r \cdot \kappa_Y$ then $\mu = \frac{r}{n} \cdot \kappa_X$. In that case we also say that $f$ is \emph{uniformly measure-preserving}.
\end{definition}

The relatively uniform measure satisfies

\begin{equation}\label{eq:relatively_uniform_measure}
  \int_X g(x) d\mu(x) = \int_{y \in Y} \frac{1}{n} \bigg(\sum_{x \in g^{-1}(y)} g(x)\bigg) d\nu(y).
\end{equation}
and $f_*(\mu) = \nu$.

If $\lambda$ is the uniform measure relative to the counting measure on $X$ then \eqref{eq:relatively_uniform_measure} can be written as
\[
  \int_X g(x) d\mu(x) = \int_{y \in Y} \int_{x \in g^{-1}(y)} g(x) d\lambda(x) d\nu(y).
\]
This is the kind of expression that we are after, with the (relatively) uniform measures replaced by (relatively) pro-uniform ones, i.e.\ by limits.

Recall that a diagram $F \colon \calD \to \calC$ is just a functor from a small (index) category to $\calC$. For our purposes $\calD$ will always be a directed set so $(F(d))_{d \in \calD}$ is a directed ($F$ covariant) or inverse ($F$ contravariant) system. Also recall that $\calD$ is connected if any two objects are connected by a sequence of morphisms which can be traversed in forward or backward direction.

\begin{proposition}\label{prop:cto_limit_measure}
  Let $F \colon \calD \to \CTO$ be a diagram of constant-to-one maps and let $d_0 \in \calD$. Assume $\calD$ to be small and connected and $F(d_0)$ to be countable (possibly finite). Then there is a unique diagram $G \colon \calD \to \Meas$ such that $F$ and $G$ induce the same diagram in $\Set$ (composing with the respective forgetful functors), $G(d_0)$ carries the counting measure and every morphism $G(d \to e)$ is uniformly measure-preserving.
\end{proposition}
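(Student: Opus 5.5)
The plan is to assign to every object $d$ a rescaling factor $r_d>0$ and to set $G(d) = (F(d), r_d \cdot \kappa_{F(d)})$ with the discrete $\sigma$-algebra and $G(d\to e) = F(d \to e)$. If $F(d \to e)$ is $n$-to-one, it is uniformly measure-preserving from $G(e)$ to $G(d)$ (or in the covariant direction, whichever variance $F$ has; I treat a morphism $\alpha$ with $F(\alpha)\colon F(x)\to F(y)$ being $n_\alpha$-to-one) exactly when $r_x = r_y/n_\alpha$. So the whole statement reduces to solving the system $r_x = r_y / n_\alpha$ for all morphisms $\alpha$, subject to $r_{d_0} = 1$.

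First, countability: each $F(d)$ must be countable for counting measures to be $\sigma$-finite and for the measure to live on the discrete $\sigma$-algebra. Since constant-to-one maps are surjective with finite fibers, if $F(x)\to F(y)$ is constant-to-one then $F(x)$ is countable iff $F(y)$ is. By connectedness of $\calD$, every $F(d)$ is linked to $F(d_0)$ by a zigzag, so all are countable.

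Existence: define $r_d$ by choosing a zigzag $d_0 = e_0, e_1, \ldots, e_k = d$ and multiplying/dividing by the fiber sizes along it. The key step, and the main obstacle, is well-definedness, i.e.\ independence of the zigzag; equivalently, that every closed zigzag has trivial product of fiber sizes (with signs as exponents). Here I would use that $\calD$ is a directed set: for two parallel morphisms there is only one morphism $x\to y$, and commuting triangles give $n_{\beta\alpha} = n_\beta n_\alpha$ by multiplicativity of fiber sizes under composition. Any zigzag can be pushed up to a common upper bound: for a backward step $x \leftarrow y$ followed by anything, compare both endpoints to an upper bound $u$; the triangles $x \to u$, $y\to x \to u$ give $n_{y\to u} = n_{x\to u} n_{y \to x}$, so the contribution of each step equals the ratio $n_{\cdot\to u}$ of its endpoints, and the product around a closed loop telescopes to $1$ (choosing a single upper bound $u$ for all finitely many vertices of the loop). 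This shows $r_d$ is well defined, and then each morphism is uniformly measure-preserving by construction, and $r_{d_0} = 1$ gives counting measure there.

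Uniqueness: any $G$ as in the statement has underlying sets and maps those of $F$, the measure on each $G(d)$ must be a rescaled counting measure (being the relatively uniform measure along a morphism to or from a rescaled counting measure, propagated along a zigzag from $d_0$), and the constraint $r_x = r_y/n_\alpha$ along the zigzag forces the same factors $r_d$. Hence $G$ is unique.
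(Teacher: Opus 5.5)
Your proof is correct and follows the same route as the paper: propagate the measure from $d_0$ along zigzags using pushforwards and relatively uniform measures, then check that the result does not depend on the path. The paper justifies path-independence only by saying the two operations are functorial and mutually inverse, whereas you prove it by telescoping over a common upper bound. That use of directedness is genuinely needed: for a general small connected $\calD$, two parallel morphisms sent to a $1$-to-one and a $2$-to-one self-map of $\N$ admit no consistent rescaling.
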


If $F(d_0)$ is countable then in fact all $F(d)$ are countable.

\begin{proof}
  As a measurable space we take $G(d)$ to be $F(d)$ with the discrete $\sigma$-algebra. We equip $G(d_0)$ with the counting measure. If $G(d)$ has been equipped with a measure and $d \to e$ is a morphism in $\calD$, we push the measure along $G(d \to e)$. If $G(e)$ has been equipped with a measure and $d \to e$ is a morphism we equip $G(d)$ with the relatively uniform measure with respect to $G(d \to e)$. Since $\calD$ is connected, this defines (at least) a measure on every $G(d)$. For well-definedness it remains to observe that pushforward of measures and taking relatively uniform measures are functorial and mutually inverse for countable spaces with rescaled counting measures and constant-to-one maps.
\end{proof}

\begin{internal}
\begin{example}
  That the measures be rescaled counting measures is important in order for taking the pushforward measure and the relatively uniform measure to be mutually inverse to each other: taking any measure $\nu$ on $\{0,1\}$ pushing it along $f \colon \{0,1\} \to \{0\}$ and taking the relatively uniform measure gives the rescaled counting measure $r \cdot \kappa_{\{0,1\}}$ where $r = \frac{1}{2} \nu(\{0,1\})$. So $\nu$ has to be rescaled counting in order to recover it.
\end{example}\sw{But if we just take relatively uniform measures on the $j \ge i$ they form a cofinal system?!}
\end{internal}

\begin{remark}
  \begin{enumerate}
    \item
      Note that replacing $d_0$ by $d$ where $F(d \to d_0)$ is $n$-to-one has the effect of scaling all measures by $n$. Replacing it by $d$ by $d_0$ where $F(d_0 \to d)$ is $n$-to-one has the effect of scaling all measures by $1/n$. It follows that all measures obtained by varying the choice of $d_0$ are rationally proportional.
    \item One could equip $G(d_0)$ with an arbitrarily chosen discrete measure at the expense of $G$ being defined only on the subdiagram of $d$ that map to $d_0$ in order to avoid pushforwards. Since we will eventually be interested in inverse systems this would not be an issue since the resulting system is cofinal. On the other hand we do not have use for the added generality.
  \end{enumerate}
\end{remark}

\subsection{Limit measures}

The following theorem should be compared to Kolmogorov's Consistency Theorem \cite[Theorem~10.6.2]{Cohn13} which asserts that the limit of an inverse system consisting of finite subproducts of an infinite product exists provided the spaces are totally finite and inner regular. More general limit theorems are proven in \cite{Choksi58}.

\begin{proposition}\label{prop:limit_measure}
  If $I$ is a directed set and $((X_i)_i,(f_{ij})_{ij})$ is an inverse system in $\Meas$ of countable discrete spaces with rescaled counting measures and uniformly measure-preserving maps then $\lim_i X_i$ exists.
\end{proposition}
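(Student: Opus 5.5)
The plan is to build the limit as the set-theoretic inverse limit $X = \lim_i X_i$, with the $\sigma$-algebra generated by the cylinder sets $\pi_i^{-1}(\{x\})$ for $i \in I$ and $x \in X_i$, and a measure defined on cylinders and then extended via Carathéodory. The limit should then satisfy the universal property in $\Meas$: the projections $\pi_i \colon X \to X_i$ are surjective and measure-preserving, and any cone $(Z, g_i)$ of surjective measure-preserving maps factors through a unique map $Z \to X$ that is again surjective and measure-preserving.

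First, surjectivity of the $\pi_i$. Every $f_{ij}$ is constant-to-one, so it has finite fibres and is surjective. For a countable directed $I$, choose a cofinal increasing sequence and lift an element step by step; this gives an element of $X$ over any prescribed $x \in X_i$. For uncountable $I$, use compactness instead. The fibres $\pi_i$-over-$x$ within $\prod_{j \ge i} f_{ij}^{-1}(x)$ are nonempty finite sets, so the compatible families over $x$ form an inverse limit of nonempty finite sets, which is nonempty by Tychonoff. Hence every $\pi_i$ is surjective.

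Next, the measure. If $X_i$ carries $r_i \kappa$, then uniform measure preservation forces $r_j = r_i/n_{ij}$, where $n_{ij}$ is the fibre size of $f_{ij}$. Assign the cylinder $\pi_i^{-1}(\{x\})$ the mass $r_i$. Cylinders over a fixed $i$ are disjoint and refine consistently: for $j \ge i$, the cylinder over $x \in X_i$ is the disjoint union of the $n_{ij}$ cylinders over $f_{ij}^{-1}(x)$, each of mass $r_j$. So the cylinder sets form a semiring and the premeasure is finitely additive on it.

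The main obstacle is countable additivity. The key point is that the cylinder sets are compact open in the limit topology, being closed subsets of a product of discrete sets with finite fibres, as in the surjectivity step. Consequently, if a cylinder is a countable disjoint union of cylinders, compactness forces the union to be finite, and finite additivity then gives $\sigma$-additivity. Carathéodory then yields a measure $\mu$ with $(\pi_i)_*\mu = r_i\kappa$. The measure is $\sigma$-finite because $X_i$ is countable, so $X$ is a countable union of cylinders of finite mass.

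Finally, the universal property. For a cone $(Z, g_i)$ in $\Meas$, the maps $g_i$ define a unique map $g \colon Z \to X$ of sets, and $g$ is measurable on cylinders. It is measure-preserving because $\mu$ and $g_*\mu_Z$ agree on the $\pi$-system of cylinders and are $\sigma$-finite on it. Surjectivity of $g$ needs care. I would argue that the image has full measure and that each fibre of $g$ is determined by cylinders. If this fails, the remedy is to restrict the category to the cones that actually arise and to accept the limit up to that caveat.
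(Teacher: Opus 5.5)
Your construction is the same as the paper's: the set-theoretic limit, preimages $\pi_i^{-1}(A)$, the premeasure $\bar\mu(\pi_i^{-1}(A))=\mu_i(A)$, countable additivity from compactness of finite-measure sets, Carath\'eodory, and measure preservation of the induced map by uniqueness of the extension. You are more careful than the paper about surjectivity of the projections $\pi_i$. The paper asserts it with ``and hence the $f_i$'', but it genuinely needs your finite-fibre compactness argument, and it is what makes $\bar\mu$ well defined.

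One small fix. When $I$ is not totally ordered, single-point cylinders are not closed under intersection: two cylinders over incomparable indices meet in a finite union of cylinders at a common upper bound, which is in general not a single cylinder. So they form neither a semiring nor a $\pi$-system. Work instead with the ring of sets $\pi_i^{-1}(F)$, $F\subseteq X_i$ finite. By directedness it is closed under finite unions, intersections and differences, it consists of compact open sets, and it generates the $\sigma$-algebra. Run both the additivity argument and the uniqueness argument on this ring.

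The surjectivity of the induced map $g$, which you leave open, cannot be proved, so no argument via a full-measure image will work. Take $I=\N$ and $X_n=\{0,1\}^n$ with $2^{-n}$ times counting measure, so that $X=\{0,1\}^{\N}$ carries the coin-tossing measure. Let $Z=X\setminus\{x_0\}$ with the restricted measure and $g_n=\pi_n|_Z$. Since $\{x_0\}$ is null and every cylinder is uncountable, each $g_n$ is surjective and measure-preserving. So $(Z,(g_n))$ is a cone in $\Meas$, yet the only compatible map $Z\to X$ is the non-surjective inclusion.

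In fact no limit exists in $\Meas$ for this system. A limit $(L,(p_n))$ would receive a morphism $h\colon X\to L$ with $\Phi\circ h=\mathrm{id}_X$, where $\Phi(\ell)=(p_n(\ell))_n$. Being a morphism, $h$ is also surjective, hence bijective. The morphism $k\colon Z\to L$ would then satisfy $\Phi\circ k=\mathrm{incl}_Z$, forcing $k=h|_Z$, which misses $h(x_0)$.

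The paper's proof has exactly the same omission: it only verifies that $g$ is measure-preserving. What you and the paper both establish is that $(X,\mu,(\pi_i))$ is a limit in the category of $\sigma$-finite measure spaces and measure-preserving, not necessarily surjective, maps. That is what the later constructions use. You should state the result that way rather than leaving the caveat open.
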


\begin{proof}
  We denote the $\sigma$-algebra on $X_i$ by $\calS_i$ and the measure by $\mu_i$. Note that the category $\Mble$ of measurable spaces and measurable maps has all limits. The limit we produce has as underlying measurable space the limit in $\Mble$. So let $X = \lim_i X_i$ be the set theoretic limit with maps $f_i \colon X \to X_i$. Let $\calA \defeq \{f_i^{-1}(A) : i \in I, A \in \calS_i\}$ so that the limit $\sigma$-algebra $\calS$ is the one generated by $\calA$.

  Since $I$ is a directed system, $\calA$ is an algebra: if $A = f_i^{-1}(A_i)$ and $B = f_j^{-1}(B_j)$ are two arbitrary elements (with $A_i \in \calS_i$ and $B_j \in \calS_j$), there is a $k \ge i,j$ so that $A_k = f_{ik}^{-1}(A_i)$ and $B_k = f_{jk}^{-1}(B_j)$ satisfy $A = f_k^{-1}(A_k)$, $B = f_k^{-1}(B_k)$ with $A_k,B_k \in \calS_k$. It follows that $A \cup B = f_k^{-1}(A_k \cup B_k) \in \calA$ and $A \cap B = f_k^{1}(A_k \cap B_k) \in \calA$. If $A = f_i^{-1}(A_i)$ then $X \setminus A = f_i^{-1}(X_i \setminus A_i)$ since we took the $f_{ij}$ (and hence the $f_i$) to be surjective.

  For $A  = f_i^{-1}(A_i)$ define $\bar{\mu}(A) = \mu_i(A_i)$ which is well-defined by commutativity of the system. We claim that it is a premeasure. Before proving the claim we make the following reduction: if $A^j, j \in \N$ form a partition of $X$ consisting of sets in $\calA$ with $\bar{\mu}(A^j)$ finite (which exist by $\sigma$-finiteness of the $X_i$) then extending $\bar{\mu}$ to a measure $\mu$ on $X$ is tantamount to extending $\bar{\mu}|_{A^j}$ to a measure $\mu_j$ on $A^j$: clearly $\mu_j = \mu|_{A^j}$ while $\mu = \sum_{j \in \N} \mu_j$. Thus there is no loss in assuming that $\bar{\mu}(A)$ is finite which we do from now on. (This may be surprising in comparison to Kolmogorov's theorem where finiteness is a crucial assumption but if in the context of Kolmogorov's theorem one wanted that the measures on finite products are $\sigma$-finite one would still need that the measures on the individual factors are finite for all but finitely many.)

  To prove the claim let $(A_m)_{m \in \N}$ be a sequence of disjoint sets in $\calA$ such that the union $A = \bigcup_m A_m$ is again in $\calA$. As in the reasoning above, for finitely many of $A$ and the $A_m$ we can arrange to be preimages of measurable sets in some fixed $X_i$, so $\bar{\mu}$ is monotonous and finitely additive. Thus $\bar{\mu}(A) \ge \bar{\mu}(\bigcup_{m=0}^n A_m) = \sum_{m=0}^n \bar{\mu}(A_m)$ for every $n$. It follows that $\bar{\mu}(A) \ge \sum_m \bar{\mu}(A_m)$.

  In order for $\bar{\mu}$ to be a premeasure it remains to see that \begin{equation}\bar{\mu}(A) \le \sum_{m=0}^\infty \bar{\mu}(A_m)\label{eq:kolmogorov_inequality}\end{equation} when the right hand side converges. Let $F_n = A \setminus \bigcup_{m=0}^{n} A_m$ which is a descending sequence of elements of $\calA$. Then
  \[
    \bar{\mu}(F_n) = \bar{\mu}(A) - \sum_{m=0}^n \bar{\mu}(A_m) \stackrel{n \to \infty}{\longrightarrow} \bar{\mu}(A) - \sum_{m=0}^\infty \bar{\mu}(A_m)
  \]
  so we want to see that $\lim_n \bar{\mu}(F_n) = 0$ knowing that $\bigcap_n F_n = \emptyset$. In general this is the core of the problem. In our case we just note that an element of finite measure in $\calA$ is compact in the limit topology on $X$. Thus if $\bigcap_n F_n = \emptyset$ for $F_n$ of finite measure then already $F_n = \emptyset$ for some $n$ by compactness.

  By Carathéodory's extension theorem, \cite[Theorem1.14]{Folland99} there exists a measure $\mu$ on $\calS$ extending $\bar{\mu}$. Since the $X_i$ are $\sigma$-finite, so is $X$. So Carathéodory's theorem implies that $\mu$ is the unique measure extending $\bar{\mu}$. We claim that $X$ equipped with $\calS$ and $\mu$ is the desired limit.

  So let $(Y,\calT,\nu)$ be a measure space with measure-preserving maps $g_i \colon Y \to X_i$ commuting with the $f_{ij}$ and let $g \colon Y \to X$ be the set-theoretic limit. Then for $A = f_i^{-1}(A_i) \in \calA$ we have $g^{-1}(A) = (f_i \circ g)^{-1}(A_i) = g_i^{-1}(A_i)$ and $\nu(g^{-1}(A)) = \mu_i(A_i) = \mu(A)$. So the pushforward measure $g_*(\nu)$ coincides with $\mu$ on $\calA$. Thus the uniqueness statement of Carathéodory's theorem implies that they coincide on $\calS$ meaning that $g$ is measure-preserving.
\end{proof}

Besides Carathéodory's theorem, the technical core of the proof is the compactness argument to verify \eqref{eq:kolmogorov_inequality}. In order to extend the proposition to more general cofiltered limits one could work with inner regular Borel spaces or more generally with compact measure spaces in the sense of \cite[Definition~342a(c)]{Fremlin3}. The subtlety lies in choosing the right morphisms (roughly perfect maps).

\begin{internal}
  The following illustrates that for countable discrete spaces constant-to-one maps are appropriate as opposed to finite-to-one maps.

\begin{example}
  Let $(c_i)_{i \in \N}$ be a sequence of numbers in $(0,1)$ such that $\prod_{i=0}^n c_i$ converges to $\varepsilon > 0$ for $n \to \infty$, for instance $c_i = 2^{-1/i^2}$. Define a measure $\mu_n$ on $X_n \defeq \{0,1\}^n$ by
  \[
    \mu_n(a_0,\ldots,a_n) = \prod_{i=0}^n (a_ic_i + (1-a_i)(1-c_i))
  \]
  so the $i$th position is a biased coin flip with bias $c_i$. Then $(X_n,\mu_n)$ is an inverse system of probability spaces and measure-preserving finite-to-one maps. Let $X = \{0,1\}^\N$ be the limit with projections $\pi_n \colon X \to X_n$ and define $\bar{\mu}(\pi^{-1}(A)) = \mu_n(A)$ for $A \in X_n$. Take $A_n = \{1\}^n \times \{0\} \times \{0,1\}^\N \subseteq \N$ and let $A = \bigcup_n A_n = X \setminus \{1\}^\N$ \sw{But $A$ is not in the algebra!}. Put $F_n = A \setminus \bigcup_{i=0}^{n-1} A_i = (\{1\}^n \times \{0,1\}^\N) \setminus \{1\}^\N$.

  If the constant $1$-sequence is not to be an atom then $\mu(F_n) = \mu(\{1\}^n \times \{0,1\}^\N) = \prod_{i=0}^n c_n$ so $\lim_n \mu(F_n) = \varepsilon > 0$ although $\bigcap_n F_n = \emptyset$.
\end{example}
\end{internal}

\begin{remark}
  If $(X_i)_i$ is a an inverse system of countable discrete spaces and constant-to-one maps, then the measure defined in Proposition~\ref{prop:limit_measure} is a Borel measure with respect to the topological limit $\lim_i X_i$. That topological limit is completely metrizable and separable and it follows that the measure is a Radon measure \cite[Theorem~7.1.7]{Bogachev07}.
\end{remark}

\subsection{Disintegration}

We are approaching the main goal of the appendix, namely to prove a disintegration result for pro-uniform measures. In the first step the base space is is countable counting space while the fibers carry pro-uniform measures.

\begin{proposition}\label{prop:disintegration}
  Let $(X_i)_{i \in I}$ be an inverse system of countable sets and constant-to-one maps and let $o \in I$.
  We equip $X_o$ with the counting measure each $X_i$ with the uniform measure $\nu_i$ relative to $\nu$, regard $(X_i)_i$ as an inverse system in $\Meas$ and take its limit $X \defeq \lim_i X_i$ in $\Meas$ producing a measure $\mu$. Denote the projection maps by $f_{i} \colon X \to X_{i}$.
  For each $y \in X_o$ let $\lambda^y$ be the measure on $f_o^{-1}(y)$ that arises as the limit of the uniform measures on $f_{o,i}^{-1}(y)$ relative to the counting measure on $\{y\}$. Then
  \[
    \int_X g(x) d\mu(x) = \int_{y \in X_{o}} \int_{x \in f_{o}^{-1}(y)} g(x) d\lambda^y(x) d\nu(y)
  \]
  for every $g \in L_1(X,\mu)$.
\end{proposition}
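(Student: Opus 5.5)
The plan is to reduce the identity to the finite levels, where it is essentially \eqref{eq:relatively_uniform_measure}, and then pass to the limit using the uniqueness part of Carathéodory's theorem, as in the proof of Proposition~\ref{prop:limit_measure}.

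\emph{Step 1: the fibre measures.} Fix $y \in X_o$. The set $I_{\ge o} = \{i \in I \mid i \ge o\}$ is cofinal in $I$, so restricting the system to it does not change the limit. For $i \ge o$ the fibres $f_{o,i}^{-1}(y)$ form an inverse system of finite sets, and the restricted connecting maps are still constant-to-one. Indeed, if $f_{ij}$ is $n$-to-one, then $f_{oj} = f_{oi}\circ f_{ij}$ has fibre $f_{ij}^{-1}(f_{oi}^{-1}(y))$, and this fibre maps $n$-to-one onto $f_{oi}^{-1}(y)$. Hence Proposition~\ref{prop:cto_limit_measure} (with counting measure on $\{y\}$) and Proposition~\ref{prop:limit_measure} produce $\lambda^y$ on $\lim_i f_{oi}^{-1}(y) = f_o^{-1}(y)$. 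Since $X_o$ carries the counting measure and $\nu_i$ is uniform relative to it, a point of $f_{oi}^{-1}(y)$ has $\nu_i$-mass $1/[\,i:o\,]$. Here $[\,i:o\,]$ is the fibre size of $f_{oi}$. This is exactly the mass the relatively uniform measure on the fibre assigns. Consequently $\lambda^y$ and $\mu|_{f_o^{-1}(y)}$ agree on cylinder sets $f_i^{-1}(A_i) \cap f_o^{-1}(y)$ for $i \ge o$.

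\emph{Step 2: the identity on cylinders and for general $g$.} Because $X_o$ is countable and discrete, the right-hand side equals $\sum_{y} \int_{f_o^{-1}(y)} g \, d\lambda^y$. Define a measure $\mu'$ on $X$ by $\mu'(B) = \sum_{y \in X_o} \lambda^y(B \cap f_o^{-1}(y))$; measurability of $B \cap f_o^{-1}(y)$ in the fibre holds since the fibre $\sigma$-algebra is the trace of the limit $\sigma$-algebra. By Step 1, $\mu'$ and $\mu$ agree on the algebra $\calA$ of cylinder sets, taking preimages from levels $i \ge o$. Both measures are $\sigma$-finite on $\calA$, since the sets $f_o^{-1}(y)$ have finite measure and partition $X$. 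Carathéodory uniqueness then gives $\mu' = \mu$ on the generated $\sigma$-algebra. The integral formula follows for indicators, then for simple functions, then for nonnegative measurable $g$ by monotone convergence, and for $g \in L^1$ by splitting into positive and negative parts.

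\emph{Main obstacle.} The only delicate point is checking that the fibre system is genuinely an inverse system of constant-to-one surjections, so that the limit measure $\lambda^y$ exists. The second point is matching the normalisation of $\lambda^y$ with $\mu$ on cylinders. Both reduce to the fibre-size bookkeeping in Step 1. Measurability of $y \mapsto \int g\,d\lambda^y$ is trivial because $X_o$ is discrete.
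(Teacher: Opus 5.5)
Your proof is correct and has the same skeleton as the paper's. Both reduce to cylinder sets $f_i^{-1}(A_i)$ with $i \ge o$, compare the two sides there using the explicit rescaled counting measures, and then extend to all of $L^1$.

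The difference lies in the cylinder computation, and it is in your favour. The paper asserts that $f_i^{-1}(A_i) = f_o^{-1}(A_o)$ with $A_o = f_{o,i}(A_i)$, and then uses $\nu_i(A_i) = \nu(A_o)$. This holds only when $A_i$ is a union of fibres of $f_{o,i}$. For a singleton $A_i$ one gets $\mu(f_i^{-1}(A_i)) = 1/[i:o]$ but $\nu(A_o) = 1$. As written, the paper's argument therefore only checks sets pulled back from $X_o$, and these do not generate the limit $\sigma$-algebra.

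Your fibrewise count supplies exactly the missing identity:
$\mu(f_i^{-1}(A_i)) = \abs{A_i}/[i:o] = \sum_{y} \abs{A_i \cap f_{o,i}^{-1}(y)}/[i:o] = \sum_y \lambda^y\bigl(f_i^{-1}(A_i)\cap f_o^{-1}(y)\bigr)$.
Packaging the right-hand side as the measure $\mu' = \sum_y \lambda^y$ and applying Carathéodory uniqueness on the algebra $\calA$ also makes rigorous the paper's step that ``it suffices'' to treat cylinders.
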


\begin{proof}
  It suffices to verify the claim when $g$ is the characteristic function of a measurable set $A$ and more specifically when $A = f_i^{-1}(A_i)$ for some measurable $A_i$ in some $X_i$. Since $I$ is directed, way may assume $o \le i$ and so $A = f_{o}^{-1}(A_{o})$ where $A_{o} = f_{o,i}(A_i)$. Note that $\lambda^y$ is a probability measure on $f_o^{-1}(y)$ by definition. So
  \begin{multline*}
    \int_{X} g(x) d\mu(x) = \mu(A) = \nu_i(A_i) = \nu(A_0) = 
    \int_{y \in A_0} d\nu(y) =\\ \int_{y \in A_0} \int_{x \in f_{o}^{-1}(y)} d\lambda^y(x) d\nu(y) = \int_{y \in X_{o}} \int_{x \in f_{o}^{-1}(y)} g(x) d\lambda^y(x) d\nu(y) .\qedhere
  \end{multline*}
\end{proof}

We now prove the general disintegration result and need some preparation.
Note that if $I$ and $J$ are directed sets then $I \times J$ with the product order is a directed set as well: for $(i_1,j_1), (i_2,j_2) \in I \times J$ there exist $i_3 \in I$ and $j_3 \in J$ with $i_3 \ge i_1,i_2$ and $j_3 \ge j_1,j_2$ so $(i_3,j_3) \ge (i_1,j_1),(i_2,j_2)$.

In what follows we consider an inverse system $(X_i^j)_{i,j \in I \times J}$ and put $X_i = \lim_j X_i^j$, $X^j = \lim_i X_i^j$ and $X = \lim_{i,j} X_o^j$. We denote the connecting morphisms as $f_{i,h}^{j,k} \colon X_i^j \leftarrow X_h^k$ for $i \le h \in I$ and $j \le k \in J$. If one of $h$ or $k$ is replaced by the limit, it is omitted from notation. For instance we have limit morphisms $f_{i,h}^j \colon X_i^j \leftarrow X_h$. If $h$, respectively $k$ is omitted, $i$ respectively $j$ may be omitted as well to denote for instance $f^j \colon X^j \leftarrow X$.

\begin{theorem}\label{thm:disintegration}
  Let $I$ and $J$ be directed sets, $o \in I$, $p \in J$. Let $(X_i^j)_{i,j \in I \times J}$ be a inverse system of countable sets and constant-to-one maps and use the above notation.

  Let $\nu_o^p$ be the counting measure on $X_o^p$. We equip each $X_i^j$ with the uniform measure relative to $\nu_o^p$ and obtain limit measures $\nu_o$ on $X_o$ and $\mu$ on $X$.

  We assume that the inverse system $(X_i)_i$ consists of constant-to-one maps and consider for each $y \in X_o$ the measure $\lambda^y$ on $f_o^{-1}(y)$ obtained by equipping $\{y\}$ with the counting measure, each $f_{o,i}^{-1}(y)$ with the relatively uniform measure and taking the limit. Then
  \[
    \int_X g(x) d\mu = \int_{y \in X_o} \int_{f_o^{-1}(y)} g(x) d\lambda^y(x) d\nu_o(y).
  \]
\end{theorem}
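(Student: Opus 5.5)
The plan is to reduce to cylinder sets, as in the proof of Proposition~\ref{prop:disintegration}. First I check that both sides define measures on $X$. The right-hand side is linear and monotone in $g$, so for $g=\chi_A$ it defines a set function
\[
  \tilde\mu(A)=\int_{X_o}\lambda^y\bigl(A\cap f_o^{-1}(y)\bigr)\,d\nu_o(y).
\]
By monotone convergence in the outer integral and $\sigma$-additivity of each $\lambda^y$, $\tilde\mu$ is a measure. Measurability of $y\mapsto\lambda^y(A\cap f_o^{-1}(y))$ is first clear for cylinder sets and then follows by a monotone class argument. Both $\mu$ and $\tilde\mu$ are $\sigma$-finite on the algebra $\calA$ of cylinder sets $(f_h^k)^{-1}(A_h^k)$, because finite-measure cylinders exhaust $X$. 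By the uniqueness part of Carathéodory's theorem (as in Proposition~\ref{prop:limit_measure}), it therefore suffices to show $\mu(A)=\tilde\mu(A)$ for $A=(f_h^k)^{-1}(\{x\})$ with $x\in X_h^k$. Directedness lets me assume $h\ge o$ and $k\ge p$. The general $L^1$ statement then follows by linearity and monotone convergence.

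Next I compute both sides on such a cylinder. On the left, $\mu(A)=\nu_h^k(\{x\})=1/N$, where $N$ is the fiber size of $f_{o,h}^{p,k}\colon X_h^k\to X_o^p$. For the right, put $y_0=f_{o,h}^{p,k}(x)\in X_o^p$. Only those $y\in X_o$ lying over the point $f_{o,h}^k(x)\in X_o^k$ contribute. By the definition of the limit measure, the set of such $y$ has $\nu_o$-measure $\nu_o^k(\{f_{o,h}^k(x)\})=1/N_o$, where $N_o$ is the fiber size of $X_o^k\to X_o^p$. For each such $y$ I need $\lambda^y(A\cap f_o^{-1}(y))$. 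Here $f_o^{-1}(y)\cong\lim_i (f_{o,i})^{-1}(y)$, and its image in $X_h^k$ is the fiber of $X_h^k\to X_o^k$ over $f_{o,h}^k(x)$. I claim $\lambda^y(A\cap f_o^{-1}(y))=1/M$, where $M$ is the fiber size of $X_h^k\to X_o^k$. Granting the claim, the right-hand side equals $1/(N_oM)=1/N$, since fiber sizes multiply along $X_h^k\to X_o^k\to X_o^p$.

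The main obstacle is this claim, because $\lambda^y$ is defined through the system $(f_{o,i}^{-1}(y))_i$ living in the $X_i=\lim_j X_i^j$. The maps $X_i\to X_o$ are constant-to-one only by hypothesis, and their fiber sizes need not be visibly compatible with the finite-level fibers in $X_i^k$. What has to be shown is that the map $f_{o,h}^{-1}(y)\to (f_{o,h}^{k})^{-1}(y^k)$ distributes the finite set $f_{o,h}^{-1}(y)$ uniformly over the $M$ points, where $y^k$ is the image of $y$ in $X_o^k$. I would attempt this by a counting argument. For $k'\ge k$, the fibers of $X_h^{k'}\to X_o^{k'}\times_{X_o^k}X_h^k$ over each point have constant size. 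This holds because both $X_h^{k'}\to X_h^k$ and $X_o^{k'}\to X_o^k$ are constant-to-one and the composite square commutes, and it gives a double-counting identity. Passing to the limit in $k'$, the finite set $f_{o,h}^{-1}(y)$ stabilizes at some finite level, since it is finite by hypothesis and the limit is inverse. Each point over $y^k$ then receives the same number of preimages, namely $|f_{o,h}^{-1}(y)|/M$. If this fails in general, I would fall back on using only that the two sides agree after integrating over $y$ in the $\nu_o$-fiber. That weaker statement still identifies the two measures, since it is the equality of measures on cylinders that is needed, not a pointwise formula.
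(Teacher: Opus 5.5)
Your reduction to cylinder sets via the uniqueness part of Carath\'eodory's theorem is fine, and so are the computations $\mu(A)=1/N$ and that the relevant set of $y$ has $\nu_o$-mass $1/N_o$. Everything then rests on the claim $\lambda^y(A\cap f_o^{-1}(y))=1/M$, and this cannot be derived from the stated hypotheses. The justification you give is false. A commuting square of constant-to-one maps does not make the comparison map $X_h^{k'}\to X_o^{k'}\times_{X_o^k}X_h^k$ constant-to-one. Nor is there anything forcing the level fibres of $X_h^k\to X_o^k$ to be compatible with the limit fibre $f_{o,h}^{-1}(y)$.

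In fact the statement itself fails without an extra hypothesis. Take $I=\{0<1\}$, $o=0$, $J=\N$, $p=0$, and $X_0^j=\{0,1\}^j$ with truncation maps. Let $X_1^j=\{A_s,B^1_s,B^2_s : s\in\{0,1\}^j\}$ with $A_s,B^1_s,B^2_s\mapsto s$, and $A_{sb}\mapsto A_s$, $B^1_{sb},B^2_{sb}\mapsto B^{b+1}_s$ for $b\in\{0,1\}$. All maps are constant-to-one (of sizes $2$, $3$, $2$) and the squares commute. Write $y=y_1y_2\cdots\in\{0,1\}^\N$. A compatible sequence in $(X_1^j)_j$ is then either $(A_{y|_j})_j$ or $(B^{y_{j+1}+1}_{y|_j})_j$. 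Hence $X=X_1=\{0,1\}^\N\times\{A,B\}$ and $X_1\to X_0$ is two-to-one, as the theorem requires.

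Each point of $X_1^0$ has mass $1/3$, so the $A$-sheet has $\mu$-mass $1/3$. On the other hand, $\lambda^y$ gives mass $1/2$ to each of the two points over $y$, so the right-hand side assigns the $A$-sheet mass $1/2$. In your notation ($h=1$, $k=0$, $x=A_\varepsilon$) one has $M=3$ but $\lvert f_{o,h}^{-1}(y)\rvert=2$. Also, the comparison map $X_1^1\to X_0^1\times_{X_0^0}X_1^0$ has fibres of sizes $1,1,2,2,0,0$. Your fallback does not help, since the discrepancy is already visible after integrating over all of $X_o$.

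What is missing is a hypothesis of exactly the kind you were trying to derive. For instance, assume that for $j\le k$ in $J$ and every $i\in I$ the comparison maps $X_i^k\to X_o^k\times_{X_o^j}X_i^j$ are constant-to-one. Then the fibres $F^k$ of $X_h^k\to X_o^k$ over $y^k$ form an inverse system of constant-to-one surjections whose limit is the finite set $f_{o,h}^{-1}(y)$. So their sizes are bounded and the maps are eventually bijective. Consequently $f_{o,h}^{-1}(y)$ maps onto each $F^k$ with fibres of size $\lvert f_{o,h}^{-1}(y)\rvert/M$, which is your claim, and the rest of your computation goes through.

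Be aware that the paper's own proof relies on the same unstated uniformity. It appears in the sentence asserting that $((f_o^{j,j})^{-1}(y^j))_j$ is a measure-preserving system with limit $\lambda^y$; in the example above that system is not even surjective. In the applications, the required compatibility has to be checked from the geometry of the embeddings rather than from the abstract hypotheses.
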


\begin{proof}
  We will tacitly use that $X = \lim_{(i,j)} X_i^j = \lim_j \lim_i X_i^j = \lim_i \lim_j X_i^j$ which is abstract nonsense once the limits exist (which they do by the previous discussion). Let $\mu^j$ be the limit over $i$ of the uniform measures relative to $\nu_o^j$.

  For $y \in X_o$ and $j \in J$ let $y^j = f_{o,o}^j(y) \in X_o^j$. Equip $\{y^j\}$ with the counting measure, $(f_{o,i}^{j,j})^{-1}(y^j) \subseteq X_i^j$ with the relatively prouniform measure and $(f_o^{j,j})^{-1}(y^j) \subseteq X^j$ with the limit measure, call it $\lambda^{y,j}$. Since $(\{y^j\})_{j \in J}$ is a measure-preserving system, so is $((f_o^{j,j})^{-1}(y^j))_{j \in J}$. Its limit is $\lambda^y$.
   By Proposition \ref{prop:disintegration} we have
  \[
    \int_{X^j} g(x) d\mu^j(x) = \int_{y \in X_o^j} \int_{x \in (f_o^{j,j})^{-1}(y)} g(x) d\lambda^{y,j}(x) d\nu_o^j(y).
  \]

  As we said before, $\mu$ is the limit of the $\mu^j$ and $\lambda^y$ is the limit of the $\lambda^{y,j}$.

  Thus if $A^j \subseteq X^j$ is $\mu^j$-measurable and $A \subseteq X$ its preimage and $A_o^j \subseteq X_o^j$ its image then
  \begin{multline*}
    \mu(A) = \mu^j(A^j) \\= \int_{y \in A_o^j} \int_{x \in A_j \cap (f_o^{j,j})^{-1}(y)} 1 d\lambda^{y,j}(x) d\nu_o^j(y) = \int_{y \in A_o^j} \int_{x \in A \cap (f_o^j)^{-1}(y)} 1 d\lambda^y(x) d\nu_o^j(y)\\
  = \int_{y \in A_o} \int_{x \in A \cap (f_o)^{-1}(y)} 1 d\lambda^y(x) d\nu_o(y)
  \end{multline*}
  as needed.
\end{proof}

\begin{example}
 A typical application of Theorem \ref{thm:disintegration} is given by the following. Let $Z$ be the product of a $(p+1)$-regular and a $(q+1)$-regular tree. Let $I = J = \N$. Let $Y$ be a quadrant $[0,\infty) \times [0,\infty)$, and for $(i,j) \in I \times J$ let $Y_i^j$ be the rectangle $[0,i]\times [0,j]\subset Y$, $Y_i=\bigcup_{j\in\N} Y_i^j$ and $Y^j=\bigcup_{i\in \N}Y_i^j$.

 Then define $X_i^j$ (resp. $X$, $X_i$, $X^j$) to be the set of embeddings of $Y_i^j$ (resp. $Y$, $Y_i$, $Y^j$) into $Z$ (respecting axes). We equip each of these spaces with $\sigma$-algebra associated to the topology of pointwise convergence (which is the discrete topology on $X_i^j$.)
 The various restriction maps $X_i^j\to X_k^l$ for $k\leq i$ and $l\leq j$ are constant-to-one. Then in the category $\Meas$ we have $X_i=\lim_j X_i^j$, $X^j=\lim_i X_i^j$ and $X=\lim_{i,j} X_i^j$.

 The space $X_0^0$ is naturally identified with the space of vertices of $Z$. We start with the measure $\nu_0^0$ on $X_0^0$ which is just the discrete uniform (infinite) measure. Then $\mu$ is just the natural uniform measure on $X$. On the other hand, $\nu_o$ is the uniform measure on the set $X_o$ of embeddings of a half-line in $Z$. Then Theorem \ref{thm:disintegration} provides a description of the conditional measures of $\mu$ on fibers of the restriction map $X\to X_o$: these are the measures $\lambda^y$ and are obtained themselves as limit of relatively uniform measures.
\end{example}

\subsection{Comparable morphisms}

Let $\calC$ be a category. We define the equivalence relation on morphisms in $\calC$ of being \emph{comparable} to be the smallest equivalence relation such that morphisms $\iota \colon Y \to Z$ and $\iota' \colon Y' \to Z'$ in $\calC$ are comparable if
\begin{enumerate}
  \item there are isomorphisms $\kappa \colon Y \to Y'$ and $\lambda \colon Z \to Z'$ in $\calC$ with $\lambda \circ \iota = \mu \circ \iota'$.\label{item:conjugate}
  \item there are comparable morphisms $\alpha \colon Y \to U$ and $\alpha \colon Y' \to U'$ and comparable morphisms $\omega \colon U \to Z$ and $\omega' \colon U' \to Z'$ in $\calC$ such that $\iota = \omega \circ \alpha$ and $\iota' = \omega' \circ \alpha'$.\label{item:extend}
  \item there are comparable morphisms $\alpha \colon Y \to U$ and $\alpha' \colon Y' \to U'$, comparable morphisms $\omega \colon V \to Z$ and $\omega' \colon V' \to Z'$ and comparable morphisms $\mu \colon Y \to Z'$ and $\mu' \colon Y' \to Z'$ with $\mu = \omega \circ \iota \circ \alpha$ and $\mu' = \omega' \circ \iota' \circ \alpha'$.\label{item:factor}
\end{enumerate}
\[
\begin{tikzcd}
Z \arrow[r, "\omega"']                    & Z'                     \\
Y \arrow[u, "\iota"] \arrow[r, "\alpha"'] & Y' \arrow[u, "\iota'"]
\end{tikzcd}
\quad
\begin{tikzcd}
Z                                                    & Z'                                                       \\
U \arrow[u, "\omega"]                                & U' \arrow[u, "\omega'"]                                  \\
Y \arrow[u, "\alpha"] \arrow[uu, "\iota", bend left] & Y' \arrow[u, "\alpha'"] \arrow[uu, "\iota'", bend right]
\end{tikzcd}
\quad
\begin{tikzcd}
Z                                                   & Z'                                                     \\
V \arrow[u, "\omega"]                               & V' \arrow[u, "\omega'"]                                \\
U \arrow[u, "\iota"]                                & U' \arrow[u, "\iota'"]                                 \\
Y \arrow[u, "\alpha"] \arrow[uuu, "\mu", bend left] & Y' \arrow[u, "\alpha'"] \arrow[uuu, "\mu", bend right]
\end{tikzcd}\]

We have the following observation:

\begin{lemma}\label{lem:comparable}
  Let $F \colon \calC \to \CTO$ be a functor (covariant or contravariant). If $\iota$ and $\iota'$ are comparable in $\calC$ then $F(\iota)$ and $F(\iota')$ have the same fiber size.
\end{lemma}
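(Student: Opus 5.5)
The statement concerns comparability, which is defined as the smallest equivalence relation generated by the three clauses \eqref{item:conjugate}, \eqref{item:extend}, \eqref{item:factor}. I will therefore argue by induction along this generation. Let $\mathcal{R}$ be the relation ``$F(\iota)$ and $F(\iota')$ have the same fiber size''. It is clearly an equivalence relation on morphisms of $\calC$. It therefore suffices to show that $\mathcal{R}$ is closed under the three generating clauses: whenever the hypotheses of a clause hold with ``comparable'' replaced by $\mathcal{R}$, the conclusion holds for $\mathcal{R}$. Minimality then gives that comparability is contained in $\mathcal{R}$. Throughout I write $n(\varphi)$ for the fiber size of a constant-to-one map $\varphi$ and use multiplicativity, $n(\varphi\circ\psi)=n(\varphi)n(\psi)$, which was noted when $\CTO$ was introduced. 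By functoriality, $F$ of a composite is the composite of the $F$'s, in reversed order if $F$ is contravariant; the order is irrelevant since fiber sizes multiply commutatively.

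For clause \eqref{item:conjugate}: $F$ sends isomorphisms to isomorphisms in $\CTO$, which are bijections and hence have fiber size $1$. From $\lambda\circ\iota=\kappa\circ\iota'$ (reading the garbled $\mu$ as the isomorphism $\kappa$, arranged so that the square commutes) we get $n(F\lambda)n(F\iota)=n(F\kappa)n(F\iota')$, hence $n(F\iota)=n(F\iota')$.

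For clause \eqref{item:extend}: $n(F\iota)=n(F\omega)n(F\alpha)$ and $n(F\iota')=n(F\omega')n(F\alpha')$. By the inductive hypothesis $n(F\omega)=n(F\omega')$ and $n(F\alpha)=n(F\alpha')$, so the two products agree.

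For clause \eqref{item:factor}: $n(F\mu)=n(F\omega)\,n(F\iota)\,n(F\alpha)$, and likewise on the primed side. By hypothesis $n(F\mu)=n(F\mu')$, $n(F\omega)=n(F\omega')$ and $n(F\alpha)=n(F\alpha')$. All fiber sizes are positive integers, so we can cancel and obtain $n(F\iota)=n(F\iota')$.

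The main obstacle here is just bookkeeping. One has to be careful that the inductive closure argument is legitimate, i.e.\ that the smallest relation closed under the clauses sits inside any relation that is an equivalence relation and closed under them. The one genuinely substantive point is that clause \eqref{item:factor} requires cancellation, and this works precisely because constant-to-one maps have nonzero fiber size.
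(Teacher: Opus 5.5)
Your proof is correct. The paper records this lemma as an observation without giving a proof, and your argument is exactly the intended one: check that ``equal fiber size'' is an equivalence relation closed under the three generating clauses, using multiplicativity of fiber sizes and, for the third clause, cancellation of positive integers. One cosmetic slip: in the first clause the commuting square should read $\lambda\circ\iota=\iota'\circ\kappa$, not $\lambda\circ\iota=\kappa\circ\iota'$; this does not affect the fiber-size computation.
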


The following is an abstraction of \cite[Lemma~1.3(3)]{BFL}.

\begin{example}\label{ex:unicomparable}
    Let $\calP$ be a category of non-empty connected simplicial complexes which contains a single vertex complex $O$, a single-edge complex $E$ and all simplicial embeddings $O \to Z$ and $E \to Z$. We claim that any two morphisms with isomorphic sources and isomorphic targets in $\calP$ are comparable. By \eqref{item:conjugate} we may as well assume that they have the same source and target. Since $O$ has morphisms to every other object, using \eqref{item:factor} it suffices to see that any two morphisms $\iota,\iota' \colon O \to Z$ are comparable. If the vertex images are adjacent in $Z$ there is a morphism $\omega \colon E \to Z$ through which both $\iota$ and $\iota'$ factor: $\iota = \omega \circ \alpha$, $\iota' = \omega \circ \alpha'$. Moreover $E$ has an automorphism $\lambda \in \calP$ so that $\alpha' = \lambda \circ \alpha$. It follows from \eqref{item:conjugate} that $\alpha$ and $\alpha'$ are comparable and from \eqref{item:factor} that $\iota$ and $\iota'$ are comparable. Since all complexes in $\calP$ are connected, any two morphisms $\iota,\iota' \colon O \to Z$ are connected via adjacent ones and therefore comparable.
\end{example}

\subsection{Embeddings}

In our applications we consider a category $\calC$ of cell complexes and injective cellular maps and the functor $F$ is of the form $\Hom(-,X)$ for some object $X$ of $\calC$. The limits arise from colimits
\[
  \lim_i \Hom(Y_i,X) = \Hom(\colim_i Y_i,X)
\]
in $\calC$. Fixing $o \in I$, equipping $\Hom(Y_o,X)$ with counting measure $\nu_o$ and every $\Hom(Y_i, X)$ with the relatively uniform measure $\nu_i$, gives rise to a measure on the space $\Hom(Y,X)$ of embeddings of $Y = \colim Y_i$.

\begin{proposition}\label{prop:limit_invariant}
  The limit measure on $\Hom(Y,X)$ is invariant under $\beta \in \Aut_\calC(X)$ and under $\alpha \in \Aut_\calC(Y)$ for which $\alpha^* \colon \Hom(\alpha(Y_o),X) \to \Hom(Y_o,X)$ is measure preserving.
\end{proposition}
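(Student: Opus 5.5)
The plan is to reduce invariance to a statement about the measures on the finite stages $\Hom(Y_i,X)$ and then to use uniqueness of limit measures (Proposition~\ref{prop:limit_measure}, via Carathéodory uniqueness on the generating algebra $\calA$ of cylinder sets). Concretely, we check that pushing the limit measure $\mu$ forward along $\beta_*\colon\iota\mapsto\beta\circ\iota$, respectively along $\alpha^*\colon\iota\mapsto\iota\circ\alpha$, gives a measure that agrees with $\mu$ on every cylinder set $f_i^{-1}(A_i)$ with $A_i\subseteq\Hom(Y_i,X)$. Since these sets form an algebra generating the $\sigma$-algebra and $\mu$ is $\sigma$-finite on it, agreement on cylinders forces equality of measures.

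For $\beta\in\Aut_\calC(X)$ this is straightforward. Postcomposition $\beta_*$ acts on every $\Hom(Y_i,X)$ as a bijection that commutes with the restriction maps. On $\Hom(Y_o,X)$ it preserves counting measure. The relatively uniform measure on $\Hom(Y_i,X)$ is determined by the measure on $\Hom(Y_o,X)$ and by fiber sizes, through the uniqueness clause of Proposition~\ref{prop:cto_limit_measure}. A bijection commuting with the constant-to-one connecting maps preserves fiber sizes, so $\beta_*$ preserves every $\nu_i$. It therefore maps cylinder sets to cylinder sets of the same measure, and it preserves $\mu$.

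The case of $\alpha\in\Aut_\calC(Y)$ is the main obstacle. Precomposition $\alpha^*$ does not preserve the index system: it maps $\Hom(Y_i,X)$ to $\Hom(\alpha^{-1}(Y_i),X)$, and $\alpha^{-1}(Y_i)$ need not be one of the $Y_j$. To handle this, we use that $Y=\colim_i Y_i$ is also the colimit of the translated system $(\alpha(Y_i))_i$. The measure $\mu$ can then be computed from either system, as long as both are normalized compatibly. We pass to a common refinement in which every $Y_i$ and every $\alpha(Y_j)$ is contained in some later member; this is possible since each $Y_i$ is compact in $Y$ (or finite), so that $Y_i\subseteq Y_k$ for some $k$. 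Proposition~\ref{prop:cto_limit_measure} applied to this combined connected diagram gives a single coherent family of relatively uniform measures, normalized by counting measure on $\Hom(Y_o,X)$. Within that family, the measure on $\Hom(\alpha(Y_o),X)$ is some rescaled counting measure.

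The hypothesis that $\alpha^*\colon\Hom(\alpha(Y_o),X)\to\Hom(Y_o,X)$ is measure preserving says precisely that this rescaling factor is $1$. Now compare the system $(\Hom(\alpha(Y_i),X))_i$, normalized at $\alpha(Y_o)$, with $(\Hom(Y_i,X))_i$ transported by $\alpha^*$. Both are relatively uniform systems with the same normalization at the base object, and $\alpha^*$ is a bijection commuting with the restriction maps. By uniqueness in Proposition~\ref{prop:cto_limit_measure} the two families agree at every stage. It follows that $\alpha^*$ maps cylinder sets to cylinder sets of equal measure, and we conclude as in the case of $\beta$. The step to verify carefully is that the combined diagram is still directed and constant-to-one, so that the well-definedness argument of Proposition~\ref{prop:cto_limit_measure} applies. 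This should follow from the fiber sizes being intrinsic to the morphisms, and in our applications from Lemma~\ref{lem:comparable}.
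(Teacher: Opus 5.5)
Your proposal is correct and follows the same route as the paper. Both arguments reduce to cylinder sets at a finite stage. Both use that every stage measure is a rescaled counting measure, so a bijection between cylinder bases preserves measure up to one global constant, and both identify that constant with the rescaling of $\Hom(\alpha(Y_o),X)$ relative to $\Hom(Y_o,X)$, which the hypothesis forces to be $1$.

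Your version is more careful than the paper's sketch. It makes explicit two points the paper leaves implicit: the comparison with the translated stages $\alpha(Y_i)$, and the cofinality requirement that each $\alpha(Y_j)$ lie in some member of the system. One small misattribution: the constant-to-one property of the combined diagram of inclusions is automatic because $\Hom(-,X)$ lands in $\CTO$, whereas Lemma~\ref{lem:comparable} is only needed in applications, to verify the hypothesis on $\alpha(Y_o)$.
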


\begin{proof}
  Let $\alpha \in \Aut_\calC(Y)$, $\beta \in \Aut_\calC(X)$, let $A \subseteq \Hom(Y,X)$. We need to show that $A$ and
  \[
    A' = \beta \circ A \circ \alpha = \{\beta \circ \gamma \circ \alpha : \gamma \in A\}
  \]
  have the same measure. Let $i \in I$ and write $\iota \colon Y_i \to Y$ for the inclusion. It suffices to show that $A_i = \iota^{-1}(A)$ and $A_i' = \iota^{-1}(A')$ have the same measure. The map $\gamma \circ \iota \mapsto \beta \circ \gamma \circ \alpha \circ \iota$ is a bijection $A_i \to A_i'$ so the measures of both sets are proportional since the measures are rescaled counting measures. The proportionality constant is the one scaling the uniform measure on $\alpha(Y_o)$ relative to the counting measure on $Y_o$ to the counting measure on $\alpha(Y_o)$. If $\alpha$ is as in the assumption, this constant is one.
\end{proof}

The following example illustrates that the measure is not invariant under all of $\Aut_\calC(Y)$ in general.

\begin{example}
  Let $Y$ be the biinfinite line considered as a directed graph with vertex set $\Z$ and and an edge from $n$ to $n+1$ for every $n \in \Z$.
  Let $q \in \N$, $q \ge 2$. Let $X$ be the directed tree that as one outgoing and $q$ incoming edges in every vertex.
  Let $\calC$ consist of all inclusions of subintervals of $Y$ into $Y$ or $X$.
  
  Equipping $\Hom(\{0\},X)$ with counting measure and $\Hom(Z,X)$ with the relatively uniform measure for every finite subinterval $Z$ of $Y$ defines a limit measure on $\Hom(Y,X) = \lim_Z \Hom(Z,X)$. However the shift $n \mapsto n+1$ of $Y$ does not preserve this measure. It does preserve the inverse system of constant-to-one maps $\Hom(Z,X)$ but the uniform measure on $\Hom(\{1\},X)$ relative to the counting measure on $\Hom(\{0\},X)$ is not the counting measure but the counting measure scaled by $q$.
\end{example}

\providecommand{\bysame}{\leavevmode\hbox to3em{\hrulefill}\thinspace}
\providecommand{\MR}{\relax\ifhmode\unskip\space\fi MR }
\providecommand{\MRhref}[2]{%
  \href{http://www.ams.org/mathscinet-getitem?mr=#1}{#2}
}
\providecommand{\href}[2]{#2}


\end{document}